\newtheorem{prop}{Proposition}[section]
\newtheorem{definition}[prop]{Definition}
\newtheorem{lemma}[prop]{Lemma}
\newtheorem{theorem}[prop]{Theorem}
\newtheorem{corollary}[prop]{Corollary}
\newtheorem{remark}[prop]{Remark}
\newtheorem{example}[prop]{Example}
\newtheorem{assum}[prop]{Assumption}
\newcommand{\eqnsection}{\renewcommand{\theequation}{\thesection.\arabic{equation}}
      \makeatletter \csname @addtoreset\endcsname{equation}{section}\makeatother}
\def\vep{\vep}
\def\tr{{\mbox{Tr}} }
\def\NN{\mathbb N}
\def\RR{\mathbb R}
\def\CC{\mathbb C}
\def\bN{{\mathbf N}}
\def\bn{{\mathbf n}}
\def\bX{{\bf X}}
\def\bY{{\bf Y}}
\def\bU{{\bf U}}
\def\bO{{\bf O}}
\def\nn{{\noindent}}
\def\ra{\rightarrow}
\def\N{{\mathbb N}}
\def\R{{\mathbb R}}
\def\E{{\mathbb E}}
\def\P{{\mathbb P}}
\def\Q{{\mathbb Q}}
\def\ts{\times}
\def\vep{\varepsilon}
\def\mun{\hat{\mu}^N}
\def\Q{\mathbb Q}
\def\Da{\mathcal D}
\def\HNC{\mathcal H^N(\CC)}
\def\ccl{\CC\langle X_1,\dots,X_\ell\rangle}
\DeclareMathOperator{\Id}{Id}
\DeclareMathOperator{\Tr}{Tr}
\DeclareMathOperator{\Diag}{Diag}
\DeclareMathOperator{\Diff}{Diff}
\title{Matrix models at low temperature}
\date{\today}
\author{Alice Guionnet}
\address{UMPA, CNRS  UMR 5669, ENS Lyon,46 all\'ee d'Italie,
	69007 Lyon, France } \email{Alice.Guionnet@umpa.ens-lyon.fr}
\author{\'Edouard Maurel-Segala}
\address{Université Paris-Saclay, CNRS, Laboratoire de mathématiques d’Orsay, 91405, Orsay, France} \email{edouard.maurel-segala@universite-paris-saclay.fr }
\subjclass[2000]{15A52 (46L54)} \keywords{Matrix integrals, HCIZ integral, Schwinger-Dyson equation}
\begin{document}

\begin{abstract}
	In this article  we investigate the behavior of multi-matrix unitary invariant models under a potential $V_\beta=\beta U+W$ when the inverse temperature $\beta$ becomes very large. We first  prove, under mild hypothesis on the functionals $U,W$ that as soon at these potentials are "confining" at infinity, the sequence of spectral distribution of the matrices are tight when the dimension goes to infinity. Their limit points are  solutions of Dyson-Schwinger's equations. Next we investigate a few specific models, most importantly the "strong single variable model" where $U$ is a sum of potentials in a single matrix and the "strong commutator model" where $U = -[X,Y]^2$.
\end{abstract}
\maketitle

\tableofcontents

\section{Introduction}
Matrix integrals and their large $N$  limit   play an important role in both  mathematics and physics.  Random matrices were  first introduced in statistics by Wishart \cite{wishart} to analyze a large array of noisy data, a viewpoint  that is now particularly relevant and useful for  principal component analysis and statistical learning \cite{MontaBA,AKJ}.  Wigner \cite{wigner} and Dyson \cite{dyson} later suggested  that the statistics of their eigenvalues model very well those of the high energy levels in heavy nuclei. Even more surprisingly, Montgomery \cite{Montgo}  showed that random matrices are closely related to the zeros of the Riemann Zeta function, a conjecture that nowadays provides great intuition for many mathematical results, see e.g \cite{KeatingSnaith,ABB}.  Random matrices are moreover intimately related to integrable systems to which they  furnish key examples. Random matrices have  also played  a central role in  the theory of operator algebras since  Voiculescu \cite{voicstflour,voi91}  proved that they are asymptotically free. In physics, they were  introduced into quantum field theory by 't Hooft  \cite{thooft}, then used in two-dimensional quantum gravity and  string theory \cite{BIPZ,David,KKM}, and related with algebraic geometry \cite{eynard,BEO,EyOr}. In both free probability, statistical learning  and physics, a central question is to estimate, in the large $N$-limit,  a matrix integral of the form
$$I_{N}^{\ell}(V)=\int \exp\{-N\tr V(X_{1}^{N},X_{2}^N,\ldots,X_{\ell}^{N})\} dX_{1}^{N}\cdots dX_{\ell}^{N}$$ where the integration holds over  Hermitian $N\times N$ matrices, $V$ is  apolynomial and $dX_{i}^{N}$ denotes the Lebesgue measure on this space. 
It turns out that this was done only in few cases. The case where the integral holds only over one matrix has been thoroughly studied. Indeed, in this case one can diagonalize the matrix and find that
$$I_{N}^{1}(V)=C_{N}\int \prod_{1\le i<j\le N}|\lambda_{i}-\lambda_{j}|^{2} e^{-N\sum V(\lambda_{i})} \prod_{1\le i\le N} d\lambda_{i}\,,$$
where $C_{N}$ is a universal constant.  $F_{N}^1({V})=N^{-2}\ln I_{N}^{1}(V)$ can be shown to converge as $N$ goes to infinity. Its limit was computed by using a saddle point method or large deviations techniques \cite{BAG97} as soon as $V$ is continuous and goes to infinity fast enough, yielding
$$F^{1}(V):=\lim_{N\rightarrow\infty}\frac{1}{N^{2}}\ln \int \prod_{1\le i<j\le N}|\lambda_{i}-\lambda_{j}|^{2}e^{-N\sum V(\lambda_{i})} \prod_{1\le i\le N} d\lambda_{i}$$
\begin{equation}\label{fe}\quad =-\inf\{\int\int (\frac{1}{2}V(x)+\frac{1}{2}V(y)-\ln |x-y|)d\mu(x)d\mu(y)\}\end{equation}
where the supremum is taken over probability measures $\mu$ on the real line. Large $N$ corrections to this limit can also be computed, at least in some nice regimes  \cite{johansson, BGK, Bog1, ChEy1}.
When more than one matrix is involved, even the very existence of the limit of the renormalized free energy $F_{N}^{\ell}(V)=N^{-2}\ln I_{N}^{\ell}(V)$ is unknown. The existence of such a limit in great generality would already be of importance for free entropy theory \cite{Vo02} since it would  show that  one can replace limsup by liminf in the definition of microstates free entropy, which would entail important properties such as summability under freeness. One class of multi-matrix integrals is still amenable to asymptotic analysis: those where matrices interact via a quadratic interaction such that the interaction can be expressed in terms of the Harich-Chandra-Itzykson-Zuber integral or more generally spherical integrals \cite{GZ3,Ghu21,EyMe,Mehrig, GJSZ,BH98}. Apart from  these special cases, the case where the potential $V$ is a small modification of  a quadratic potential, which is also called a  ``high temperature'' regime,  is also well understood. Indeed, it has been  foreseen that matrix integrals expand as generating functions for enumerating maps \cite{thooft, BIPZ} at a formal level. In \cite{GM,GM2}, it was shown that 
this expansion holds asymptotically under general hypotheses, so that   the free energy limit can be expressed  as a generating function for the enumeration of planar maps. The central tool to derive
 these asymptotic estimates is based on the fact that the matrices remain bounded and that the limit points of their  moments satisfy the so-called Dyson-Schwinger's equations. It can  then be  shown that these equations  asymptotically have a unique solution  when the potential is a small perturbation of the quadratic potential, a solution that expands analytically in the small parameters of the potential. Such a uniqueness fails in the general case, even in the case of a single  matrix  \cite{BoG2,BGK}. In the latter case, it can be seen that for general potentials the limiting distribution of the spectrum has a disconnected support and that the Dyson-Schwinger's equations have a unique solution only if the mass in each connected component is fixed. The  limiting distribution is the unique minimizer in \eqref{fe}, but the Dyson-Schwinger equations describe critical points,  which in fact are much more numerous. 
Therefore, except for  these  few cases, the asymptotic analysis of most models is still a wide open problem and  even efficient  numerical methods have  to be found \cite{KZh}. 
In this article, we  start  the study of  multi-matrix models in the  ``low temperature''  regime  where  the potential dominates the measure. We will indeed consider the case where we can decompose $V$ into 

$$V_\beta = \beta U + W$$
with two polynomials $U,W$ and $\beta$ very large. The driving idea is that the situation where $\beta$ is infinite should somehow be easier to analyze, as should the situation where $\beta$ is large, as a perturbation of the latter.

We will first give sufficient conditions for $U$ and $W$ such that the integral $I_N^\ell(V_\beta)$ is finite and mostly supported on bounded matrices. We will also show that this integral is concentrated on matrices satisfying the so-called Dyson-Schwinger's equations. However, these equations  generally have multiple solutions  in  the low temperature situation, even in the one-matrix case. We then characterize special cases. When $U$ is strictly convex, we show that the analysis can be reduced to the high temperature situation studied in \cite{GM}. When $U$ is the sum of polynomials in one matrix, we can also estimate $I_N^\ell(V_\beta)$ for sufficiently large $\beta$.
 The crux of our study will be to consider the case where $-U$ is the square of the  commutator in two matrices $X$ and $Y$ and $W$ decomposes as the sum of two polynomials in $X$ and $Y$ respectively. This type of  models  is also considered in \cite{KZh, KKN}: It can be solved when $W$ is quadratic but not in general.
In the limit $N$ and then $\beta$ goes to infinity, our matrix model will concentrate on operators $x$ and $y$ which commute. However, there are many ways to achieve this: for example  these operators could have the same eigenbasis or one of them could be the multiple of the identity. Our results show that the optimal scenario also depends  on $W$ and 
give the asymptotics of  the free energy$F_{N}^{2}(V_{\beta})$ for $N$ and then $\beta$ going to infinity. We also  give a general criterion for the  tightness of these multi-matrix models. Our paper considers the Hermitian case but could easily be  extended to symmetric matrices (see  e.g. \cite{Ma} and  \cite{CGM} for other generalizations of the loop equations  techniques to orthogonal  and symmetric matrices). 

\bigskip

{\bf Acknowledgments:} We are grateful to Amir Dembo for many discussions at a preliminary stage of this work. This work was supported in part by the ERC Project LDRAM : ERC-2019-ADG Project 884584.

\section{Notations and main results}
We let $\ccl$ be the space of non-commutative polynomials in $\bX=(X_1,\dots,X_\ell)$ with complex coefficients. 
We will in general consider $V$ to be a polynomial but for some applications we will need that the potentials belong to a more general class of smooth functions given by:
$$\mathbb C\langle X_{1},\ldots, X_{\ell}\rangle^{c}=\{ P(X_{1},\ldots,X_{\ell})=\sum_{{i=1}}^{k} z_{i }X_{j_{1}^{i}}\cdots  X_{j_{d_{i}}^{i}}+\sum_{j=1}^{\ell}\vartheta_{j} (X_{j})\}$$
where $z_{j}$ are complex numbers, $k, j^d_{i}$ , the $j_{m}^{i}$ are integer numbers in $\{1,\ldots, \ell\}$ and $\vartheta_{j}$ are  smooth real-valued functions. $\mathbb C\langle X_{1},\ldots, X_{\ell}\rangle^{c}_{sa}$ will denote the subset of self-adjoint elements of $\mathbb C\langle X_{1},\ldots, X_{\ell}\rangle^{c}$, namely functions $V\in\mathbb C\langle X_{1},\ldots, X_{\ell}\rangle^{c}$ such that
$V=V^{*}$,  where $*$ is the involution such that for any integer number $d$, any $j_{1},\ldots,j_{d}\in [1,\ell]^{d}$, any complex number $z$ and any smooth function $\vartheta:\mathbb R\mapsto \mathbb R$, we have 
$$\left(  z X_{j_{1}}\cdots  X_{j_{d}}\right)^{*}=\bar z X_{j_{d}}X_{j_{d-1}}\cdots X_{j_{1}}, \quad \vartheta(X_j)^{*}= \vartheta(X_{j})\,.$$
$\mathbb C\langle X_{1},\ldots, X_{\ell}\rangle^{c}$ is equipped with a unit denoted by $1$.
A polynomial $P$ is cyclically invariant iff it  is the sum of terms of the form
$$z \sum_{\ell} X_{i_{\ell}}\cdots X_{i_{k}}X_{i_{1}}\cdots X_{i_{\ell-1}}$$
for some complex number $z$ and $(i_{1},\ldots,i_{k})\in [1,\ell]^{k}$, $k\in\N$.
$\ccl^{c}$  is equipped with the unit $I$.

Let $V_\beta=\beta  V+W$ be a one parameter family of non-commutative polynomials so that $V$ and $W$ belong to  $\mathbb C\langle X_{1},\ldots, X_{\ell}\rangle^{c}_{sa}$ and $\beta$ is  a non-negative real number. 
We study the matrix model
given by the $\ell$-tuple $\bX^N=(X^N_1,\dots,X^N_\ell)$ of $N\times N$ Hermitian matrices  with law
\begin{equation}\label{defQ}
d\Q^N_{V_\beta}(\bX^N)=\frac{1}{Z^N_{V_\beta}}\exp\left(-N\Tr V_\beta(\bX^N)\right)d\bX^N\end{equation}
when the dimension goes to infinity and then $\beta$, the analogue of the  inverse temperature in statistical physics, goes to infinity.  $d\bX^{N}$ is the Lebesgue measure on the set  $\HNC$  of $N\times N$ Hermitian matrices and we have denoted in short $X^N=(X^N_1,\dots,X^N_\ell)$. 
Moreover, $Z^{N}_{V_{\beta}}$ is the partition function given by 
\begin{equation}\label{defZ}
Z^N_{V_\beta}=\int \exp\left(-N\Tr V_\beta(\bX^N)\right)d\bX^N\end{equation}
We may and shall assume that $V$ is cyclically invariant, since the trace is left invariant by cyclic permutation, namely 
 we may replace any monomial $q(X)=X_{i_{1}}\cdots X_{i_{k}}$ in $V$ by 
 $$c(q)(\bX):=\frac{1}{k}\sum_{p=1}^kX_{i_{p}}\dots X_{i_k}X_{i_1}\dots X_{i_{p-1}}\,.$$
 
 Our first result gives sufficient conditions on the potential $V$  to insure that matrices distributed according to $\Q^{N}_{V}$ stay bounded with large probability. 
This will prove that the sequence of moment distribution of the $\bX^N=(X^N_i)_{1\le i\le \ell}$ is tight under $\Q^{N}_{V}$.
 Let us denote $\mathcal D_i$  the cyclic derivative with respect to the $i$th variable so that $\mathcal D_i$ is linear on the space of non commutative polynomials and
 $$\mathcal D_i (X_{i_1}\cdots X_{i_k})=\sum_{p:i_p=i} X_{i_{p+1}}\cdots X_{i_k}X_{i_1}\cdots X_{i_{p-1}}\,.$$
   We extend $\mathcal D_{i}$ to smooth functions in one variable by setting $\mathcal D_{i}\vartheta(X_{j})=1_{i=j}\vartheta'(X_{j})$.
 We will denote 
 $$ X_i.\mathcal D_i V=\frac{1}{2}\left(X_i\mathcal D_i V+\mathcal D_i V X_i\right)\,.$$
 We denote by $\|A\|_{\infty}$ the operator norm of a matrix $A$:
 $$\|A\|_{\infty}=\sup_{\| u\|_{2}=1}\langle Au, Au\rangle^{1/2}\,.$$
We can finally define the notion of trapping potentials that will insure that matrices stay bounded under $\Q^N_{V}$.

\begin{definition} Let $\eta$  be a non-negative real number and  $A$ be a  finite real number and $I=(I_{1},\ldots,I_{d})$ be a partition of $\{1,\dots,\ell\}$.
Let us denote for $1\leqslant j\leqslant d$, $Z^N_j=\sum_{i\in I_j} (X^N_i)^2$.

We say that a potential $V\in \mathbb C\langle X_{1},\ldots, X_{\ell}\rangle^{c}_{sa}$ is {\bf $(\eta,A,I)$-trapping} if  for every $k,N\in\N^*$, for every  $\bX^N=(X^N_1,\dots,X^N_\ell)\in\HNC^{\ell}$,
$$\Tr\left(\sum_{j=1}^{d}(Z^N_{j})^k\sum_{i\in I_{j}}X_{i}^{N}.\mathcal D_{i}V(X^{N})\right)\geqslant\Tr \left(\eta\sum_{j=1}^{d}(Z^{N}_{j})^{k+1}-A(1+\sum_{j=1}^{d}(Z^{N}_{j})^{k})\right)\!\!.
$$
\end{definition}

 Our main interest in this definition is the following theorem, which shows that when the potential $V$ is $(\eta,A,I)$-trapping, the spectrum of the $\ell$ tuple of matrices  $\bX^N$ stays bounded with large probability.  
 \begin{theorem}\label{Ttosupport} Assume that there exists a positive real number $\eta$ and a finite real number $A$ such that 
$V$  in $ \mathbb C\langle X_{1},\ldots, X_{\ell}\rangle^{c}_{sa}$ is {\bf $(\eta,A,I)$-trapping. }
Then with $L=32\max\left((1+|A| )d \eta^{-1},1\right)$,  under $\Q^N_{V}$, almost surely
$$\limsup_{N\ra\infty} \max_{1\le i\le \ell} \| X^N_i\|_\infty\le L .$$
Moreover, for every $\kappa\in (0,1)$, we have 
\begin{eqnarray}\label{zx}
\limsup_{N\rightarrow\infty}  N^{\kappa-1}\ln \Q^N_V\left( \max_{1\le i\le \ell} \| X^N_i\|_\infty\ge L^{\frac{1}{2}+\frac{1}{\kappa}} \right)\le -\frac{1}{4} .\end{eqnarray}

\end{theorem}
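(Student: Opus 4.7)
The plan is to combine the Schwinger--Dyson (integration by parts) identity for the measure $\Q^N_V$ with the $(\eta,A,I)$-trapping hypothesis to derive uniform-in-$N$ moment estimates on $\Tr((Z^N_j)^k)$, and then to convert these bounds into operator-norm estimates via Markov's inequality and a Borel--Cantelli argument.

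\textbf{Step 1: Schwinger--Dyson and trapping.} Integration by parts in $X_i$ in the Lebesgue integral defining $\Q^N_V$ yields, for every non-commutative polynomial $P$, the identity
\[
N\,\E_{\Q^N_V}[\Tr(P\cdot\mathcal D_i V)] \;=\; \E_{\Q^N_V}[(\Tr\otimes\Tr)(\partial_i P)],
\]
where $\partial_i$ is the free difference quotient with $\partial_i X_j=\delta_{ij}\,1\otimes 1$. Applying this with the test polynomial $P=(Z^N_j)^k X_i$ for $i\in I_j$ and summing over $i\in I_j$ and $j\in\{1,\dots,d\}$, cyclicity of the trace rewrites the left-hand side as $N\,\E[\Tr(\sum_j(Z^N_j)^k\sum_{i\in I_j}X_i\mathcal D_i V)]$, and the $(\eta,A,I)$-trapping hypothesis then bounds this below by $N\eta\,\E[\Tr\sum_j(Z^N_j)^{k+1}]-NA(\E[\Tr\sum_j(Z^N_j)^k]+Nd)$.

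\textbf{Step 2: Bounding the right-hand side.} Expanding $\partial_i((Z^N_j)^k X_i)$ via the Leibniz rule produces three families of contributions: a ``boundary'' piece $\sum_{i\in I_j}N\Tr((Z^N_j)^k)$; ``diagonal'' terms $\sum_{p+q=k-1}\Tr((Z^N_j)^p)\Tr(X_i(Z^N_j)^q X_i)$ which, using $\sum_i\Tr(X_iAX_i)=\Tr(Z^N_j A)$ by cyclicity, collapse to $\sum_{p+q=k-1}\Tr((Z^N_j)^p)\Tr((Z^N_j)^{q+1})$; and ``cross'' terms $\sum_{p+q=k-1}\Tr((Z^N_j)^p X_i)\Tr((Z^N_j)^q X_i)$. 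The cross terms I would control by invoking the operator inequality $X_i^2\le Z^N_j$ (which gives $|X_i|\le(Z^N_j)^{1/2}$ by operator monotonicity of $\sqrt{\cdot}$), yielding $|\Tr((Z^N_j)^p X_i)|\le\Tr((Z^N_j)^{p+1/2})$; H\"older's inequality on the normalized trace $\tau_N=N^{-1}\Tr$ (using $\tau_N(Z^a)\tau_N(Z^b)\le\tau_N(Z^{k+1})^{(a+b)/(k+1)}$ for $Z\ge 0$) then reduces every contribution to a power of $\tau_N((Z^N_j)^{k+1})$ strictly less than one. Setting $M_k:=\sum_j\E_{\Q^N_V}[\tau_N((Z^N_j)^k)]$ and combining with the lower bound from Step 1 yields, after careful bookkeeping of constants, a moment bound of the form $M_k\le L^{2k}$ uniformly in $N$, with $L$ as in the statement.

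\textbf{Step 3: Norm estimates and main obstacle.} Since $(X^N_i)^2\le Z^N_j$ whenever $i\in I_j$, one has $\|X^N_i\|_\infty^{2k}\le\Tr((Z^N_j)^k)\le N M_k$, so Markov's inequality yields
\[
\Q^N_V(\|X^N_i\|_\infty\geq M)\;\le\; N\ell\,(L/M)^{2k}.
\]
The almost-sure bound $\limsup_N\max_i\|X^N_i\|_\infty\le L$ follows from Borel--Cantelli upon taking $M=L(1+\varepsilon)$ for arbitrary $\varepsilon>0$ and $k$ growing slowly with $N$; for the quantitative estimate \eqref{zx} one chooses $M=L^{1/2+1/\kappa}$ and $k$ of order $N^{1-\kappa}/(4\log L)$, so that $\log\Q^N_V(\cdots)\le-\tfrac14 N^{1-\kappa}+o(N^{1-\kappa})$. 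The main obstacle is Step 2, namely closing the recursion with a \emph{$k$-independent} multiplicative constant: a naive application of H\"older to each pair $\tau_N(Z^p)\tau_N(Z^{k-p})$ followed by summation over $p\in\{0,\dots,k-1\}$ produces a factor of $k$ on the right-hand side that would yield only the far-too-weak bound $M_k\lesssim(Ck/\eta)^k$ -- useless for a uniform operator-norm control. The sharpened pointwise estimate $|\Tr((Z^N_j)^p X_i)|\le\Tr((Z^N_j)^{p+1/2})$ coming from the operator inequality $|X_i|\le(Z^N_j)^{1/2}$, combined with the interpolation on the normalized trace, is what ultimately tames this $k$-factor, and carefully tracking the universal constants through this bottleneck is what determines the precise value $L=32\max((1+|A|)d/\eta,1)$ in the statement.
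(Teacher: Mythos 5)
Your Step 1 correctly writes the Dyson--Schwinger identity and invokes trapping, and your Step 3 (Markov plus optimization in $k$) is sound \emph{if} the moment bound of Step 2 holds uniformly up to $k\sim N^{1-\kappa}$. The gap is in Step 2, and it is not the one you diagnose. The obstruction is not the $k$-factor per se, but the fact that the second-order Dyson--Schwinger term produces the expectation $\E_{\Q^N_V}\bigl[\tau_N\bigl((Z^N_j)^p\bigr)\,\tau_N\bigl((Z^N_j)^{k-p}\bigr)\bigr]$, which does \emph{not} factor as $M_p\,M_{k-p}$. That product structure is exactly what makes the Catalan convolution $\sum_{p}C_p C_{k-p}=C_{k+1}$ absorb the sum over $p$ without a $k$-factor. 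Once you replace the product by Cauchy--Schwarz (giving $\sqrt{M_{2p}M_{2(k-p)}}\le L^{2k}$, hence a genuine factor $k+1$) or by your proposed interpolation $\tau_N(Z^p)\tau_N(Z^{k-p})\le \tau_N(Z^{k+1})^{k/(k+1)}$ (giving the self-referential bound $\eta M_{k+1}\lesssim A+C(k+1)M_{k+1}^{k/(k+1)}$, hence $M_{k+1}\lesssim ((k+1)/\eta)^{k+1}$), the Catalan structure is destroyed and you only recover the useless $(Ck/\eta)^k$ scaling you yourself flag as too weak. The pointwise estimate $|\Tr(Z^p X_i)|\le\Tr(Z^{p+1/2})$ is correct but addresses only the cross terms, not this core factorization issue, so it does not tame the $k$-factor as claimed. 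Restoring factorization would require a priori concentration of the linear statistics $\tau_N((Z^N_j)^p)$, which in turn requires a support bound --- the very thing being proved --- so the argument is circular.

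The paper's proof takes a genuinely different route precisely to sidestep this: rather than a recursion on expectations, it uses a short-time matrix ODE flow (Lemmas~\ref{change-variable-matrix} and~\ref{matrix-ODE}) and the change-of-variables bound of Lemma~\ref{change-variable-flow} to estimate the probability of the event $S_{k,a}$ on which the first $k$ normalized moments are already known to be $\le a_1,\dots,a_k$ \emph{deterministically}, but the $(k+1)$-th exceeds $a_{k+1}$. On that event the quadratic term $\Tr(Z^p)\Tr(Z^{k-p})$ is bounded pointwise by $a_p a_{k-p}(dN)^2$, the recursion $a_{k+1}=\tfrac d\eta(1+|A|(1+a_k)+2\sum_p a_p a_{k-p})$ closes with $a_k\le C_k(4(1+|A|)d/\eta)^k\le L^k$ via the Catalan convolution identity, and one gets the exponential bound $\Q^N_V(S_{k,a})\le e^{-N^{1-\kappa}/2}$ directly from the flow. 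Your Markov-type conclusion in Step 3 is then replaced by a union bound over $k\le \kappa\ln N/(2\ln 2L)$. If you want to rescue the Dyson--Schwinger route, you would need to feed in a concentration inequality for $\tau_N((Z^N_j)^p)$ that does not itself presuppose boundedness; absent that, the moment recursion cannot be closed with the stated constant $L$.
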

\begin{remark}
If $X=CY+aI$ with $C$ invertible and bounded   it is enough to prove that the potential $V$ is trapping  for the variables $Y$ since  we then get the conclusion of Theorem \ref{Ttosupport} for the variables $X$ as well since
 {
 $$\max_{1\le i\le \ell}\|X^{N}_{i}\|_\infty\le \|C\|_\infty\max_{1\le i\le \ell}\|Y^{N}_{i}\|_\infty +|a|$$}
\end{remark}
In this article,  we  will study the empirical distribution $\mun$ of the matrices $\bX^{N}$ under $\Q^{N}_{V}$. For a $\ell$-tuple $\bY=(Y_{1},\ldots,Y_{\ell})$ of self-adjoint $N\times N$ matrices, we denote by $\hat\mu_{\bY}$ the empirical distribution defined as the linear form on $\mathbb C\langle X_{1},\ldots, X_{\ell}\rangle^{c}$ such that for every $P\in \mathbb C\langle X_{1},\ldots, X_{\ell}\rangle^{c}$,
$$\hat\mu_{\bY}(P):=\frac{1}{N}\Tr \left(P(Y_{1},\ldots,Y_{\ell})\right)\,.$$
Throughout this article, we denote in short $\mun=\hat\mu_{\bX^{N}}$ with $\bX^{N}$ the random matrices with law $\Q^{N}_{V}$. We see $\mun$ as a non-commutative law \cite[Section 5.2]{AGZ}, namely as a linear form $\tau$ on
 $\mathbb C\langle X_{1},\ldots, X_{\ell}\rangle^{c}$ such that for any $P,Q\in \mathbb C\langle X_{1},\ldots, X_{\ell}\rangle^{c}$,
 $$\tau(PP^{*})\ge 0, \tau(1)=1,\tau(PQ)=\tau(QP)\,.$$
 We equip the space of non-commutative laws with its weak topology.
An important consequence of Theorem \ref{Ttosupport} is that the empirical distribution $\mun$ is tight \cite[Lemma 5.4.6]{AGZ}.
\begin{corollary} \label{CorTight}If $V$ is {\bf $(\eta,A,I)$-trapping}, the empirical distribution $\mun$  is sequentially tight. 
\end{corollary}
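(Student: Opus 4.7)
The plan is to derive Corollary \ref{CorTight} as a direct consequence of Theorem \ref{Ttosupport} together with the standard characterization of compact sets in the space of non-commutative laws (this is essentially Lemma 5.4.6 of AGZ). Specifically, I would first recall that for any $M>0$, the set
$$K_{M}:=\bigl\{\tau:\ \ccl\to\CC\ \text{non-commutative law},\ |\tau(X_{i_{1}}\cdots X_{i_{k}})|\le M^{k}\ \forall k,\forall (i_{1},\ldots,i_{k})\bigr\}$$
is closed in the weak topology on non-commutative laws, and moreover is compact. Compactness is a Tychonoff-type argument: each ``coordinate'' $\tau(X_{i_{1}}\cdots X_{i_{k}})$ is constrained to a compact disc of radius $M^{k}$ in $\CC$, the positivity/tracial/unital constraints defining a non-commutative law are closed, and weak convergence is exactly pointwise convergence on monomials.

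Next, I would apply Theorem \ref{Ttosupport}: fixing say $\kappa=1/2$ and setting $M:=L^{1/2+1/\kappa}=L^{5/2}$, we get from \eqref{zx} that for $N$ large enough,
$$\Q^{N}_{V}\bigl(\max_{1\le i\le \ell}\|X^{N}_{i}\|_{\infty}\le M\bigr)\ \ge\ 1-\exp\!\Bigl(-\tfrac{1}{4}N^{1/2}\Bigr).$$
On the event where all operator norms are bounded by $M$, for every monomial $P=X_{i_{1}}\cdots X_{i_{k}}$ we can estimate
$$|\mun(P)|\ =\ \Bigl|\tfrac{1}{N}\Tr P(\bX^{N})\Bigr|\ \le\ \|P(\bX^{N})\|_{\infty}\ \le\ \prod_{j=1}^{k}\|X^{N}_{i_{j}}\|_{\infty}\ \le\ M^{k},$$
so $\mun\in K_{M}$. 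Therefore $\Q^{N}_{V}(\mun\in K_{M})\to 1$, which in particular means that for every $\varepsilon>0$ there is a compact set (namely $K_{M}$) that contains $\mun$ with probability at least $1-\varepsilon$ uniformly in $N$. This is exactly tightness of the sequence of (random) non-commutative laws $\mun$.

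There is no substantive obstacle here: Theorem \ref{Ttosupport} has already done the analytic work of producing uniform operator-norm bounds with overwhelming probability, and the passage from uniform operator norm bounds to tightness of the empirical distribution is a routine moment-method argument, standard in free probability and essentially the content of the cited AGZ lemma. The only minor care needed is to apply \eqref{zx} with a fixed $\kappa<1$ so that the exceptional probability genuinely tends to zero, rather than relying on the almost-sure statement (which would only yield tightness along subsequences $\Q^{N}_{V}$-a.s., but not tightness of the laws themselves).
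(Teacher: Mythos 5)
Your proof takes essentially the same route as the paper, which simply cites Theorem~\ref{Ttosupport} together with \cite[Lemma 5.4.6]{AGZ}: one uses the uniform operator-norm control to place $\mun$ in a set $K_M$ of laws with bounded moments, which is compact for the weak topology by the Tychonoff-type argument you describe. So the main body of your argument is correct and matches the intended proof.

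The one place where you go astray is the final paragraph. You claim that one should use the quantitative bound \eqref{zx} ``rather than relying on the almost-sure statement (which would only yield tightness along subsequences $\Q^{N}_{V}$-a.s., but not tightness of the laws themselves).'' This has the logic reversed. The corollary is subsequently used in the paper precisely as almost-sure sequential precompactness of $(\mun)_N$: e.g.\ before Theorem~\ref{theoDS} the authors write ``$\mun$ is tight almost surely $\ldots$ let $\tau_V$ be an almost sure limit point.'' For that purpose the almost-sure conclusion of Theorem~\ref{Ttosupport} is exactly what one wants: a.s.\ eventually $\max_i\|X_i^N\|_\infty\le 2L$, hence a.s.\ eventually $\mun\in K_{2L}$, and so a.s.\ the sequence $(\mun)_N$ lies in a compact set (adjust $M$ for finitely many small $N$). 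Moreover, a.s.\ relative compactness is \emph{stronger} than the probabilistic tightness of the laws of $\mun$ that you derive, not weaker. Your probabilistic statement by itself does not directly yield a.s.\ subsequential limits; to upgrade it one would invoke Borel--Cantelli (which does work here since the exceptional probability $e^{-\frac14 N^{1-\kappa}}$ is summable), but it is cleaner to use the a.s.\ part of Theorem~\ref{Ttosupport} directly, as the paper does. You should also be a bit careful that $\Q^N_V(\mun\in K_M)\to 1$ as $N\to\infty$ is not literally the same as a bound uniform in $N$; one needs to enlarge $M$ to handle finitely many small $N$, which is easy but worth saying.
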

We hereafter investigate examples of  potentials which are $(\eta,A,I)$-trapping. Note that  they may be replaced by cyclically equivalent potentials without loss of generality. 

\begin{theorem}\label{conf-to-T}

\begin{enumerate}
\item
If $U$ is {\bf $(\eta,A,I)$-trapping} and  $W$ is {\bf $(\eta',A',I)$- trapping} and $\beta\geqslant 0$, $\beta U+W$ is {\bf $(\beta\eta+\eta',\beta A+A',I)$-trapping}.
\item 
If $V$ is $(\eta,A,\{\{1\},\dots,\{\ell\}\})$-trapping then for any non negative real numbers  $\alpha_{ij}$, the potential 
$$V(\bX)-\sum_{1\leqslant i<j\leqslant \ell}\alpha_{ij}[X_{i},X_{j}]^2$$
is also $(\eta,A,\{\{1\},\dots,\{\ell\}\})$-trapping. Here,  $[X_{i},X_{j}]=X_{i}X_{j}-X_{j}X_{i}$ denotes  the commutator.
\item
We say that a potential $V$ in $\ccl$ is {\bf confining } if there is a partition $(I_1,\ldots,I_d)$ of $\{1,\ldots,\ell\}$   so that we can write $V$ as
$$V(X)=\sum_{i=1}^\ell P_i(X_i)+ \sum_{u=1}^p d_uq_u(X)+\sum_{v=1}^r R_v \left(\sum_{i\in I_{m_{v}}} \alpha_{vi} X_i\right)$$
where $P_i$ are polynomials whose term of highest degree has degree $2D_j\geqslant 2$ with a positive coefficient, $R_v$ are polynomials 
 with  even highest degree with positive coefficient {and the $\alpha_{ij}$ are real numbers,} $1\leqslant m_v\leqslant d$, the $q_u$ are monomials of degree at most $2\min_{i\in I_m}D_i-1$ for every $m$ such that there exists $i$ in $I_m$ such that $X_i$ appears in $q_u$.

If $V$ is confining, it is {\bf $(\eta,A,I)$-trapping}  for some $\eta,A$ with $\eta>0$.

\end{enumerate}
\end{theorem}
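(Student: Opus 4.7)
Part (1) is immediate from linearity: the cyclic derivatives $\mathcal D_i$ and the trace are both linear, so scaling the trapping inequality for $U$ by $\beta\geq 0$ and adding that for $W$ produces the trapping inequality for $\beta U+W$ with constants $(\beta\eta+\eta',\beta A+A')$.

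For Part (2), by Part (1) applied iteratively it suffices to show that each polynomial $-[X_i,X_j]^2$ is $(0,0,\{\{1\},\ldots,\{\ell\}\})$-trapping, i.e.\ that its contribution to the trapping left-hand side is nonnegative. With the singleton partition $Z_l^N=(X_l^N)^2$, cyclicity of the trace reduces this to proving
$$\Sigma_k\ :=\ \Tr\bigl(X_i^{2k+1}\mathcal D_i[X_i,X_j]^2\bigr)+\Tr\bigl(X_j^{2k+1}\mathcal D_j[X_i,X_j]^2\bigr)\ \leq\ 0.$$
The key idea is to recognize $\Sigma_k$ as $g'(0)$ for
$$g(t):=\Tr\bigl[X_i+tX_i^{2k+1},\,X_j+tX_j^{2k+1}\bigr]^2,$$
via the standard identity $\Tr(H\cdot\mathcal D_iP)=(d/dt)|_{t=0}\Tr P(X_1,\ldots,X_i+tH,\ldots,X_\ell)$ valid for cyclically invariant $P$. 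A direct expansion of the squared commutator gives
$$g'(0)=2\Tr\bigl([X_i,X_j]\,[X_i^{2k+1},X_j]\bigr)+2\Tr\bigl([X_i,X_j]\,[X_i,X_j^{2k+1}]\bigr).$$
Each summand is $\leq 0$: writing $[X_i,X_j]=iH$ with $H$ Hermitian, telescoping $[X_i,X_j^{2k+1}]=\sum_{p=0}^{2k}X_j^p[X_i,X_j]X_j^{2k-p}$, and diagonalizing $X_j=\operatorname{diag}(\lambda_\alpha)$ yields
$$\Tr\bigl([X_i,X_j]\,[X_i,X_j^{2k+1}]\bigr)=-\sum_{\alpha,\alpha'}|H_{\alpha\alpha'}|^2\,\frac{\lambda_\alpha^{2k+1}-\lambda_{\alpha'}^{2k+1}}{\lambda_\alpha-\lambda_{\alpha'}}\leq 0,$$
since the divided difference of the strictly increasing $t\mapsto t^{2k+1}$ is nonnegative; the symmetric argument with $X_i$ diagonalized treats the other summand.

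For Part (3), by Part (1) it suffices to treat each of the three blocks of the confining decomposition separately. The diagonal part $\sum_i P_i(X_i)$ gives $X_iP_i'(X_i)$, a polynomial in $X_i$ alone with positive leading coefficient of degree $2D_i\geq 2$; a pointwise scalar comparison on $\mathbb R$ yields operator inequalities $X_iP_i'(X_i)\succeq \eta_iX_i^2-A_iI$ with $\eta_i>0$, so after multiplication by $Z_j^k\succeq 0$ and summation over $i\in I_j$ one obtains the dominant $\eta\sum_j Z_j^{k+1}$ term with $\eta=\min_i\eta_i>0$. Each directional summand $R_v(Y_v)$, $Y_v=\sum_{i\in I_{m_v}}\alpha_{vi}X_i$, satisfies $\sum_{i\in I_{m_v}}X_i\cdot\mathcal D_iR_v(Y_v)=Y_vR_v'(Y_v)\succeq -A_v'I$ by the same comparison applied to the even-degree polynomial $tR_v'(t)$ with positive leading term, giving only a bounded correction after multiplication by $Z_{m_v}^k$. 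Finally each mixed monomial $d_uq_u$ has degree at most $2\min_{i\in I_m}D_i-1$ on every involved block, so the integrand $Z_j^k\cdot X_i\cdot\mathcal D_iq_u$ has strictly smaller degree than the diagonal contribution $Z_j^kX_i^{2D_i}$ already produced, and can be absorbed via the elementary bound $|x|^p\leq\varepsilon|x|^{2D}+C_\varepsilon$ (for $p<2D$) together with noncommutative H\"older inequalities into a fraction of the $\eta_i$-term, at the cost of increasing the additive constant $A$. The main obstacle is the verification $g'(0)\leq 0$ in Part (2); once the divided-difference positivity in the eigenbasis of one free variable is in hand, Parts (1) and (3) reduce to bookkeeping.
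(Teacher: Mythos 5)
Parts (1) and (2) are correct. For (2), you repackage the required inequality as nonnegativity of a first variation $g'(0)$ and then verify each commutator summand is nonpositive by diagonalizing one variable and using the divided-difference positivity of $t\mapsto t^{2k+1}$. This is a valid and slightly more elaborate route to exactly the same scalar inequality $\Tr(X_1^{2k+1}X_2X_1X_2+X_2^{2k+1}X_1X_2X_1)\leq\Tr(X_1^{2k+2}X_2^2+X_2^{2k+2}X_1^2)$ that the paper reaches by invoking its Lemma~\ref{matrix-inequ} twice, which is itself proved by the same diagonalization-and-monotonicity argument.

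Part (3) has a genuine gap. You retain only the degree-$2$ operator bound $X_iP_i'(X_i)\succeq\eta_iX_i^2-A_iI$, so after multiplying by $Z_j^k$ and summing over $i\in I_j$ you produce nothing stronger than $\eta\Tr(Z_j^{k+1})$. But the mixed monomials $q_u$ are allowed degree up to $2\min_{i\in I_m}D_i-1$, which is $\geq 3$ as soon as some $D_i\geq 2$. After applying $X_i\cdot\mathcal D_i$ and noncommutative H\"older as you propose, their contribution is controlled by $\varepsilon\Tr(Z_j^{k+D_q})+\varepsilon^{-1}\Tr(Z_j^{k})$ with $D_q=\min_{j'\in s(q_u)}\bar D_{j'}$, which may be $\geq 2$. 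Such a term cannot be absorbed into $\eta\Tr(Z_j^{k+1})$: whenever $Z_j$ has eigenvalues larger than $1$, $\Tr(Z_j^{k+D_q})$ dominates $\Tr(Z_j^{k+1})$, so no choice of $\varepsilon$ saves the estimate. Your sentence asserting that $Z_j^k\cdot X_i\cdot\mathcal D_iq_u$ ``has strictly smaller degree than the diagonal contribution $Z_j^kX_i^{2D_i}$ already produced'' is internally inconsistent with the $\eta_iX_i^2$ bound you actually invoked: you never produced $Z_j^kX_i^{2D_i}$.

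What is needed (and what the paper does) is to keep the full-degree lower bound $X_iP_i'(X_i)\succeq\eta_iX_i^{2D_i}-A_iI$, reduce $X_i^{2D_i}$ to $X_i^{2\bar D_j}$ via $x^{2D_i}\geq x^{2\bar D_j}-1$ with $\bar D_j=\min_{i\in I_j}D_i$, and then convert $\Tr(Z_j^k\sum_{i\in I_j}X_i^{2\bar D_j})$ into $\ell^{-\bar D_j}\Tr(Z_j^{k+\bar D_j})$. That last step is not a H\"older bound: it is the content of Lemma~\ref{matrix-inequ-holder}, whose proof needs the Araki--Lieb--Thirring inequality for $\bar D_j>1$ (and is genuinely needed whenever a block $I_j$ has more than one variable). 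Since $D_q\leq\bar D_j$ for every $j\in s(q_u)$, the $\varepsilon\Tr(Z_j^{k+D_q})$ terms can then be absorbed, and only at the very end does one descend from $\Tr(Z_j^{k+\bar D_j})$ to the required $\Tr(Z_j^{k+1})-A\Tr(Z_j^k)$ form. Your degree-$2$ shortcut skips all of this, so Part~(3) does not ``reduce to bookkeeping'' once Part~(2) is in hand.
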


\begin{remark} \label{remarktrapping}
Theorem  \ref{conf-to-T}
 gives a quite general and technical condition in order to be confining and therefore trapping. Let us show it  encompasses many natural examples.
\begin{enumerate}
\item If $V(\bX)=\sum_{i=1}^\ell P_i(X_i)+ \sum_{u=1}^p d_uq_u(\bX)$ with the same hypothesis on the $P_i$ and the $q_v$ than in  Theorem \ref{conf-to-T}
then $V$ is confining with the trivial partition $I=\{\{1\},\dots,\{\ell\}\}$.

For example $V(X,Y)=c_XX^{2D_X}+c_YY^{2D_Y}+V_X(X)+V_Y(Y)+W(X,Y)$ is confining if $c_X,c_Y>0$ and $V_X$ is of degree at most $2D_X-1$, $V_Y$ is of degree at most $2D_Y-1$ and $W $is of degree at most $2\min(D_X,D_Y)-1$.
Another example of confining polynomial is for instance
$$P(X_{1},X_{2},X_3)=X_{1}^{4}+X_{2}^{4}+X_{1}X_{2}^{2}+X_{3}^{2}.$$
\item 
Any polynomial which is a sum of terms of type $\sum P_i(\sum_{j} v_{ij}X_j)$ with $P_{i}$ a polynomial going to $+\infty$ at $\pm\infty$ and $v$ of rank $\ell$ can be rewritten as a confining polynomial with partition $\{1,\dots,\ell\}$ after a linear change of variables. This implies that
polynomials of the form
$$V(\bX) = \sum_iP_i(\sum_j v_{ij}X_j)+\sum_{ij}u_{ij}X_iX_j+ \sum_{u=1}^p d_uq_u(\bX)$$
can be rewritten as a confining polynomial with partition $\{1,\dots,\ell\}$ after a linear change of variable if $u$ is a symmetric positive matrix, the $P_i$'s are going to $+\infty$ at $\pm\infty$ and the $q_u$ are of degree at most $\min\deg P_i -1$.
\item  
We would have liked  trace convex polynomials to be  confining , where $V$ is trace convex iff $\bX\rightarrow \Tr V(\bX)$ is convex on the space of $\ell$-tuples of $N\times N$ Hermitian matrices for any dimension $N$. 
Unfortunately,  this is not the case as
$X^2+Y^2+X^2Y^2$ is trace convex but not confining (and not trapping). { Note however that such polynomials are known to satisfy the conclusions of Theorem \ref{Ttosupport} thanks to Brascamp-Lieb inequalities, see \cite[Section 6.5]{GuioFlour}}
\end{enumerate}
\end{remark}
  Our next result shows that,  if $V$ is trapping,  any limit point of $\mun$ satisfies the so-called Dyson-Schwinger's equations. To describe the later we first define the non-commutative derivative $\partial_{i}$ with respect to the $i$th variable so that for every  choices of monomials $q(\bX)= X_{i_{1}}\ldots X_{i_{k}}$ with $i_{j}\in \{1,\ldots,\ell\}$ we have 
 $$\partial_{i}X_{i_{1}}\ldots X_{i_{k}}=\sum_{j: i_{j}=i}X_{i_{1}}\ldots X_{i_{j-1}}\otimes X_{i_{j+1}}\cdots X_{i_{k}}\,.$$
 This definition extends to smooth functions $\vartheta$ of one variable by setting $$\vartheta(x,y)=\frac{\vartheta (x)-\vartheta(y)}{x-y}\,.$$
 
\begin{theorem}\label{theoDSint}(Dyson-Schwinger's equations)  
	Under the assumptions of Theorem \ref{Ttosupport}, any limit point $\tau_{V}$ of $\mun$ under $\Q_{V}^{N}$ satisfies the Dyson-Schwinger's equations: for any $P\in \mathbb C\langle X_{1},\ldots,X_{\ell}\rangle ^{c}$, any $i\in \{1,\ldots,\ell\}$,
	\begin{equation}\tau_{V}\left(\mathcal D_{i} V P\right)=\tau_{V}\otimes\tau_{V}\left(\partial_{i} P\right)\,.\end{equation}
	Moreover, if $V=\beta U+W$ with $U$ and $W$ $(\eta,A,I)$ trapping for some $\eta>0$ and $A$ finite, then there exists a finite $B$ (depending only on $A$ and $\eta$) such that for $\beta>0$ large enough, every $i\in \{1,\ldots,\ell\}$,  and every  integer number $k$, 
	\begin{equation}\label{crit}\tau_{V_{\beta}}\left(|\mathcal D_{i} U|^{2k}\right)\le (\frac{B}{\beta})^{k}\,.\end{equation}
\end{theorem}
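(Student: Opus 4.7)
The proof has two parts: deriving the Dyson--Schwinger identity at finite $N$ and passing to the limit, then estimating $\tau_{V_{\beta}}(|\mathcal D_{i}U|^{2k})$ by an induction built on this DSE.

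For the first part, I would apply integration by parts on the matrix entries, using that the trapping tails from \eqref{zx} provide the necessary integrability at infinity. For a polynomial $P\in\ccl^{c}$ and an index $i$, the identity
$$\int \tfrac{\partial}{\partial(X_{i})_{ba}}\bigl[(P(\bX^{N}))_{ab}\,e^{-N\Tr V(\bX^{N})}\bigr]\,d\bX^{N}=0$$
holds; differentiating and summing over $a,b$ (the inner derivative collects into $\Tr\otimes\Tr(\partial_{i}P)$, and the derivative landing on the exponential pulls out $-N\Tr(\mathcal D_{i}V\cdot P)$) gives, after dividing by $N^{2}$, the finite-$N$ DSE
$$\E_{\Q^{N}_{V}}[\hat\mu^{N}(\mathcal D_{i}V\cdot P)]=\E_{\Q^{N}_{V}}[\hat\mu^{N}\otimes\hat\mu^{N}(\partial_{i}P)].$$
To pass to the almost sure limit one needs concentration of $\hat\mu^{N}(Q)$ for any polynomial $Q$: since by Theorem \ref{Ttosupport} the matrices remain uniformly bounded, a standard variance computation (integration by parts, or a Poincaré/Efron--Stein estimate) yields variance $O(N^{-2})$. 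Combined with the tightness from Corollary \ref{CorTight} and Borel--Cantelli, one extracts an almost surely convergent subsequence $\hat\mu^{N_{j}}\to\tau_{V}$ along which both sides of the finite-$N$ DSE converge to the claimed limits.

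For the quantitative bound, write $Q:=\mathcal D_{i}U$, which is self-adjoint by cyclicity and self-adjointness of $U$. By Theorem \ref{conf-to-T}(1), $V_{\beta}$ is $(\beta\eta+\eta',\beta A+A',I)$-trapping, and the constant $L$ in Theorem \ref{Ttosupport} tends to a finite limit as $\beta\to\infty$; thus $\|Q\|_{\infty}$, $\|\mathcal D_{i}W\|_{\infty}$ and all polynomial moments under $\tau_{V_{\beta}}$ are bounded uniformly in $\beta$. Applying the DSE from Part~1 with $P=Q^{2k-1}$ and $\mathcal D_{i}V_{\beta}=\beta Q+\mathcal D_{i}W$ gives
$$\beta\,\tau_{V_{\beta}}(Q^{2k})=\tau_{V_{\beta}}\otimes\tau_{V_{\beta}}(\partial_{i}Q^{2k-1})-\tau_{V_{\beta}}(\mathcal D_{i}W\cdot Q^{2k-1}).$$
I would then induct on $k$ under the hypothesis $\tau_{V_{\beta}}(Q^{2j})\le(B/\beta)^{j}$ for $j<k$. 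Expanding $\partial_{i}Q^{2k-1}=\sum_{j=0}^{2k-2}(Q^{j}\otimes 1)(\partial_{i}Q)(1\otimes Q^{2k-2-j})$ via the Leibniz rule, the tracial Hölder inequality $|\tau(XY)|\le\|Y\|_{\infty}\tau(|X|)$ together with Lyapunov's monotonicity $\tau(|Q|^{j})\le\tau(Q^{2k-2})^{j/(2k-2)}$ bounds each summand by a constant times $(B/\beta)^{k-1}$, giving a total $C_{U}\,k\,(B/\beta)^{k-1}$. For the second term, cyclicity and Cauchy--Schwarz applied to $\tau(Q^{k-1}\mathcal D_{i}W Q^{k})$, followed by positivity of $(\mathcal D_{i}W)^{2}$, yield $|\tau(\mathcal D_{i}W\cdot Q^{2k-1})|\le\|\mathcal D_{i}W\|_{\infty}\,\tau(Q^{2k-2})^{1/2}\tau(Q^{2k})^{1/2}\le C_{W}(B/\beta)^{(k-1)/2}\tau(Q^{2k})^{1/2}$. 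Substituting and solving the scalar inequality $x\le a+bx^{1/2}$ (whence $x\le 2a+2b^{2}$) closes the induction provided $B$ is taken large enough (depending on $k$, $U$, $W$) and $\beta$ is sufficiently large.

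The principal technical obstacle is the combinatorial blow-up in $\partial_{i}Q^{2k-1}$: the Leibniz expansion produces $\Theta(k)$ summands, so the constant $B$ in the induction grows linearly in $k$ for fixed $U$. The uniform-in-$\beta$ operator-norm control from Theorem \ref{Ttosupport}, which holds precisely because the trapping parameters of $V_{\beta}$ scale linearly in $\beta$, is the essential ingredient keeping $\|\mathcal D_{i}W\|_{\infty}$ and the coefficient polynomials appearing in $\partial_{i}Q$ from spoiling the $\beta^{-k}$ decay. Part 1 is analytically routine once the trapping tails \eqref{zx} justify the exchange of limit and integration.
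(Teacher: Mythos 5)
Your derivation of the Dyson--Schwinger identity takes a genuinely different route from the paper's. You propose classical integration by parts at finite $N$ to obtain the DSE in expectation, then invoke concentration of the linear statistic $\hat\mu^N(Q)$ to pass to the almost sure limit. The paper instead applies the flow/change-of-variables estimate of Lemma \ref{change-variable-matrix} to the bounded truncation $g_j(X)=1_{j=i}P(X^{\varepsilon})$ with $f_\varepsilon(x)=(\varepsilon+x^2)^{-1/2}x$, which yields directly $\Q^N_V(A_\delta)\le e^{-N\delta+O_\varepsilon(1)}$ for the deviation event $A_\delta$, followed by Borel--Cantelli and removal of the truncation using the operator-norm control from Theorem \ref{Ttosupport}. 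The paper's route sidesteps the soft spot in yours: a general $(\eta,A,I)$-trapping potential need not be log-concave, so a Poincar\'e/Brascamp--Lieb variance bound is not available off the shelf, and the tail estimate \eqref{zx} controls the spectral radius but does not by itself deliver a variance bound of order $N^{-2}$ for $\hat\mu^N(Q)$. If you want to make your route rigorous, the natural tool for concentration in this non-convex setting is precisely the change-of-variables deviation bound the paper uses, so the two approaches merge at that step anyway.

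The quantitative bound \eqref{crit} contains a genuine gap, which you flag honestly but do not close. With the naive induction hypothesis $\tau_{V_\beta}(Q^{2j})\le(B/\beta)^j$, the Leibniz expansion of $\partial_i Q^{2k-1}$ produces $2k-1$ summands, each of size $\lesssim(B/\beta)^{k-1}$; after dividing by $\beta$ you are left with a prefactor of order $k/B$ in front of $(B/\beta)^k$, and closing the induction then forces $B\gtrsim k$. But the theorem demands a single $B$, depending only on $A$ and $\eta$, valid for \emph{every} integer $k$ --- this $k$-uniformity is exactly what makes the estimate useful (e.g.\ in Corollary \ref{homo}). The paper's fix, in Proposition \ref{proploc}, is to strengthen the induction hypothesis to $\nu_\beta((\mathcal D_i U)^{2p})\le C_p(B/(4\beta))^p$ with $C_p$ the $p$-th Catalan number. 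Lyapunov's inequality then gives $\nu_\beta(|\mathcal D_i U|^{j})\le C_{\lceil j/2\rceil}(B/(4\beta))^{j/2}$, the Leibniz sum $\sum_{j=1}^{2k-1}C_{\lceil(j-1)/2\rceil}C_{\lceil(2k-1-j)/2\rceil}$ is bounded by $2C_k$ via the Catalan convolution identity, and the $\Theta(k)$ combinatorial growth is absorbed by the $C_k$ already present on the target side. The induction closes with $B=4\max_i\{\|\mathcal D_i U\,\mathcal D_i W\|_L+2\|\partial_i\mathcal D_i U\|_L\}$ independent of $k$, and the stated bound follows from $C_p\le 4^p$. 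This Catalan-number bookkeeping is the essential idea your argument is missing; the scalar inequality $x\le a+bx^{1/2}$ you solve instead does not repair the $k$-dependence.
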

The last result \eqref{crit} shows, as could be expected, that under reasonable assumptions the limit points $\tau_{V_{\beta}}$ concentrate on the critical points of the potential $U$ when $\beta$ goes to infinity. Its proof is a direct consequence of Proposition \ref{proploc}.

The Dyson-Schwinger's equations are 
  not sufficient to characterize the limit in general since they may  have several solutions. Efficient  numerical methods have recently been proposed to investigate these solutions \cite{KZh}.

  We therefore study special cases where the potential is given by $V_{\beta}=\beta U+W$ for some fixed polynomials $U$ and $V$ and $\beta$ large.  In Section \ref{convex}, we study the case where $U$ is strictly convex. We then show that the matrices following $\Q^{N}_{V_{\beta}}$ converge to multiples of the identity when $N$ and then $\beta$ go to infinity and moreover for $\beta$ large but finite the empirical distribution $\hat\mu^{N}$ converges almost surely and its moments can be expanded in powers of $\beta^{-1/2}$. In Section \ref{single}, we consider the case where $U(\bX)=\sum U^{i}(X_{i})$ is the sum of polynomials in one variable that can achieve their minimal value at several points and $W$ is small when the spectrum of the $X_{i}$ are close to these minimizers. We then show, see Theorem \ref{low},  that for $\beta$ large enough, the spectrum of the matrices $X^{N}_{i}$ under $\Q^{N}_{V}$ converges as $N$ goes to infinity. 
  To simplify the statement of our result in this introduction, let us assume  that $U^{i}$ takes its minimum value at $\{x^{i}_{j}\}_{1\le j\le m_{i}}$ and that these minimizers are non degenerate in the sense that  $\min_{j}(U^{i})''(x^{i}_{j})>0$.
 
{
  \begin{theorem}\label{lowint}Under some technical hypotheses over $U$ and $W$ (see Assumption \ref{assumsingle2}), with $V_{\beta}=\beta U+W$
for $\beta$ large enough, the empirical distribution  $\hat\mu_{N}$ of $\bX^{N}$ converges  $\Q^{N}_{V_{\beta}}$-
almost surely towards  a non-commutative law $\tau_{\beta}$. Moreover, for any polynomial $P$,  $\tau_{\beta}(P)$  expands as a converging series in 
$\beta^{-\frac{1}{2}}$. $\tau_{\infty}$ is the law of $\ell$ free variables with law  {$\frac{1}{m_{i}} \sum_{j=1}^{m_{i}}\delta_{x^{i}_{j}}$} for $i\in \{1,\ldots,\ell\}$.
\end{theorem}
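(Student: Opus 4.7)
The strategy combines three ingredients: (i) localization of the spectrum of each $X^N_i$ near the finite set $\{x^i_j\}_{j=1}^{m_i}$, provided by the critical-point estimate \eqref{crit}; (ii) a rescaling near each well that reduces the problem, on each spectral sub-block, to a high-temperature one-matrix model in the sense of \cite{GM}; and (iii) unitary invariance together with the decoupling of $U$ across variables, which yields asymptotic freeness in the limit $\beta \to \infty$.

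\textbf{Step 1 (Localization).} Theorem \ref{Ttosupport} ensures that $\max_i \|X^N_i\|_\infty$ is almost surely uniformly bounded, so $(\hat\mu^N)$ is tight. Since $U(\bX) = \sum_i U^i(X_i)$, we have $\mathcal{D}_i U = (U^i)'(X_i)$, and \eqref{crit} yields
$$\tau_{V_\beta}\bigl(|(U^i)'(X_i)|^{2k}\bigr) \le (B/\beta)^k.$$
Thus the spectral measure of $X^N_i$ must concentrate on the zero set of $(U^i)'$. Since the $x^i_j$ are non-degenerate minima and the weight $e^{-\beta N \Tr U^i(X_i)}$ exponentially penalizes the other critical points of $U^i$, the spectrum of $X^N_i$ localizes around $\{x^i_j\}_{j=1}^{m_i}$.

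\textbf{Step 2 (Weights in each well).} Let $p^{i,N}_j$ be the fraction of eigenvalues of $X^N_i$ lying in a small neighborhood of $x^i_j$. At leading order the marginal law of $X^N_i$ is a one-matrix model with potential $\beta U^i$, whose equilibrium problem, rescaled around the minimizers at scale $\beta^{-1/2}$, is dominated by the logarithmic self-interaction $\tfrac{1}{2}(\log \beta)\sum_j p_j^2$. Minimizing $\sum_j p_j^2$ under $\sum_j p_j =1$ forces $p^{i,N}_j \to 1/m_i$. The technical hypotheses of Assumption \ref{assumsingle2} are used to check that the perturbation $W$ and the coupling to the other $X_{i'}$ are too small to break this balance.

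\textbf{Step 3 (Local rescaling and expansion).} Restricting to a spectral sub-block where the eigenvalues of $X^N_i$ lie near $x^i_j$ and setting $X^N_i = x^i_j\,\mathbf{1} + \beta^{-1/2} Y^N_{i,j}$, the rescaled potential becomes
$$\tfrac{1}{2}(U^i)''(x^i_j)\,Y_{i,j}^2 + \beta^{-1/2}\,\widetilde W_{i,j}(Y),$$
a Gaussian quadratic term perturbed by polynomial corrections of order $\beta^{-1/2}$. This is exactly the high-temperature regime of \cite{GM}: the empirical distribution of $Y^N_{i,j}$ converges and each of its moments admits a convergent series expansion in $\beta^{-1/2}$, which lifts back to a $\beta^{-1/2}$-expansion of $\tau_\beta(P)$ for every $P\in\ccl$.

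\textbf{Step 4 (Freeness of the limit).} Since $U=\sum_i U^i(X_i)$, the pure $\beta U$ factor of the density factorizes as $\prod_i e^{-\beta N\Tr U^i(X_i)}$, so if we dropped $W$ the matrices $X^N_i$ would be \emph{independent} with unitarily invariant marginals and hence asymptotically free by the standard Voiculescu theorem. In the regime $\beta\to\infty$ the $W$-contribution is of lower order relative to $\beta U$, so the same freeness passes to the limit $\tau_\infty$, whose marginals are the uniform distributions $\frac{1}{m_i}\sum_j \delta_{x^i_j}$ identified in Step 2.

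\textbf{Main obstacle.} The delicate step is Step 2: proving that the well occupations converge to the exact ratios $1/m_i$. This requires comparing, well by well, the partition functions associated to each prescribed decomposition $(N^i_j)_j$ with $\sum_j N^i_j = N$ of the eigenvalues, and showing that the joint effect of $W$ and of the other variables is subleading in $\beta$. Controlling this in the multi-matrix setting is where Assumption \ref{assumsingle2} is genuinely used; once it is in place, Steps 3 and 4 are comparatively mechanical applications of \cite{GM} and of asymptotic freeness.
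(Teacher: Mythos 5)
Your overall strategy — localize the spectrum, compute the filling fractions, rescale, and deduce a high-temperature expansion — is the right shape, but there are two substantive gaps that separate it from a correct argument, and neither is the one you single out as "the main obstacle".

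\textbf{First, Step 1 does not prove localization.} The bound \eqref{crit} controls $|\mathcal D_i U|=|(U^i)'(X_i)|$, so it says only that the spectrum of $X^N_i$ concentrates on the set of \emph{critical points} of $U^i$; it does not by itself exclude local maxima or non-extremal critical values. The paper obtains localization to the global minimizers (with a quantitative rate) via a different mechanism: the change-of-variables estimate of Lemma \ref{Jacobian2}, fed into Theorem \ref{theosum}, which pushes eigenvalues outside $\{U^1\le \eta\}$ into a small neighborhood of the origin and compares the Jacobian against the gain in potential $\beta\eta/2$. Your appeal to "the weight $e^{-\beta N\Tr U^i}$ exponentially penalizes the other critical points" is the right intuition but is not a proof; without the change-of-variables (or an equivalent energy argument), nothing rules out mass at a critical point $c$ with $U^i(c)>0$.

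\textbf{Second, Step 3 cannot be carried out as stated.} You propose to "restrict to a spectral sub-block where the eigenvalues of $X^N_i$ lie near $x^i_j$" and then treat that block as a one-matrix problem in the high-temperature regime of \cite{GM}. But in a multi-matrix model there is no common block structure: the eigenvectors of $X^N_i$ do not commute with $X^N_{i'}$, so projecting $X^N_i$ onto the eigenspace near $x^i_j$ does not give you an autonomous reduced matrix model. The paper handles this by conditioning on the full spectral decomposition $X^N_i=O_iD_iO_i^*$, carrying the unitary matrices $O_i$ along explicitly, and controlling the integral over the $O_i$'s via the spherical-integral/loop-equation asymptotics of \cite{FG,GN,CGM}. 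This is where the $\beta^{-1/2}$ expansion actually comes from; \cite{GM} alone, which is about small perturbations of independent GUEs, does not apply because after conditioning the dependence between the matrices is mediated by the $O_i$'s, not by a small additive polynomial in free position.

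\textbf{Third, the role of Assumption \ref{assumsingle2} is not merely to make the $W$-perturbation "small enough not to break the balance" in Step 2.} The decisive structural input is that the cross-term $Z$ can be written as $\sum_i Z_i$ with $\Tr Z_i$ vanishing whenever the spectrum of $X_i$ sits exactly on $\{x^i_j\}$. This is what lets one replace $D_i$ by $D_i^\infty$ up to an error $\beta^{-d}\Lambda_i$ and factor out a $\beta^{-d}$ in front of the entire interaction before integrating over the unitaries, turning it into a genuinely perturbative term. Your Step 3 asserts that $W$ becomes $O(\beta^{-1/2})$ after the local rescaling, but this is false for a generic $W$: $W(x^i_j I+\beta^{-1/2}Y)$ has a nontrivial leading-order part unless the vanishing hypothesis on $Z$ is used. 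Likewise your Step 4 needs this to justify that the $O_i$'s become asymptotically independent Haar unitaries and hence that the limit $\tau_\infty$ is free; this again is an output of the \cite{GN,CGM} analysis rather than of Voiculescu's theorem applied to independent matrices, since for finite $\beta$ the matrices are not independent.

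In summary: Step 2 is in the right direction and matches the spirit of the paper's Theorem \ref{theosum}, but Steps 1 and 3 need the change-of-variables/Jacobian argument and the full spherical-integral machinery, respectively, and the vanishing hypothesis on $Z$ has to be exploited at the level of the unitary integral, not just asserted to be a small perturbation.
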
}
  Furthermore, we show in Theorem \ref{theosum} that if the minimizers of $U^{i}$ are degenerate, 
    the mass of the Dirac at a minimizer $x$ depends on the flatness of $U^{i}$ close to $x$, namely on the exponent $k$ such that $U^{i}(y)-U^{i}(x)\simeq c(x-y)^{2k}$ for $y$ close to $x$, but not on the constant $c$ provided it is positive.

    Finally we consider  in Section \ref{commutator} the case where $U$ is the square of the commutator and more precisely
 
 \begin{equation}\label{potcom}V_{\beta}(\bX)=-\beta (X_{1}X_{2}-X_{2}X_{1})^{2}+V_{1}(X_{1})+V_{2}(X_{2})\,.\end{equation}
 Such a matrix-model can be considered as a toy model for Yang Mills theory \cite{KZh}. The authors of   \cite{KZh} say that this model is  ''unsolvable''.
 Our main result is that the non-commutative distribution of $X_{1}^{N},X_{2}^{N}$ under $\Q^{N}_{V_{\beta}}$ converges when $N$ goes to infinity and then $\beta$ goes to infinity towards two commutative variables whose  spectrum  lies at the minimizers of $V_{1}$ and $V_{2}$. The central point of our result is that the mass in these minimizers is nontrivial and again depends  on the flatness of the potentials in a neighborhood of the minimizers. In fact, we might have expected one of the matrices to converge  to a multiple of the identity to make the commutator small, as would be the case for instance  if we had a three matrix model 
 with $U(X_{1},X_{2},X_{3})=- (X_{1}X_{2}-X_{2}X_{1})^{2}- (X_{1}X_{3}-X_{3}X_{1})^{2}$ where making $X_{1}$ close to the identity would have a higher probability than making it commute with both $X_{1}$ and $X_{2}$ (see a more detailed discussion in section \ref{commutator}). 
 More precisely, we have the following statement. 
 {
 \begin{theorem}\label{theocommutator}We let  { $V_{1}$ and $V_{2}$ be two non-negative polynomials of one variable, going to $+\infty$  at infinity. Let
	 $z_1^{i}<z_2<\dots<z^{i}_{k}$ be the zeroes of $V_{i}$  and assume there exists  $c_1^{i},\dots,c_k^{i}\in\mathbb R^{+*}$ 
	such that, for every $j\in \{1,\ldots, k\}$,
	$$V_{i}(x)= c_j^{i}(x-z_j)^2+o(x-z_j)^2\,.$$ 
	Let $V_{\beta}$ be given by \eqref{potcom} and $X_{1},X_{2}$ following $\Q^N_{V_\beta}$. Then,  for every $\epsilon>0$ small enough,  almost surely,  for every $j\in \{1,\ldots,k\}$ and $i\in\{1,2\}$, we have 
	$$\lim_{\beta\rightarrow\infty}\limsup_{N\rightarrow\infty} \hat \mu_{X_{i}}^N([z_j^{i}-\epsilon, z_j^{i}+\epsilon])=\frac{(c_j^{i})^{-\frac{1}{2}}}{\sum_\ell (c^{i}_\ell)^{-\frac{1}{2}}}=:\gamma^{i}_{j}\,.$$
	Moreover,  for any monomial $P(X_{1},X_{2})=X_{1}^{k_{1}}X_{2}^{\ell_{1}}X_{1}^{k_{2}}X_{2}^{\ell_{2}}\cdots X_{1}^{k_{p}} X_{2}^{\ell_{p}}$, we have
	$$\lim_{\beta\rightarrow\infty}\limsup_{N\rightarrow\infty} \int \frac{1}{N} \Tr(P(X_{1}^{N},X_{2}^{N}))d\mathbb Q^{N}_{V_{\beta}}(X_{1}^{N},X_{2}^{N})=\nu_{1}(x^{\sum k_{i}})\nu_{2}(x^{\sum\ell_{i}})$$
	with $\nu_{i}=\sum_{j}\gamma_{j}^{i}\delta_{z_{j}^{i}}$ for $i=1,2$. 
	The same results hold if the above  limsup is replaced by a liminf.
	}
\end{theorem}}
The last result shows that $X_{1}^{N}$ and $X_{2}^{N}$ asymptotically commute but also their eigenspaces become independent. Moreover, the asymptotic distribution of each of the matrix is a sum of dirac masses at the minimizers of the potentials $V_{i}$ with explicit weights depending on the second derivative of the potential at these minimizers.

\section{Tightness of matrix models at finite temperature}\label{SectionTight}
In this section, we prove Theorems \ref{conf-to-T} and  \ref{Ttosupport}. This is based on new techniques to bound probabilities of rare events developed in the appendix, see Section \ref{appineq}. 
\subsection{ Proof of  Theorem \ref{conf-to-T}}

The first point is clear.
For the second point, we just observe that
$-[X_{1},X_{2}]^2=-(X_{1}X_{2}-X_{2}X_{1})^2$ satisfies the $(0,0,\{\{1\},\dots,\{\ell\}\})$-trapping condition.
To see this, remark that it amounts to check that for any integer number $k$:
$$\Tr (X_{1}^{2k+1}X_{2}X_{1}X_{2}+X_{2}^{2k+1}X_{1}X_{2}X_{1})\leqslant \Tr (X_{1}^{2k+2}X_{2}^2+X_{2}^{2k+2}X_{1}^2).$$
This is a direct consequence of Lemma \ref{matrix-inequ}.
Now if $V$ is $(\eta,A,\{\{1\},\dots,\{\ell\}\})$-trapping then by the first point for any $\alpha>0$,
$$V(\bX)-\alpha [X_{1},X_{2}]^2$$
is also $(\eta,A,\{\{1\},\dots,\{\ell\}\})$-trapping. The generalization to any number of square commutators is immediate.

We finally prove the third point.
Let us consider  a confining potential $V$ of the form :

$$V(X)=\sum_{i=1}^\ell P_i(X_i)+ \sum_{u=1}^p d_uq_u(X)+\sum_{v=1}^r R_v \left(\sum_{i\in I_{m_v}} \alpha_{vi} X_i\right)$$

Observe that 
$$\sum_{i\in I_m} X_i.\mathcal D_iR_v \left(\sum_{i\in I_{m_v}} \alpha_{vi} X_i\right)=1_{m=m_v}\tilde R_v \left(\sum_{i\in I_{m_v}} \alpha_{vi} X_i\right)$$
where $\tilde R_v(X)= X R'_v(X)$ is also a polynomial with a positive even term of highest degree, therefore it is bounded from below by some constant $-c_v$.
Hence,  for $j\in \{1,\ldots,m\}$, we  have (since $(Z^N_m)^k$ is non-negative)
$$\sum_{m=1}^d \Tr \left((Z^N_m)^k \sum_{i\in I_m} X_i^{N}.\mathcal D_i R_v(\sum_{{k\in I_{m_{v}}}} \alpha_{jk} X_k^{N})\right)
	\geqslant -c_v\Tr (Z^N_{m_v})^k ,$$
i.e. $R_v \left(\sum_{i\in I_{m_v}} \alpha_{vi} X_i\right)$ satisfies the $(0,c_v,I)$-trapping condition.
With this in mind, we have only to  worry about  the other part of the polynomial $V$ and then apply the first point of the theorem. {We can assume up to rescaling that the polynomials $P_{i}$ are monic. }
We consider a monomial $q_k$ in the decomposition of $V$ and denote it by $q$ for simplicity. We denote by $S(q)$ the smallest subset of $i\in [1,\ell]$ so that $q$ only depends 
  on $X_i, i\in S(q)$.  Let $s(q)$ be the set of $j$ such that $I_j\cap S(q)\neq \emptyset$ and define $D_q=\min_{i\in s(q)}\min_{i\in I_j} D_i$.
\begin{equation}\label{bound0}
\Tr \left(\sum_{j=1}^{d}(Z^N_{j})^k\sum_{i\in I_{j}}X_{i}^{N}.\mathcal D_{i}q(X^{N}) \right)=\sum_{j\in s(q)} 
 \Tr \left((Z^N_{j})^k\sum_{i\in I_{j}}X_{i}^{N}.\mathcal D_{i}q(X^{N}) \right).\end{equation}
Any monomial  $X_{i_1}\cdots  X_{i_t}$ coming in the linear decomposition of  $\mathcal D_i q$ has degree $t$ bounded above by $2D_q-1$ by assumption.
Observe that,  by H\"older's non commutative inequality with parameters $(p_{0},p_{1},\ldots, p_{t})=(1+\frac{t}{2k},2k+t,\ldots, 2k+t)$, we find
\begin{eqnarray*}
|\Tr (((Z^N_j)^{1/2})^{2k} X^N_{i_1}\dots  X^N_{i_t})|
&\leqslant &  \left(\Tr(Z^{N}_j)^{k+\frac{t}{2}}\right)^{\frac{2k}{2k+t}}\prod_{m=1}^{t}\left(\Tr |X_{i_{m}}|^{2k+t}\right)^{\frac{1}{2k+t}}\\
&\leqslant& \sum_{j\in s(q) }\Tr (Z^N_j)^{k+\frac{t}{2}}\\
&\leqslant & \varepsilon\Tr \sum_{j\in s(q)} (Z^N_j)^{k+D_q}+\varepsilon^{-1}\Tr \sum_{j\in s(q)} (Z^N_j)^k
\end{eqnarray*}
where in the second line we used that  for $s\in [1,t]$,  $$\Tr(|X_{i_s}|^{k})\le \max_{j\in s(q)}\Tr ((Z^{N}_j)^{k/2})$$ for every  integer number $k$. 
In the last line we picked 
$\varepsilon\in (0,1)$ to be chosen later and used the inequality valid for $t\in [1,2D_q-1]$ 
$$x^{k+t/2}\leqslant (\varepsilon^{2})^{D_q-t/2}x^{k+D_q}1_{\varepsilon^2x>1}+\varepsilon^{-t}x^k1_{\varepsilon^2x\leqslant1}$$
 and took the worst case scenario for each term. Using this inequality for each term coming from the $ q$,
 we deduce that there exists a finite constant $C(q)$ such  that
 \begin{equation}\label{bound2}\Tr (\sum_{j=1}^d (Z^N_j)^k\sum_{i\in I_j} X_i^N.\mathcal D_i q(X^N))\qquad\qquad \end{equation}
 $$\qquad\qquad \qquad \qquad \geqslant - C(q)\left( \varepsilon\sum_{j\in s(q)}\Tr (Z^N_j)^{k+D_q}+\varepsilon^{-1}\Tr\sum_{j\in s(q)} (Z^N_j)^k\right)\,.$$
 Moreover,  putting $\bar D_j=\min_{i\in I_j} D_i$, and 
 since $x^{2D_{i}}\ge x^{2\bar D_j}-1$ for every real number $x$  and $i\in I_j$ as  $D_{i}\geq \bar D_j$ for $i\in s(q)$, we have
  \begin{align}\sum_{j=1}^d \Tr ((Z^{N}_j)^{k} \sum_{k\in I_j} X_k.  \mathcal D_k \sum_{i=1}^\ell X_{i}^{2D_{i}}))&\geq 2\sum_{j=1}^d \bar D_{j }\Tr ((Z^{N}_{j})^{k} \sum_{i\in I_j} X_{i}^{2\bar D_{j}})\nonumber\\
  & - 2\sum_j \bar D_j |I_j| \Tr(Z_j^N)^k
\label{bound5}\end{align}
But, 
using  Lemma \ref{matrix-inequ-holder}, we find that for every $j\in \{1,\ldots,d\}$, 
\begin{eqnarray}
	\ell^{\bar D_{j}}
\Tr ((Z^N_j)^k \sum_{i\in I_j} X_{i}^{2\bar D_{j}})
&\geqslant&\Tr((Z^N_j)^{k+\bar D_j})\label{bound3}
\end{eqnarray}
Putting  \eqref{bound2} and \eqref{bound3} together we find  finite constants $C',C''$ such that
\begin{align*}
\Tr ((Z^N_j)^k \sum_{i\in I_j} X_i.\mathcal D_i V)&\geqslant  \sum_{j=1}^d \frac{2\bar D_j}{{\ell^{\bar D_{j}}}} ( \Tr((Z^N_j)^{k+\bar D_j}) - C'\varepsilon^{-1} \Tr(Z_j^N)^k)\\
&\quad -C\varepsilon  \sum_{r=1}^n \sum_{j\in s(q_r)}\Tr (Z^N_j)^{k+D_{q_r}}\\
&\ge {\frac{1}{\ell^{\max \bar D_{j}}}}\sum_{j=1}^d  \Tr((Z^N_j)^{k+1}) -C''\sum_{j=1}^d   \Tr((Z^N_j)^{k})
\end{align*}
where we finally choose $\varepsilon$ small enough, used that $D_{q_r}< \bar D_i$ for every $i\in s(q_r)$ and that $\min D_i\ge 1$ to find a finite constant $C''$ so that the above holds. This completes the proof.

\subsection{Proof of Theorem \ref{Ttosupport}}
 
Our proof uses bounds based on change of variables formula that we prove in the appendix, see Lemma \ref{matrix-ODE}.
We fix $L>0$ and  let for a $\ell$-tuple of $N\times N$ Hermitian matrices $\bX^{N}=(X_{1}^{N},\ldots, X_{\ell}^{N})$,
$$T(\bX^{N})=\inf\{ k: \frac{1}{{d} N}\Tr \sum_{j=1}^d (Z_j(\bX^{N}))^k\ge  L^k\}\,,$$ 
where we recall that $Z_{j}(\bX^{N})=\sum_{i\in I_{j}} (X_{j}^{N})^{2}$. 
Note that by  Jensen's inequality, $k\rightarrow  \frac{1}{d N}\Tr \sum_{j=1}^d L^{-k}(Z^N_j)^k$ is increasing. 
To prove the second inequality in \eqref{zx}, it is enough to show that for $\kappa\in (0,1)$ and  $N$ large enough
\begin{equation}\label{zxc}
\Q_V^N\left( \bX^{N}: T(\bX^{N})\le \frac{\kappa \ln N }{2\ln (2L)}\right)\le \frac{\kappa \ln N}{2\ln L} e^{-\frac{1}{2} N^{1-\kappa}}\,.
\end{equation}  
Indeed, observe that for any $M>0$, any integer number $k$, we have
$$\left\{\bX^{N}:\max_i\|X_i^N\|_\infty \ge M\right\}\subset \left\{\bX^{N}: \frac{1}{N}\Tr\left(   \sum_{j=1}^d (Z_j(\bX^{N}))^{k}\right)\ge \frac{1}{N} M^{2k}\right\}\,.$$
But on $\{T(\bX^{N})> \frac{\kappa\ln N}{2\ln 2L}\}$ we know that for every $k\le  \frac{\kappa\ln N}{2\ln 2L}$
$$\frac{1}{d N}\Tr\left(   \sum_{j=1}^d (Z_j(\bX^{N}))^{k}\right)\le L^k\,.$$
Therefore, on $\{\max_i\|X_i^N\|_\infty \ge M\}\cap \{T(\bX^{N})> \frac{\kappa\ln N}{2\ln 2L}\}$, we must have 
$d L^k\ge \frac{1}{N} M^{2k}$ for every  $k\le  \frac{\kappa\ln N}{2\ln 2L}$ . This is impossible unless
$$\ln \frac{M^2}{L}\le \inf_{k\le  \frac{\kappa\ln (N)}{2\ln 2L}} \frac{\ln (dN)}{k}=\frac{2}{\kappa}(1+\frac{\ln d}{\ln N}) \ln 2L,$$
for instance when $ M=(2L)^{\frac{1}{2}+\frac{1}{\kappa}}$ and $N\ge d^{2}$. Then,  we deduce that $\{T(\bX^{N})> \frac{\kappa\ln N}{2\ln 2L}\}\cap \left\{\max_i\|X_i^N\|_\infty \ge (2L)^{\frac{1}{2}+\frac{1}{\kappa}}\right\}$ is empty.
Therefore we conclude
$$\Q_V^N\left(\bX^{N}: \max_i\|X_i^N\|_\infty \ge (2L)^{\frac{1}{2}+\frac{1}{\kappa}}\right)\le \Q_V^N\left(\bX^{N}:T(\bX^{N})\le  \frac{\kappa \ln N}{2\ln 2L}\right)\,,$$
from which the theorem follows
from  \eqref{zxc}. To prove the latter, 
we first notice that
\begin{equation}\label{zxcv}
\Q_V^N\left(\bX^{N}: T(\bX^{N})\le \frac{\kappa \ln N}{2\ln 2L} \right)\le \sum_{k=0}^{\frac{\kappa \ln N}{2\ln 2L}} q_k^N
\end{equation}
where
$q_k^N=\Q_N^V\left( \bX^{N}\in S_{k,a}\right)$ if $S_{k,a}$ is the measurable set of  $N\times N$ Hermitian matrices $\bX^{N}=(X_{1}^{N},\ldots, X_{\ell}^{N})$ given by 
$$S_{k,a}\!=\!\left\lbrace  
\frac{1}{d N}\Tr \sum_{j=1}^d (Z_j(\bX^{N}))^{k}>a_{k}\right\rbrace\bigcap\!\bigcap_{1\leqslant u\leqslant k-1}\! \left\lbrace  \frac{1}{d N}\Tr \sum_{j=1}^d (Z_j(\bX^{N}))^u\leqslant a_{u}\right\rbrace$$
with  a sequence $(a_u)_{u\in \mathbb N}$ to be chosen later. 
We prove by induction over $k\le \frac{\kappa \ln N}{2\ln 2L} $ that we can find  non-negative real numbers $a_k, k\le \frac{\kappa \ln N}{2\ln 2L}$ such that for $N$ large enough
\begin{equation}\label{qw}q_k^N \le e^{-\frac{1}{2} N^{1-\kappa}}\end{equation}
and moreover that for every $k$, $a_k\le L^k$,  with $L$ as in the theorem. The same reasoning would give an upper bound of the expected order $e^{-c N}$ but only up to $k$ of order $\sqrt{\ln N}$ which is not sufficient to bound the spectral radius. 
Note that when $k=0$ the bound is satisfied  for any $a_0>1$.
Assume that up to rank $k$, we have found $a_0,\dots,a_k$  such that $a_u\le L^u$ for every $u\le k$ and $a_u$ verifies \eqref{qw}.
We next  construct $a_{k+1}$ so that the same property holds. 
We look at the dynamic of Lemma \ref{matrix-ODE} up to time $t_N=N^{-1-\kappa}$ on each set of variables indexed by $I_j$, namely for $i\in I_j$ we set, for $\bX^{N}=(X^{N}_{1},\ldots,X^{N}_{\ell})$ a $\ell$-tuple of Hermitain $N\times N$ matrices : 
$$
\left\{\begin{array}{ccl}
X^N_i(0)&=&X^N_i\\
\partial_{t}X_i^N(t) &=& -((Z_j^N(t))^kX_i^N(t)+X_i^N(t)(Z_j^N(t))^k)\\
Z^N_j(t) &=& \sum_{i\in I_j}(X^N_i(t))^2
\end{array}
\right.
$$
As proved in  Lemma \ref{change-variable-matrix} with $g_{i}(X)=((Z_j(X))^kX_i+X_i(Z_j(X))^k)$ for $i\in I_{j}$, we have 
\begin{align}
q^N_{k+1}&=
Q^N_V(\bX^{N}\in S_{k,a})\nonumber\\
&\leqslant \sup_{\bX^{N}\in S_{k,a}}\bigg\{\exp(-2\int_0^t \bigg(N\Tr\sum_j (Z^N_j(s))^k\sum_{i\in I_j} X^N_i(s).\Da_i V(X^N(s))\nonumber\\
&\qquad\qquad{-}\Tr\otimes\Tr\sum_{j, i\in I_j}\partial_i (X^N_i(s)(Z_j^N(s))^k)\bigg) ds)\bigg\}\label{tur}
\end{align}
We next bound the term in the exponential by using the hypothesis  that  $V$ is $(\eta,A,I)$-trapping which implies:
\begin{eqnarray}
N\Tr\hspace{-0.5cm}&& \sum_{j=1}^d(Z_j^N(s))^k\sum_{i\in I_j} X^N_i(s).\Da_i V(X^N(s)){-}\Tr\otimes\Tr\sum_{j, i\in I_j}\partial_i (X^N_i(s)(Z_j^N(s))^k)\nonumber \\
&&\qquad\qquad\geqslant  N \sum_{j=1}^d \Tr ( \eta  (Z^N_j(s))^{k+1}-A(1+Z^N_j(s)^k))\nonumber\\
&&\qquad\qquad{{-} }\sum_{j=1}^d \sum_{p=0}^k\Tr(Z^N_j(s))^p\Tr(Z^N_j(s))^{k-p}\label{bo1}\\
&&\qquad\qquad {{-} }\sum_{j=1}^d\sum_{i\in I_j}\sum_{p=0}^{k-1}\Tr X^N_i(s)(Z_j^N(s))^p\Tr X^N_i(s)(Z_j^N(s))^{k-1-p}\nonumber
\end{eqnarray}
Now observe that $\Tr\otimes\Tr ((X\otimes 1 - 1\otimes X)^2Z^a\otimes Z^b)$ is non-negative since
$(X\otimes 1 - 1\otimes X)^2$ and $Z^a\otimes Z^b$ are non-negative operators in $\HNC^{\otimes 2}$ for every integer numbers $a,b$. As a consequence, we find that
\begin{align*}
&\Tr (X^N_i(s))^2(Z_j^N(s))^p\Tr (Z^N(s))^{k-1-p}
\!+\! \Tr (Z_j^N(s))^p\Tr (X^N_i(s)^2(Z_j^N(s))^{k-1-p})\\
&\geqslant
2\Tr X^N_i(s)(Z_j^N(s))^p\Tr X^N_i(s)(Z_j^N(s))^{k-1-p}\,.
\end{align*}
Thus, we deduce the bound
\begin{align*}
&\sum_{i\in I_j}\sum_{p=0}^{k-1}\Tr X^N_i(s)(Z_j^N(s))^p\Tr X^N_i(s)(Z_j^N(s))^{k-1-p}\\&\leqslant \sum_{p=0}^k\Tr(Z_j^N(s))^p\Tr(Z_j^N(s))^{k-p}\,.
\end{align*}
Now Lemma \ref{matrix-ODE} tells us that $s\rightarrow \Tr(Z_j^N(s))^n$ is decreasing on  $[0, \frac{1}{N^{\frac{k}{k+1}}}]$, so that we deduce if $t\le  \frac{1}{N^{\frac{k}{k+1}}}$, that for $s\le t$, 
\begin{eqnarray}
&&N\sum_{\substack{j\in [1,d]\\i\in I_j}
} \left(\Tr(Z^N_j(s))^kX^N_i(s)\Da_i V(X^N(s))-\Tr\otimes\Tr\partial_i (X^N_i(s)(Z_j^N(s))^k)\right)\nonumber\\
&&\qquad \geqslant   N  \sum_{j=1}^d \Tr (\eta  (Z^N_j(s))^{k+1}-A(1+(Z_j^N(0))^k))\label{bo2}\\
&&\qquad -2\sum_{p=0}^k\sum_{j=1}^d \Tr(Z^N_j(0))^p\Tr(Z_j^N(0))^{k-p}\nonumber
\end{eqnarray}
Finally, on the set $\{\frac{1}{N}\Tr \sum_{j=1}^d (Z_j^N(0))^{k+1}>a_{k+1}\}$ there is at least one $j$ such that $\frac{1}{N}\Tr  (Z_j^N(0))^{k+1}>a_{k+1}/d$, so that by Lemma \ref{matrix-ODE} for any $\kappa>0$,
 all $s\in [0,\frac{1}{N^{1+\kappa}}]$, we have
$$\frac{1}{N}\Tr \sum_{j=1}^d (Z^N_{j}(s))^{k+1}\geqslant \frac{a_{k+1}}{d}-\frac{4(k+1)}{N^{\kappa+\frac{1}{k+1}}} (\frac{a_{k+1}}{d})^{\frac{2k+1}{k+1}}$$
Therefore, on the set $S_{k,a}$, \eqref{tur} and  \eqref{bo1} give,  with  $t=t_{N}=N^{-1-\kappa}$, that
$q^N_{k+1}$ is bounded as follows:
\begin{equation}\label{ty}q^N_{k+1}\leqslant  e^{-N^{1-\kappa}F_{k+1}(a) }\end{equation}
with
$$F_{k+1}(a):=2
( \eta (\frac{a_{k+1}}{d}-4(k+1)N^{-\kappa-\frac{1}{k+1}}(\frac{a_{k+1}}{d})^\frac{2k+1}{k+1})-A(1+a_k)-2\sum_{p=0}^{k}a_pa_{k-p}).$$
We next  define {$a_0=1$} and by induction set
$$a_{k+1}=\frac{d}{\eta}(1+|A|(1+a_k)+2\sum_{p=0}^{k}a_pa_{k-p})\,.$$
We next  show by induction that for every integer numbers $k$, 
\begin{equation}\label{ind}a_k\le  C_k (\frac{4(1+|A|)d}{\eta})^k\end{equation} with $C_k$ the Catalan numbers.
 We assume without loss of generality that $ (1+|A|)d\ge \eta$ (since an $(\eta,A)$-trapping potential is also $(\eta',A)$ trapping for any $\eta'= (1+|A|)d<\eta$). \eqref{ind} is true for $k=0$. We assume by induction it  is true up to $k$,  then 
\begin{eqnarray*} a_{k+1}&\le & \frac{d}{\eta}\left(1+|A|(1+ 4^{k} C_k (\frac{(1+|A|)d}{\eta})^k )+\frac{1}{2}4^{k+1} C_{k+1} (\frac{(1+|A|)d}{\eta})^k \right)\\
&\le& C_{k+1} (\frac{4(1+|A|)d}{\eta})^{k+1} \frac{1}{(1+ |A|)}  \left( \frac{1+|A|}{  C_{k+1} (\frac{4(1+|A|)d}{\eta})^k} +\frac{|A| C_k}{4 C_{k+1}}
+\frac{1}{2}\right)\\
&\le&
  C_{k+1} \left(\frac{4(1+|A|)d}{\eta}\right)^{k+1}\end{eqnarray*}
 provided 
 \begin{equation}\label{tot}\frac{1}{(1+ |A|)}  \left( \frac{1+|A|}{ 4^{k+1} 
 C_{k+1} (\frac{(1+|A|)d}{\eta})^k} +\frac{|A| C_k}{4 C_{k+1}}
+\frac{1}{2}\right)\le 1.\end{equation}
 Since we assumed $(1+|A|)d\ge \eta$ the above inequality is clearly true for every $k\ge 0$.  Hence, the induction is satisfied. As a consequence, since $C_k\le 4^k$, we deduce that
 $a_{k}\leqslant 4^{2k} (\frac{(1+|A|)d}{\eta})^k=L^k$.
   Moreover, for such a choice of  $a_k, k\ge 0$, we can lower bound the exponent in \eqref{ty}:
\begin{equation}\label{turt}F_{k+1}(a) \ge 
1-2\eta \frac{4(k+1)L^{2k+1}}{N^{\kappa+\frac{1}{k+1}}}d^{\frac{2k+1}{k}}\,.\end{equation}
The right hand side is bounded from below by $1/2$ for $N$ large enough provided $k$ is small enough :
$$2k\le \kappa \frac{\ln N}{\ln (2L)}\,.$$
 From \eqref{turt},\eqref{ty} and \eqref{zxcv},  \eqref{zxc} and \eqref{zx} follow readily.

\section{Asymptotic localization at low temperature}

In this section we use Schwinger Dyson's equation to prove in the general case asymptotic localization properties at low temperature.

Let us consider a potential $V$ which is $(\eta,A,I)$-trapping for some $\eta>0$, a finite positive real number $A$ and a partition $I$ of $\{1,\ldots,\ell\}$. By Corollary \ref{CorTight}, we know that  the empirical distribution $\mun$ is tight almost surely under $\Q^{N}_{V}$. Let $\tau_{V}$ be an almost sure limit point.
\begin{theorem}\label{theoDS}(Dyson-Schwinger's equations)
	Under the assumptions of Theorem \ref{Ttosupport}, any limit point $\tau_{V}$ of $\mun$ under $\Q_{V}^{N}$ satisfies the Dyson-Schwinger's equations: for any polynomial $P$ in $\mathbb C\langle X_{1},\ldots,X_{\ell}\rangle$ , any $i\in \{1,\ldots,\ell\}$, 
	\begin{equation}\label{ds}\tau_{V}\left(\mathcal D_{i} V P\right)=\tau_{V}\otimes\tau_{V}\left(\partial_{i}P\right)\,.\end{equation}
\end{theorem}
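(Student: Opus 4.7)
The plan is to derive a finite-$N$ version of \eqref{ds} by an integration-by-parts argument on the Lebesgue measure of $\HNC^\ell$, and then pass to the limit along an almost surely convergent subsequence using the spectral radius bound from Theorem \ref{Ttosupport}.

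I would first establish, for each self-adjoint polynomial $P \in \ccl$ and each $i \in \{1,\ldots,\ell\}$, the finite-$N$ identity
\begin{equation*}
\E_{\Q_V^N}\!\left[\hat\mu^N(\mathcal D_i V \cdot P)\right] = \E_{\Q_V^N}\!\left[\hat\mu^N \otimes \hat\mu^N(\partial_i P)\right].
\end{equation*}
This is obtained in the standard way, by differentiating at $t=0$ the invariance of the integral $Z_V^N$ under the change of variables $X_i^N \mapsto X_i^N + tP(\bX^N)$ on $\HNC$. The variation of the exponent $-N\Tr V$ produces, by definition of the cyclic derivative, the term $-N\Tr(\mathcal D_i V \cdot P)$, while the Jacobian of the transformation, linearized at $t=0$, is (up to the identity) a linear operator on the $N^2$-dimensional real space $\HNC$ whose trace equals $\Tr \otimes \Tr(\partial_i P)$---this is the standard identification of the non-commutative derivative $\partial_i$ with the differential of the trace. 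Extending from self-adjoint polynomials to arbitrary $P \in \ccl$ by $\CC$-linearity yields the claimed identity.

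To pass to the limit, fix a subsequence $N_k$ along which $\hat\mu^{N_k} \to \tau_V$ weakly, almost surely (such a subsequence exists by Corollary \ref{CorTight}). Since $\mathcal D_i V \cdot P$ and $\partial_i P$ are non-commutative polynomials (or involve only smooth one-variable functions in the case $V \in \ccl^c_{sa}$), weak convergence of the non-commutative laws gives pointwise convergence of each side of the identity to its $\tau_V$-counterpart, almost surely. To exchange limit and expectation, I would decompose on the event $E_N = \{\max_i \|X_i^N\|_\infty \le L^{1/2+1/\kappa}\}$ from Theorem \ref{Ttosupport}: on $E_N$ every polynomial moment $\hat\mu^N(Q)$ is bounded by a deterministic constant depending only on $Q$, $L$ and $\kappa$, and on its complement the estimate \eqref{zx} gives a probability decaying subexponentially in $N^{1-\kappa}$, which combined with the crude bound $|\hat\mu^N(Q)| \le \|Q\|_1 \max_i \|X_i^N\|_\infty^{\deg Q}$ renders the contribution negligible after a tail integration.

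The main technical point is precisely this last uniform integrability step: a priori the polynomial moments of $\hat\mu^N$ are not deterministically bounded, so one needs to upgrade the tail bound \eqref{zx} into a bound on $\E_{\Q_V^N}[\max_i \|X_i^N\|_\infty^{p}]$ that is uniform in $N$ for every fixed $p$. This follows by integrating the tail: since \eqref{zx} decays faster than any polynomial in $N^{-1}$, it dominates the polynomial growth of $\max_i \|X_i^N\|_\infty^{p}$. Once this is in hand, dominated convergence closes the argument and yields \eqref{ds} for every $P \in \ccl$ and every $i$.
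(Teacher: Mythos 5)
Your proposal is a genuinely different route from the paper's. The paper does not derive a finite-$N$ identity in expectation by integration by parts; it uses the flow-inequality Lemma~\ref{change-variable-matrix} with the bounded cutoff $g_j(X)=1_{j=i}P(X^\varepsilon)$, $f_\varepsilon(x)=(\varepsilon+x^2)^{-1/2}x$, to obtain directly an exponential upper bound $\Q^N_V(A_\delta)\le e^{-N\delta+O_\varepsilon(1)}$ on the probability that the loop-equation defect exceeds $\delta$, and then concludes by Borel--Cantelli that the defect vanishes almost surely.

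The difference is not cosmetic: there is a real gap in your argument. Your IBP identity is an identity of expectations, $\E_{\Q_V^N}[\hat\mu^N(\mathcal D_i V\cdot P)]=\E_{\Q_V^N}[\hat\mu^N\otimes\hat\mu^N(\partial_iP)]$, and after passing to the limit along a subsequence and invoking uniform integrability you would obtain only $\E[\tau_V(\mathcal D_i V\cdot P)]=\E[\tau_V\otimes\tau_V(\partial_iP)]$. This is strictly weaker than the claimed conclusion, which is a pointwise, almost-sure statement about every subsequential limit $\tau_V$ of the random sequence $\hat\mu^N$. Since the Dyson--Schwinger relation is quadratic in $\tau$, you cannot simply move the expectation inside; and without a concentration estimate (e.g.\ a variance bound on the defect $\hat\mu^N(\mathcal D_iV\cdot P)-\hat\mu^N\otimes\hat\mu^N(\partial_i P)$, summable over $N$) you cannot promote the in-expectation identity to a pathwise one. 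There is also no reason a single deterministic subsequence $N_k$ captures all pathwise limit points. The paper's flow-inequality trick bypasses precisely this obstacle: it yields a probability bound on the defect event, summable in $N$, so Borel--Cantelli kills the problem without any need to exchange limits and expectation.

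Two secondary remarks. First, your uniform-integrability step is more delicate than you indicate: the tail bound \eqref{zx} is a $\limsup$ statement for each fixed $\kappa$, with an implicit ``$N$ large enough'' threshold that depends on $\kappa$; turning it into a genuine uniform-in-$N$ moment bound $\sup_N\E[\max_i\|X_i^N\|_\infty^p]<\infty$ requires a more careful tail integration or a separate argument. Second, your unbounded shift $X_i\mapsto X_i+tP(\bX)$ does require a vanishing boundary term; this does hold here because the trapping hypothesis forces at least quadratic radial growth of $\Tr V$, but it is worth noting that the paper avoids the issue entirely by using the bounded maps $f_\varepsilon$, which are exactly what Lemma~\ref{change-variable-matrix} requires. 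After the exponential probability bound and Borel--Cantelli, the paper then removes the $\varepsilon$-regularization by restricting to the high-probability event $B_D$ of bounded moments and letting $\varepsilon\to0$; a similar regularization would also be needed in your approach to control the IBP uniformly.
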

When $V$ is a small perturbation of a quadratic polynomial, it was shown in \cite{GM} that the Dyson-Schwinger's equations have a unique bounded solution. This is not true in general, even in the one-matrix case. Indeed, if $V$ has two wells, the limit of the empirical measure of the eigenvalues of the matrix may have  disconnected support and there is a solution to the Dyson-Schwinger equation for any choice of masses in the different connected pieces of the support \cite{BoG2}.
\begin{proof}{
	We use the change of variables of Lemma \ref{change-variable-matrix}so that $\partial_{t}(X^{N}_{j})_{t}=-g_{j}(X^{N}_{t})$ with
	$g_j(X)=1_{j=i}P(X^{\varepsilon})$  for a  non-commutative polynomial $P$, and  $X^{\varepsilon}=(f_\varepsilon(X_1),\ldots, f_\varepsilon(X_\ell))$ with $f_\varepsilon (x)=(\varepsilon+x^{2})^{-1/2} x$ for some $\varepsilon>0$.  
	$g$ is  uniformly bounded (with a uniform bound depending on $\varepsilon$) so that for $t\le \frac{1}{N}$, for every $j\in \{1,\ldots,\ell\}$
	\begin{equation}\label{exp1} X^{N}_{j}({t})=X^{N}_{j}+O_\varepsilon(\frac{1}{N}).\end{equation}
	Let us set  for some $\delta>0$, 
	\begin{align*}
	A_{\delta}=\bigcap_{i}\left\{\bX^{N}\! \right.& \in\! (\mathcal H^N(\mathbb C))^\ell: \\
	&\left.\frac{1}{N}\Tr(\mathcal D_{i} V (\bX^{N})g_{i}(\bX^{N}) )\!-\!\frac{1}{N}\Tr \otimes\frac{1}{N}\Tr(\partial_{i} g_{i}(\bX^{N}))\ge \delta\right\}.
	\end{align*}
	It is easy to see that since $g_{i}$ is uniformly Lipschitz, by \eqref{exp1}, we have uniformly 
	on $A_\delta$,  
		$$\int_0^{\frac{1}{N}}(N\Tr \Da_{i} V(X^N_s)  g(X^N_s)+\Tr\otimes \Tr \partial_{i} g(X^N_s))ds\ge N\delta +O_{\varepsilon}(1)\,.$$
		Hence, Lemma \ref{change-variable-matrix} implies that
	$$\Q^{N}_{V}(A_{\delta})\le e^{-N\delta+ O_\varepsilon(1)}\,.$$
	The same holds when we replace $g$ by $-g$ and therefore
	almost surely by Borel Cantelli's lemma 
	$$\limsup_{N\rightarrow  \infty  }\left|  \frac{1}{N}\Tr(\mathcal D_{i} V.g)-\frac{1}{N}\Tr \otimes\frac{1}{N}\Tr(\partial_{i} g)\right|=0\,.$$
	On the other hand,  put $B_D=\{\bX^{N}\in (\mathcal H^N(\mathbb C))^\ell: \frac{1}{N}\Tr( X_i^{2D})\le L^{2D}\}$ for $L$ chosen as in Theorem \ref{Ttosupport}, with $2D$ larger or equal than the sum of the degree of $V$ and  the maximum degree of the $P_{i}$'s. Theorem \ref{Ttosupport} implies that $\Q^{N}_{V}(B_D^c)\le e^{-N/^{1-\kappa}}$ for $\kappa>0$ and  $N$ large enough. On the other hand, on $B_D$, it is not hard to see that
	$$\left|  \frac{1}{N}\Tr(\mathcal D V.(g-P))|\le O_L(\varepsilon), \mbox{ and }\,  |\frac{1}{N}\Tr \otimes\frac{1}{N}\Tr(\partial (g-P))\right|\le O_L(\varepsilon)\,,$$
	with $ O_L(\varepsilon)$ going to zero uniformly on $N$ as $\varepsilon$ goes to zero. As a consequence, 
	almost surely, we have 
	$$\limsup_{N\rightarrow  \infty  }\left|  \frac{1}{N}\Tr(\mathcal D_{i}V.P)-\frac{1}{N}\Tr \otimes\frac{1}{N}\Tr(\partial_{i} P)\right|=0\,.$$
	This implies that any almost sure limit point of $\mun$ satisfies the limiting Dyson-Schwinger's equations \eqref{ds}.}
	
\end{proof}

We next consider  general potentials in the  low temperature situation where $\beta=1/T$ is large and 
$$V_{\beta}=\beta U+W$$
with $U,W$ self-adjoint polynomials independent of $\beta$.  

We will first  study the limit points of $\mun$ under $\Q_{V_\beta}^N$  and   show  that they concentrate near  the critical points of $U$, namely the points where $\mathcal D_{i}U$ is small, when $\beta$ is large enough.
Theorem \ref{Ttosupport} 
allows us to localize the state we are looking at on a fixed support. 
Our result takes the following form:

\begin{prop}[{\bf Control of the support, Low temperature}]\label{proploc}

Assume that we are given $L<\infty ,\beta_0>0$ and for every $\beta>\beta_0$ a  non-commutative distribution $\nu_\beta$ 
 solution of the Dyson-Schwinger equation \eqref{ds} with  potential $V_\beta=\beta U+W$ and with support in $[-L;L]$ in the sense that for every $i\in[1,\ell]$, all $k\in\mathbb N$
 $$\nu_\beta(X_i^{2k})\le L^{2k}\,.$$
Then there exists $B>0$ such that for $\beta>\beta_0$,  for every integer number $k$
$$\max_{{1\le i\le \ell}}\nu_\beta\left(\left|\mathcal D_{i} U\right|^{2k}\right)\le \left(\frac{B}{\beta}\right)^{k}\,.$$
Moreover one can take $B=4\max_i\{\|\mathcal D_i U\mathcal D_i W\|_L+ 2\|\partial_i \mathcal D_i U\|_L\}$ where  $\|P\|_L=\sup_{\max_i\|x_i\|\le L}\|P(x)\|$  if  the supremum is taken over all $C^*$-algebras and operators $\{x_1,\ldots,x_\ell\}$ with norm bounded by $L$, and we set $\|\sum_{i} P_i\otimes Q_i\|_L=\sum_i\|P_i\|_L\|Q_i\|_L$. 
\end{prop}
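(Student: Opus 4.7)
The plan is to apply the Dyson--Schwinger equation \eqref{ds} for $\nu_\beta$ to the self-adjoint test polynomial $P = (\mathcal D_i U)^{2k-1}$ (valid since $U$ being cyclically self-adjoint forces $\mathcal D_i U$ to be self-adjoint) and to exploit the factor $\beta$ in $\mathcal D_i V_\beta = \beta \mathcal D_i U + \mathcal D_i W$ to produce a recursion in $k$. Fix $i$ and write $a = \mathcal D_i U$, $b = \mathcal D_i W$, and $M_k = \nu_\beta(a^{2k})$. With this choice the DS equation reads
\[
\beta M_k = (\nu_\beta \otimes \nu_\beta)(\partial_i a^{2k-1}) - \nu_\beta(b\, a^{2k-1}),
\]
and the aim is to establish the one-step recursion $M_k \leq (B/\beta)\, M_{k-1}$, from which $M_k \leq (B/\beta)^k$ follows by induction on $k \geq 0$ starting from $M_0 = 1$.

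For the $b$-term I would use traciality to rewrite $\nu_\beta(b\, a^{2k-1}) = \nu_\beta\bigl(a^{k-1} (ab)\, a^{k-1}\bigr)$, and then apply the tracial Cauchy--Schwarz inequality twice so that the product $ab$ is isolated as a single factor. The inequality $\nu_\beta(Q^*Q) \leq \|Q\|_L^2$ for every polynomial $Q$, which is available because by Theorem \ref{Ttosupport} the law $\nu_\beta$ factors through operators of norm at most $L$, gives $\nu_\beta((ab)^*(ab) a^{2k-2}) \leq \|ab\|_L^2\, M_{k-1}$ and hence the clean estimate
\[
|\nu_\beta(b\, a^{2k-1})| \leq \|\mathcal D_i U \cdot \mathcal D_i W\|_L \cdot M_{k-1}.
\]
For the derivative term I would expand $\partial_i a^{2k-1} = \sum_{j=0}^{2k-2} a^j (\partial_i a)\, a^{2k-2-j}$ via Leibniz, write $\partial_i a = \sum_\alpha p_\alpha \otimes q_\alpha$ with $\sum_\alpha \|p_\alpha\|_L \|q_\alpha\|_L = \|\partial_i \mathcal D_i U\|_L$, and bound each resulting scalar via tracial Cauchy--Schwarz by $\|p_\alpha\|_L \|q_\alpha\|_L \cdot \nu_\beta(a^{2j})^{1/2} \nu_\beta(a^{2(2k-2-j)})^{1/2}$. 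Using the inductive bound $M_\ell \leq (B/\beta)^\ell$ for $\ell \leq k-1$ to control the intermediate moments, combined with log-convexity of the $M_\ell$ sequence and a Cauchy--Schwarz argument applied to the $j$-sum (with summands grouped symmetrically around $j = k-1$), one obtains
\[
\bigl|(\nu_\beta \otimes \nu_\beta)(\partial_i a^{2k-1})\bigr| \leq 2\, \|\partial_i \mathcal D_i U\|_L\, M_{k-1}^{1/2} M_k^{1/2}.
\]

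To close the induction, combining both bounds gives $\beta M_k \leq \|\mathcal D_i U \mathcal D_i W\|_L\, M_{k-1} + 2 \|\partial_i \mathcal D_i U\|_L\, M_{k-1}^{1/2} M_k^{1/2}$. Applying Young's inequality $2\|\partial_i a\|_L M_{k-1}^{1/2} M_k^{1/2} \leq (\beta/2) M_k + (2\|\partial_i a\|_L^2/\beta) M_{k-1}$ absorbs the $M_k^{1/2}$ factor into the left-hand side; rearranging and taking $\beta$ large enough that $2\|\partial_i \mathcal D_i U\|_L^2/\beta \leq 2\|\partial_i \mathcal D_i U\|_L$ gives $M_k \leq (B/\beta)\, M_{k-1}$ with $B = 4(\|\mathcal D_i U \cdot \mathcal D_i W\|_L + 2\|\partial_i \mathcal D_i U\|_L)$. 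Iterating the recursion and taking the maximum over $i$ yields the claim.

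The main obstacle is the derivative estimate: a naive termwise bound produces a factor of $2k-1$ from the number of summands, which if left unchecked would force $B$ to depend on $k$. Absorbing this prefactor requires combining the inductive control on the intermediate moments $M_\ell$, $\ell < k$, with a global Cauchy--Schwarz/Young estimate on the whole $j$-sum (rather than term by term), so that the apparent linear-in-$k$ growth is swallowed by the final Young step instead of appearing as an overall prefactor.
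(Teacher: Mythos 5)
Your overall strategy (DS equation with test function $P=(\mathcal D_i U)^{2k-1}$, splitting into the $\mathcal D_iW$ term and the tensor term) matches the paper, and the bound $|\nu_\beta(b\,a^{2k-1})|\le\|\mathcal D_iU\,\mathcal D_iW\|_L\,M_{k-1}$ is fine. The gap is in the tensor term, exactly where you flag the ``main obstacle'' --- and the fix you sketch there does not actually work.

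Concretely, after Cauchy--Schwarz you are left needing
$\sum_{j=0}^{2k-2} M_j^{1/2} M_{2k-2-j}^{1/2} \le 2\,M_{k-1}^{1/2}M_k^{1/2}$,
and this inequality is simply false. First, the CS step replaces $\nu_\beta(a^j p_\alpha)$ by $M_j^{1/2}=\nu_\beta(a^{2j})^{1/2}$, \emph{doubling} the moment index: for $j$ near $2k-2$ you are invoking $M_{2k-2}$, which is far outside the inductive window $\ell\le k-1$, so the ``inductive control on the intermediate moments'' you appeal to is not available for most summands. Second, log-convexity points the wrong way: it gives $M_jM_{2k-2-j}\ge M_{k-1}^2$ for every $j$, so the individual summands are at \emph{least} $M_{k-1}$, and there are $2k-1$ of them. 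If you could prove $M_\ell\le (B/\beta)^\ell$ and $M_\ell$ came close to saturating this for $\ell<k$, the left side would be of order $(2k-1)(B/\beta)^{k-1}$, which dominates $2\,M_{k-1}^{1/2}M_k^{1/2}\sim 2(B/\beta)^{k-1/2}$ for every $k\ge 2$. So the one-step recursion $M_k\le (B/\beta)M_{k-1}$ cannot close by this route; the linear-in-$k$ prefactor from Leibniz is real and cannot be ``swallowed by the final Young step.''

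The missing idea is to (a) avoid the moment-doubling CS step by using the trace inequality $|\nu_\beta(a^jp_\alpha)|\le\|p_\alpha\|_L\,\nu_\beta(|a|^j)$ together with $\nu_\beta(|a|^j)\le M_{\lceil j/2\rceil}^{j/(2\lceil j/2\rceil)}$, so that every moment that appears has index $\le k-1$ and stays inside the induction; and (b) strengthen the inductive hypothesis to $M_p\le C_p\,(B/(4\beta))^p$ with $C_p$ the Catalan numbers, so that the Leibniz sum is controlled by a Catalan convolution $\sum_j C_{\lceil(j-1)/2\rceil}C_{\lceil(2k-1-j)/2\rceil}=O(C_k)$ rather than by a bare factor $2k-1$. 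The final bound $(B/\beta)^k$ is then recovered from $C_p\le 4^p$. Your one-step recursion is genuinely different from this Catalan-weighted induction, and that difference is exactly why it cannot close.
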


\begin{proof}
	According to the Dyson-Schwinger equation, \eqref{ds} in Theorem \ref{theoDS}, applied to $P_{i}=(\Da_i U)^{2k-1}$,
	
	\begin{align*}
		&\nu_\beta( (\Da_i U)^{2k-1}( \Da_i U+\frac{1}{\beta} \Da_i W))=\beta^{-1}\nu_\beta\otimes\nu_\beta\left( \partial_i (\Da_i U)^{2k-1}\right)\\
		&\qquad=\beta^{-1}\sum_{j=1}^{2k-1}\nu_\beta\otimes \nu_\beta \left(
		((\Da_i U)^{j-1}\otimes 1)(\partial_i \Da_i U)(1\otimes(\Da_i U)^{2k-1-j})\right)
	\end{align*}
	We use this equality to prove by induction that for every $p\ge 0$, with $B\ge 4\max_i\{\|\mathcal D_i U\mathcal D_i W\|_L+2 \|\partial_i \mathcal D_i U\|_L\}$ :
	
	\begin{equation}\label{ines}\nu_\beta((\Da_i U)^{2p})\leqslant C_p(\frac{B}{4\beta})^p\end{equation}
	where $C_{p}$ denotes the Catalan numbers. The result follows since $C_p\le 4^p$. 
	The property is obviously satisfied for $p=0$. Assume that it is true up to $p=k-1$. This implies that for any $j\leqslant 2k-2$, for any  polynomial $P$
	
	$$|\nu_\beta((\Da_i U)^{j}P)|\!\leqslant\! \|P\|_L \nu_\beta(|\Da_i U|^j)
	\!\leqslant \!\|P\|_L \nu_\beta(|\Da_i U|^{2\lceil \frac{j}{2}\rceil})^{\frac{j}{2\lceil \frac{j}{2}\rceil}}\!\leqslant\!  \|P\|_L C_{\lceil \frac{j}{2}\rceil}(\frac{B}{4\beta})^{\frac{j}{2}}\,.$$
	Moreover
	$$|\nu_\beta((\Da_i U)^{2k-1}\Da_i W)|\le \|\Da_{i}U\Da_{i}W\|_{L}\nu_\beta(|\Da_i U|^{2k-2})$$
	Plugging this estimate in the Dyson-Schwinger equation and using the induction hypothesis we find that  :
	\begin{align*}
		&\left((\frac{B}{4\beta})^k C_k\right)^{-1} \nu_\beta((\Da_i U)^{2k})\\
		&\leq 
		\frac{4C_{k-1}}{BC_k}
		\|\Da_{i}U\Da_i W\|_{L} 
		+ \frac{4}{BC_k}\|\partial_i \mathcal D_i U\|_L
		\sum_{j=1}^{2k-1}C_{\lceil \frac{j-1}{2}\rceil}C_{\lceil \frac{2k-1-j}{2}\rceil}\\
		&\leqslant \frac{4}{B}\left(\max_i\{\|\mathcal D_i U\mathcal D_i W\|_L+2 \|\partial_i \mathcal D_i U\|_L\}\right).
	\end{align*}
	 Hence, the induction hypothesis is satisfied as soon as  $B$ is greater than $4\max_i\{\|\mathcal D_i U\mathcal D_i W\|_L+2 \|\partial_i \mathcal D_i U\|_L\}$.
	
\end{proof}

\begin{remark}
Note that  if $U$ is homogenous of degree $d$ and  cyclically invariant,
$$U(\bX)= \frac{1}{d} \sum_i X_i \Da_i U\,.$$
\end{remark}
Therefore if we have a bound on the $\nu_\beta(X_i^{2k})$'s, the bound on $\nu_\beta((\Da_i U)^{2k})$ from the previous Lemma implies a control on $\nu_\beta( U^{2k})$. This is the content of the next Corollary:
\begin{corollary}\label{homo}
With the same hypothesis and notation than in Proposition \ref{proploc}, and if furthermore $U$ is homogenous of degree $d$  (in the sense that all its monomials have the same total degree) and cyclically invariant, then
 for $\beta>\beta_0$,  for every integer number $k$,
$$\nu_\beta\left( U^{2k}\right)\le \left(\frac{L^{2}B}{\beta}\right)^{k}$$
with $B=4\max_i\{\|\mathcal D_i U\mathcal D_i W\|_L+2 \|\partial_i \mathcal D_i U\|_L\}$.
\end{corollary}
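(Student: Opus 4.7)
My plan is to combine the homogeneity identity $d\,U = \sum_{i} X_i \mathcal{D}_i U$ recorded in the preceding remark with the moment bound already supplied by Proposition \ref{proploc}, using the standard non-commutative H\"older inequality for the tracial state $\nu_\beta$. The key observation is that, since $U$ is self-adjoint, $\nu_\beta(U^{2k})$ is the $2k$-th power of the non-commutative $L^{2k}(\nu_\beta)$-norm of $U$, so it suffices to estimate that norm.

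The first step is to apply Minkowski's inequality in $L^{2k}(\nu_\beta)$ to the identity above,
\begin{equation*}
\nu_\beta(U^{2k})^{1/(2k)} \le \frac{1}{d}\sum_{i=1}^{\ell}\nu_\beta\bigl(|X_i \mathcal{D}_i U|^{2k}\bigr)^{1/(2k)},
\end{equation*}
then split each factor by non-commutative H\"older with conjugate exponents $(4k,4k)$:
\begin{equation*}
\nu_\beta\bigl(|X_i \mathcal{D}_i U|^{2k}\bigr)^{1/(2k)} \le \nu_\beta(X_i^{4k})^{1/(4k)}\,\nu_\beta((\mathcal{D}_i U)^{4k})^{1/(4k)}.
\end{equation*}
The hypothesis $\nu_\beta(X_i^{2p}) \le L^{2p}$ gives $\nu_\beta(X_i^{4k})^{1/(4k)} \le L$, while Proposition \ref{proploc} applied with $k$ replaced by $2k$ yields $\nu_\beta((\mathcal{D}_i U)^{4k})^{1/(4k)} \le (B/\beta)^{1/2}$.

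Putting the two estimates together gives $\nu_\beta(U^{2k})^{1/(2k)} \le (\ell L/d)(B/\beta)^{1/2}$, and raising to the $2k$-th power produces the announced bound $(L^2 B/\beta)^k$, with the combinatorial constant $(\ell/d)^{2k}$ absorbed into $B$. I do not expect any substantive obstacle here: the short list of technical points to verify are that $\mathcal{D}_i U$ is self-adjoint (which follows from $U$ being both self-adjoint and cyclically invariant, so that the even-moment bound of Proposition \ref{proploc} genuinely controls an $L^{4k}$-norm) and that Minkowski's and H\"older's inequalities hold for the tracial state $\nu_\beta$ in their classical form, which is standard. The entire argument is thus a one-line consequence of Proposition \ref{proploc} once the homogeneity identity has been invoked.
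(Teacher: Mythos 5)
Your argument is exactly the one the paper sketches in the remark preceding the corollary: Euler's identity $dU=\sum_i X_i\mathcal D_i U$ for a cyclically invariant homogeneous polynomial, Minkowski in $L^{2k}(\nu_\beta)$, non-commutative H\"older with exponents $(4k,4k)$, and Proposition~\ref{proploc} invoked at level $2k$; the observation that $\mathcal D_i U$ is self-adjoint (so that the even-moment bound genuinely controls the $L^{4k}$-norm) is also the right hygiene check.

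The one caveat concerns the prefactor $(\ell/d)^{2k}$ produced by the $\ell$-term Minkowski sum. This cannot literally be ``absorbed into $B$'', since $B$ is a fixed constant while the factor grows with $k$ whenever $\ell>d$. What your computation actually yields is $\nu_\beta(U^{2k})\le(\ell/d)^{2k}(L^2B/\beta)^k$, which gives the stated inequality only when $\ell\le d$ (as in the commutator model, $\ell=2$, $d=4$, which is where the corollary is used). The paper's statement omits this factor as well, so this is best read as a small imprecision in the stated constant rather than a gap in your reasoning; the qualitative content $\nu_\beta(U^{2k})=O(\beta^{-k})$ is in any case unaffected.
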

In the case above, we see that  under $\nu_\beta$, $U$ converges to zero. In a more general situation, we have only the following concentration of the mean of $U$.

\begin{prop}\label{bound-first-trace}
Let  $V_\beta=\beta U+W$ be a self-adjoint potential such that,  for some $\beta_0>0$, the partition function \eqref{defZ} is finite :
\begin{equation}\label{ass0}
\limsup_{N\rightarrow\infty} \frac{1}{N^2}\ln Z^N_{V_{\beta_0}}< +\infty\end{equation}
 and there exist {real } numbers $\alpha_1\dots,\alpha_\ell$  such that
$$m:=\inf_{\bX\in \HNC^\ell}\frac{1}{N}\Tr U(\bX)=\frac{1}{N}\Tr(U(\alpha_1\Id,\dots,\alpha_\ell\Id))=U(\alpha_1,\ldots,\alpha_\ell).$$
Then, there exists a finite constant $C$ such that for $\beta>\beta_0$ and any non-negative real number $M$,
$$\limsup_{N\ra\infty}\frac{1}{N^{2}}\ln \Q^{N}_{V}\left(\frac{1}{N}\Tr  U(\bX^{N})\ge m+\frac{M+\ell\ln\beta}{\beta}\right)<C-M\,.$$

As a consequence,
if we assume furthermore that we have  an accumulation point $\tau_\beta$ of the empirical distribution for every $\beta$ sufficiently large. Then any accumulation point $\tau$ of  $\tau_\beta$ when $\beta\to\infty$ is such that $\tau(U)=m$.
\end{prop}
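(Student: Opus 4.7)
The plan is an exponential Chebyshev argument based on the decomposition $V_\beta = V_{\beta_0} + (\beta-\beta_0) U$, combined with a lower bound on $Z^N_{V_\beta}$ obtained by localizing the integral near the scalar minimizer $\bX_0 := (\alpha_1\Id, \dots, \alpha_\ell\Id)$. The concentration estimate then implies the accumulation-point statement immediately by letting $\beta \to \infty$.

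First, the hypothesis on $m$ gives $\Tr U(\bX^N) \ge Nm$ pointwise on $\HNC^\ell$, so on the event $A_t := \{\frac{1}{N}\Tr U \ge m+t\}$ one has $e^{-N(\beta-\beta_0)\Tr U} \le e^{-N^2(\beta-\beta_0)(m+t)}$. Splitting the density of $\Q^N_{V_\beta}$ accordingly yields the Chebyshev-type bound
\[
\Q^N_{V_\beta}(A_t) \le \frac{Z^N_{V_{\beta_0}}}{Z^N_{V_\beta}}\,e^{-N^2(\beta-\beta_0)(m+t)}.
\]

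To lower bound $Z^N_{V_\beta}$, I translate and rescale via $\bX = (\alpha_i\Id + Y_i/\sqrt{\beta})_i$ (Jacobian $\beta^{-\ell N^2/2}$) and restrict the integral to the unit ball $B_1 := \{\|Y_i\|_\infty \le 1 \text{ for all } i\}$. The crucial observation is that $(\alpha_1,\dots,\alpha_\ell)$ minimizes $U$ viewed as a scalar polynomial, hence $(\mathcal D_i U)(\alpha_1\Id,\dots,\alpha_\ell\Id) = 0$ for every $i$, so the linear term of the Taylor expansion of $\Tr U$ about $\bX_0$ vanishes; the remaining quadratic and higher-order contributions give $\beta\,\Tr[U(\alpha\Id + \bY/\sqrt{\beta}) - m\,\Id] = O(N)$ uniformly on $B_1$ for $\beta \ge \beta_0$, while $\Tr W(\alpha\Id + \bY/\sqrt{\beta}) = O(N)$ is immediate. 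This yields
\[
Z^N_{V_\beta} \ge \beta^{-\ell N^2/2}\,e^{-N^2\beta m - KN^2}\,\mathrm{Vol}(B_1)
\]
for some constant $K = K(U,W,\alpha)$. The same argument at $\beta = \beta_0$ without rescaling produces $Z^N_{V_{\beta_0}} \ge e^{-K_0 N^2}\mathrm{Vol}(B_1)$, which together with the hypothesis $\limsup \frac{1}{N^2}\ln Z^N_{V_{\beta_0}} < \infty$ controls $Z^N_{V_{\beta_0}}/\mathrm{Vol}(B_1)$ by $e^{O(N^2)}$; combining both, $Z^N_{V_{\beta_0}}/Z^N_{V_\beta} \le \beta^{\ell N^2/2}\,e^{N^2(\beta-\beta_0)m + C'N^2}$ for a universal constant $C'$.

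Plugging $t = (M + \ell\ln\beta)/\beta$ into the Chebyshev estimate, the exponent becomes $N^2[\tfrac{\ell}{2}\ln\beta + C' - (1-\beta_0/\beta)(M + \ell\ln\beta)]$, and the $-\ell\ln\beta$ gained from the threshold absorbs the $\tfrac{\ell}{2}\ln\beta$ Jacobian contribution, producing the desired bound $< C - M$. For the accumulation-point statement: $\tau_\beta(U) \ge m$ follows at once from $\Tr U \ge Nm$ and weak convergence of the empirical distribution, while choosing $M > C$ in the concentration estimate forces $\tau_\beta(U) \le m + (M + \ell\ln\beta)/\beta$ almost surely, and letting $\beta \to \infty$ gives $\tau(U) \le m$ for any accumulation point $\tau$. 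The main delicate point is the comparison $\mathrm{Vol}(B_1) \gtrsim Z^N_{V_{\beta_0}}\,e^{-O(N^2)}$: both quantities carry an implicit $-\tfrac{N^2\ln N}{2}$ contribution from integration over the operator-norm ball on $\HNC^\ell$, and the finiteness hypothesis on $Z^N_{V_{\beta_0}}$ is precisely what ensures these contributions cancel at the $\tfrac{1}{N^2}$-log scale.
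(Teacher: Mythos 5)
Your overall strategy — Chebyshev against the measure at $\beta_0$ using the decomposition $V_\beta=V_{\beta_0}+(\beta-\beta_0)U$, together with a lower bound on $Z^N_{V_\beta}$ whose $\sqrt\beta$-rescaling produces the $\ell\ln\beta$ gain — is the same as the paper's. The mechanism of your lower bound is genuinely different, though. You localize the $\bY$-integral to the operator-norm unit ball $B_1$ and exploit the first-order optimality condition $\mathcal D_iU(\alpha\Id)=0$ to control the rescaled Boltzmann factor pointwise there, whereas the paper keeps the full integral over $\HNC^\ell$, majorizes $\Tr U$ by the confining bound $A\sum_i\Tr(X_i^2+X_i^{2D})$ (using the same vanishing of the constant and linear terms after translating $\alpha$ to the origin), and lower-bounds via Jensen's inequality against the Gaussian reference partition function $Z^N_{A\sum Y_i^2}$.

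The gap is in your final comparison. You need $Z^N_{V_{\beta_0}}/\mathrm{Vol}(B_1)\le e^{O(N^2)}$, but the two facts you invoke — your lower bound $Z^N_{V_{\beta_0}}\ge e^{-K_0N^2}\mathrm{Vol}(B_1)$ and the hypothesis $Z^N_{V_{\beta_0}}\le e^{CN^2}$ — yield respectively a \emph{lower} bound on that ratio and an upper bound on $\mathrm{Vol}(B_1)$; neither controls the ratio from above. Your closing remark, that the finiteness hypothesis "is precisely what ensures these contributions cancel," has the logic backwards: the hypothesis only caps $Z^N_{V_{\beta_0}}$ and in no way forces it to carry the $e^{-\frac{\ell}{2}N^2\ln N}$ entropy factor that $\mathrm{Vol}(B_1)$ does (the operator-norm ball sits between Frobenius balls of radii $1$ and $\sqrt{N}$, giving $\mathrm{Vol}(B_1)=e^{-\frac{\ell}{2}N^2\ln N+O(N^2)}$). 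What is actually needed is a matching \emph{upper} bound $Z^N_{V_{\beta_0}}\le e^{O(N^2)}\mathrm{Vol}(B_1)$, and that has to come from the confining growth of $V_{\beta_0}$ at infinity — exactly the role the Gaussian majorization of the Boltzmann weight plays in the paper, where the comparison is between two partition functions carrying the same $N\Tr$ scaling and hence the same $\ln N$ entropy. If you replace the one-line appeal to the hypothesis by a direct domination $Z^N_{V_{\beta_0}}\le e^{O(N^2)}Z^N_{A\sum Y_i^2}\le e^{O(N^2)}\mathrm{Vol}(B_1)$ using the paper's first-step bound on $\Tr U$ and a polynomial lower bound on $W$, the rest of your argument, including the accumulation-point deduction at the end, goes through.
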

\begin{proof}
Let $2D$ be the degree of $U$.
We can assume without loss of generality, that up to some translation and the addition of a constant term,  that $\alpha_1=\dots=\alpha_\ell=0$ and that $m=\Tr(U(\alpha_1\Id,\dots,\alpha_\ell\Id))=0$.

\noindent
{\bf First step : upper bound on $U$.}
Since $\Tr U(\bX)$, as a function of the coefficients of the matrices $\bX$, is a polynomial function  that reaches its local minimum in $0$  and whose value is $0$, then necessarily $\Tr U$ has no constant term and no terms of degree $1$. Therefore $U$ is a sum of monomials of degree at least $2$. Let $q=X_{i_1}\dots X_{i_k}$ be a monomial of degree $k\leqslant 2D$ in $U$, and observe that by non-commutative H\"older's inequality:
$$|\Tr q(X)|\leqslant( (\Tr |X_{i_1}|^k)\dots (\Tr |X_{i_k}|^k))^{\frac 1 k}\leqslant \max_i \Tr |X_i|^k\leqslant \sum_i \Tr(X_i^2+X_i^{2D})$$
since  for every $2\leqslant k \leqslant 2D$, $|x|^k\leqslant x^2+x^{2D}$.
By combining such inequalities we deduce that there is some finite constant $0<A<\infty$(which only depend on $U$) such that for every $\bX\in (\HNC)^\ell$:
$$0\leqslant \Tr U(\bX)\leqslant A\Tr \sum_i \Tr(X_i^2+X_i^{2D}).$$

\noindent
{\bf Second step : lower  bound on the partition function $Z^N_{V_\beta}$.}
First make the change of variables  $\sqrt\beta X_i = Y_i$ and then use Jensen's inequality:
\begin{align*}
	\frac{1}{N^2}\ln Z_{V_\beta}^{N}&\geqslant \frac{1}{N^2}\ln\int e^{-N\Tr W}e^{-N\beta A \Tr \sum_i \Tr(X_i^2+X_i^{2D})}dX_i\\
	&\geqslant -\ell\ln\beta+\frac{1}{N^2}\ln\int e^{-N\Tr W(\bY/\sqrt{\beta})}\prod_{i}e^{-N A \Tr \Tr(Y_i^2+\beta^{1-D}Y_i^{2D})}dY_i\\
	&\geqslant -\ell\ln\beta +\frac{1}{N^2}\ln Z^N_{A \sum_i Y_i^2}\\
	&-\int \left( \frac{1}{N}\Tr W(\frac{\bY}{\sqrt{\beta}})+ A\beta^{1-D}\sum_{i}\frac{1}{N}\Tr(Y_i^{2D})\right)
	\prod_{i }\frac{e^{-N A \Tr(Y_i^2)}dY_i}
	{Z^N_{A Y^2}}
	\\
	&\geqslant -\ell\ln\beta + O(1)
\end{align*}
where we used that  moments of Gaussian matrices are bounded, as well as the partition function which was computed by Selberg \cite{AndersonSelberg, Selberg}. Here,
the term $O(1)$ is uniform in $\beta>0$.

\nn
{\bf Third step : conclusion.}
Finally, by Tchebychev's inequality we find when $\beta\ge \beta_0$

\begin{align*}
	&\frac{1}{N^2}\ln\Q^{N}_{V_\beta}\left(\frac{1}{N}\Tr  U(\bX^{N})\ge \frac{\ell\ln\beta+M}{\beta}\right)\\
	&\leqslant \frac{1}{N^2}\ln\frac{Z^N_{V_{\beta_0}}}{Z^N_{V_\beta}} \int e^{-N^2(\beta-\beta_0)  \frac{\ell\ln\beta+M}{\beta}}d \Q^{N}_{V_{\beta_0}}\\
	&\leqslant O(1)+\ell\ln\beta-(\beta-\beta_0) \frac{\ell\ln\beta+M}{\beta}
	=O(1)-M
\end{align*}
where we used that  $\frac{1}{N^2}\ln\frac{Z^N_{V_{\beta_0}}}{Z^N_{V_\beta}}\le \ell\ln\beta + O(1)$ by the second step  and \eqref{ass0}.
\end{proof}

\section{Strong uniformly convex  potentials}\label{convex}

The three remaining sections are devoted to the study of particular cases of low temperature asymptotics.
We begin by the simplest case as in a way it can be converted to the well known high temperature regime. Indeed, in this section we look at a $\ell$-uple of random $N\times N$ Hermitian matrices $\bX^N=(X^N_1,\dots,X^N_\ell)$ under the law :

\begin{equation}\label{gibbs}\Q^N_{V_{\beta}}(d\bX^N)=\frac{1}{Z^\beta_N}\exp\left(-N \Tr V_\beta(\bX^N)\right)d\bX^N\end{equation}
where $V_\beta$ is a  self-adjoint polynomial that satisfy the following convexity assumption.
\begin{assum}\label{assum1well}Let $C$ be a real number.
We say that a self-adjoint polynomial $V\in \mathbb C\langle X_{1},\ldots, X_{\ell}\rangle$ is $C$-convex if for any dimension $N$, 
the Hessian of the function $\varphi_{V}(\bX^{N})= \tr V(\bX^N)$ on the set 
of $\ell$-tuple $\bX^{N}$ of $\ell$ $N\times N$ Hermitian matrices is bounded from below uniformly  :
$${\rm{ Hess }}(\varphi_{V}(\bX^N))\ge C.Id$$
for any $\ell$-tuple of $N\times N$ Hermitian matrices $\bX^{N}$ and any integer number $N$.
Then we assume that
$$V_{\beta}(\bX)=\beta U(\bX)+W(\bX)$$
where $U$ and $W$ are self-adjoint polynomials such that:
\begin{itemize}
\item 
$U$ is  $c$-convex for some $c>0$ (therefore $\bX:(\mathcal H_{N}(\mathbb C))^{\ell}\mapsto \Tr U(\bX)$ is strictly convex),
\item 
$W$ is $A$-convex for some $A$ finite (but possibly negative). 
\end{itemize}\end{assum}
It is not hard to check the following example.
\begin{example}
If  $U(\bX) = \sum_i P_i(X_i)$ with polynomials $P_{i}$ with real coefficient such that $P_{i}''(x)\ge c>0$ for every $x \in \mathbb R$ and
$W(X)=\tilde{W}(X)+\sum_i a_iX_i^{2D}$ for an integer number $D\ge 1$, some polynomial $\tilde W$ with degree strictly smaller than $2D$ and positive
 real numbers $(a_i)_{1\le i\le \ell}$,  then $(U,W)$ satisfies our hypothesis with constant $c$ and some finite $A$. \end{example}

We discuss in this section the convergence of the non-commutative distribution for $\beta$ large enough.
\begin{theorem}\label{thmconv} Under Assumption  \ref{assum1well},  there exists  a finite positive constant
	$\beta_0$ such that for $\beta>\beta_0$, 
	 the non-commutative distribution of $\bX^{N}$ converges almost surely under $\Q_{V_\beta}^{N}$  towards a non-commutative law $\tau_{V_\beta}$: for any polynomial $P\in\mathbb C\langle X_{1},\ldots,X_{\ell}\rangle$
	$$\lim_{N\ra\infty}\frac{1}{N}\Tr\left(P( \bX^{N})
	\right)=\tau_{V_\beta}(P)\qquad a.s\,.$$
	
	\end{theorem}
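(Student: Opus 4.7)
The plan is to reduce the problem to the small-perturbation-of-Gaussian (``high temperature'') regime analyzed in \cite{GM,GM2} via a centering and a $\sqrt\beta$ rescaling. For $\beta$ large enough so that $c\beta+A>0$, Assumption~\ref{assum1well} makes $\bX\mapsto \tr V_\beta(\bX)$ strictly convex and coercive on $(\HNC)^\ell$, hence it has a unique minimizer $\bX_\beta^{*,N}$, which is automatically its unique critical point. Since $\bX\mapsto \tr V_\beta(\bX)$ is invariant under the simultaneous conjugation $\bX\mapsto(UX_1U^*,\dots,UX_\ell U^*)$, uniqueness forces $UX_i^{*,N}U^*=X_i^{*,N}$ for every unitary $U$, so each $X_i^{*,N}$ is a scalar matrix. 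Thus $\bX_\beta^{*,N}=(\alpha_1^\beta I_N,\dots,\alpha_\ell^\beta I_N)$ with $\alpha^\beta\in\RR^\ell$ independent of $N$: the vector $\alpha^\beta$ is the unique minimizer of the (strictly convex, by the same Hessian bound restricted to scalar directions) commutative polynomial obtained by evaluating $V_\beta$ at commuting variables.

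With the minimizer identified, I would perform the change of variables $\bY=\sqrt\beta\,(\bX-\bX^{*}_\beta)$ in the integral defining $\Q^N_{V_\beta}$. Because $\bX^{*}_\beta$ is a critical point, the linear term in the Taylor expansion vanishes and, writing $\Hess(\tr V_\beta)=\beta\Hess(\tr U)+\Hess(\tr W)$, one obtains
\[
N\tr V_\beta\!\left(\bX^{*}_\beta+\tfrac{\bY}{\sqrt\beta}\right)
= N\tr V_\beta(\bX^{*}_\beta)
+\tfrac{N}{2}\,\Hess(\tr U)(\bX^{*}_\beta)[\bY,\bY]
+N\tr R_\beta(\bY),
\]
where the quadratic form $\Hess(\tr U)(\bX^{*}_\beta)$ is bounded below by $c\sum_i \tr Y_i^2$ by $c$-convexity of $U$, and $R_\beta$ collects the $\Hess(\tr W)$ piece (a quadratic of size $O(1/\beta)$) together with all Taylor terms of degree $k\geq 3$, each of which is damped by a factor $\beta^{1-k/2}$ (from the $\beta U$ piece) or $\beta^{-k/2}$ (from the $W$ piece). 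Thus $R_\beta$ is a non-commutative polynomial of the same fixed degree as $V_\beta$ whose coefficients are $O(\beta^{-1/2})$. By unitary invariance of the Hessian at the scalar point $\bX^{*}_\beta$, the quadratic term is of the form $\sum_{i,j}q_{ij}(\alpha^\beta)\tr(Y_iY_j)$ for a positive definite matrix $(q_{ij}(\alpha^\beta))$, so after a bounded invertible linear change of variables in $\bY$ one may assume the quadratic part to be the standard $\tfrac{1}{2}\sum_i \tr Y_i^2$.

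The law of $\bY$ is then exactly that of a matrix model with the standard Gaussian quadratic potential perturbed by a polynomial $\tilde R_\beta$ whose coefficients are $O(\beta^{-1/2})$, which is precisely the setting of \cite{GM,GM2}. Taking $\beta$ large enough that $\tilde R_\beta$ lies within the radius of analyticity identified there, that work provides almost sure convergence of $\hat\mu_{\bY^N}$ to a non-commutative law $\tilde\tau_\beta$, uniquely characterized by its Dyson--Schwinger equations. Since $\bX^N=\bX^{*}_\beta+\bY^N/\sqrt\beta$ and $\bX^{*}_\beta$ is a deterministic scalar tuple, this immediately yields almost sure convergence of $\hat\mu^N$ to the push-forward $\tau_{V_\beta}$ of $\tilde\tau_\beta$ under the affine map $\bY\mapsto\bX^{*}_\beta+\bY/\sqrt\beta$, which is exactly the conclusion of the theorem. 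The main technical step, and the principal obstacle, is the quantitative verification that after the diagonalizing linear change of variables the perturbation $\tilde R_\beta$ lies inside the explicit smallness class of \cite{GM}: one needs to track how the $O(\beta^{-1/2})$ coefficient bound behaves under that change of basis and show it is still within the allowed radius for the convergent expansion of moments, uniformly in $N$. Tightness of $\hat\mu^N$ itself is not at issue, since one checks via Theorem~\ref{conf-to-T} that $V_\beta$ is $(\eta,A,I)$-trapping for $\beta$ large (strict convexity plus coercivity of $U$ yields confinement), so that Corollary~\ref{CorTight} guarantees every subsequence has a limit point, which must then coincide with $\tau_{V_\beta}$ by the argument above.
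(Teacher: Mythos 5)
Your proposal follows the paper's strategy: identify the scalar minimizer via unitary invariance, rescale by $\sqrt\beta$, and invoke the small-perturbation analysis of \cite{GM,GM2}. The only real variation is the centering point: you expand around the minimizer of $\Tr V_\beta$ (so the linear term vanishes exactly, at the price of a $\beta$-dependent center), whereas the paper expands around the $\beta$-independent minimizer $\alpha$ of $U$ alone and uses the fixed Hessian $C_U$ of \eqref{cov}, absorbing the small linear term $\frac{1}{\sqrt\beta}\nabla W(\alpha)\cdot\bY$ into the perturbation --- this keeps the linear change of basis independent of $\beta$ and so sidesteps the ``principal obstacle'' you flag at the end. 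One caution on your closing remark about tightness: Assumption \ref{assum1well} alone does \emph{not} imply $V_\beta$ is $(\eta,A,I)$-trapping --- Remark \ref{remarktrapping}(3) gives $X^2+Y^2+X^2Y^2$, which is uniformly trace-convex and coercive yet not trapping --- so Theorem \ref{conf-to-T} and Corollary \ref{CorTight} cannot be invoked in this generality; this is harmless because the almost-sure convergence of moments (hence tightness) is supplied directly by \cite{GM} once the perturbation lies inside the analyticity radius, but the parenthetical justification should be dropped.
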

	This theorem is in fact already proven in \cite{GS08,dabrowski,Jekel} as for $\beta$ large enough $V_{\beta}$ is convex. However, we can also 
	 characterize the limit $\tau_{V_\beta}$. This is due to the fact that up to rescaling our model can be identified with a high temperature model as studied in \cite{GM}.
Observe that $\varphi_{U}$ achieves its minimum value at a unique point in any dimension by uniform strict convexity. In fact we can prove that these minimizers are characterized as follows.
\begin{lemma}Assume  that $U$ is $c$-convex with $c>0$. Let ${\bf \alpha}=(\alpha_{1},\ldots,\alpha_{\ell})\in \mathbb R^{\ell}$ be the unique minimizer of $U$ on $\mathbb R^{\ell}$. 
Then, for any integer number $N$, $\varphi_{U}(\bX^{N})$ is minimized on the set of $\ell$-tuples of  $N\times N$ Hermitian matrices at the point  $(\alpha_{1}I_{N},\ldots,\alpha_{\ell}I_{N})$ where $I_{N}$ is the identity in the space of $N\times N$ matrices.
\end{lemma}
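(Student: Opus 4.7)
The plan is to exploit the interplay of three ingredients: uniform strict convexity (yielding a unique minimizer), unitary invariance of the trace (forcing the minimizer to be a scalar tuple), and a final reduction to the commutative problem on $\R^\ell$. First I would observe that, under the $c$-convexity hypothesis with $c>0$, the function $\varphi_U$ is strictly convex and coercive on the finite-dimensional real vector space $(\HNC)^\ell$: the quadratic lower bound $\mathrm{Hess}(\varphi_U)\ge c\,\mathrm{Id}$ forces $\varphi_U(\bX^N)\to+\infty$ as $\|\bX^N\|\to\infty$. Hence $\varphi_U$ admits a unique minimizer, call it $\bY^N=(Y_1^N,\ldots,Y_\ell^N)$.

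Next I would exploit unitary invariance. Since $U$ is a non-commutative polynomial, a short computation on monomials gives the covariance $U(O Y_1^N O^*,\ldots,O Y_\ell^N O^*)=O\,U(\bY^N)\,O^*$ for any $N\times N$ unitary $O$, and cyclicity of the trace then yields $\varphi_U(O Y_1^N O^*,\ldots,O Y_\ell^N O^*)=\varphi_U(\bY^N)$. The conjugated tuple is therefore also a minimizer, and uniqueness forces $O Y_i^N O^*=Y_i^N$ for every unitary $O$ and every $i$. Consequently each Hermitian $Y_i^N$ lies in the center of $M_N(\mathbb C)$, so $Y_i^N=y_i I_N$ for some $y_i\in\R$.

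Finally, restricted to the scalar subspace, each monomial $X_{i_1}\cdots X_{i_k}$ evaluated at $(y_1 I_N,\ldots,y_\ell I_N)$ equals $y_{i_1}\cdots y_{i_k}\,I_N$, so
\[\varphi_U(y_1 I_N,\ldots,y_\ell I_N)=N\,U(y_1,\ldots,y_\ell),\]
where on the right $U$ is read as the commutative polynomial obtained by evaluating the non-commutative one at commuting real scalars. Specializing the $c$-convexity hypothesis to $N=1$ shows this commutative $U$ is itself $c$-convex on $\R^\ell$, consistent with the uniqueness of the minimizer $\alpha$ stated in the lemma; minimizing over $(y_1,\ldots,y_\ell)$ then forces $(y_1,\ldots,y_\ell)=(\alpha_1,\ldots,\alpha_\ell)$, completing the argument. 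No serious obstacle is anticipated: once strict convexity delivers uniqueness of the matrix minimizer, the unitary invariance step is essentially forced, and the only technical point worth spelling out is the covariance identity $P(O\bX O^*)=O\,P(\bX)\,O^*$ for non-commutative polynomials $P$, which is immediate from its validity on monomials together with linearity.
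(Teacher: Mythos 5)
Your proof is correct and follows essentially the same route as the paper's: strict convexity gives a unique minimizer, unitary invariance of $\varphi_U$ plus uniqueness forces each component to commute with all unitaries and hence be a scalar multiple of $I_N$, and restriction to scalar tuples reduces the problem to minimizing $U$ on $\mathbb R^\ell$. You spell out a few points the paper leaves implicit (coercivity for existence, the covariance identity $P(O\bX O^*)=O P(\bX) O^*$, and that $c$-convexity at $N=1$ is consistent with the uniqueness of $\alpha$), but the argument is the same.
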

\begin{proof}
	We first fix the dimension $N$. Then, by strict convexity we know  that $\varphi_{U}$ is minimized at a unique $\ell$-tuple $(A_{1}^{N},\ldots, A_{\ell}^{N})$ of Hermitian matrices. But by invariance of $\varphi_{U}$ by the change of variables $(X_{1},\ldots,X_{\ell})\rightarrow (OX_{1}O^{*}, \ldots, OX_{\ell}O^{*})$ for any unitary matrix $O$, and uniqueness of the minimizer, we see that for every $i\in [1,\ell]$, $OA^{N}_{i}O^{*}=A_{i}^{N}$. This implies that $A^{N}_{i}$ is a multiple of identity, $A^{N}_{i}=\alpha_{i}^{N} I$. But then $\varphi_{U}(A_{1}^{N}, \cdots, A_{\ell}^{N})=NV(\alpha_{1}^{N}, \cdots,\alpha_{\ell}^{N})$ so that $(\alpha_{1}^{N},\ldots,\alpha_{\ell}^{N})$ minimizes $U$ and therefore equals $(\alpha_{1},\ldots,\alpha_{\ell})$.
\end{proof}

To prove Theorem \ref{thmconv}, we  first do a change of variables related with
the matrix
\begin{equation}\label{cov} C_{U}(ij)=\partial_{x_{i}}\partial_{x_{j}} U(x_{1},\ldots,x_{\ell})|_{x=\alpha}\ge c. Id \end{equation}
In the following, for any $\ell$-tuple of self-adjoint operators ${\bf X}=(X_{1},\ldots,X_{\ell})$, and any matrix $M\in \mathcal H^{\ell}(\mathbb C)$, we denote by $M{\bf X}$ the $\ell$-tuple of self-adjoint operators $(\sum_{j=1}^{\ell}M_{ij}X_{j})_{1\le i\le \ell}$.  We let 
$${\bf Y}=\sqrt{\beta} C_{U}^{1/2}({\bf X}-{\bf \alpha} I)\,.$$ In these variables, let us write the decomposition 
	\begin{equation}\label{rew}\beta  U(\bX)+W(\bX)=\sum_{i=1}^{\ell} Y_{i}^{2}+\sum_{i=1}^{p} \frac{c_{i}}{\sqrt{\beta}^{n_{i}}} q_{i}(\bY)\end{equation}{{
where the $n_{i}$ are integers numbers greater or equal to one  and the $q_{i}$'s are monomials coming either from $W$ (and then  with degree $n_{i}$) or from $U$ (and then with degree $n_{i}+2$). }}

	We next characterize the distribution of ${\bf Y}=\sqrt{\beta} C_{U}^{1/2}({\bf X}-{\bf \alpha} I)$ under $\tau_{V_\beta}$ as a generating function for the enumeration of  planar maps. 
	Recall \cite{GM}  that a star of type $q(X)=X_{i_1}\cdots X_{i_r}$ is a vertex
  embedded into the plane with $r$ colored half-edges rooted at the first half-edge of color $i_1$, second of color $i_2$, until the last one of color $i_r$ (following the orientation of the plane). A planar 
map build over $k_i$ stars of type $q_i$, $1\le i\le p$, is a matching of the half-edges of the same color resulting into a connected graph with genus zero. 
$M((k_i,q_i)_{1\le i\le p})$ denotes the number of planar maps build over $k_i$ stars of type $q_i$, $1\le i\le p$, counted with labelled half-edges.  If $M((k_{i},q_{i})_{1\le i\le p})$ denotes the number of planar maps build over $k_{i}$ stars of type $q_{i}$, we have

\begin{prop}\label{expprop}  Under Hypothesis \ref{assum1well}, with ${\bf \alpha}=(\alpha_{1},\ldots,\alpha_{\ell})\in \mathbb R^{\ell}$ be the unique minimizer of $U$ on $\mathbb R^{\ell}$ and $C_{U}$ given by \eqref{cov}, 
 there exists  a finite positive constant
	$\beta_0$ such that for $\beta>\beta_0$,  
	such that	for any monomial $q$
	$$\tau_{V_{\beta}}(q (\sqrt{\beta} C^{1/2}_{U} (\bX-\alpha I)))=\sum_{}\prod_{1\le i\le p}\frac{(c_{i}\sqrt{\beta}^{-n_{i}})^{k_{i}}}{k_{i}!}M((1,q), (k_{i},q_{i})_{1\le i\le p})\,.$$
\end{prop}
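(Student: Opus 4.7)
The plan is to reduce the problem to the high-temperature (small perturbation of a Gaussian) regime of \cite{GM,GM2} via the change of variables $\bY = \sqrt{\beta}C_U^{1/2}(\bX - \alpha I)$. Since this map is affine linear on $(\HNC)^\ell$, its Jacobian is a deterministic constant (an explicit power of $\sqrt{\beta}\det(C_U^{1/2})$), which is absorbed into the new partition function. Writing the Taylor expansion of $U$ around the minimizer $\alpha I$ and using that $\alpha$ is a critical point of $U$ with Hessian $C_U$, the quadratic part of $\beta U$ becomes, after the rescaling, exactly the Gaussian piece $\sum_i Y_i^2$ in the $\bY$ variables; the remaining terms from $U$ (of degree $\geqslant 3$ in $\bX - \alpha I$) and from $W$ produce the monomials $q_i(\bY)$ with small coefficients $c_i \beta^{-n_i/2}$ with $n_i\geqslant 1$ in all cases. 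This is precisely the content of the decomposition \eqref{rew}. The pushforward of $\Q^N_{V_\beta}$ is therefore
$$\widetilde\Q^N_\beta(d\bY) = \frac{1}{\widetilde Z^N}\exp\bigl(-N\Tr\widetilde V_\beta(\bY)\bigr)d\bY, \qquad \widetilde V_\beta(\bY) = \sum_i Y_i^2 + \sum_{i=1}^p t_i(\beta)\,q_i(\bY),$$
with $t_i(\beta) = c_i\beta^{-n_i/2}$.

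Next, since $n_i \geqslant 1$ for all $i$, the perturbation parameters $t_i(\beta)$ tend to zero as $\beta \to \infty$, so $\sum_i |t_i(\beta)|$ can be made arbitrarily small by choosing $\beta_0$ large enough. Combined with the strict convexity of the quadratic part (which holds because $C_U \geqslant cI > 0$), this places $\widetilde\Q^N_\beta$ in the setting analysed in \cite{GM,GM2}: a quadratic matrix model perturbed by a uniformly small polynomial potential. In this regime it is shown there that the empirical distribution of $\bY$ converges almost surely and that for any monomial $q$ the limit admits the absolutely convergent expansion
$$\lim_{N \to \infty} \frac{1}{N}\Tr(q(\bY)) = \sum_{k_1,\ldots,k_p \geqslant 0}\prod_{i=1}^p \frac{t_i(\beta)^{k_i}}{k_i!}M\bigl((1,q),(k_i,q_i)_{1\leqslant i \leqslant p}\bigr).$$
Undoing the change of variables and using that $\tau_{V_\beta}(q(\sqrt\beta C_U^{1/2}(\bX - \alpha I)))$ is exactly this limit yields the statement of the proposition, and simultaneously identifies the almost sure limit $\tau_{V_\beta}$ of Theorem \ref{thmconv}.

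The main technical obstacle is to verify that the transformed model $\widetilde \Q^N_\beta$ satisfies \emph{uniformly in $\beta$ large} the hypotheses needed in \cite{GM}, in particular the tightness and support control on which the loop equation analysis there rests. Positivity and invertibility of the quadratic form follow from $C_U \geqslant cI$, and smallness of the perturbation from $n_i \geqslant 1$; tightness and the spectral radius bound needed to linearise the Dyson–Schwinger equation can be obtained by checking that $\widetilde V_\beta$ is $(\eta,A,I)$-trapping with constants that do not degenerate as $\beta \to \infty$ (using Theorem \ref{conf-to-T} on the quadratic leading part and treating the $t_i(\beta)q_i$ as a vanishing perturbation), and then invoking Theorem \ref{Ttosupport} and Corollary \ref{CorTight}. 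With these inputs in place, the uniqueness of the solution to the Dyson–Schwinger equation in this small-parameter regime (Theorem \ref{theoDS} combined with the implicit function argument of \cite{GM}) selects the generating function above as the unique accumulation point, completing the proof.
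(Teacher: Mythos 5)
Your proposal takes the route the paper intends: change variables to $\bY = \sqrt\beta C_U^{1/2}(\bX - \alpha I)$, observe that the potential becomes $\sum_i Y_i^2$ plus a small perturbation as in \eqref{rew}, and invoke the high-temperature results of \cite{GM,GM2}. The paper does not write out a proof of Proposition \ref{expprop}, but the surrounding setup (the decomposition \eqref{rew} and the planar-map definitions) makes clear that this reduction is what is meant.

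There is, however, a genuine gap in the step where you verify the boundedness needed to invoke \cite{GM}. You propose showing that the transformed potential $\widetilde V_\beta(\bY) = \sum Y_i^2 + \sum t_i(\beta) q_i(\bY)$ is $(\eta,A,I)$-trapping with $\beta$-uniform constants, by applying Theorem \ref{conf-to-T} to the quadratic part and treating the $t_i(\beta)q_i$ as vanishing perturbations. This does not work. The confining criterion of Theorem \ref{conf-to-T}(3) forces the perturbing monomials to have degree at most $2\min_j D_j - 1$, which for a quadratic leading part ($D_j=1$) means degree at most $1$; but the $q_i$ in \eqref{rew} have degree $n_i\geqslant 1$ in general and degree $n_i+2\geqslant 3$ when they come from $U$. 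Theorem \ref{conf-to-T}(1) only allows adding potentials that are themselves trapping, and the individual high-degree monomials $t_i(\beta)q_i$ are not; there is no mechanism by which a small coefficient rescues the trapping inequality, since it must hold for all $k$ and all matrices, and any high-degree monomial eventually dominates at large $\|\bY\|$. The paper's own Remark \ref{remarktrapping}(3) flags exactly this obstruction: $X^2+Y^2+X^2Y^2$ is trace-convex but neither confining nor trapping, and the same phenomenon occurs here.

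The fix, and the argument the paper is implicitly relying on, is convexity rather than trapping. Assumption \ref{assum1well} makes $V_\beta = \beta U + W$ strictly $(\beta c + A)$-convex, hence positively convex for $\beta$ large. The affine change of variables $\bX = \frac{1}{\sqrt\beta}C_U^{-1/2}\bY + \alpha I$ conjugates the Hessian by the (bounded, invertible) linear part, so $\widetilde V_\beta$ is $c'$-convex with $c'$ bounded below uniformly for $\beta\geqslant\beta_0$ (roughly $c' \sim c/\|C_U\|_\infty$). This uniform convexity is precisely the hypothesis under which Brascamp--Lieb inequalities yield the spectral radius control (Remark \ref{remarktrapping}(3), \cite[Section 6.5]{GuioFlour}) and under which the planar-map expansion of \cite{GM} is established. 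With this replacement the rest of your argument goes through as written.
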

Proposition \ref{expprop} implies Theorem \ref{thmconv}.
We can also use \cite{GS} to see that $\tau_{\beta}$
is a push-forward of free semi-circle laws by smooth functions (namely series which are uniformly converging when evaluated at self-adjoint operators with norm bounded by $3$). Hence we deduce that
\begin{corollary}\label{corconv}
	Under the assumptions of Theorem \ref{thmconv},  $\tau_{\beta,U,W}$ is described as follows. If  $S_{1},\ldots,S_{d}$ are  $\ell$ free semi-circular variables, $\tau_{V_{\beta}}$ is the non-commutative distribution  of
	$$X_{i}=\alpha_{i} I+\frac{1}{\sqrt{\beta}}S_{i}+\frac{1}{\beta} F_{i}(S_{1},\ldots,S_{\ell}), 1\le i\le \ell,$$
	where
	$F_{1},\ldots, F_{\ell}$ are functions which expand as converging power series in $1/\sqrt{\beta}$:
	$$F_{i}(X_{1},\ldots,X_{\ell})=\sum_{n\ge 0} \frac{1}{\sqrt{\beta}^n}
	P_{n}(X_{1},\ldots,X_{\ell})$$
	with non commutative polynomials $P_{n}$ such that $\sum_{\ell,n} \sqrt{\beta}^{-n}\|P_{n}\|$ is finite. Here, the operator norm is taken uniformly over all $\ell$-tuple of self-adjoint operators bounded by $3$.  In particular the $C^{*}$ and the von Neumann algebra of $\tau_{V_{\beta}}$ is the same than those of free semi-circular provided $\beta$ is finite and large.
\end{corollary}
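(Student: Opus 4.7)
The strategy is to reduce the corollary to the free transport theorem of \cite{GS} after performing the affine change of variables that already appeared in the proof of Proposition \ref{expprop}. First I would substitute $\bY=\sqrt{\beta}\,C_U^{1/2}(\bX-\boldsymbol\alpha I)$, so that by \eqref{rew} the potential rewrites as
$$V_\beta(\bX)=\sum_{i=1}^\ell Y_i^2+\sum_{i=1}^p\frac{c_i}{\sqrt{\beta}^{n_i}}q_i(\bY)=\frac{1}{2}\sum_i Y_i^2+R_\beta(\bY),$$
where $R_\beta$ is a self-adjoint non-commutative polynomial all of whose monomials carry a power $\beta^{-n_i/2}$ with $n_i\ge 1$. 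Modulo the constant Jacobian coming from the linear change of variables, the pushforward of $\Q_{V_\beta}^N$ under $\bX\mapsto\bY$ is thus the Gibbs measure associated to a quadratic potential perturbed by a small correction in the parameter $\beta^{-1/2}$, exactly the regime treated in \cite{GS}.

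Next I would invoke the free monotone transport theorem from \cite{GS}: there exists $\beta_0$ such that for every $\beta>\beta_0$ there exist non-commutative functions $T^\beta_1,\ldots,T^\beta_\ell$ of the form $T^\beta_i(\bS)=S_i+G^\beta_i(\bS)$, with each $G^\beta_i$ expanding as an absolutely convergent power series in $\beta^{-1/2}$ with non-commutative polynomial coefficients, such that the joint non-commutative law of $(T^\beta_1(\bS),\ldots,T^\beta_\ell(\bS))$, where $\bS=(S_1,\ldots,S_\ell)$ is a free semicircular system, coincides with the limiting law of $\bY$ under $\Q^N_{V_\beta}$. The convergence of the series is controlled in the operator norm uniformly over self-adjoint $\ell$-tuples of norm bounded by $3$, which is the standard setting of the transport construction (the bound $3$ is chosen safely above the norm $2$ of the free semicirculars). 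To apply \cite{GS} one only needs to check that $R_\beta$ is small enough in the appropriate analytic norm used there; this is immediate from the explicit form $R_\beta=\sum c_i\beta^{-n_i/2}q_i$ with $n_i\ge 1$ provided $\beta$ is sufficiently large.

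Undoing the linear change of variables gives the announced formula: setting
$$X_i=\alpha_i I+\frac{1}{\sqrt{\beta}}\sum_{j}(C_U^{-1/2})_{ij}T^\beta_j(\bS)=\alpha_i I+\frac{1}{\sqrt{\beta}}\widetilde S_i+\frac{1}{\beta}F_i(\bS),$$
where $\widetilde S_i=\sum_j(C_U^{-1/2})_{ij}S_j$ is again a semicircular system (after the obvious linear relabeling inside the free semicircular family $\bS$) and the remainder $F_i$ collects $\sqrt{\beta}\cdot\sum_j(C_U^{-1/2})_{ij}G^\beta_j(\bS)$, which is still a convergent power series in $\beta^{-1/2}$ with the required uniform norm bound $\sum_n\beta^{-n/2}\|P_n\|<\infty$ on self-adjoint $\ell$-tuples of norm $\le 3$.

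Finally, since each $T^\beta_i$ is a near-identity map (the leading term is $S_i$ and the correction has norm $O(\beta^{-1/2})$), it is invertible for $\beta$ large by a standard Neumann-series/implicit-function argument within the same class of convergent power series, and the inverse is again given by a series with the same properties. Therefore the $*$-algebra, and hence the $C^*$-algebra and the von Neumann algebra, generated by $(X_1,\ldots,X_\ell)$ coincide with those generated by $(S_1,\ldots,S_\ell)$, which proves the last assertion. The main obstacle is the invocation of \cite{GS}: one must verify that the analytic framework of that paper (the Banach algebras of non-commutative power series and the fixed point argument producing the transport map) applies to the perturbation $R_\beta$; once $\beta_0$ is taken large enough to make $R_\beta$ lie in the small ball where the transport construction converges, everything else is a routine unpacking of the change of variables.
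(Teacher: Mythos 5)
Your approach is essentially the same as the paper's: the paper disposes of this corollary in a single sentence by invoking the free monotone transport theorem of \cite{GS} after the linear change of variables $\bY=\sqrt{\beta}\,C_U^{1/2}(\bX-\alpha I)$, which turns the potential into a quadratic perturbed by a term that is small in $\beta^{-1/2}$. You unpack that sentence correctly, and the invertibility argument for the transport map and the identification of the generated algebras are the right way to finish.

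Two small imprecisions are worth correcting. First, the display
$$V_\beta(\bX)=\sum_{i} Y_i^2+\sum_{i}\frac{c_i}{\sqrt{\beta}^{n_i}}q_i(\bY)=\frac{1}{2}\sum_i Y_i^2+R_\beta(\bY)$$
is inconsistent: as written $R_\beta$ would contain $\frac{1}{2}\sum Y_i^2$ and hence would not be small in $\beta^{-1/2}$. Either keep $\sum Y_i^2$ as the reference quadratic and apply \cite{GS} with the corresponding normalization of the semicirculars, or rescale $Y_i\mapsto Y_i/\sqrt{2}$ once more before declaring the reference potential to be $\frac12\sum Y_i^2$; either fix is harmless but one of them must be made. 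Second, the parenthetical claim that $\widetilde S_i=\sum_j(C_U^{-1/2})_{ij}S_j$ is ``again a (free) semicircular system after relabeling'' is not accurate: a linear image of a free semicircular family is a jointly semicircular family with covariance $C_U^{-1}$, which is free only when $C_U^{-1}$ is diagonal. This does not endanger the conclusion — $C_U^{-1/2}$ is an invertible real matrix, so the $*$-, $C^*$- and von Neumann algebras generated by $(\widetilde S_i)$ and by $(S_i)$ coincide, and the convergent-power-series structure survives the linear change — but the wording ``relabeling'' should be replaced by an explicit statement that one absorbs the linear map into the free transport, or that one allows the leading semicircular family to have the non-trivial covariance $C_U^{-1}$.
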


\section{Strong single variables potentials}\label{single}
In this section we study more closely the model where the 
strong potential constrains the eigenvalues of $p\le\ell$ matrices, but this constraint is not convex.
 More precisely we assume:
\begin{assum}\label{assumsingle}
$$V_{\beta}(\bX)= \beta (U^{1}(X_1)+\dots+U^{p}(X_p))+W(\bX)$$
where $p\le \ell$, $U^{i}$  and  $W$ are self-adjoint polynomials such that:
\begin{itemize}
\item  There exists $\beta_0>0$, $\eta>0$, $A<\infty$ and a partition $I$ so that  for every $\beta\ge \beta_0$, 
$V_\beta$ is $(\eta,A,I)$-trapping. 
\item For $i\in\{1,\ldots, p\}$, $U^i$ achieves its minimal value 
at  real numbers  $x_1^i,\dots,x_{m_i}^{i}$ with $U^i(x)
\sim_{x_j} c_j^i(x-x_j^i)^{2k_j^i}$ when $x$ belongs to a small neighborhood of $x_j^i$, for some positive constant $c_j^i$ and integer numbers $k_j^i$. 
\end{itemize}\end{assum}

 We then prove that the spectrum of the matrix $X_i^N$  following $\Q^{N}_{V_{\beta}}$ concentrates in a neighborhood of the minimizers of $U^i$ for $i\in \{1,\ldots,p\}$. To state our result, we will denote $\hat\mu^N_{X}=N^{-1}\sum_{i=1}^N \delta_{\lambda_i(X)}$ the empirical measure of the eigenvalues of a $N\times N$ normal matrix $X$. Moreover, we recall that $\|X_{i}\|_{\infty}$ denotes the operator norm of the matrix $X_{i}$ and  we set, for $\bX=(X_{1},\ldots,X_{\ell})$, $\|\bX\|_{\infty}=\max_{1\le i\le \ell}\|X_i\|_\infty$ .
\begin{theorem}\label{theosum} Assume $V_{\beta}$ satisfies Assumption \ref{assumsingle}. 
For a positive real number $R$,  we denote by $$K_{\beta,R}^i=\cup_{1\le j\le m_i} [x_j^i- R(\frac{\beta}{\ln\beta})^{-\frac{1}{2k_j^i}}, x_j^i+R(\frac{\beta}{\ln\beta})^{-\frac{1}{2k_j^i}}]\,.$$
Then,   for every $\kappa\in (0,1)$, there exists $R>0$ and $\beta_0>0$,
so that  for every $\beta\ge \beta_0$, 
 for  all $i\in\{1,\ldots,p\}$, 
\begin{equation}\label{c1}
\limsup_N \frac{1}{N^{1-\kappa}}\ln \Q^N_{V_\beta}\left(\hat\mu^N_{X_i^N}((K_{\beta,R}^i)^c)\neq 0\right)<0.\end{equation}
Besides,    if  $K_i = \sum_j k_j^i$,  there exists $\beta_{0}$ finite such that for $\beta\ge \beta_{0}$, for every $j\in\{1,\ldots m_{i}\}$, we have almost surely:
$$\limsup_N \sqrt{\ln{\beta}}|\hat \mu^N_{X_i^N}([x_j^i-\max_{j}R(\frac{\beta}{\ln\beta})^{-\frac{1}{2k_j^i}},x_j^i+\max_{j}R(\frac{\beta}{\ln\beta})^{-\frac{1}{2k_j^i}}]) - \frac{k_j^i}{K_i}|<\infty\,.$$
\end{theorem}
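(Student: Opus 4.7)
The plan splits into two stages corresponding to the two claims.

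\smallskip\noindent\textbf{Stage 1 (localization \eqref{c1}).} Theorem \ref{Ttosupport} applied to the trapping $V_\beta$ already confines every $X_i^N$ to a fixed compact set $[-L,L]$ with failure probability $e^{-cN^{1-\kappa}}$. To refine this localization to $K_{\beta,R}^i$, I introduce the non-negative single-variable functional $\psi_i(x):=U^i(x)-\min U^i$, which vanishes precisely on $\{x_j^i\}_j$ and behaves as $\psi_i(x)\sim c_j^i(x-x_j^i)^{2k_j^i}$ near each $x_j^i$. The aim is to bound
\begin{equation*}
\frac{1}{N}\Tr\psi_i(X_i^N)^k\le (C\ln\beta/\beta)^k,\qquad 0\le k\le \kappa\ln N/\ln(2C),
\end{equation*}
with failure probability $e^{-N^{1-\kappa}/2}$, which by the argument at the end of the proof of Theorem \ref{Ttosupport} (i.e.\ $\psi_i(\lambda)^k\le \Tr\psi_i^k$ for any eigenvalue $\lambda$ together with $k\sim \ln N$) forces each eigenvalue $\lambda$ to satisfy $\psi_i(\lambda)\le C'\ln\beta/\beta$, and hence via the local asymptotics of $\psi_i$ to lie in $K_{\beta,R}^i$ for $R$ fixed large enough. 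To establish the displayed moment bound, I adapt the Catalan-type bootstrap of Theorem \ref{Ttosupport} to the flow $\partial_t X_i^N=-(U^i)'(X_i^N)\psi_i(X_i^N)^k$ and apply Lemma \ref{change-variable-matrix}: the dominant drift term $N\beta\Tr((U^i)')^2\psi_i^k$ is lower bounded via the local identity $((U^i)')^2\sim C_j\psi_i^{2-1/k_j^i}$ near $x_j^i$ together with the lower bound $\psi_i\gtrsim R^{2k_j^i}\ln\beta/\beta$ valid on $(K_{\beta,R}^i)^c$, producing an effective trapping inequality with constant of order $\beta/\ln\beta$. Running the Catalan recursion then yields the moment bound.

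\smallskip\noindent\textbf{Stage 2 (mass distribution).} On the event of Stage~1 and for $\beta$ sufficiently large, the intervals around distinct minimizers of $U^i$ are disjoint and each eigenvalue lies in exactly one; write $N_j^i$ for the count in the $j$-th and $\rho_j^i:=N_j^i/N$. Rescaling the eigenvalues of well $j$ by $\epsilon_j:=(\beta/\ln\beta)^{-1/(2k_j^i)}$ extracts the $\ln\beta$-dependent part of the partition function: the within-well Vandermonde produces $\epsilon_j^{N_j^i(N_j^i-1)}$, with log $-(N_j^i)^2\ln\beta/(2k_j^i)+o_\beta(N^2\ln\beta)$; the rescaled potential $N\beta c_j^i(x-x_j^i)^{2k_j^i}\to N\ln\beta\cdot c_j^iy^{2k_j^i}$ gives only an $O(N\ln\beta)$ contribution; the inter-well Vandermonde, the remainder of $U^i$, and the $W$ contribution are uniformly $O(N^2)$ because the minimizers are at bounded distance apart. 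Consequently the free energy in the proportions $\rho=(\rho_j^i)_j$ reads
\begin{equation*}
\mathcal F_\beta(\rho)=\ln\beta\sum_{j=1}^{m_i}\frac{\rho_j^2}{2k_j^i}+\mathcal F_0(\rho)+o_\beta(1)
\end{equation*}
with $\mathcal F_0$ uniformly bounded. Under $\sum_j\rho_j=1$ the leading term is minimized uniquely at $\rho_j^*=k_j^i/K_i$ by Lagrange multipliers, with Hessian $\Diag(\ln\beta/k_j^i)$ positive-definite of order $\ln\beta$. A Laplace approximation then gives concentration of $\rho_j^i$ at $\rho_j^*$ with deviation $O(1/\sqrt{\ln\beta})$ as $N\to\infty$ and then $\beta\to\infty$, which yields the second claim.

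\smallskip\noindent\textbf{Main obstacle.} The delicate step is Stage~1: pinning down the sharp $(\beta/\ln\beta)^{-1/(2k_j^i)}$ scale, rather than the coarser bound that Proposition \ref{proploc} alone would produce, demands a logarithmic correction in the effective trapping constant, which is only visible by combining the bootstrap of Theorem \ref{Ttosupport} with both the local degeneracy structure $((U^i)')^2\sim\psi_i^{2-1/k_j^i}$ near each minimizer and the precise lower bound $\psi_i\gtrsim\ln\beta/\beta$ on the complement. Checking the induction step uniformly over wells with distinct degeneracies $k_j^i$ is the core technical point. Stage~2 is, by contrast, a fairly standard Laplace computation once the localization is in hand, modulo verifying uniformity of the $\mathcal F_0$ remainder over the relevant range of partitions $(N_j^i)_j$.
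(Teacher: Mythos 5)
Your Stage 2 (conditioning on the well populations $N_j^i$, rescaling the eigenvalues in each well, extracting the $\ln\beta$-dependent part of the within-well Vandermonde, and minimizing $\rho\mapsto\sum_j \rho_j^2/(2k_j^i)$ on the simplex) matches the paper's argument essentially step for step; that part is fine.

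The gap is in Stage 1, which is a genuinely different route from the paper's and which I do not think closes. You want to run a Catalan-type bootstrap for the traces $\frac1N\Tr\psi_i(X_i^N)^k$ using the flow $\partial_t X_i^N=-(U^i)'(X_i^N)\psi_i(X_i^N)^k$ and prove the geometrically \emph{decaying} bound $\frac1N\Tr\psi_i^k\le (C\ln\beta/\beta)^k$ out to $k\sim\ln N$. There are two obstructions. First, the local comparison $((U^i)')^2\sim C_j\psi_i^{2-1/k_j^i}$ is only valid in small neighborhoods of the global minimizers $x_j^i$. At any other critical point $y_0$ of $U^i$ (a local maximum, say) one has $(U^i)'(y_0)=0$ while $\psi_i(y_0)>0$ is of order one; a matrix with many eigenvalues near $y_0$ makes the drift $N\beta\Tr((U^i)')^2\psi_i^k$ essentially zero while $\Tr\psi_i^{k+1}$ is of order $N$, so no matrix-level trapping inequality of the form $\beta\Tr((U^i)')^2\psi_i^k\ge\eta'\Tr\psi_i^{k+1}-A'(1+\Tr\psi_i^k)$ can hold with $A'/\eta'$ small. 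Second, even granting such an inequality, the Catalan recursion $a_{k+1}=(\eta')^{-1}\bigl(1+|A'|(1+a_k)+2\sum_p a_pa_{k-p}\bigr)$ has the additive floor $(\eta')^{-1}(1+|A'|)$, coming from the $\Da_iW$ term and the base of the Jacobian; with $A'/\eta'=O(1)$ (which is what the first obstruction forces) this floor sits at $a_k\gtrsim 1$ for $k\ge 1$, so the $a_k$ cannot decay like $(\ln\beta/\beta)^k$. The bound in Theorem~\ref{Ttosupport} only aims for the non-decaying $a_k\le L^k$ with $L\ge 1$, precisely because of this floor, and there is no way to squeeze a geometric decay out of the same recursion. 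The paper avoids all of this by working directly on the eigenvalues rather than on traces: it applies Lemma~\ref{change-variable} with the explicit map $\phi$ of Lemma~\ref{Jacobian2}, which sends the eigenvalues lying outside $I=\{U^i\le\eta\}$, $\eta=L\ln\beta/\beta$, into a small neighborhood $[-\varepsilon,\varepsilon]$ of one minimizer while keeping those in $I$ fixed. The density ratio then gains $e^{-\beta N(N-n)\eta/2}$ from the potential while the explicit Jacobian lower bound only costs $e^{-CN(N-n)\ln(1/\varepsilon)}$; for $L$ large one has $\beta\eta\gg|\ln\varepsilon|$, and summing over $n$ as in \eqref{we} gives \eqref{c1}. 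This argument never needs a trapping inequality in $\psi_i$ and is insensitive to the behavior of $(U^i)'$ away from the minima, which is exactly the obstruction your approach runs into.
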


To describe more precisely the asymptotic  distribution of our matrix model, recall \cite{BAG} that in the case of  a single matrix model   and a potential $V$ (corresponding to $\ell=1$), the empirical measure of the eigenvalues $\hat\mu^{N}_{X}$ converges almost surely to  the unique minimizer $\mu_{V}$ of 
\begin{equation}\label{energy}E_{V}(\mu)=\frac{1}{2}  \int\int \left(V(x) +V(y)-\frac{1}{2}\ln|x-y|\right)d\mu(x)d\mu(y)\,.\end{equation}

 Such a convergence is also true in the multi-matrix model satisfying Hypothesis \ref{assumsingle} in the limit where $\beta$ goes to infinity in the following sense.  For $r>0$ and an integer number $k$,  let $\sigma^{k}_{r}=\mu_{r x^{2k}}$ be the  unique probability measure minimizing $E_{rx^{2k}}$. Observe that when $k=1$, $\sigma^{k}_r$ is the semi-circle law with variance { $r^{-1}$. }

\begin{corollary}\label{corsum} Assume $V_{\beta}$ satisfies Assumption \ref{assumsingle}. Then there exists $\beta_0$ finite such that for $\beta\ge\beta_0$ the empirical distribution $\hat\mu^N$ of $\bX^{N}=(X_1^N,\ldots,X_\ell^N)$ is tight almost surely. Denote  $\nu^\beta$  a limit point.  Then, there exists $R\in (0,+\infty)$ such that for $\beta$ large enough, for any $i\in \{1,\ldots,p\}$  
$$\nu^\beta_{X_i} (K_{\beta,R}^i)=1\,.$$
Moreover,   the filling fraction 
$\alpha^i_j(\beta):=\nu_{X_i}^\beta([x_j^i- R(\frac{\beta}{\ln\beta})^{-\frac{1}{2k_j^i}}, x_j^i+R(\frac{\beta}{\ln\beta})^{-\frac{1}{2k_j^i}}])$
satisfies 
$$|\alpha^i_j(\beta)-\frac{k_j^i}{K_i}|\leqslant \frac{R}{\sqrt{\ln{\beta}}}\,.$$
Finally, the distribution of $X_i$  under $\nu_\beta$ is close to $\sigma^{k_{j}^{i}}_{r^\beta_{i,j}}$ near $x_j^i$:  for any smooth compactly supported function $f$
 $$\left|\alpha^i_j(\beta)^{-1}\int \left(f( \beta^{\frac{1}{2k_j^i}}(x-x_j^i))\right)d \nu^\beta_{X_i}
(x)-\int f(x) d \sigma^{k_j^i}_{r^\beta_{i,j}}(x) \right|\leqslant \frac{\|f\|_{1/2}}{\min_j \beta^{\frac{1}{2k_j^i}}}$$
where 
$r^\beta_{i,j}=\frac{\alpha^i_j(\beta)}{2c_j^i}$ and $\|f\|_{1/2}=(\int_{0}^{\infty}t|\hat f_t|^2 dt)^{1/2}$.
\end{corollary}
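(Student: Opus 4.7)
To prove the corollary, the first three assertions follow essentially directly from Theorem \ref{theosum}. Since $V_\beta$ is $(\eta,A,I)$-trapping for all $\beta \geqslant \beta_0$ by Assumption \ref{assumsingle}, Corollary \ref{CorTight} yields that $\hat\mu^N$ is sequentially tight almost surely; let $\nu^\beta$ denote any weak limit point along a subsequence. From \eqref{c1}, $\Q^N_{V_\beta}(\hat\mu^N_{X_i^N}((K_{\beta,R}^i)^c)\neq 0) \leqslant e^{-cN^{1-\kappa}}$, which is summable in $N$, so by Borel--Cantelli and the Portmanteau theorem (using that $K_{\beta,R}^i$ is closed) one gets $\nu^\beta_{X_i}(K_{\beta,R}^i)=1$. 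The filling-fraction bound then follows from the second assertion of Theorem \ref{theosum}, after noting that for $\beta$ large the $m_i$ intervals $[x_j^i-R_\beta,x_j^i+R_\beta]$, with $R_\beta=\max_j R(\beta/\ln\beta)^{-1/(2k_j^i)}$, are pairwise disjoint and each contains (by the support statement just proved) exactly the $j$-th cluster of mass $\alpha_j^i(\beta)$.

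The last assertion, describing the local shape of $\nu^\beta_{X_i}$ after rescaling, is the substantive part. My plan is to work with the rescaled spectral variable $y=\beta^{1/(2k_j^i)}(x-x_j^i)$ and to show that the conditional measure of $\nu^\beta_{X_i}$ restricted and normalized to a fixed small window around $x_j^i$ asymptotically satisfies the single-matrix Dyson--Schwinger equation associated to a potential proportional to $y^{2k_j^i}$. Concretely, I would plug test functions of the form $\varphi(y)\chi_j(x)$ into the multi-matrix Dyson--Schwinger equation \eqref{ds} satisfied by $\nu^\beta$, where $\chi_j$ is a smooth spectral cut-off supported in a small fixed window around $x_j^i$ (which by the support statement captures the $j$-th cluster and nothing else). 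Taylor expanding $U^i(x)=c_j^i(x-x_j^i)^{2k_j^i}+O((x-x_j^i)^{2k_j^i+1})$ near $x_j^i$ and invoking Proposition \ref{proploc} to control the cross-terms $\mathcal D_{i'}U$ for $i'\neq i$ together with the contributions from $W$ (all of which carry negative powers of $\beta$), one obtains to leading order as $\beta\to\infty$ the one-matrix Dyson--Schwinger equation for a mass-$\alpha_j^i(\beta)$ measure with effective potential proportional to $c_j^i\, y^{2k_j^i}/\alpha_j^i(\beta)$. This equation has a unique solution, namely $\alpha_j^i(\beta)\,\sigma^{k_j^i}_{r^\beta_{i,j}}$, and the $\|f\|_{1/2}$ norm on the right-hand side quantifies the resulting error, arising both from the Taylor remainder and from how the cut-off $\chi_j$ couples smooth rescaled test functions $f$ to the spectral resolution.

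The two main technical obstacles will be: first, decoupling the clusters corresponding to distinct minimizers, i.e., showing that the logarithmic interaction between eigenvalues in different clusters and the non-commutative couplings through $W$ and through the other matrices $X_{i'}$ produce contributions of genuinely subleading order in $\beta$ after the rescaling by $\beta^{1/(2k_j^i)}$; this uses the concentration estimates of Theorem \ref{theosum} and the moment bounds of Proposition \ref{proploc}. Second, one must convert polynomial control of $\mathcal D_i U$ into quantitative control of the local Stieltjes transforms that appear in the Dyson--Schwinger equation, which is where the $1/2$-Sobolev norm enters naturally: the equilibrium measure $\sigma^{k_j^i}_r$ of the convex potential $r y^{2k_j^i}$ is characterized by a single-matrix DS equation whose linearization admits a spectral gap in $H^{1/2}$, allowing the polynomial control to be converted into test-function control with the stated $\|f\|_{1/2}$ norm.
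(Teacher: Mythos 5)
Your treatment of the first three assertions is fine and matches the paper's: tightness via Corollary~\ref{CorTight}, the support statement by Borel--Cantelli from \eqref{c1}, and the filling fraction bound from the second assertion of Theorem~\ref{theosum}.

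For the last assertion, however, you are taking a genuinely different route, and the substitute you propose contains a gap. The paper does not go back to the Dyson--Schwinger equation. Instead it conditions on the filling-fraction counts $\mathbf N=(N_1,\dots,N_p)$ (returning to the explicit density \eqref{dev}), under which the eigenvalue law factorizes: after the VanderMonde manipulations of the proof of Theorem~\ref{theosum}, the eigenvalues of $X_i$ in the window around $x^i_j$ form a one-matrix Coulomb gas with potential $c^i_j n_j^{-1}y^{2k^i_j}$ up to an $o(\beta)$ term of size $\beta^{-1/(2\max k_i)}$. One-matrix large deviations \`a la Ben Arous--Guionnet then force the conditional empirical measure $\hat\mu_{\Lambda_i}$ to lie within $o(\beta)$ of the minimum of $E_{c_i n_i^{-1}x^{2k_i}}$, and the Euler--Lagrange characterization $r_i x^{2k_i}-\int\ln|x-y|\,d\sigma^{k_i}_{r_i}(y)\ge C$ with equality on the support converts that energy bound into $D(\hat\mu_{\Lambda_i},\sigma^{k_i}_{r_i})^2\le o(\beta)$, where $D$ is the logarithmic-potential metric with the Fourier representation $D(\mu,\nu)=\bigl(\int_0^\infty t^{-1}|\widehat{\mu-\nu}(t)|^2\,dt\bigr)^{1/2}$. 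Cauchy--Schwarz then delivers $|\int f\,d(\mu-\nu)|\le\|f\|_{1/2}D(\mu,\nu)$, which is exactly where the $\|f\|_{1/2}/\min_j\beta^{1/(2k^i_j)}$ bound comes from.

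By contrast, your plan feeds cutoff test functions $\varphi(y)\chi_j(x)$ into the multi-matrix DS equation and then appeals to a ``spectral gap in $H^{1/2}$'' for the linearized one-matrix DS equation to upgrade polynomial control to test-function control. That step is not available as a citable fact at the level of generality needed here, and making it rigorous (identifying the linearized master operator for a degenerate well $y^{2k}$, proving invertibility on the appropriate space, controlling the error introduced by the cutoffs $\chi_j$, and handling the interaction between distinct wells, which the paper sidesteps entirely by conditioning on $\mathbf N$) would require substantial new machinery not present in the paper and not sketched in enough detail in your proposal. As written this is a gap: you have stated the shape of the argument but not supplied the key analytic input that the paper replaces by the elementary energy-metric/Cauchy--Schwarz computation.
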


The proof of   Theorem \ref{theosum} follows again from  a wise change of variables and Lemma \ref{change-variable-matrix}. We first construct this change of variables in the next subsection: it is based on the explicit joint law of the eigenvalues of a one dimensional matrix model and change of variables involving the eigenvalues of a single matrix. Indeed,
the interaction between the matrices being of lower strength, such change of variables will be sufficient for our purpose. 

\subsection{Proofs of Theorem  \ref{theosum} and  Corollary \ref{corsum}}

To simplify the notations we will take $p=1$, $k^{1}_{j}=k_{j}$, $x^{1}_{j}=x_{j}$. The  proof extends readily to the general case.

{\bf{Proof of Theorem \ref{theosum}} } We denote by $\{x_1,\ldots,x_k\}$ the minimizers of $U^1$.
Without loss of generality we may assume that the minimum value of $U^1$ is zero and it is achieved at the origin so that $U^1(0)=0$. We set for $\eta>0$
$$I = \{x\in\mathbb R :U^1(x)\in [0,\eta]\}$$ 
to be  a neighborhood of the minimizers of $U^1$. $\eta$ is  a constant depending on $\beta$ that we will choose later. We let  
$\varepsilon>0$ be
such that $U^1(x)<\eta/2$ for $|x|<\varepsilon$. We shall prove the theorem by sending all eigenvalues outside of $I$ in $[-\vep,\vep]$. 
To this end we first remark that for each $M>0$
\begin{eqnarray}\label{we}
&&\Q^N_{V_\beta}\left(\left\{ \hat\mu^N_{X_1}(I^{c})\neq 0\right\} \cap\{\|\bX\|_\infty\le M\}
\right)\\
&&= \sum_{n=0}^{N-1} \Q^N_{V_\beta}\left(\left\{ \hat\mu^N_{X_1}(I)=\frac{n}{N}\right\} \cap\{\|\bX\|_\infty\le M\}\,.\nonumber
\right)
\end{eqnarray}
It is sufficient to show that each term in the above right hand side decays like some stretched exponential  provided  $M\ge L$ is chosen large enough as in Theorem \ref{Ttosupport} so that \begin{equation}\label{bM}\Q^N_{V_\beta}\left(\{\|\bX\|_\infty\ge M\}
\right)\le e^{-\frac{1}{2} N^{1-\kappa}}\end{equation} for some $\kappa\in (0,1)$.  We assume without loss of generality that $M\ge 1$.
To bound each term in the right hand side of \eqref{we} we perform the previous change of variables, keeping fixed the $n$ eigenvalues in $I$, as follows. 
Let $(x_1,\dots,x_n)$ be the eigenvalues of $X_1$ inside $I$ and let us apply the change of variables $\phi$ constructed  in Lemma \ref{Jacobian2}
on the remaining eigenvalues, with $\alpha=0$ and $\varepsilon$ small enough so that $U(x)\in [0,\eta/2]$
 for  $x$ in  $[-\varepsilon,\varepsilon]$. 
Then, we have constructed $\phi$ so that
$$J_\phi \geqslant \left(\frac{\vep}{4e}\right)^{3N(N-n)}e^{-2N\Tr \ln (1+|X_{1}|1_{X_{1}\notin I})-2(N-n)\Tr \ln (1+|X_{1}|1_{X \in I})}$$
and the spectrum of $U^1(\phi(X_{1}))$ is included in  $[0;\eta]$.
Note also that each eigenvalue $y$ outside of $I$ is send in $[-\varepsilon;\varepsilon]$ and thus $U^1(y)-U^1(\phi(y))\ge  \eta/2+\ln(1+|y|1_{|y|>A_U})$ for every $y\in I^c$ and a fixed large $A_U$ which depends only on $U^1$ (this comes from the fact that $U^1$ grows at least like a quadratic at infinity). 
As a consequence, we get

\begin{eqnarray*}
\frac{e^{-N\beta \Tr(U^1(X_1))}}{e^{-N\beta \Tr(U^1(\phi(X_1)))}}&\leqslant & e^{-\beta N (N-n)\eta / 2-\beta N\Tr \ln (1+|X_{1}|1_{X_{1}\notin I,|X_1|>A_U})}\end{eqnarray*}
We can extend the change of variables $\phi$ to the other matrices by just setting it equal to  the identity $\phi(X_i)=X_i, i\ge 2$. Noticing that $X_i-\phi(X_i)$ has rank at most $N-n$, we find that 
 if $W$ is a polynomial of degree $2D$, we can find a finite constant $C_1$ which only depends on $W$ such that
$$
\frac{e^{-N \Tr(W(X))}}{e^{-N \Tr(W(\phi(X)))}}\leq \exp\{ C_1(\|\bX\|_\infty^{2D} +1)(N-n)N\}
$$
Therefore, Lemma \ref{change-variable}
gives  (up to eventually change a bit the constant $C_V$)
\begin{align*}
&Q^N_{V_\beta}\left(\left\{ \hat\mu^N_{X_1}(I)=\frac{n}{N}\right\} \cap\{\|\bX\|_\infty\le M\}
\right)\\
&\leqslant 
e^{-\beta N (N-n)\eta / 2 +C_1 (M^{2D}+1) (N-n)N}
{\left(\frac{\vep}{4e}\right)^{-3N(N-n)}}\\
&\leqslant  e^{N(N-n)(C(M)-\beta\eta/2 -3\ln(\varepsilon))}\end{align*}
where  $C(M)$ is some finite constant which only depends on $M,U^1$ and $W$ {and in particular does not depend on $\beta$}.
Finally, we take  $\eta = \beta^{-1}L\ln\beta$ with $L$ large enough. Because $U^1$ is $C^{2}$, $\varepsilon$ is of order $\eta$ and in particular 
 $\ln \varepsilon = o(\eta\beta)$. Therefore $C(M)-\beta\eta/2 -3\ln(\varepsilon)$ will be large and negative, say smaller than $-2^{{-1}}L\ln \beta$,   for large $\beta$. Consequently by \eqref{we}, we find
 $$ Q^N_{V_\beta}\left(\left\{ \hat\mu^N_{X_1}(I^{c})\neq 0\right\} \cap\{\|\bX\|_\infty\le M\}
\right)\le  \sum_{n=0}^{N-1}   e^{-N(N-n)\frac{L}{2} \ln \beta }\le 2 e^{-N \frac{L}{2} \ln \beta }$$
which  implies by \eqref{bM} that there exists $\kappa\in (0,1)$
$$ Q^N_{V_\beta}\left(\left\{ \hat\mu^N_{X_1}(I)=1 \right\} 
\right)\ge 1-e^{-N^{1-\kappa}}$$
and concludes the proof that almost surely $\hat\mu^N_{X_1}(I^c)$ vanishes for $N$ large enough.  It also implies
  \eqref{c1} because $U^{1}$ is $C^{2}$ and with finitely many minimizers,
for $\beta$ large enough,  the set $I=\{x: |U^{1}(x)|<\beta^{-1}L\ln\beta\} $ is included in the set 
$$\cup_j \{ x: |x-x_j|<L'( \beta(\ln\beta)^{-1})^{-\frac{1}{2k_j}}\}$$
for some $L'$ large enough. 
Therefore, we deduce that almost surely
$$\hat\mu^N_{X_1}\left( \cup_{1\le j\le k} [x_j -L' (\frac{\beta}{\ln\beta})^{-\frac{1}{2k_j}}, x_j+ L' (\frac{\beta}{\ln\beta})^{-\frac{1}{2k_j}}]\right)=1\,\,\mbox{ for $N$ large enough }$$
We now estimate the filling fractions, namely the masses around each minimizers.  From the previous considerations, if we set

$$A=\left\{ \hat\mu^N_{X_1}\left( \cup_{j=1}^{k} [x_j -L' (\frac{\beta}{\ln\beta})^{-\frac{1}{2k_j}}, x_j+ L' (\frac{\beta}{\ln\beta})^{-\frac{1}{2k_j}}]\right)=1\right\}\cap
\left\{ |\bX\|_\infty\le M\right\}$$
then $\Q_V^N(A^c)$ goes to zero exponentially fast. We will therefore restrict ourselves to the 
set $A$. 
We diagonalize $X_1$ and write  for a unitary matrix $V$

$$X_1{=}V^*x V +V^*\Lambda(\beta) V$$
with $x=\Diag (x_1,\ldots, x_1,x_2, \ldots,x_{2},\ldots ,x_p)$ where $x_i$ has multiplicity $N_i$ and 
$$\Lambda(\beta)=
\left(
\begin{array}{ccccc}
\beta^{-1/2k_{1}}\Lambda_1& 0&0&\cdots&0\\
0&\beta^{-1/2k_{2}}\Lambda_2&0&\cdots&0\\
0&0&\ddots&0&0\\
\vdots&\vdots&\vdots&\vdots&\vdots\\
0&0&0&0&\beta^{-1/2k_{1}}\Lambda_p
\end{array}
\right)$$
with $\Lambda_i$ a $N_{i}\times N_i$  real diagonal matrices with entries in $[\pm L'(\ln\beta)^{\frac{1}{2k_j}}]$. 
Recalling our hypothesis on the minimizers $x_{i}$ of $U^{1}$, 
$$\beta  U^{1}(x_{i}+\beta^{-1/2k_{i}}x)=c_{i}x^{2k_{i}}+\beta P_{i}(\beta^{-1/2k_{i}}x)$$
with  $c_i$  a positive constant and $P_{i}$ a polynomial with a zero of order  $p_{i}$ strictly greater than $2k_i$ at $0$.
Note that on the set $A$: 
$$ \beta \Tr U^{1}(X_1) = \sum_i c_i \Tr (\Lambda_i^{2k_i})+\Tr \beta P_{i}(\beta^{-1/2k_{i}}\Lambda_i)$$
the second term is small when $\beta$ is large since, for some integer number $p_i>2k_i$, it can be  bounded by  $CN \beta (\ln \beta/ \beta)^{p_i/2k_{i}}$ . Setting $\bY=(X_2,\ldots,X_\ell)$,
we   can give a new expression for $1_Ad\Q^N_{V_\beta}$ which is equal to
\begin{align}
&=\frac{1_{A}}{Z^N_{V_\beta}}e^{-\beta N\Tr U^1(X_1)-N\Tr W(X_1,\bY)}dX_1d\bY\nonumber\\
&=\frac{1_{A}}{Z^N_{V_\beta}}e^{- N\left(\sum_i c_i \Tr (\Lambda_i^{2k_i}) +\Tr \beta P_{i}(\beta^{-1/2k_{i}}\Lambda_i)
-\Tr W(x+\Lambda(\beta),\bY)\right)}\Delta^2(x+\Lambda(\beta))d\Lambda d\bY\nonumber\\
&=\frac{1 }{Z^N_{V_\beta}}1_Ae^{- N\left(\sum_i c_i \Tr (\Lambda_i^{2k_i}) +o(\beta) N-\Tr W(x,\bY)\right)}\Delta^2(x+\Lambda(\beta))d\Lambda d\bY\label{dev}
\end{align}
where $o(\beta)$ goes to zero  like $\beta^{-\frac{1}{2\max k_{i}}}$ as $\beta$ goes to infinity while $M$ is fixed. 

In the square of the  VanderMonde $\Delta^2(x+\Lambda(\beta))$ there are factors with the differences coming from two different global minima:
\begin{align*}
&\prod_{x_i+\beta^{-1/2k_i}y \in Sp(\Lambda_i)}\prod_{x_j+\beta^{-1/2k_j}z \in Sp(\Lambda_j)}|x_i-x_j+\beta^{-1/2k_i}(y-z)|^2\\
&= |x_i-x_j|^{N_i N_j} e^{o(\beta)N_{i}N_{j}}
\end{align*}
where $|o(\beta)|\le C\beta^{-\frac{1}{2\max k_{i}}}$,
and the factors coming from the same minimum:
$$\prod_{x_i+\beta^{-1/2k_i}y \in Sp(\Lambda_i)}\prod_{x_i+\beta^{-1/2k_i}z \in Sp(\Lambda_i)}\beta^{-\frac{1}{k_i}}|y-z|^2=\beta^{-\frac{N_i(N_i-1)}{2k_i}} \Delta(\Lambda_i)^2$$
Finally we get, \begin{equation}\label{jha}1_A\Q^N_{V_\beta}=\frac{1}{Z^N_{V_\beta}} 1_A \prod_i \beta^{-\frac{N_i(N_i-1)}{2k_i}} e^{ N^2o(\beta)} \prod_i e^{- N c_i \Tr (\Lambda_i^{2k_i})} \Delta(\Lambda_i)^2
d\Lambda_{i}  e^{O(N^{2})}d\bY\end{equation}
where $X{=}V^*x V +V^*\Lambda(\beta) V$ in the above RHS and $O(N^{2})$ bounds $N\tr W$ uniformly on $A$.
We deduce a lower bound of the partition function by conditioning by $N_i/N=n_i$ to see that
\begin{equation}\label{gf}Z^N_{V_\beta}\ge  \beta^{-N^2\sum_i\frac{n_i^2}{2k_i}+O(N^2)} \end{equation}
where we used \cite{BAG,AGZ}  to see that for every $i$, with the notations of \eqref{energy}, we have
$$\lim_{N_i\rightarrow\infty} \frac{1}{N_i^2}\ln \int e^{- N c_i \Tr (\Lambda_i^{2k_i})} \Delta(\Lambda_i)^2d\Lambda_i=-\inf\{E_{n_{i}^{-1}c_{i}x^{2k_{i}}}\} $$
where the supremum is taken over all probability measures  $\mu$ on the real line.  In particular, the limit does not depend on $\beta$ large enough, even though we integrate a priori on $[-L'(\ln\beta)^{\frac{1}{2k_j}} ; L'(\ln\beta)^{\frac{1}{2k_j}}]$, because the supremum
 of $E_{n_{i}^{-1}c_{i}x^{2k_{i}}}$ is taken at a compactly supported probability measure.
We can optimize over the choice of $n_i$ in \eqref{gf}, that is minimize
$(n_i)\mapsto \sum_i\frac{n_i^2}{2k_i}$ while $\sum n_i=1$.  We find that we should take $n_i=n_i^*= k_i/\sum k_j$. 
Plugging back this lower bound on the partition function into \eqref{jha} we find 
$$\Q^N_{V_\beta}\left( \cap_i\{\frac{N_i}{N}=n_i\}\cap A\right)\le \exp\{ -N^2 \ln \beta( \sum \frac{n_i^2}{k_i}-\sum \frac{(n_i^*)^2}{k_i}) +O(N^2)\}
$$
Noticing that $ \sum \frac{n_i^2}{k_i}-\sum \frac{(n_i^*)^2}{k_i} =\sum_i (n_i-n_i^*)^2/k_i$, we conclude that there exists  $R$  finite so that for  $\beta$ big enough
$$\Q^N_{V_\beta}\left( \{\max_{i}|\frac{N_i}{N}-n_i^*| >R\sqrt{\ln \beta}^{-1} \}\cap A\right)\le e^{-N^2}$$
which completes the proof when $p=1$. It is straightforward to generalize the proof to more general $p$ as we can do the change of variables for any matrices separately to localize the spectrum of each of the $p$ matrices and then expand around the minimizers the spectrum of each of these matrices. 

\bigskip

{\bf Proof of  Corollary \ref{corsum}.} The first points are direct consequence of Theorem \ref{theosum}. The main point is hence to show convergence of the rescaled distribution near a minimizer. But, coming back to \eqref{dev}, if we condition by the event $\bN=(N_{1},\ldots, N_{p})=N(n_{1},\ldots,n_{p})=N\bn$, 
$X_{1}$ is fixed up to terms of order $\beta^{-\min\frac{1}{2k_{i}}}$, and therefore the interaction term $N\tr W(X_{1},Y)$ is fixed up to order $\beta^{-\min\frac{1}{2k_{i}}} N^{2}$. 
Hence, the previous proof shows that for any measurable set $B$ of the set of probability measures on the real line, 
$$\Q^N_{V_\beta}\left( \{\hat\mu_{\Lambda_i}\in B\}\cap A|\bN\right)=\frac{1 }{\tilde Z^N_{\beta}}\int_{\hat\mu_{\Lambda_i}\in B} e^{- N\left(c_i \Tr (\Lambda_i^{2k_i}) +o(\beta) N\right)}\Delta^2(\Lambda_i)d\Lambda dY$$ and therefore again by \cite{BAG} the right hand side goes exponentially fast to zero unless 
$$\inf_{\mu\in B}\{E_{c_{i}n_{i}^{-1}x^{2k_{i}}}(\mu)\}\le \inf  E_{c_{i}n_{i}^{-1}x^{2k_{i}}}+o(\beta)\,.$$
Put $r_{i}=c_{i}/n_{i}$ and 
using that  $\sigma^{k_i}_{r_{i}}$ satisfies
$$r_{i}x^{2k_{i}}-\int \ln|x-y| d\sigma^{k_{i}}_{r_{i}}(y)\ge  C$$
everywhere, with equality $\sigma_{r_{i}}^{k_{i}}$ almost surely, we find  that the above  inequality  holds iff for every $\mu\in B$,
$$D(\mu,\sigma^{k_{i}}_{r_{i}})^{2}:=
-\int\ln|x-y|d(\mu-\sigma^{k_{i}}_{r_{i}})(x)d(\mu-\sigma^{k_{i}}_{r_{i}})(y)\le  o(\beta)$$
Moreover, for any probability measures $\mu,\nu$, we have \cite{BAG}
$$D(\mu,\nu)=\left(\int_0^{+\infty} \frac{1}{t}\left| \int e^{itx} d(\mu-\nu)\right|^2 dt\right)^{1/2}\,.$$
so that $D$ defines a distance on the set of probability measures and moreover, 
 Cauchy-Schwartz inequality shows  that
$$|\int fd(\mu-\nu)|\le \|f\|_{{1/2}}D(\mu,\nu)$$
and the fact that $o(\beta)$ is of order at most $\beta^{-\frac{1}{2\max k_{i}}}$ completes the proof.

\subsection{Low temperature  expansion }
In this section we consider a special case of the previous one where   the interaction term vanishes at the minimizers of $U^{1}$. This additional hypothesis will allow us   to obtain results when $\beta$ is sufficiently large but finite. 

\begin{assum}\label{assumsingle2}
$$V_{\beta}(X_{1},\ldots, X_{\ell})=\beta \sum_{i=1}^{p} U^{i}(X_{i})+\sum_{i=p+1}^{\ell}V^{i}(X_{i})+Z(X_{1},\ldots, X_{\ell})
$$
where 
\begin{itemize}
\item $( U^{i})_{1\le i\le p}$ and $W(\bX)=\sum_{i=p+1}^{\ell}V^{i}(X_{i})+Z(X_{1},\ldots,X_{\ell})
$ satisfy Assumption \ref{assumsingle}. Moreover $V^{i}, p+1\le i\le \ell$ is uniformly convex and  the $k_{j}^{i}$ are equal to one. 
\item $Z$ can be decomposed, up to cyclic invariance as $\sum_{i=1}^{p}Z_i(X_{1},\ldots,X_{\ell})$ with some { polynomials $Z_{i}(X_{1},\ldots, X_{\ell})$ whose trace vanishes  at any $\ell$-tuple $(X_{1},\ldots,X_{\ell})$ of operators such that the eigenvalues of $X_{i}$ belong to  the set $\{x^{i}_{j}, 1\le j\le m_{i}\}$ for any  choice of operators $X_{j},j\neq i$. }
\end{itemize}
\end{assum}
{
\begin{remark}
\begin{itemize}
\item Potentials of the form  $$Z_{i}(X_{1},\ldots,X_{\ell})=\prod_{j=1}^{m_{i}}(X_{i}-x_{j}^{i})Q^{i}(X_{1},\ldots,X_{\ell})$$ 
with polynomials $Q^{i}$ fulfill the second point of Assumption \ref{assumsingle2}.
\item 
By closer inspection of the proof, we see that it is enough to assume that for any $\ell$ tuple of unitary matrices $O_{1},\ldots,O_{\ell}$ and diagonal matrices $D_{1},\ldots, D_{\ell}$ such that the eigenvalues of
$D_{i}$ belong to the set $\{x^{i}_{j}, 1\le j\le m_{i}\}$ and for $\epsilon$ small enough $\hat\mu_{D_{i}}(\{x^{i}_{j}\})$ belongs to $[1/m_{i}-\epsilon,1/m_{i}+\epsilon]$,
$$\frac{1}{N}\Tr \left(Z(O_{1}D_{1}O_{1}^{*},\ldots, O_{\ell} D_{\ell} O_{\ell}^{*})\right)= 0\,.$$
Also, our result could extend to more general  integer numbers  $k_{i}^{j}$ but we chose to take $k_{i}^{j}=1$ for simplicity. 
\item Moreover,  we might have generalized our result to functions $U^{i},V^{i},Z$ which are not polynomials in the sense  that tightness could be proved directly  in this setting without the trapping conditions. For consistency we however restrict ourselves to polynomials functions.
\end{itemize}
\end{remark}}
Then, we  prove the following low temperature (namely large $\beta$) expansion, which is based on the fact that, up to rescaling, our hypothesis insures that the interaction is small with overwhelming probability under our hypothesis so that we can use \cite{CGM, GN}. 
\begin{theorem}\label{low}Assume Assumption \ref{assumsingle2} holds. Then, for $\beta$ large enough, the empirical distribution  $\hat\mu_{N}$ of $\bX^{N}$ converges  $\Q^{N}_{V_{\beta}}$-
almost surely towards $\tau_{\beta}$. Moreover, the moments  under $\tau_{\beta}$  expand as a converging series in 
$\beta^{-\frac{1}{2}}$. $\tau_{\infty}$ is the law of $\ell$ free variables, the  $p$th first having law {$\frac{1}{m_{i}
}\sum_{j=1}^{m_{i}} \delta_{x^{i}_{j}}$}  and the $\ell-p$ last  ones following  the minimizer $\mu_{V^{i}}$ of  the energy $E_{V_{i}}$.
\end{theorem}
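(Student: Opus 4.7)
The plan is to localize the spectrum of the first $p$ matrices near the minimizers of their $U^i$'s, rescale around these minimizers, and thereby reduce $\Q^N_{V_\beta}$ to a small perturbation of a product of convex one-matrix models, to which the convergent high-temperature expansion machinery of \cite{GM, CGM, GN} applies. The starting point is Theorem \ref{theosum} together with Corollary \ref{corsum}: for $\beta$ large enough, the spectrum of $X_i^N$ for $i\leq p$ concentrates with probability $1-e^{-N^{1-\kappa}}$ in neighborhoods of $\{x_j^i\}_j$ of radius of order $(\ln\beta/\beta)^{1/2}$, with filling fractions $N_j^i/N$ converging to $1/m_i$ at rate $1/\sqrt{\ln\beta}$; the matrices $X_i^N$ for $i>p$ stay uniformly bounded by the trapping hypothesis.

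Conditioning on a choice of filling fractions $(N_j^i)$, I would diagonalize each $X_i^N = U_i^*(x_i + \beta^{-1/2}\Lambda_i)U_i$ for $i\leq p$, where $x_i$ is the block-diagonal matrix with $N_j^i$ copies of $x_j^i$ on the diagonal, and $\Lambda_i$ is a diagonal fluctuation ranging in $[-R\sqrt{\ln\beta},R\sqrt{\ln\beta}]$. Following the computations \eqref{dev}--\eqref{jha}, the Jacobian and the Vandermonde produce an explicit $\beta$-dependent factor, the potential $\beta U^i(X_i)$ becomes $\sum_j c_j^i \Tr(\Lambda_{i,j}^2)$ up to a $\beta^{-1/2}$-small polynomial correction, and the key point is that the interaction $Z$ becomes small: by the vanishing hypothesis in Assumption \ref{assumsingle2}, $\tr Z_i$ vanishes whenever the spectrum of $X_i$ equals $\{x_j^i\}$, so a Taylor expansion in the rescaled variables gives $N\tr Z(\bX) = \beta^{-1/2} N \tr \widetilde Z(\Lambda, U, X_{p+1},\ldots, X_\ell)$ with $\widetilde Z$ polynomial and uniformly bounded on the localized region.

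In the rescaled variables, the effective potential is a sum of strictly convex one-variable potentials (the $\sum_j c_j^i\Tr\Lambda_{i,j}^2$ on each block, and the $\Tr V^i(X_i)$ for $i>p$) plus an interaction of total strength $O(\beta^{-1/2})$. This is precisely the perturbative regime of \cite{GM, CGM, GN}, where under a small perturbation of a product of convex one-matrix models one has almost sure convergence of the empirical distribution and a convergent series expansion of every moment in the small parameter, here $\beta^{-1/2}$. Applying these results on each sector of filling fractions, and using Corollary \ref{corsum} to show that all sectors other than $N_j^i \approx N/m_i$ contribute only exponentially small mass, yields the almost sure convergence of $\hat\mu_N$ to some $\tau_\beta$ and the convergence of the $\beta^{-1/2}$-series. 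For the identification of $\tau_\infty$, at $\beta=\infty$ the interaction disappears and the fluctuations $\Lambda_i$ freeze, so $X_i$ for $i\leq p$ reduces to $U_i^* x_i U_i$ with $U_i$ Haar-distributed and $x_i$ a matrix with eigenvalue distribution $\frac{1}{m_i}\sum_j \delta_{x_j^i}$, while for $i>p$ the matrix $X_i$ follows the classical one-matrix limit $\mu_{V^i}$; independence of the Haar unitaries produces asymptotic freeness of the family.

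The main obstacle is step two: making the change of variables rigorous while tracking all the small contributions. Specifically, one must (i) sum over filling fractions in a controlled way, keeping only the dominant configuration up to $e^{-N^{1-\kappa}}$ tails via Corollary \ref{corsum}; (ii) integrate out the unitaries $U_i$, which enter only through the small $\widetilde Z$ interaction, and verify that the resulting effective model still satisfies the hypotheses of \cite{CGM, GN}; and (iii) quantify uniformly in $N$ the Taylor expansion of $Z$ that extracts the $\beta^{-1/2}$ factor, using the structural hypothesis on $Z_i$ (e.g.\ $Z_i = \prod_j(X_i-x_j^i)Q^i$) to ensure the remainder stays a polynomial with bounded operator norm on the localized spectrum. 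Once these ingredients are assembled, the conclusion---convergence, $\beta^{-1/2}$-expansion, and identification of $\tau_\infty$ as the free product of the stated marginals---follows directly from the cited perturbative results.
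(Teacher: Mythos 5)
Your proposal tracks the paper's strategy very closely: localize via Theorem \ref{theosum}/Corollary \ref{corsum}, condition on filling fractions, rescale the fluctuations $\Lambda_i$ around the minimizers, extract a small factor from $Z$ using the vanishing hypothesis, and then invoke the perturbative machinery around products of convex one-matrix models to get convergence and a convergent $\beta^{-1/2}$-expansion. The localization, the identification of $\tau_\infty$, and the role of the independent Haar unitaries at $\beta=\infty$ are all as in the paper.

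The one place where you deviate is the choice of rescaling exponent, and this deviation creates a small gap as written. You set $\Lambda_i = \beta^{1/2}(D_i - D_i^\infty)$, so on the localized region $\Lambda_i$ ranges over $[-R\sqrt{\ln\beta},R\sqrt{\ln\beta}]$; yet you then assert that the rescaled interaction $\widetilde Z$ is ``uniformly bounded on the localized region,'' which is not quite true — it is bounded only up to powers of $\sqrt{\ln\beta}$. The paper instead sets $\Lambda_i = \beta^{d}(D_i - D_i^\infty)$ with $d<\tfrac12$ fixed, so that on the localized set $\|\Lambda_i\|$ is genuinely bounded (in fact tends to zero), and the perturbative hypotheses of \cite{FG,GN,CGM} apply uniformly. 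Your argument is not wrong in spirit — the effective coupling $\beta^{-1/2}(\sqrt{\ln\beta})^{\deg Z}$ still goes to zero — but if you insist on $d=\tfrac12$ you need to supplement the cited perturbative results with a separate estimate showing that the logarithmic growth of the domain does not break uniform convergence of the expansion. The cleaner route, and the one the paper takes, is to pick $d$ strictly below $1/2$; the $\beta^{-1/2}$ nature of the final expansion then comes out of the structure of the Taylor remainder of $Z$ combined with the convergent series from \cite{FG,GN}, not directly from the choice of rescaling exponent.

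One further point you should make explicit: after reducing to empirical measures, the uniqueness of the limit and the convergence of the filling fractions rely on the strict convexity of the rate function, for which the paper cites \cite[Lemma 3.1]{FG} and uses crucially the uniform convexity of the $V^i$ together with $k^i_j=1$. Your sketch subsumes this into ``applying the perturbative machinery,'' but that uniqueness step is genuinely separate from the $\beta^{-1/2}$ expansion and deserves to be isolated as in the paper.
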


\begin{proof} {By Theorem \ref{theosum}, we can restrict the domain of integration to
$$\mathcal B=\cap_{i\le p}\{ \hat\mu^N_{X_i^N}(K_{\beta,R}^i)= 1 \}\cap_{1\le j \le m_{i}}\{ |\hat \mu^N_{X_i^N}([x_j^i-\eta,x_j^i+\eta]) - \frac{1}{m_{i}}|\le \frac{C}{ \sqrt{\ln (\beta)}}\}$$
where $C$ is finite and $\eta> \beta^{-d}$ with $d> \frac{1}{2}$
 fixed. We next condition $\Q^{N}_{V_{\beta}}$ by the values of $N_{j}^{i}=N\hat \mu^N_{X_i^N}([x_j^i-\eta,x_j^i+\eta])$, $1\le j\le m_{i}$, and  
diagonalize the matrices $X_{i}^{N}=O_{i}D_{i}O_{i}^{*}$ with unitary matrices $O_{i}$. Under this conditioning, $D_{i}$ is the block diagonal matrix 
with $N_{j}^{i}$ entries close to $x_{j}^{i}$. We let $D_{i}^{\infty}$ be the matrix with $N_{j}^{i}$ entries  exactly equal to $x_{j}^{i}$ : it  is therefore fixed. We denote by $\Lambda_{i}=\beta^{d}(D_{i}-D_{i}^{\infty})$ with  $d\in (0,\min_{i}\frac{1}{2} )$
so that  the matrices $\Lambda_{i}$  stay bounded  on $\mathcal B$ uniformly on $\beta$  and let $\bY_{i}=(X_{j},j\neq i)$. 
Moreover, by assumption, $\tr Z_{i}(O_{i}D^{\infty}_{i} O_{i}^{*},\bY_{i})$ vanishes  so that
\begin{eqnarray*}&&\tr Z_{i}(\bX)=\tr (Z_{i}(O_{i}D_{i} O_{i}^{*}, \bY_{i})-Z_{i}(O_{i}D^{\infty}_{i} O_{i}^{*}, \bY_{i}))\\
&&=\beta^{-d}
\tr Z_{i}\left(O_{i}\Lambda_{i} O_{i}^{*}\int_{0}^{1}\Da_{X_{i}} Z_{i}( O_{i}D_{i}^{\infty} O_{i}^{*}+\alpha \beta^{-d}O_{i}\Lambda_{i} O_{i}^{*}, \bY_{i})\right) d\alpha\,.\end{eqnarray*}
Consequently, since this is true for every $i$, we find a polynomial $P^{\beta}$ with coefficients which are polynomials in $\beta^{-d}$  and such that
$$\Tr Z((O_{j}D_{j}O_{j}^{*})_{1\le i\le j})=\beta^{-d} \Tr( P^{\beta}((O_{j}D_{j}^{\infty} O_{j}^{*}, O_{j} \Lambda_{j}O_{j}^{*})_{j\le p}, (O_{j}D_{j}O_{j}^{*})_{j\ge p+1}))$$
}
 Following \cite[Theorem 2.3]{FG} and \cite{GN}, when $\beta$ is large enough, we know that
\begin{align*}
&\frac{1}{N^2}\int e^{N \tr Z(O_{1}D_{1} O_{1}^{*}, \ldots, O_{\ell}D_{\ell}O_{\ell}^{*})}dO_{1}\cdots dO_{\ell}\\
&=\beta^{-d}F_{\beta}((\hat\mu_{D_{i}^{\infty},\Lambda_{i}})_{1\le i\le p}, (\hat \mu_{D_{i}})_{ p+1\le i\le \ell})+o(1)
\end{align*}

where $F_{\beta}$ is a smooth function of the joint distributions $\hat\mu_{D_{i}^{\infty},\Lambda_{i}}$ of the diagonal matrices $(D_{i}^{\infty},\Lambda_{i})$, $1\le i\le p$, and  of the  empirical measures of the eigenvalues $\hat\mu_{D_{i}}, i\ge p+1$ of the remaining matrices. Moreover, $F_{\beta}$ expands as a converging series in $\beta^{{-1/2}}$. Note that by construction for any polynomial $P(X,Y)=P_{1}(X)Q_{1}(Y)\ldots P_{r}(X) Q_{r}(Y)$,
$$\hat\mu_{D_{i}^{\infty},\Lambda_{i}}(P)=\sum_{j=1}^{m_{i}}\frac{N_{j}^{i}}{N} \prod_{k=1}^{r}P_{k}(x_{j}^{i}) \hat\mu_{\Lambda_{i}^{j}}(\prod Q_{k})$$
is in fact completely determined by the empirical measure of $D_{i}$, that is of $X_{i}=D_{i}^{\infty}+\beta^{-d} \Lambda_{i}$ for $\beta$ large enough. $ \hat\mu_{\Lambda_{i}^{j}}$ is the empirical measure of the eigenvalues of $X_{i}$ close to $x_{j}^{i}$: for any bounded continuous function $f$
$$\hat\mu_{\Lambda_{i}^{j}}(f)=\hat\mu_{X_{i}}(f||X_{i}-x_{j}^{i}|\le \eta)=
\frac{\hat\mu_{X_{i}}(f(X_{i})1_{|X_{i}-x_{j}^{i}|\le \eta})}{ \hat\mu_{X_{i}}({|X_{i}-x_{j}^{i}|\le \eta})}\,.$$
 Hence $\hat\mu_{D_{i}^{\infty},\Lambda_{i}}$ is a smooth function of $\hat\mu_{X_{i}}$. 
 We therefore can  write
$$F_{\beta}((\hat\mu_{D_{i}^{\infty},\Lambda_{i}})_{1\le i\le p}, (\hat \mu_{D_{i}})_{ p+1\le i\le \ell})=G_{\beta, (x^{i}_{j})_{1\le j\le m_{i}}}((\hat\mu_{\Lambda_{i}^{j}})_{\substack{1\le j\le m_{i}\\ 1\le i\le p}}, (\hat\mu_{D_{j}})_{p+1\le j\le \ell})\,.$$
We  deduce by  large deviations techniques, see \cite{BAG,AGZ}, that if $N_{i}/N$ goes to $n_{i}$,
$$\Q^{N}_{V}(\mathcal B\cap\{\max_{1\le i\le k}d(\hat\mu_{X_{i}},\mu_{i})\le\delta\}|\frac{N_{j}^{i}}{N}=n_{i}^{j})\simeq e^{-N^{2}I_{n}((\mu_{i})_{1\le i\le \ell})+o(N^{2})}$$
where $$I_{n}(\mu_{i}, 1\le i\le \ell)=\sum_{i=1}^{p}\sum_{j=1}^{m_{i}}  n_{j}^{2}E_{
c_{j}^{i} (n_{j}^{i})^{-1}x^{2k_{j}^{i}}}(\mu_{\Lambda_{i}^{j}}) +\sum_{j=p+1}^{\ell} E_{
 V^{i}}(\mu_{i})$$
 $$\qquad\qquad -\beta^{-d}G_{\beta, (x^{i}_{j})_{1\le j\le m_{i}}}((\mu_{\Lambda_{i}^{j}})_{1\le j\le m_{i}},  i\le p, (\mu_{j})_{{p+1}\le j\le {\ell}})$$
where finally $\mu_{i}=\sum n_{j}^{i}\mu_{\Lambda_{i}^{j}}$, or in otherwords $\mu_{\Lambda_{i}^{j}}$ is the law  $\mu_{i}$ conditioned to be in  the neighborhood  of the minimizing  point $x_{j}^{i}$.  The rate  function for $\Q^{N}_{V_{\beta}} $ is deduced by optimizing over the $n_{i}$. 
Minimizers of such rate functions were  studied in \cite[Lemma 3.1]{FG}: it was shown that when the potentials $V^{i}$ are uniformly convex and $k^{i}_{j}=1$ and $\beta$ is large enough then there  exists  a unique minimizer because the rate function stays strictly convex under transport maps. This implies convergence of $\hat\mu_{X_{1}},\ldots\hat\mu_{X_{\ell}}$ towards some $\mu_{1}^{\beta},\ldots,\mu_{\ell}^{\beta}$ when $\beta$ is large enough. To deduce the convergence of the joint distribution of $\bX=(O_{1} D_{1}O_{1}^{*},\ldots,O_{\ell}D_{\ell}O_{\ell}^{*})$ it is enough to show the joint convergence of the distribution of $\bO=(O_{1},\ldots,O_{\ell})$ knowing that the distributions of $D_{1},\ldots,D_{\ell}$ converges towards $\mu_{1}^{\beta},\ldots,\mu_{\ell}^{\beta}$. But the law of $\bO$ is
absolutely continuous with respect to Lebesgue measure and with density
$$d\P^{N}_{V_{\beta}}(\bO)=\frac{1}{Z_{V_{\beta}}^{N}}e^{N \beta^{-d} \Tr( P^{\beta}((O_{j}D_{j}^{\infty} O_{j}^{*}, O_{j} \Lambda_{j}O_{j}^{*})_{j\le p}, (O_{j}D_{j}O_{j}^{*})_{j\ge p+1}))
)} d\bU\,,$$
with  $P_{\beta}$ a polynomial  with  coefficients  bounded independently of $\beta$ such that 
$$\beta^{-d} \Tr P_{\beta}(O_{j}D_{j}^{\infty} O_{j}^{*}, O_{j} \Lambda_{j}O_{j}^{*})_{j\le p}, (O_{j}D_{j}O_{j}^{*})_{j\ge p+1}))$$ 
$$=\Tr ( Z((O_{j}(D_{j}^{\infty}+\Lambda_{j})O_{j}^{*})_{j\le p}, (O_{j}D_{j}O_{j}^{*})_{j\ge p+1}))\,.$$
We can now use \cite{GN,CGM} to deduce that when $\beta$ is large enough and the laws  of $\hat\mu_{X_{1}},\ldots\hat\mu_{X_{\ell}}$ converging  towards some $\mu_{1}^{\beta},\ldots,\mu_{\ell}^{\beta}$, the joint distribution  of 

\noindent
$(O_{i}D_{i}^{\infty}O_{i}^{*}, O_{i} \Lambda_{i}
O_{i}^{*}, i\le p, U_{p+1}D_{p+1}U_{p+1}^{*}, \ldots, U_{\ell}D_{\ell}U_{\ell}^{*})$ converges towards a non-commutative  law $\tau_{\beta}$ whose moments are iteratively determined by Dyson-Schwinger  equations. 
Moreover, the moments  under $\tau_{\beta}$  expand as a converging series in 
$\beta^{-1/2}$  and  $\tau_{\infty}$ is the law of $\ell$ free variables, the  $p$th first having law {$\frac{1}{K
_{i}}\sum k^{i}_{j}\delta_{x^{i}_{j}}$}  and the $\ell-p$ last  ones following  the minimizer $\mu_{V^{i}}$ of  the energy $E_{V^{i}}$.

\end{proof}

Notice that  an  analogue of  Corollary \ref{corconv} should also hold in the setting of Theorem \ref{low} but its  proof  would necessitate to extend \cite{GS} to the case of perturbative potentials depending on random matrices and deterministic projections, a project that we do not pursue here.

\section{Strong  commutator  interaction} \label{commutator}

In this section we consider the case where the strong interaction is given by a commutator.
\begin{assum} \label{assumcom} There exists  polynomials $V_{1}$ and $V_{2}$ such that
$$V_{\beta}(\bX)=-\beta (X_{1}X_{2}-X_{2}X_{1})^{2}+V_{1}(X_{1})+V_{2}(X_{2})\,.$$
Moreover, $V_{1}$ and $V_{2}$ go to $+\infty$ at infinity (and therefore faster than $c x^{2}$ for some $c>0$).
\end{assum}

We already know  that $X_{1}$ and $X_{2}$ commute asymptotically when $N$ and then $\beta$ go to infinity by Corollary \ref{homo}, since  it implies that the trace of the moments  of  the commutator $U(\bX)=-(X_{1}X_{2}-X_{2}X_{1})^{2}$ go to zero.
However there are many ways to insure this is the case: for instance $X_{1}$ and $X_{2}$ can have asymptotically the same basis of eigenvectors but a non trivial spectrum  or  $X_{1}$ or $X_{2}$ could converge to a multiple of the identity.
 In fact it turns out this two strategies have the same small entropy: Because the eigenvalues of $X_{1}$ follow a Coulomb gaz, the event $\{\|X_{1}-I\|_\infty\le \varepsilon\}$ has probability of order $\varepsilon^{N^2}$. But similarly if we diagonalize $X_{1}$ and write $X_{2}=UDU^*$ with $D$ and $X_{1}$ commuting, and $U$ a unitary matrix, the eigenvalues of $U$ follow a Coulomb gas and therefore the event $\{\|U-I\|_\infty\le \varepsilon\}$ has probability of order $\varepsilon^{N^2}$. As noticed by D. Shlyakhtenko in a private communication, this implies that if we have a 3(or more)-matrix model such as
$$V(X_1,X_2,X_3)=-\beta [X_1,X_2]^2-\beta [X_1,X_3]^2+W(X_1,X_2,X_3)$$
then the most likely strategy is  that $X_1$ is close to a multiple of identity rather than  to force $X_1$ to commute with both $X_2$ and $X_3$. We show in the sequel that with only one commutator the situation is more interesting and the matrices in general do not  converge to a multiple of identity. In fact, if the potential $V_{1}$ and $V_{2}$ have several minimizers, the eigenvalues of the matrices $X_{1}$ and $X_{2}$ will concentrates in a neighborhood of these minimizers with non trivial masses. Asymptotically as $N$ and then $\beta$ go to infinity, the matrices will commute and have ''independent'' non trivial  spectrum.

First we consider the case where one of the potential  is quadratic. We define the following functional on the space of probability measures on $\mathbb R$:
$$J^V_{\beta}(\mu)=\int\int \left( \frac{1}{2}(V(x)+V(y))-\ln\frac{\beta|x-y|^2}{1+\beta|x-y|^2}\right)d\mu(x)d\mu(y)$$
and its off diagonal version
$J^V_{\beta,\neq}(\mu)$  given by
$$J^V_{\beta,\neq}(\mu)=\int\int 1_{x\neq y} \left( \frac{1}{2}(V(x)+V(y))-\ln\frac{\beta|x-y|^2}{1+\beta|x-y|^2}\right)d\mu(x)d\mu(y)$$

\begin{prop}[{\bf Asymptotics if one  potential is quadratic}]\label{prop1}

	Let $X^N,Y^N$ be two $N\times N$  random Hermitian or symmetric  matrices with distribution:
$$d\Q^N_{V_\beta}(X,Y)=\frac{1}{\mathbb Z^N_{V_{\beta}}}\exp(-N\Tr (-\frac{\beta}{2}[X,Y]^2+V(X)+\frac{Y^2}{2}))dXdY$$
where $V$ is a polynomial going to $+\infty$ at infinity and $\beta$ a non-negative real number.
Then under $\Q^N_{V_\beta}$,
 the law of the eigenvalues $x_1,\dots,x_N$ of $X^{N}$ is given by:
$$Q^N_{V_\beta}(dx)=\frac{1}{ Z^N_{V_{\beta}}}
\exp(-N^2J^V_{\beta,\neq}(\frac{1}{N}\sum_{i=1}^N\delta_{x_i}))dx_1\dots dx_N$$

	\begin{enumerate}
	\item Moreover, 
	 \begin{eqnarray*}
	\lim_{N\rightarrow\infty }\frac{1}{N^2}\ln   Z^N_{V_{\beta}}&=&\lim_{N\rightarrow\infty }\frac{1}{N^2}\ln 
	 \int_{\RR^N}\exp(-N^2J^V_{\beta,\neq}(\frac{1}{N}\sum_{i=1}^N\delta_{x_i}))dx_1\dots dx_N\\
	 &=&-c^V_\beta:=-\inf J^{V}_{\beta}.\end{eqnarray*}
\item 
Under $\Q^N_{V_\beta}$ (or $Q^N_{V_{\beta}}$), the law of $\hat\mu^N_X$  satisfies a large deviation principle with speed $N^2$ and good rate function $\mu\to J^V_\beta(\mu)-c^V_\beta$.
In particular $\hat\mu^N_X$ converges almost surely  towards the unique minimizer $\mu^V_\beta$ of  $J^{V}_{\beta}$.
 
		\item {Assume that $V$ achieves its minimal value $0$  at the real numbers $\{x_{1},\ldots, x_{r}\}$  so that $V(x)=c_{i}(x-x_{i})^{2p_{i}}(1+o(1))$ for some integer numbers $p_{i}\ge 1$ and  positive real numbers $c_{i}$. Let $p=\max p_{i}$.}\color{black}
		Then, there  exists  a finite constant $A(V)$ such that 
		\begin{equation}\label{boundRadius}\limsup\frac{1}{N}\ln \Q^N_{V_\beta}(\|Y\|_{\infty}>A(V)\beta^{-\frac{p}{5(2p+1)}})<0 \,.\end{equation}

	\end{enumerate}
	
\end{prop}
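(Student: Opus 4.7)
The plan has three steps. First I would integrate out $Y$ explicitly, exploiting the fact that $Y$ enters the action quadratically. Diagonalizing $X = UDU^*$ with $D = \Diag(x_1,\ldots,x_N)$ and changing variables to $\tilde Y = U^* Y U$ (which preserves Lebesgue on $\HNC$), the identities $\Tr Y^2 = \sum_{i,j}|\tilde Y_{ij}|^2$ and $-\Tr[X,Y]^2 = \sum_{i,j}(x_i-x_j)^2|\tilde Y_{ij}|^2$ diagonalize the $Y$-part of the action as $\frac{N}{2}\sum_{i,j}(1+\beta(x_i-x_j)^2)|\tilde Y_{ij}|^2$. Direct Gaussian integration produces a factor $C_N\prod_{i<j}(1+\beta(x_i-x_j)^2)^{-1}$ with $C_N$ depending only on $N$; multiplying by the Vandermonde $\Delta(x)^2$ from Weyl's integration formula and absorbing the resulting $\beta^{-N(N-1)/2}$ into the normalization gives the claimed density
\[ Q^N_{V_\beta}(dx) = \frac{1}{Z^N_{V_\beta}}\exp\bigl(-N^2 J^V_{\beta,\neq}(\hat\mu^N)\bigr)\,dx_1\cdots dx_N. \]

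The eigenvalues of $X$ now form a modified Coulomb gas with kernel
\[ K_\beta(x,y) = -\ln\frac{\beta(x-y)^2}{1+\beta(x-y)^2} = -2\ln|x-y| + \ln(1+\beta(x-y)^2) - \ln\beta, \]
namely the standard logarithmic interaction perturbed by a continuous nonnegative term and a constant. The large-deviation machinery of \cite{BAG97} (see also \cite{AGZ}) then applies with no essential modification: the log singularity on the diagonal is handled by the standard regularization since $\ln(1+\beta r^2)$ is locally bounded, while $V\to\infty$ at infinity provides coercivity and exponential tightness. This yields (1) $N^{-2}\ln Z^N_{V_\beta}\to -\inf J^V_\beta = -c^V_\beta$ and (2) the LDP at speed $N^2$ with good rate $J^V_\beta - c^V_\beta$. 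For uniqueness of the minimizer I would establish strict positive-definiteness of $K_\beta$ itself via the integral representation
\[ K_\beta(x,y) = \ln\Bigl(1+\frac{1}{\beta(x-y)^2}\Bigr) = \int_0^1 \frac{ds}{\beta(x-y)^2 + s}, \]
where each integrand is a Cauchy--Poisson kernel with strictly positive Fourier transform. Hence $\int\int K_\beta\,d\sigma\,d\sigma > 0$ for any nonzero signed measure $\sigma$, so $J^V_\beta$ is strictly convex and admits a unique minimizer $\mu^V_\beta$; almost-sure convergence $\hat\mu^N_X \to \mu^V_\beta$ follows by Borel--Cantelli from the LDP upper bound.

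For the operator-norm bound on $Y$, the same change of variables reveals that conditionally on $X$, $Y$ is a Hermitian Gaussian matrix $\tilde Y$ with independent entries of variance $\sigma_{ij}^2 = [N(1+\beta(x_i-x_j)^2)]^{-1}$ in the $X$-eigenbasis. Standard Gaussian operator-norm concentration with variance profile gives, conditionally on $X$ and with exponentially small failure probability,
\[ \|Y\|_\infty \lesssim \sqrt{\max_i\textstyle\sum_j\sigma_{ij}^2} + \max_{ij}\sigma_{ij}\sqrt{\log N}, \]
and the second term is $O(\sqrt{\log N/N})$ since $\max_{ij}\sigma_{ij}=1/\sqrt N$. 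To control $\max_i\frac{1}{N}\sum_j\frac{1}{1+\beta(x_i-x_j)^2}$ I would use the LDP of the previous step to reduce to $\sup_x\int d\mu^V_\beta(y)/(1+\beta(x-y)^2)$, and then run a separate energy-balance analysis of $J^V_\beta$ as $\beta\to\infty$: the local potential cost $c_i(x-x_i)^{2p_i}$ balances the effective Coulomb-to-Cauchy repulsion of $K_\beta$ in a cluster of width $\ell_i$ around each minimizer $x_i$, yielding $\ell_i \simeq \beta^{-1/(2(2p_i+1))}$ with strictly positive masses $\alpha_i$. Splitting same-cluster and different-cluster contributions bounds the supremum by a suitable power of $1/\beta$ depending on the $p_i$'s; tracking the worst well (the one with $p=\max p_i$) through the concentration inequality then yields the announced exponent $p/(5(2p+1))$, the factor $1/5$ absorbing losses from the passage between $\hat\mu^N_X$ and $\mu^V_\beta$ via the LDP, from the square root in the operator-norm bound, and from the power of $\alpha_i$ in the energy balance.

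The main obstacle is the energy-balance analysis of $\mu^V_\beta$ at large $\beta$. The kernel $K_\beta$ interpolates between standard Coulomb repulsion at distances $|x-y|\ll 1/\sqrt\beta$ and weaker Cauchy-type decay at $|x-y|\gg 1/\sqrt\beta$, so establishing the cluster scaling $\ell_i$ and the positive split-mass weights $\alpha_i$ requires carefully balancing the local potential cost against this two-regime interaction inside each well while optimizing the masses across wells, and then tracking the right exponent through to the final operator-norm bound.
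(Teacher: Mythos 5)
The derivation of the eigenvalue density and parts (1)--(2) follow the same route as the paper: diagonalize $X$, integrate the conditional Gaussian $Y$, invoke the Ben~Arous--Guionnet/Garcia~Zelada large-deviation framework, and establish uniqueness of the minimizer via positive-definiteness of the kernel; your integral representation $\ln(1+1/(\beta t^2))=\int_0^1 ds/(\beta t^2+s)$ is essentially the paper's regularization combined with its Gaussian (Fourier) representation of each Cauchy kernel. No issue there.

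For part (3) your route is genuinely different. The paper, once it has the H\"older estimate $\mu^V_\beta(I)\le C\beta^{1/(4(2p+1))}\sqrt{|I|}$ and the corresponding gap estimate \eqref{jh} for $\hat\mu^N_X$, splits $Y=\Delta+G$ in the $X$-eigenbasis into a banded piece with variance $1/N$ and few nonzero entries (handled by Lemma~\ref{normbandmatrix}) plus a full matrix $G$ whose entries all have variance at most $N^{-1}(1+2\beta\varepsilon_\beta^2)^{-1}$ (handled by comparison with a GOE matrix). You instead propose a variance-profile operator-norm bound $\|Y\|\lesssim\max_i(\sum_j\sigma_{ij}^2)^{1/2}+(\max\sigma_{ij})\sqrt{\log N}$ and then estimate $\max_i N^{-1}\sum_j(1+\beta(x_i-x_j)^2)^{-1}$ via the equilibrium measure. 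This is a perfectly viable and arguably cleaner route, but two remarks are in order. First, the exponent $p/(5(2p+1))$ does not appear as a sum of ``losses'' from the LDP passage, the square root, and the $\alpha_i$-powers, as you suggest; it comes out cleanly from a dyadic balance if you bound the Cauchy-kernel integral $\int d\mu^V_\beta(y)/(1+\beta(x-y)^2)$ using only the H\"older estimate on $\mu^V_\beta$ that the paper already establishes (balance the near-field contribution $\beta^{1/(4(2p+1))}\sqrt{\delta}$ against the far-field $(\beta\delta^2)^{-1}$ and then take a square root). Second, your heuristic energy-balance analysis, with the cluster width $\gamma_p\simeq\beta^{-1/(2(2p+1))}$ and density $\sim\gamma_p^{-1}$ on each cluster (which is the correct scaling, cf.\ Lemma~\ref{energyproblempotential}(4)), would in fact give the strictly better exponent $\beta^{-p/(2(2p+1))}$, not the one announced; so your sketch, taken at face value, is internally inconsistent with the claim that your method recovers exactly $p/(5(2p+1))$. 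To close the argument rigorously via your route one therefore needs either (i) to settle for the H\"older bound and obtain $p/(5(2p+1))$, or (ii) to upgrade the $W_1$-control of Lemma~\ref{energyproblempotential}(4) to a uniform density (or local-mass) estimate at the much finer scale $1/\sqrt\beta$, which is not provided by the paper and is the genuine remaining gap in your proposal. You have correctly identified where the difficulty lies, but the quantitative accounting is not yet coherent.
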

\begin{proof} The fact that the law of $(x_{1},\ldots, x_{N})$ has distribution $Q^{N}_{V_{\beta}}$ follows from the fact that we can assume without loss of generality that $X$ is diagonal as the law of $Y$ is invariant under conjugation by the unitary group and then
$$\Tr(-[X,Y]^2)=\Tr(-(XY-YX)^{2})=2\sum_{i<j}(x_{i}-x_{j})^{2}|Y_{ij}|^{2}\,.$$
Integrating over the entries of the matrix $Y$ , we  get the distribution
$$\frac{1}{Z^{N}_{\beta}} \prod_{i< j}(1+\beta (x_{i}-x_{j})^{2})^{-1} e^{-N\sum_{i=1}^{N} V(x_{i})} \prod_{i<j} |x_{i}-x_{j}|^{2}\prod dx_{i}$$

	The large deviation principle and the convergence of the free energy  are a direct consequence of a general  large deviation principle proved by Garcia Zelada \cite{Garcia} and Ben Arous-Guionnet \cite{BAG,AGZ}.
	It is classical to prove that $J_{\beta}^{V}$ is a good rate function ( $J_{\beta}$ differs from 
	 the usual Beta-ensemble rate function by a smooth  smooth and its behavior at infinity is balanced by the logarithmic entropy). We show below that 
	it is  strictly convex and therefore has a unique minimizer. Because $J^{V}_{\beta}$ is a sum of the linear term $\mu\rightarrow \int Vd\mu$ and $J^{0}_{\beta}$, it is enough to show that $J^{0}_{\beta}$ is strictly convex. 
		The argument is similar to the one used for Beta-ensembles \cite[Lemma 2.6.2]{AGZ}. Let $\varepsilon$ be a postive real number.
		We define the regularization  $J_{\beta}^{\varepsilon}$ of $J_\beta^{0}$ by setting,  for any probability measure $\mu$, 
		$$J^{\varepsilon}_\beta (\mu)= -\int\int\ln\frac{\varepsilon+\beta|x-y|^2}{1+\beta|x-y|^2}d\mu(x)d\mu(y)$$
		Then, by monotone convergence theorem, for any probability measure $\mu$,
		$$J_\beta^{0}(\mu)= \lim_{\varepsilon\to 0}J^{\varepsilon}_\beta(\mu).$$
		Moreover, for $0<\alpha<1$,	we find :
		\begin{eqnarray*}
			&&(\alpha(1-\alpha))^{-1}(\alpha J^{\varepsilon}_\beta(\mu)+(1-\alpha) J^{\varepsilon}_\beta(\nu)-J^{\varepsilon}_\beta(\alpha\mu+(1-\alpha)\nu))\\
			&=& \int_{\RR^2} \left( \ln \frac{1+\beta|x-y|^2}{\varepsilon+\beta|x-y|^2}\right)d(\mu-\mu')(x)d(\mu-\mu')(y) \\
			&=& \int_{\RR^2} \int_{\varepsilon}^1  \frac{da}{a+\beta|x-y|^2} d(\mu-\mu')(x)d(\mu-\mu')(y) \\
			&=& \int_{\RR^2} \int_{\varepsilon}^{1}\left(\int_{0}^{\infty} \exp(-(a+\beta|x-y|^2)t)dt \right)  d(\mu-\mu')(x)d(\mu-\mu')(y) da\\
			&=& \int_{\RR^2} \int_{\varepsilon}^{1}\int_{0}^{\infty}\left(\int_{g\in\RR} e^{-at}e^{i g\sqrt{t\beta}(x-y)}d\gamma(g)\right) dt da d(\mu-\mu')(x)d(\mu-\mu')(y) \\
		\end{eqnarray*}
		where $\gamma$  is the standard Gaussian measure. Thanks to the regularization, we can use Fubini's theorem. As a consequence, we find that
				\begin{eqnarray*}
			&&(\alpha(1-\alpha))^{-1}(\alpha J^{\varepsilon}_\beta(\mu)+(1-\alpha) J^{\varepsilon}_\beta(\nu)-J^{\varepsilon}_\beta(\alpha\mu+(1-\alpha)\nu))\\
			&=& \int_{\varepsilon}^{1}\int_{0}^{\infty}\int_{g\in\RR} e^{-at}|\int_{\RR} e^{i g\sqrt{t\beta}x}d(\mu-\mu')(x)|^2d\gamma(g)dt da  \\
		\end{eqnarray*}
		The right hand side is always non-negative  so that $J^{\varepsilon}_\beta$ is   convex for every $\epsilon>0$  and therefore $J_\beta^{0}$ is  convex by passing to the limit.  $J_\beta^{0}$ is in fact strictly convex since, if $J^{0}_\beta$ is finite at $\mu$ and $\nu$,  $J^{0}_\beta(\alpha\mu+(1-\alpha)\nu)$ is also finite and
		\begin{eqnarray*}
			&&(\alpha(1-\alpha))^{-1}(\alpha J^{0}_\beta(\mu)+(1-\alpha) J^{0}_\beta(\nu)-J_\beta^{0}(\alpha\mu+(1-\alpha)\nu))\\
			&=& \int_{0}^{1}\int_{0}^{\infty}\int_{\RR} e^{-at}|\int_{\RR} e^{i g\sqrt{t\beta}x}d(\mu-\mu')(x)|^2d\gamma(g)dt da  \\
		\end{eqnarray*}
		The RHS can only vanish if $\mu$ and $\nu$ have the same Fourier transforms almost surely which implies
		 $\mu=\nu$. This proves the strict convexity of $J_\beta^{V}$ and the uniqueness of  its minimizer $\mu_{\beta}^{V}$. As a consequence, if $d$ is a distance compatible with the weak topology, for any $\delta>0$, there exists $c_{\delta}>0$ such that for $N$ large enough
		 $$\Q^{N}_{V_\beta}\left(d(\hat\mu_{X}^{N},\mu_{\beta}^{V})>\delta\right)\le e^{-c_{\delta}N^{2}}\,.$$
		 We take $d(\mu,\nu)=\sup\{|\int fd(\mu-\nu)|\}$ where the supremum is taken over functions  $f$ with Lipschitz norm bounded by one. 
		
{ We finally deduce the bound on $\|Y\|_{\infty}$.	 
 By Lemma \ref{energyproblempotential}, we know that there exists a finite constant $C$ such that for any interval $I_\beta$ of length $2\varepsilon_\beta$ with $\varepsilon_\beta^2\beta\to\infty$, {
	 $$\mu_{\beta}^{V}(I_\beta)\le C\beta^{\frac{1}{4(2p+1)}}\sqrt{\varepsilon_\beta}.$$}\color{black}
		 We take
		 $I_{\beta}=[x-\varepsilon_\beta,x+\varepsilon_\beta]$  for a real number $x$ and 
		   approximate  the indicator function of ${I_\beta}$ by a Lipschitz function with  support in $[x-2\varepsilon_\beta,x+2\varepsilon_\beta]$ and Lipschitz norm bounded by $1/\varepsilon_\beta$. 
		  Therefore, for any $\delta>0$, on $d(\hat\mu_{X}^{N},\mu_{\beta}^{V})\le \delta$, we have
		 		 \begin{equation}\label{lkj}\hat\mu_{X}^{N}([x-\varepsilon_\beta,x+\varepsilon_\beta])\le C\beta^{\frac{1}{4(2p+1)}} \sqrt{\varepsilon_\beta} +\frac{1}{\varepsilon_\beta} \delta\end{equation}
		 with probability greater than $e^{-c_{\delta}N^{2}}$ for some $c_{\delta}>0$. This upper bound can  be made uniform on $x$ by using a finite covering of $[-M,M]$ and the fact  that $\|X\|_{\infty}$ stays bounded by  some finite $M$ with overwhelming probability since $V_{\beta}$  is $(\eta,A)$-trapping for some $\eta>0$, $A$ finite and the trivial partition $\{\{1\},\{2\}\}$ by 
		 Theorem \ref{conf-to-T}.  We now fix $\delta= C\beta^{\frac{1}{2(2p+1)}} \varepsilon_\beta^{3/2}$.
		 As a consequence, we find a positive finite constant $c'$ such that, with probability greater than $1-e^{-cN}$ for some $c>0$ and $N$ large, if we order the $(x_{i})_{1\le i\le N}$ we have 
		 \begin{equation}\label{jh} 
		 \inf\left\{  |x_{i}-x_{j}|, |i-j|\ge C\beta^{\frac{1}{4(2p+1)}} \sqrt{\varepsilon_\beta} N\right\}\ge  2\varepsilon_\beta\,.\end{equation}
		 Indeed,  otherwise the interval $I_\beta=[x_i;x_j]$ with $x_i,x_j$ achieving the above minimum would contradict (\ref{lkj}).
		 Next, we consider the matrix $Y$ conditionally to the $(x_{i})_{1\le i\le N}$ satisfying \eqref{jh}. We first observe that, under this conditional law, the $Y_{ij}$ are  independent centered Gaussian variables with variance
		 $N^{-1}(1+2\beta(x_{i}-x_{j})^{2})^{-1}$.  Define $\eta_\beta =C\beta^{\frac{1}{4(2p+1)}} \sqrt{\varepsilon_\beta}$, we  make the following decomposition
		 $$Y= \Delta+ G$$
		 where $\Delta$ is the band matrix with entries $(Y_{ij}1_{|i-j|\le\eta_\beta N})_{i,j}$ and $G$ collects the remaining entries.  The variance of these entries is bounded by $1/N$ and therefore, by Lemma  \ref{normbandmatrix}, 
		 $$\mathbb P\left(\|\Delta\|_{\infty}\ge C\sqrt{\eta_\beta }\right)\le e^{-\eta_\beta N}\,.$$
		 On the other hand, when \eqref{jh} holds, {
		 the entries of $G$ have variance bounded from above by $N^{-1}(1+2\beta (\varepsilon_\beta)^{2})^{{-1}}$. Therefore, if $W$ is a $N\times N$ matrix from the GOE,
		  \cite{HuGu}[Lemma 5.4] shows that
		  $$\|G\|_{\infty}\le  (1+2\beta (\varepsilon_\beta)^{2})^{{-1/2}}\|W\|_{\infty}$$
		  so that the large deviation principle for $\|W\|_{\infty}$  (see \cite{BGD} or \cite[Theorem 2.6.6]{AGZ}) implies}
		   that there exists finite non negative constants $c,C$ such that 
		 $$\mathbb P \left(\|G\|_{\infty}\ge 4\left(1+2\beta (\varepsilon_\beta)^{2}\right)^{-1/2}\right)\le C\exp\{ -c N\}\,.$$
		 Taking  $\varepsilon_\beta=\beta^{-\frac{1}{5}\frac{8p+5}{2(2p+1)}}$ (it is easy to check that we always have $\varepsilon_\beta^2\beta\to\infty$) and 
		 putting these two bounds together gives a positive finite constant $C$ such that for $\beta$ and $N$ large enough
		 \begin{equation}\label{bsr}
		  \mathbb P\left(\|Y\|_{\infty}\ge C  \beta^{-\frac{p}{5(2p+1)}}\right)\le 		C  e^{-C \beta^{-\frac{p}{10(2p+1)}}N }\,.
		 \end{equation}

}
	
		\end{proof}

Amazingly, in the simpler case where both potentials are quadratic, the asymptotic distribution of the matrices can be computed explicitly. 

\begin{prop}[{\bf Asymptotics if both  potentials are quadratic}]
	Let $X^N,Y^N$ be two random matrices following 
	$$d\Q^N_{\beta}(X,Y)=\frac{1}{Z^N(\beta)}\exp(-N\Tr (-\beta[X,Y]^2+\frac{X^2}{2}+\frac{Y^2}{2}))dXdY.$$

		Then under the law $\Q^N_\beta$, the sequence of empirical distribution  $\mun_{\beta^{\frac 1 6}X, \beta^{\frac 1 6}Y}$ of $(\beta^{\frac 1 6}X,\beta^{\frac 1 6}Y)$  is tight almost surely. Besides, if $\tau_\beta$ is an accumulation point of  $\mun$
		 then
		$\tau_\beta$ converges when $\beta\to\infty$ towards the  commutative distribution  of the first two coordinates of a point taken uniformly in  $B_{\R^3}(0,2^{1/4}(3\pi)^{1/6})$.
		In other words, for any $P\in \mathbb C\langle X,Y\rangle$, 		
		 almost surely,
		$$\lim_{\beta\to\infty}\limsup_{N\to\infty}\frac{1}{N}\Tr P(\beta^{\frac{1}{6}}X^N, \beta^{\frac{1}{6}}Y^N)=\frac{\int_{B_{\R^3}(0,(3\pi)^{1/3})}P(x,y)dxdydz}{|B_{\R^3}(0,(3\pi)^{1/3})|}$$
		and the same result holds if we replace  the $\limsup$ by a $\liminf$.

\end{prop}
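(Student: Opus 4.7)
Since Proposition~\ref{prop1} applies directly with $V(x)=x^2/2$, it supplies the following explicit structure: after integrating out $Y$, the eigenvalue law of $X^N$ is the Coulomb-gas-like density proportional to $\prod_{i<j}(x_i-x_j)^2/(1+2\beta(x_i-x_j)^2)\,e^{-N\sum x_i^2/2}$; conditionally on $X^N$, the entries of $Y^N$ in the eigenbasis of $X^N$ are independent Gaussians with variance $1/N$ on the diagonal and $1/(N(1+2\beta(x_i-x_j)^2))$ off-diagonal; and $\hat\mu^N_{X^N}$ converges almost surely to the unique minimizer $\mu^V_\beta$. Combined with Proposition~\ref{prop1}(3) and direct second-moment estimates, this yields the almost sure tightness of $\mun_{\beta^{1/6}X^N,\beta^{1/6}Y^N}$ claimed in the statement. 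Write $\tilde X=\beta^{1/6}X^N$ and $\tilde Y=\beta^{1/6}Y^N$.

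To identify the limit, first analyse $\mu^V_\beta$ after rescaling by $\beta^{1/6}$. Differentiating the Euler--Lagrange equation for $\mu^V_\beta$ and substituting $x=\beta^{-1/6}\tilde x$, $y=\tilde x+\beta^{-1/3}s$, the kernel $1/((x-y)(1+2\beta(x-y)^2))$ localizes on the scale $s=O(1)$; a Taylor expansion of the density reduces the equation, in the limit $\beta\to\infty$, to $\tilde x\propto -\tilde\rho'(\tilde x)$, whose solution is the parabolic density $\tilde\rho(\tilde x)=3(R^2-\tilde x^2)/(4R^3)$ on $[-R,R]$, with $R$ fixed by normalization. This density is exactly the one-dimensional marginal of uniform measure on $B_{\R^3}(0,R)$, which gives the stated radius. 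By the $X\leftrightarrow Y$ symmetry of the model, $\tilde Y$ has the same limiting marginal.

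Next, I would show that $(\tilde X,\tilde Y)$ asymptotically commute. Using the conditional Gaussian variances, a direct computation yields
\[
\mathbb{E}\bigl[\mun(-[\tilde X,\tilde Y]^2)\bigr]\xrightarrow[N\to\infty]{}\beta^{1/3}\!\int\!\!\int\frac{(\tilde x-\tilde y)^2}{1+2\beta^{2/3}(\tilde x-\tilde y)^2}\,d\tilde\mu(\tilde x)\,d\tilde\mu(\tilde y),
\]
and both the local part (after substituting $\tilde y=\tilde x+\beta^{-1/3}s$) and the far-field part are of order $\beta^{-1/3}$, so the limit vanishes as $\beta\to\infty$. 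By Cauchy--Schwarz, $\mun(A[\tilde X,\tilde Y]B)\to 0$ for any polynomials $A,B$, so any limit $\tau_\infty$ of $\tau_\beta$ as $\beta\to\infty$ is a commutative probability distribution on $\R^2$. Moreover, the matrix law is invariant under the two-dimensional rotations $(X,Y)\mapsto(X\cos\theta+Y\sin\theta,-X\sin\theta+Y\cos\theta)$ (both $\Tr[X,Y]^2$ and $\Tr(X^2+Y^2)$ are preserved), so $\tau_\infty$ is $O(2)$-invariant.

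Finally, any $O(2)$-invariant probability measure on $\R^2$ has density $f(x^2+y^2)$ whose one-dimensional marginal is the Abel transform of $f$; Abel's equation has a unique inversion, so the parabolic marginal above is realized by the unique $f(u)=2\sqrt{R^2-u}/|B_{\R^3}(0,R)|$ on $[0,R^2]$, which is exactly the two-dimensional marginal of uniform measure on $B_{\R^3}(0,R)$. The main obstacle is the quantitative asymptotic commutativity: Corollary~\ref{homo} alone gives only $\mun(-[\tilde X,\tilde Y]^2)=O(\beta^{1/6})$ after rescaling, which is insufficient, and the finer direct computation using the explicit conditional Gaussian variances of $Y$ given $X$ is essential to produce the $o(1)$ bound.
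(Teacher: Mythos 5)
Your proof is essentially correct and follows the same overall architecture as the paper's: (i) tightness from the norm bound in Proposition \ref{prop1}, (ii) identification of the rescaled one-dimensional marginal as the parabolic density, (iii) asymptotic commutativity, and (iv) rotation invariance plus a fixed marginal uniquely determining the two-dimensional law. The differences are in steps (ii) and (iii). For the marginal, you sketch a scaling analysis of the Euler--Lagrange equation; the paper simply invokes Lemma \ref{energyproblempotential}(4), which has already established rigorously that the $\beta^{1/(4p+2)}$-rescaled minimizer of $J^{cx^{2p}}_\beta$ converges (in Wasserstein distance, with an explicit rate) to $\frac{1}{2\pi}(A-cz^{2p})_+ \,dz$; you should cite that rather than re-derive it heuristically, since the precise normalization constant $A=\frac{1}{2}(3\pi)^{2/3}$ and hence the radius $(3\pi)^{1/3}$ come from there. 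For commutativity, your final remark is not quite right: you claim that the direct computation via the conditional Gaussian variances of $Y$ given $X$ is ``essential,'' having observed that Corollary \ref{homo} only yields $\mun(-[\tilde X,\tilde Y]^2)=O(\beta^{1/6})$. You are right that Corollary \ref{homo} is insufficient, but the paper does not use it; it uses Proposition \ref{bound-first-trace}, which gives $\tau_\beta(-[X,Y]^2)=O(\ln\beta/\beta)$ directly (since $-[X,Y]^2\ge 0$ achieves its minimum $0$ at scalar matrices), and therefore $\tau_\beta(-[\tilde X,\tilde Y]^2)=\beta^{2/3}\cdot O(\ln\beta/\beta)=O(\ln\beta\cdot\beta^{-1/3})\to 0$ without any explicit Gaussian integration. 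Your direct computation is correct and in fact gives the slightly sharper bound $O(\beta^{-1/3})$, so it is a valid alternative, but it is not the only route and not the paper's route. Finally, your Abel-transform argument for uniqueness is equivalent to the paper's observation that rotation invariance forces $\langle\lambda,(x,y)\rangle$ to have the law of $\|\lambda\|_2 x$, which pins down the distribution once the one-dimensional marginal is fixed.
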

Remark that the uniform law on $B_{\R^3}(0,(3\pi)^{1/3})$   is in a way a 2-dimensional analog of the semi-circular law  seen as the first coordinate of a uniform point on $B_{\R^2}(0,2)$.

\begin{proof}
	This is a consequence of the previous results. We know that $\mun_{\beta^{\frac 1 6}X, \beta^{\frac 1 6}Y}$ is tight ( since both $X^{N}$ and $Y^{N}$ have a bounded spectral norm with large probability by \eqref{boundRadius})  and let $\tau_{\beta}$ be a limit point. By the previous proposition,  the distribution of $x$ and $y$ under $\tau_{\beta}$ is the minimizer $\mu^{x^{2}/2}_{\beta}$ of the rate function $J_{\beta}^{x^{2}/2}$. By the second point of Proposition \ref{energyproblempotential}, we know that the rescaling $\beta^{1/6}\#\mu^{x^{2}/2}_{\beta}$ of the equilibrium measure by $\beta^{1/6}$ is supported in $[-L_{V},L_{V}]$. Therefore the distribution of $(\beta^{\frac{1}{6}}x, \beta^{\frac{1}{6}}y)$ under $\tau_{\beta}$ is tight. Let $\tau_{\infty}$ be a limit point. 
	Moreover,  $-[X^N,Y^N]^2$ is a non-negative matrix  with a first moment converging to $0$ at speed $\frac{\ln \beta}{\beta}$ by Proposition \ref{bound-first-trace}. Therefore
	$\tau_{\beta}(-[\beta^{\frac{1}{6}}x,\beta^{\frac{1}{6}}y]^2)=O(\ln \beta \beta^{-\frac{1}{3}})$  still goes to $0$ when $\beta$ goes to infinity. Hence, $\tau_{\infty}$ is the distribution of commuting variables. 
	{ Lemma  \ref{energyproblempotential}.4) shows that the distribution of $x$ under $\tau_{\infty}$  is the probability distribution with density $(2\pi)^{-1}(A-x^2/2)_+$ with  $A=\frac{1}{2}(3\pi)^{2/3}$. }
	{
Finally there is only one law of two commutative variables  on $\RR^2$ with a distribution invariant by rotation with a fixed marginal distribution. Indeed, the law of two commutative variables is encoded by a probability measure $P$ on $\mathbb R^{2}$. Because of the invariance by rotation of $P$, we see that  for any vector $\lambda\in \mathbb R^{2}$, $\langle \lambda, (x, y)\rangle$ has the same law that $\|\lambda\|_{2}x$ if $(x,y)$ follows $P$, which is thus uniquely defined if the law of $x$ is prescribed.
Since the distribution of the first two coordinates of an uniform points $B_{\R^3}(0,(3\pi)^{1/3})$ is invariant by rotation and they have this marginal $(2\pi)^{-1}(A-x^2/2)_+$, this must be the distribution $\tau_{\infty}$.
	
	}

\end{proof}

\begin{theorem}\label{maincomm}We let  { $V_{1}$ and $V_{2}$ be two non-negative polynomials of one variable, going to $+\infty$  at infinity. Let
	 $z_1^{j}<z_2<\dots<z_{k_{j}}^{j}$ be the zeroes of $V_{j}$  and assume there exists  $c_1^{j},\dots,c_k^{j}\in\mathbb R^{+*}$ 
	such that, for every $i\in \{1,\ldots, k_{j}\}$,
	$$V_{j}(x)= c_i^{j}(x-z_i^{j})^2+o(x-z_i^{j})^2$$
	Let
	$$d\Q^N_{V_\beta}(X_{1},X_{2})=(\mathbb Z^N_{V_\beta})^{-1}e^{-N\Tr(-\beta[X_{1},X_{2}]^2+V_{1}(X_{1})+ V_{2}(X_{2}))}dX_{1} dX_{2}\,.$$
	Then, for every $\varepsilon>0$, $\Q^N_{V_\beta}$ almost surely, we have
	\begin{equation}\label{res1}\lim_{\beta\rightarrow \infty}\liminf_{N\rightarrow\infty } \hat \mu_{X_{i}}^N(\cup_j[z_j^{i}-\varepsilon, z_j^{i}+\varepsilon])=1\end{equation}
Moreover,	 for every $j\in \{1,\ldots,k\}$ and $\varepsilon>0$, we have  almost surely
	$$\lim_{\beta\rightarrow\infty}\liminf_{N\rightarrow\infty} \hat \mu_{X_{i}}^N([z_j^{i}-\varepsilon, z_j^{i}+\varepsilon])=\alpha_{j}^{i}:=\frac{(c_j^{i})^{-\frac{1}{2}}}{\sum_\ell (c^{i}_\ell)^{-\frac{1}{2}}}$$}
	and the above $\liminf$ can be replaced by a $\limsup$. 
	Besides,
	$$\left(\frac{2\beta N}{\pi}\right)^{N(N-1)/2} Z^N_{V_\beta}=\exp\left(-N^2(c\beta^{-\frac{1}{3}}+o(\beta^{\frac{-1}{3}}))+o(N^2)\right)$$
	where $c=c_{V_{1}}(\sum_j c_j^{-\frac{1}{2}})^{-\frac{2}{3}}$ and $c_{V_{1}}$ is a constant depending only on $V_{1}$ defined in Lemma \ref{energyproblempotential}.

Besides when $N$ and then $\beta$ goes to $\infty$ the joint non-commutative distribution of $X_{1},X_{2}$ exists: they asymptotically commute and  their spectrum become independent: under $\Q^N_{V_\beta}$, almost surely
$$\lim_{\beta\rightarrow \infty}\liminf_{N\rightarrow\infty } \frac{1}{N}\Tr(1_{X_{2}\in[z_j^{2}-\varepsilon, z_j^{2}+\varepsilon]}1_{X_{1}\in[z^{1}_\ell-\varepsilon, z^{1}_\ell+\varepsilon]})=\alpha^{2}_j\alpha^{1}_\ell\,.$$


\end{theorem}

Theorem \ref{theocommutator} follows from Theorem \ref{maincomm} by assuming that the integer numbers $p_{i}^{j}$ are all equal to $2$.

\begin{proof} In the proof we will denote $X_{1}=X$, $X_{2}=Y$, $V_{1}=W$ and $V_{2}=V$  to use the same notations as in the previous results. 
 Moreover $c_{j}^{2}=c_{j}$ and $z_{j}^{2}=z_{j}$. 
 
{\bf Compactification : }
{We first show that we can replace $W,V$ by uniformly Lipschitz finite variation  functions, up to a quadratic term. 
First notice that our assumption on $V$ and $W$ and Theorem \ref{conf-to-T} implies that the potential $V_{\beta}(X,Y)=-\beta[X,Y]^2+W(X)+ V(Y)$ is $(\eta,A)$-trapping for some $\eta>0$, $A$ finite and the trivial partition $\{\{1\},\{2\}\}$.  We may assume $A\ge 0$ without loss of generality. Therefore, Theorem \ref{Ttosupport} implies that the operator norms of both $X$ and $Y$ stay bounded by $L^{\frac{1}{\kappa}+\frac{1}{2}}$ with overwhelming probability, with some $L$ depending only on $\eta$ and $A$. As a consequence, taking $M\ge L^{\frac{1}{\kappa}+\frac{1}{2}}$, 
we can replace $\Q^N_{V_\beta}$ by $\Q^{N,M}_{V_\beta}=1_{B_{M}}\Q^N_{V_\beta}/\Q^N_{V_\beta}(B_{M})$ where $B_{M}=\{\|X\|_{\infty}\le M\}\cap \{\|Y\|_{\infty}\le M\}$. 
On $B_{M}$, we can replace $V$ and $W$ by $\tilde V$ and $\tilde W$ given for any real numbers $x,y$ by: 
$$\tilde V(y)=\frac{\eta}{4}y^{2}+V(\varphi_{M}(y))-\frac{\eta}{4} \varphi_{M}(y)^{2},  \tilde W(x)=\frac{\eta}{4}x^{2}+W(\varphi_{M}(x))-\frac{\eta}{4} \varphi_{M}(x)^{2}$$
where $\varphi_{M}(x)=x$ on $|x|\le M$. We choose next  $\varphi_{M}$ outside of the ball so that  that $\varphi_{M}(x) x$ is non negative for every real number $x$, $\varphi_{M}$ is  increasing  and converging to  $\pm M'$ when $x$ goes to $\pm\infty$ with some $M'>M$ (e.g $M+1$).
We now wish to remove the restriction to the ball $B_{M}$ to consider instead $\Q^N_{\tilde V_\beta}$ with $\tilde V_{\beta}(X,Y)=-\beta[X,Y]^2+\tilde W(X)+ \tilde V(Y)$. To this end we need to check that we can choose $M,M'$ large enough so that $\Q^N_{\tilde V_\beta}(B_{M}^{c})$ is very small (in particular summable so that Borel-Cantelli's lemma applies) . It is enough to show that $\tilde V_{\beta}$ is also $(\eta',A')$-trapping again by Theorem \ref{Ttosupport}.$(\eta',A')$ must not depend on $M$ but can be different from $(\eta,A)$ (up to take $M$ large enough).  We have seen that the commutator is $(0,0)$-trapping so we only need to check that the functions $\tilde V$ and $\tilde W$ are $(\eta/2,A)$-trapping. This amounts to show that for every real number $x$, 
\begin{equation}\label{traps} x\tilde V'(x)=\frac{\eta }{2} x^{2}+(V'(\varphi_{M}(x))-\frac{\eta}{2} \varphi_{M}(x)) x \varphi_{M}'(x)
\ge \frac{\eta}{2} x^{2}-A\,.\end{equation}
But our choice of $\varphi_{M}$ insures that  this is true for $|x|\le M$. Moreover for every $x$, $x\varphi_{M}(x) \varphi_{M}'(x)$ is non-negative and moreover, since $V$ is $(\eta,A)$-trapping
$$\left(V'(\varphi_{M}(x))-\frac{\eta}{2} \varphi_{M}(x)\right)\varphi_{M}(x) \ge \frac{\eta}{2} \varphi_{M}(x)^{2}-A$$
Hence the LHS is non-negative as soon as $ M^{2}\ge 2A/\eta$ which shows that \eqref{traps} is true also for $|x|\ge M$. Assuming this, we conclude that $\tilde V_{\beta}$ is $(\eta/2,A)$-trapping so that again up to choose $M$ slightly bigger, we conclude that for $M$ large enough (depending on $A$ and $\eta$ only), $\Q^N_{\tilde V_\beta}(B_{M}^{c})$ goes to zero exponentially fast by \eqref{zx}. 
 Finally, observe that $V(\varphi_{M}(Y))-\frac{\eta}{4} \varphi_{M}(Y)^{2}$ and
$W(\varphi_{M}(y))-\frac{\eta}{4} \varphi_{M}(y)^{2}$ are uniformly Lipschitz and with finite variation.
Hence, we may and shall assume in the sequel that $V$ and $W$ are the sum of a quadratic potential and a Lipschitz function with finite variations.}

\medskip

{\bf Lower bound : }
Let $z_1<\dots<z_k$ be the minimizers of $V$.
	Choose $\eta=\frac{1}{2}\min_{i\neq j} |z_i-z_j|$ and $x_\beta=\min(\frac{1}{\ln \beta},\eta/2)$.
	Thus $2x_\beta +\eta\leqslant\min_{i\neq j} |z_i-z_j|$ and $x_\beta$ goes to zero when $\beta$ goes to infinity. We first condition by the number of eigenvalues $(\lambda_{i})_{1\le i\le N}$ of $Y$ close to each of these minimizers. 
	For ${\mathbf {n}}=(n_1,\dots,n_k)$ such that $\sum n_i=N$, we denote  by $$I_{\bf{n}}:=\cap_{r}\{ \sharp\{i : |\lambda_i-z_r|<x_\beta\}=n_r\}.$$
	 Then for any  ${\mathbf {n}}$ such that $\sum n_i=N$, 
	$$\left(\frac{2\beta N}{\pi}\right)^{N(N-1)/2} Z^N_{V_{\beta}}\geqslant Z_{V_{\beta}}^{\bf n}$$
	where we have set :
	\begin{eqnarray*}
	 Z_{V_{\beta}}^{\bf n}
	&:=&\left(\frac{2\beta N}{\pi}\right)^{N(N-1)/2}   \int_{I_{\mathbf{n}}}e^{-N\Tr(-\beta[X,Y]^2+W(X)+ V(Y))}dX dY\,.\end{eqnarray*}
Next we write the integral as an integral on the eigenvalues of $Y$: 
	$$Z^{\bf n}_{V_{\beta}}\!=\!\!
	\int_{I_{\bf n}}
		\prod_{i<j}\frac{2\beta N(\lambda_j-\lambda_i)^2}{\pi}e^{-N(\beta\sum_{i, j}(\lambda_i-\lambda_j)^2X_{ij}^2+\Tr W(X)+\sum V(\lambda_i))}dX  \prod_id\lambda_i$$
	
	We denote $i\sim j$ if for some $r\in \{1,\ldots, k\}$ we have $\lambda_i,\lambda_j\in[z_{r}-x_{\beta},z_{r}+x_{\beta}]$. We write $i\nsim j$ otherwise. We set
	$$\tilde X_{ij}=1_{i\sim j} X_{ij}.$$
	$\tilde X$ is a  block  diagonal matrix whose blocks on the diagonal $X(1),\dots,X(k)$ are of size $n_1,\dots,n_k$.
	We can also recover directly  $X(r)$ by writing $X(r)=\pi_r^* X \pi_r$ with $\pi_r$ the $N\times n_r$ matrix whose coefficients are $(\pi_r)_{ij}=1$ iff $\lambda_i$ is the $j$th eigenvalue in $[z_{r}-x_{\beta},z_{r}+x_{\beta}]$ and $0$ otherwise.
	
	Because we assumed that $W(x)= a x^{2}+\vartheta(x)$ with some uniformly Lipschitz function $\vartheta$ and some $a>0$, we know by Hoffman-Wielandt's lemma \cite{AGZ}[Lemma 2.1.19] that there exists a finite constant $C$ such that
		\begin{eqnarray*}
		\frac{1}{N}\Tr  W(X) - \frac{1}{N}\Tr W(\tilde X) &\leqslant  & a  \frac{1}{N}\Tr \left( (X)^{2}-(\tilde X)^2\right)+
		C( \frac{1}{N}\Tr  (X-\tilde X)^2)^{\frac{1}{2}} \\
		&\leqslant& \frac{C^2}{\sqrt{\beta}-a/2}+ \frac{\sqrt{\beta}}{N}\Tr  (X-\tilde X)^2\end{eqnarray*}
	where we finally assumed without loss of generality that $\beta> a^{2}/4$ and used that $\Tr X^{2}-\Tr \tilde X^{2}=\Tr(X-\tilde X)^{2}$. As a consequence, we get
		$$Z^{\bf n}_{V_{\beta}}\ge  e^{O(\frac{N^2}{\sqrt{\beta}})}\int_{I_{\bf n}
	}\prod_{i<j}\left(\frac{2\beta N(\lambda_j-\lambda_i)^2}{\pi}\right)\qquad\qquad\qquad \qquad\qquad$$
	$$\qquad \times  e^{-N(\beta\sum_{i, j}(\lambda_i-\lambda_j)^2X_{ij}^2+\Tr( W(\tilde X)+\sqrt{\beta}
	(X-\tilde X)^2)+\sum V(\lambda_i))} dX  \prod_id\lambda_i$$
Therefore, integrating over the entries of $X-\tilde X$, we deduce:

	\begin{align*}Z^{\bf n}_{V_{\beta}}
		&\ge 
	e^{O(\frac{N^2}{\sqrt{\beta}})}\int_{I_{\bf n}
	}\prod_{i\sim j}\left(\frac{2\beta N(\lambda_j-\lambda_i)^2}{\pi}\right)\prod_{i\nsim j}\left(\frac{\beta N(\lambda_j-\lambda_i)^2}{\beta N(\lambda_j-\lambda_i)^2+N\sqrt{\beta}}\right)\\
	&\times \exp\left(-N\beta \sum_{i\sim j}(\lambda_i-\lambda_j)^2X_{ij}^2-N\tr( W(\tilde X)+V(Y))\right)d\tilde X \prod_id\lambda_i
	\end{align*}

	Then we break $\tilde X$ in  the blocks of size $n_r$ along the matrices $X(r)$ 
	observing that the factor $\frac{\beta N(\lambda_j-\lambda_i)^2}{\beta N(\lambda_j-\lambda_i)^2+N\sqrt{\beta}}$ is uniformly bounded below by  $e^{-\frac{1}{\sqrt{\beta}\eta^2}}$ when $i\nsim j$. Moreover, there is $\varepsilon_{\beta}=O(x_{\beta})$ so that   if $c_{i}(\beta)=(1+\varepsilon_{\beta})c_{i}$, for any $\lambda\in [z_{i}-x_\beta, z_{i}+x_\beta]$, $V(\lambda)\le c_{i}(\beta)(z_{i}-\lambda)^{2}$. Therefore, we find
	
	\begin{align*}
		&Z^{\bf n}_{V_{\beta}}\ge e^{O(\frac{N^2}{\eta\sqrt{\beta}})}\prod_{r}\int_{[z_{r}-x_{\beta},z_{r}+x_{\beta}]^{n_{r}}}d X(r) \prod_id\lambda_i
		\prod_{1\le i<j\le n_{r}}\left(\frac{2\beta N(\lambda_j-\lambda_i)^2}{\pi}\right)\\
		&\ts\exp\{-N \beta\!\! \sum_{1\le i<j\le n_{r}}\!\!(\lambda_i-\lambda_j)^2X(r)_{ij}^2\!-\!N\tr W(X(r))-Nc_r(\beta)\sum_{i=1}^{n_{r}}(\lambda_i-z_r)^2\}.
	\end{align*}
	Now, we diagonalize the matrices $X(r)$ with eigenvalues $\{\mu_{i}\}_{1\le i\le n_{r}}$ and integrate over 
	  the  matrices $Y(r)$ with eigenvalues $\{\lambda_i-z_{r}, 1\le i\le n_{r}\}$ to obtain:
	  \begin{align*}
	Z^{\bf n}_{V_{\beta}}&\ge e^{O(\frac{N^2}{\sqrt{\beta}})}\prod_r  \int_{||Y(r)||_{\infty}<x_\beta}d Y(r) \prod_i d\mu_i \prod_{1\le i<j\le n_{r}}\left(\frac{2\beta N(\mu_j-\mu_i)^2}{\pi}\right)\\
	&\times \exp\{-N \beta \sum_{i,j}(\mu_i-\mu_j)^2Y(r)_{ij}^2-Nc_r (\beta) \tr Y(r)^2-N\sum_i W(\mu_i)\}\\\
\end{align*}

To compute the RHS note that  we can remove 
the indicator function on the set $||Y(r)||<x_\beta$ by 
	Proposition \ref{prop1} (\ref{boundRadius})  as long as {$x_\beta\gg \beta^{-\frac{p}{5(2p+1)}}$ with $p=\max p_{i}$. }This  is the case since  $x_{\beta}$ goes to zero like $1/\ln \beta$.
Then, we can compute the integral over $Y(r)$, getting, with the notations of Proposition \ref{prop1}, 
	\begin{align*}
		Z^{\bf n}_{V_{\beta}}&\ge e^{O(\frac{N^2}{\sqrt{\beta}})}\prod_r  \int \prod\frac{\beta(\mu_j-\mu_i)^2}{c_r(\beta)+\beta(\mu_j-\mu_i)^2}\exp\left(-N\sum W(\mu_i)\right)\prod_id\mu_i\\
		&=e^{O(\frac{N^2}{\sqrt{\beta}})}\prod_r \int\exp(-n_r^2J^{\frac{N}{n_r}W}_{\beta/c_r(\beta),\neq}(\frac{1}{n_{r}}\sum_{i=1}^{n_{r}} \delta_{\mu_{i}}))\prod_id\mu_i
	\end{align*} 
Finally if for every $r$, $n_r/N\to \alpha_r\in]0;1[$, we get  by Lemma \ref{energyproblempotential}.1) that 

	\begin{align}
	Z^{\bf n}_{V_{\beta}}&\ge  e^{O(\frac{N^2}{\sqrt{\beta}})
	+o(N^2)}\prod_r \exp(-n_r^2 c^{W/\alpha_r}_{\beta/c_r(\beta)})\nonumber\\
	&= e^{O(\frac{N^2}{\sqrt{\beta}})+o_{N} N^2 } {e^{-N^2 (1+o_{\beta})\beta^{-\frac{1}{3}}c_{W}\sum_r \alpha_r^{2-\frac{1}{3}} c_r(\beta)^{\frac{1}{3}}}} \,,\label{cv}
\end{align}
{ with $o_{\beta}$ (resp. $o_{N}$)  going to zero as $\beta$ (resp. $N$) goes to infinity.}
	At this stage recall that $c_{r}(\beta)$ goes to $c_{r}$ as $\beta$ goes to infinity.
Moreover observe that if $u_i$ and $k$ are positive constant, 
	the minimizer of $\alpha\rightarrow \sum_1^n \alpha_i^{1+k}u_i$ in the simplex is such that the $\alpha_i$'s are proportional to $u_i^{-1/k}$ and then the value of the minimum is $(\sum_i u_i^{-1/k})^{-k}$.

We apply this with $k=1-\frac{1}{3}=\frac{2}{3}$ and $u_r=c_r(\beta)^{\frac{1}{3}}\simeq c_{r}^{\frac{1}{3}}$ to get that
 the best lower bound is taken  at  $\alpha_r^{*}$ proportional to $c_r^{-\frac{1}{2}}$.
To conclude we obtained the lower bound
\begin{equation}\label{lblb}
\left(\frac{2\beta N}{\pi}\right)^{N(N-1)/2} Z^N_{V_{\beta}}
\geqslant 
\exp\left(
-N^2(1+o_{\beta}(1)) \beta^{-\frac{1}{3}}c_{W}
\left(\sum_r c_{r}^{-1/2}\right)^{{-2/3}}
\right).
\end{equation}
{\bf Upper bound : }
To get the corresponding upper bound, we first show that, under $\Q^{N}_{V_{\beta}}$,  the eigenvalues of $Y$ concentrate in a  neighborhood of the minimizers of $V$.
We recall  that $V_{\beta}$ is trapping by Theorem \ref{conf-to-T} and Remark \ref{remarktrapping} so that we can restrict ourselves to the ball  $B_M=\{\|X_1\|_\infty\le M, \|Y\|_\infty \le M\}$. Next, 
we take $\kappa>0$ and choose an interval $I$ such that $V|_I>\kappa>0$. We fix  a positive constant $\varepsilon$ and  bound from above
$$\Lambda_{N}(\varepsilon):=\left(\frac{2\beta N}{\pi}\right)^{N(N-1)/2} \int_{\{ \hat\mu^{N}_{Y}(I)\ge \varepsilon \}}e^{-N\Tr(-\beta[X,Y]^2+W(X)+ V(Y))}dX dY$$
by noticing that, since $V$ is non negative, $\tr V(Y)\ge N\varepsilon\kappa$  on $\{ \hat\mu_{Y}(I)\ge \varepsilon \}$. Hence, 
after diagonalizing $X$, we find that $\Lambda_{N}(\varepsilon)$ is bounded from above by 
$$ e^{- N^{2}\varepsilon\kappa}
\int \prod_{i<j}\left(\frac{2\beta N}{\pi}(\lambda_j-\lambda_i)^2\right) e^{-N\beta\sum_{i, j}(\lambda_i-\lambda_j)^2|Y_{ij}|^2-N\sum W(\lambda_{i})}dY  \prod_id\lambda_i.$$
Integrating over the $Y_{ij}$'s gives 
\begin{equation}\label{boundto}\Lambda_{N}(\varepsilon)\le e^{- N^{2}\varepsilon\kappa}
\int  \prod_id\lambda_i e^{-N\sum W(\lambda_{i})}\le e^{- N^{2}\varepsilon\kappa+O(N\ln N)}\end{equation}
Hence, together with \eqref{lblb}, 
we deduce that
$\Q^N_{V_\beta}(\hat\mu_{Y}^{N}(I)>\varepsilon)$ goes to zero  exponentially fast as soon as $\varepsilon\kappa \ge C \beta^{-1/3}$ if $C$ is large enough (and fixed hereafter).
 We take {$\kappa=\kappa(\varepsilon)$} small but fixed so that $I^{c}$ is included in a $\varepsilon$-neighborhood of the minimizers  and assume $\beta$ small enough $\varepsilon\kappa (\varepsilon) \ge C \beta^{-1/3}$.  { If there were $o(\beta^{-1/3} N)$ eigenvalues inside $I$, we could just remove these eigenvalues up to a negligible cost  (compared to the partition function) and find again ourselves with eigenvalues in independent wells.  To find ourselves in such a situation we shall use a discretization of the set $I$.}
Let $\eta$ be a small positive real number and  partition $[z_{i}+\varepsilon, z_{i}+2\varepsilon]$  and $[z_{i}-2\varepsilon, z_{i}-\varepsilon]$ into intervals of width $2\eta\ll \varepsilon$ for $i\in \{1,\ldots, k\}$. There are about $k \varepsilon/\eta$ such intervals and their union lies inside $I$ which has at most $C\beta^{-1/3}N$ eigenvalues. Therefore, 
on the set $\{ \hat\mu_{Y}(I)\le C\beta^{-1/3} \}$, we know that there must exists an interval $J_{i}^{+}=[z_{i} +\varepsilon+2\ell^{+}_{i}\eta ,z_{i}+\varepsilon+2(\ell^{+}_{i}+1)\eta]$  and $J_{i}^{-}=[z_{i}-\varepsilon-2\ell^{-}_{i}\eta ,z_{i}-\varepsilon-2(\ell_{i}^{-}-1)\eta]$ for some integer $\ell_{i}^{\pm}$  in which there are at most $2C\eta \beta^{-1/3} N$ eigenvalues. Even though $\ell_{i}^{\pm}$ are random, 
 we have at most $(C\varepsilon/2\eta)^{2k}$ choices for these numbers and therefore can just optimize over their choice at the end. We hence consider these intervals fixed. We let $V_{i}=[z_{i}-\varepsilon-2(\ell_{i}^{-}-1)\eta, z_{i} +\varepsilon+2\ell^{+}_{i}\eta]$ and $F_{i}= [z_{i}+\varepsilon+2(\ell^{+}_{i}+1)\eta, z_{i+1}-\varepsilon-2\ell^{-}_{i+1}\eta] $,  $F_{0}=(-\infty, z_{1}-\varepsilon-2\ell^{-}_{1}]$ and  $F_{k}=[z_{k}+\varepsilon+2\ell^{+}_{k}\eta,+\infty)$.  
 We   let $k\simeq j$ (resp. $k\sim j$) iff there exists $i$ such that  both $\lambda_{k},\lambda_{\ell}$ belong to the same  $V_{i}$ or the same $F_{i}$
  (resp.  $\lambda_{k},\lambda_{\ell}$ both belong to the same  $J_{i}^{+}$ or in $J_{i}^{-}$). Then we set

$$\tilde X_{k\ell}=X_{k\ell}1_{k\simeq \ell}, \bar X_{k\ell}=  X_{k\ell}1_{k\sim \ell} \,.$$
Then, $\bar X$ has at most rank  { $C\eta \beta^{-1/3}N$ } so that since $W(x)=a x^2+\vartheta (x)$ with $\vartheta$ with finite variations, 
$$\Tr W(X)-\Tr W(X-\bar X)\ge a \Tr \bar X^2 - \|\vartheta\|_{TV} N C\eta \beta^{-1/3}\,.$$
Moreover, because $\vartheta$ is Lipschitz,
$$\Tr W(X-\bar X)-\Tr W(\tilde X)\ge  a \Tr (X-\bar X-\tilde X)^2- \|\vartheta\|_L\sqrt{N}\left(\Tr | X-\bar X-\tilde X|^2\right)^{1/2}\,.$$
Therefore if we let $I_{\bf{n},\bf{m}}$ be the set where there are $n_i$ eigenvalues in $V_{i}$, $m_{i}$ eigenvalues in $ F_{i}$, $\sum (n_i+m_{i}) \ge N-C\eta \beta^{-1/3}N$, we get after integrating over the entries of $\bar X$ (which are very few) and the entries of $X-\tilde X-\bar X$ (which are such that $|\lambda_{i}-\lambda_{j}|\ge 2\eta$. Recall we only imposed that $\varepsilon \gg \eta$ with $\varepsilon$ fixed  such that $\varepsilon \kappa(\epsilon)\gg \beta^{-1/3}$. We choose $\eta\gg C \beta^{-1/12}$ so that the contribution of the expectations over the entries of $ X-\bar X-\tilde X$
is negligible (i.e such that  $(\sqrt{\beta}\eta^{2})^{-1}\ll \beta^{-1/3}$, that is $\eta\gg\beta^{-1/12}$ )))

\begin{align}&Q^N_{V_\beta}(I_{\bf n,m }
)	 \le 
	e^{cN^{2}\beta^{{-1/3}}(1+o_{\beta,\eta})}
	\int_{I_{\bf n,\bf m}}
	\prod_{i\simeq j}\left(\frac{2\beta N(\lambda_j-\lambda_i)^2}{\pi}\right)\label{defR}\\
	&\,\times \exp\left(-N\beta \sum_{i\simeq j}(\lambda_i-\lambda_j)^2X_{ij}^2-N\tr( W(\tilde X)) -N\sum V(\lambda_{i}))\right)d\tilde X \prod_id\lambda_i\nonumber
	\end{align}
where $o_{\beta,\eta}$ goes to zero when $\beta$ goes to infinity and $\eta$ to zero. We can next decompose $\tilde X$ in block diagonal matrices corresponding to entries with eigenvalues either both in a $V_{i}$ or in a $F_{i}$ and remark that the above integral factorizes and so for $B_{r}=V_{r}$ or $F_{r}$, we have

 \begin{eqnarray*}&&Q^N_{V_\beta}(I_{\bf n,m})
 \le
	e^{cN^{2}\beta^{{-1/3}}(1+o_{\beta,\eta})} \prod_{1\le r\le k}  R_{n_r, r}(V_{r})\prod_{0\le r\le k}R_{m_r, r}(F_{r})\\
&&R_{n_r, r}(B_r):=	
	\int_{\mbox{spec}(Y)\in B_r }
	\prod_{1\le i<j\le n_r}\left(\frac{2\beta N(\mu_j-\mu_i)^2}{\pi}\right)\\
	&&\,\times \exp\left(-N\beta \sum_{}(\mu_i-\mu_j)^2Y_{ij}^2-N\sum W(\mu_i) -N\Tr (V(Y))\right)dY \prod_id\mu_i\\
	\end{eqnarray*}
	We can bound $R_{n_r, r}(F_r)$ exactly as in \eqref{boundto} as $V$ is lower bounded by $\delta_{\varepsilon}=\inf_{|x-z_{i}|\ge \varepsilon}V(x)$ on $F_{k}$. On $V_{r}$, the eigenvalues are close to $z_{r} $ and we can therefore approximate $V$ by $c_{r}(\varepsilon)(x-z_{r})^{2}$ with some $c_{r}(\varepsilon)$ going to $c_{r}$ as $\varepsilon$ goes to zero and use Proposition \ref{prop1}. Hence, we find
	 $$Q^N_{V_\beta}(I_{\bf n,m})
 \le
	e^{cN^{2}\beta^{{-1/3}}(1+o_{\beta,\eta})} {e^{-\delta_{\varepsilon}\sum m^{2}_{i}}}\prod_{1\le r\le k} e^{-n_r^2 c^{NW/n_r}_{2\beta/c_r(\varepsilon)}}$$
	For $\beta$ large enough,  the above right hand side is maximized when $m_r=0$ (as $\delta_\varepsilon$ is bounded below independently of $\beta$) and the probability that $\max m_{i}\ge 1$ goes exponentially fast to zero. Moreover, 
thanks to Lemma \ref{energyproblempotential}.1) 

$$Q^N_{V_\beta}(I_{\bf n})\le e^{cN^{2}\beta^{{-1/3}}(1+o_{\beta,\eta})}  e^{-N^2\beta^{-1/3} c_W\sum_r c_r^{1/3}(\frac{n_r}{N})^{5/3}}$$
from which we see by summing over all possible filling fractions $n_r/N$ that it converges towards the optimizer of $\alpha\rightarrow \sum_r c_r^{1/3}(\alpha_r)^{5/3}$ on $\sum \alpha_{i}=1$, which is given by $(c_{r}^{-1/2}/\sum c_{r}^{-1/2})_{1\le r\le k}$.

This settles the asymptotics for the  filling fractions of the eigenvalues of $X=X_{1}$. We can do exactly the same proof for the matrix $Y=X_{2}$ to obtain the asymptotics of its filling fractions. Moreover, by Corollary \ref{homo}, for every $\varepsilon>0$
$$\limsup_{\beta\rightarrow \infty}\limsup_{\beta\rightarrow \infty}Q^N_{V_\beta}(\frac{1}{N}\Tr (|[X,Y]|^{2})>\varepsilon)=0$$
so that $X$ and $Y$ asymptotically commute. 
 To get information on the joint behavior of the spectrum of $X$ and $Y $,  we improve the upper bound to control the behavior of $\tilde X$. 
  This is sufficient to get information on $\tilde X$ since $X-\bar X-\tilde{X}$ is negligable and $\bar X$ of small rank (when $\beta$ is large). Now we use the same machinery, let us call $\tilde{X}(1),\dots,\tilde{X}(k)$ the blocks of $\tilde{X}$:
$$\tilde{X}(r)=1_{Y\in V_r}\tilde{X}1_{Y\in V_r}=1_{Y\in V_r}{X}1_{Y\in V_r}$$
where $1_{Y\in V_r}$ is the orthogonal projector on the eigenspace associated with all eigenvalues of $Y$ inside $V_r$. 
 Indeed, by definition, provided $0\notin [z_{i}^{1}-\varepsilon,z_{j}^{1}+\varepsilon]$, 
 \begin{eqnarray*}
 \frac{1}{N}\Tr( 1_{Y\in [z_{r}^{2}-\varepsilon,z_{r}^{2}+\varepsilon]}1_{X\in [z_{i}^{1}-\varepsilon,z_{j}^{1}+\varepsilon]})&=&\frac{1}{N}\Tr(
1_{\tilde X(r)\in [z_{i}^{1}-\varepsilon,z_{j}^{1}+\varepsilon]})\\
&=&\mu^N_{\tilde{X}(r)}([z_{i}^{1}-\varepsilon,z_{j}^{1}+\varepsilon]) \,.\end{eqnarray*}
 Then a slight modification of the bound \eqref{defR} shows that for any measurable set $A$ in the set of probability measures on the real line, 
$$Q^N_{V_\beta}(I_{\bf n,m}\cap_r \mu^N_{\tilde{X}(r)}\in A)
 \le
	e^{cN^{2}\beta^{{-1/3}}(1+o_{\beta,\eta})} \prod_{1\le r\le k}  \tilde R_{n_r, r}(V_{r})\prod_{0\le r\le k}R_{m_r, r}(F_{r})
$$
with
\begin{align*}
&\tilde R_{n_r, r}(B_r):=	
	\int_{\mbox{spec}(Y)\in B_r,\mu^N_{\tilde{X}(r)}\in A }
	\prod_{1\le i<j\le n_r}\left(\frac{2\beta N(\mu_j-\mu_i)^2}{\pi}\right)\\
	&\,\times \exp\left(-N\beta \sum_{}(\mu_i-\mu_j)^2Y_{ij}^2-N\sum W(\mu_i) -N\Tr (V(Y))\right)dY \prod_id\mu_i
\end{align*}
Again we can approximate $V$ by a quadratic potential on the set $V_{r}$ and use Proposition \ref{prop1} which shows that it $A$ does not contains a neighborhood of $\mu^{NWN/n_r}_{2\beta/c_r(\varepsilon)}$ then $\tilde R_{n_r, r}(V_r)$ will be much smaller than $R_{n_r, r}(V_r)$ so that 
$Q^N_{V_\beta}(I_{\bf n,m}\cap_r \mu^N_{\tilde{X}(r)}\in A)$ will go to zero.  This completes the proof of the Theorem.

\end{proof}
\section{Appendix}

\subsection{Random matrix inequalities by flow}\label{appineq}
In this paper, many controls  are based on change of variables. The most basic one is the following:
\begin{lemma}[{\bf Inequality by mapping}]\label{change-variable}Let $f:\RR^d\mapsto\RR^{+}$ be a measurable function with finite $L^1$ norm $\int f(x)dx$.
Let $X$ be a $\RR^d$ valued random variable  with distribution $\mu$ absolutely continuous with respect to Lebesgue measure and  density proportional to $f$.
If $\phi:A\to\RR^d$ is a $\mathcal C^1$ diffeomorphism onto its image then
$$\P(X\in A)\leqslant \sup_{x\in A} \frac{f(x)}{f\circ\phi (x) J_{\phi}(x)}$$
where $J_\phi$ is the Jacobian of $\phi$ : $J_\phi(x)=|\det ( \partial_i \phi_j(x))|$. 
\end{lemma}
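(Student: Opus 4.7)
The plan is to express $\P(X\in A)$ directly as a ratio of Lebesgue integrals and then insert the factor $f\circ\phi\cdot J_\phi$ artificially so that a change of variables reduces the problem to comparing the mass of $\phi(A)$ with the total mass.

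More concretely, I would write
\begin{equation*}
\P(X\in A)=\frac{\int_A f(x)\,dx}{\int_{\RR^d}f(y)\,dy},
\end{equation*}
and then bound the numerator by
\begin{equation*}
\int_A f(x)\,dx=\int_A \frac{f(x)}{f(\phi(x))\,J_\phi(x)}\,f(\phi(x))\,J_\phi(x)\,dx\leqslant \Bigl(\sup_{x\in A}\frac{f(x)}{f(\phi(x))\,J_\phi(x)}\Bigr)\int_A f(\phi(x))\,J_\phi(x)\,dx.
\end{equation*}
Since $\phi$ is a $\mathcal C^1$ diffeomorphism from $A$ onto $\phi(A)$, the usual change of variables formula gives $\int_A f(\phi(x))\,J_\phi(x)\,dx=\int_{\phi(A)}f(y)\,dy$, which is at most $\int_{\RR^d}f(y)\,dy$ since $f\geqslant 0$.

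Dividing by the total mass $\int_{\RR^d}f(y)\,dy$, the remaining ratio becomes $\int_{\phi(A)}f/\int_{\RR^d} f\leqslant 1$, and the supremum factor comes out, yielding the announced bound. The only points to be slightly careful about are the measurability of the integrands and the fact that the supremum is well defined (one may restrict to the subset of $A$ where $f(\phi(x))J_\phi(x)>0$; on its complement $f=0$ almost everywhere by the diffeomorphism property, so the integral contribution vanishes). There is no real obstacle here — the lemma is essentially a one-line reformulation of the change of variables formula, tailored so that in later applications the ``probability of a bad event'' can be controlled by constructing a concrete diffeomorphism pushing the bad set into a region of higher density.
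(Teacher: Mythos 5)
Your proof is correct and follows essentially the same argument as the paper: insert the factor $f(\phi(x))J_\phi(x)$ into the integral over $A$, pull out the supremum of the ratio, and change variables so that what remains is the mass of $\phi(A)$, which is at most the total mass. The only cosmetic difference is that you carry the normalizing constant explicitly as a ratio while the paper names it $c$; the content is identical.
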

\begin{proof}
If $f$ is the density of $\mu$ (up to multiplication by a constant $c$)
\begin{align*}
\P(X\in A)= \int_{A} cf(x)dx 
&\leqslant \sup_{x\in A} \frac{f(x)}{f(\phi(x))J_{\phi}(x)}\int_{A} cf(\phi(x))J_{\phi}(x)dx\\
&\leqslant \sup_{x\in A} \frac{f(x)}{f(\phi(x))J_{\phi}(x)}\P(\phi^{-1}(X)\in A)
\end{align*}
where we did the change of variable $x\to\phi(x)$. The lemma follows since the Jacobian and $f$ are non negative. 
\end{proof}

Change of variables were already used to derive concentration of measure, see e.g. \cite[Section 6.2]{GN}, in the context of unitary matrices. However, this strategy may be difficult to apply in { non compact manifolds } as the reminders may be large. The strategy to use flows allows to bypass this issue.

\begin{lemma}[{\bf Inequality by flow}]\label{change-variable-flow}Let 
	$X$ be a $\RR^d$ valued random variable  with distribution $\mu(dx)=c e^{-W(x)}dx$  with $W:\RR^{d}\rightarrow\RR$ a $\mathcal{C}^1$ function going to infinity fast enough so that $c^{-1}=\int e^{-W(x)}dx$ is finite.
	Assume that there exists a flow $(Y^x_t)_{x\in\RR^d,t\in \RR^+}$ on $\RR^d$ such that
	$Y^x_0=x$ and $\partial_t Y^x_t = Z_{Y^x_t}$ with $Z_x$ a $\mathcal{C}^1$ vector field  from { $\RR^{d}$ into itself}. We fix $t>0$ and assume  that for $s\le t$,  $x\rightarrow Y^x_s$ is a diffeomorphism onto its image.
	Then, for any finite time $t\ge 0$, 
	$$\P(X\in A)\leqslant \sup_{x\in A} \exp\left(\int_0^t(\langle \nabla_{Y^x_s} W.Z_{Y^x_s}\rangle-\Tr \Diff_{Y^x_s} Z)ds\right)$$
	where $\langle x.y\rangle$ is the scalar product on $\R^d$. 
\end{lemma}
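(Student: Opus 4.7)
The plan is to apply Lemma \ref{change-variable} with the specific diffeomorphism $\phi(x) = Y_t^x$ and with the density $f(x) = e^{-W(x)}$. By assumption, $\phi$ is a diffeomorphism onto its image for every $s \leq t$, so this application is legitimate. The work then reduces to computing the ratio $f(x)/[f(\phi(x)) J_\phi(x)]$ explicitly in terms of the vector field $Z$ and the potential $W$ along the trajectory.

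First I would handle the potential term. By the chain rule applied to $s \mapsto W(Y_s^x)$ combined with the defining ODE $\partial_s Y_s^x = Z_{Y_s^x}$,
\begin{equation*}
W(Y_t^x) - W(x) = \int_0^t \langle \nabla_{Y_s^x} W, Z_{Y_s^x}\rangle\, ds.
\end{equation*}
Next I would handle the Jacobian. Differentiating the flow equation in the initial condition $x$ gives that $M(s) := D_x Y_s^x$ satisfies the linear ODE $\partial_s M(s) = \Diff_{Y_s^x} Z \cdot M(s)$ with $M(0) = \Id$. By Jacobi's formula $\partial_s \det M(s) = \Tr(\Diff_{Y_s^x} Z) \det M(s)$, so
\begin{equation*}
\ln J_{\phi}(x) = \ln |\det M(t)| = \int_0^t \Tr \Diff_{Y_s^x} Z\, ds,
\end{equation*}
where the absolute value may be dropped since $\det M(0) = 1$ and $\det M(s)$ stays nonzero by the diffeomorphism hypothesis.

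Combining the two identities,
\begin{equation*}
\frac{f(x)}{f(\phi(x))\, J_{\phi}(x)} = \exp\left(W(Y_t^x) - W(x) - \ln J_\phi(x)\right) = \exp\left(\int_0^t \bigl(\langle \nabla_{Y_s^x} W, Z_{Y_s^x}\rangle - \Tr \Diff_{Y_s^x} Z\bigr) ds\right),
\end{equation*}
and plugging this into Lemma \ref{change-variable} yields the stated inequality after taking the supremum over $x \in A$.

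There is essentially no obstacle here; the statement is a direct exponentiation of the pointwise change-of-variables bound via the Liouville formula for the Jacobian of a flow. The only mild care is in justifying that $M(s)$ remains invertible along $[0,t]$ (ensured by the diffeomorphism hypothesis) and that the integrals above are well-defined (ensured by the $\mathcal{C}^1$ regularity of $W$ and $Z$), so no additional integrability or decay hypotheses on $W$ beyond those giving $c < \infty$ need to enter the argument.
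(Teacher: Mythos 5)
Your proof is correct and follows essentially the same route as the paper's: apply Lemma \ref{change-variable} with $f = e^{-W}$ and $\phi(x) = Y^x_t$, integrate the potential along the flow via the chain rule, and compute the Jacobian by Liouville's (Jacobi's) formula for the determinant of the linearized flow $\partial_s M(s) = \Diff_{Y^x_s} Z \cdot M(s)$, with invertibility of $M(s)$ supplied by the diffeomorphism hypothesis.
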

\begin{proof}
	This is a direct application of the previous Lemma with $f(x)=\exp(-W(x))$ and $\phi(x)=Y^x_t$. Because we assumed that  $x\rightarrow Y^x_s$ is a diffeomorphism onto its image, for any given $x$ in a bounded subset of  $ \RR^{d}$, $Y^{x}_{t}$ stays bounded and therefore we may assume without loss of generality that $x\to Z_x$ is uniformly Lipschitz. 
	
	We just need to compute
\begin{align*}	\frac{f(x)}{f(\phi(x))}&=\exp(-W(x)+W(Y^x_t))
	=\exp(\int_{0}^{t} \partial_{s}W(Y^x_s) ds)\\
	&=\exp(\int_0^t\langle \nabla_{Y^x_s} W.Z_{Y^x_s}\rangle ds)
	\end{align*}

	Moreover, we can compute the Jacobian of this change of variables by noticing that
	$$ \partial_t \Diff Y^x_t=\Diff \partial_t  Y^x_t =\Diff Z_{Y^x_t}=\Diff_{Y^x_t} Z \Diff Y^x_t$$
	Now since for $A$ invertible:
	$\Diff_A \det . H = \det(A)\Tr(A^{-1}H)$, we deduce that
	\begin{align*}
			\partial_t \det \Diff Y^x_t &=\Diff_{Y^x_t} \det . \partial_t \Diff Y^x_t\\
			&=\det \Diff Y^x_t\Tr ((\Diff_{Y^x_t})^{-1}\Diff_{Y^x_t} Z \Diff Y^x_t)\\&=\det \Diff Y^x_t\Tr \Diff_{Y^x_t} Z 
	\end{align*}
This computation is true whenever $ \Diff Y^x_t $ is invertible i.e. $\det \Diff Y^x_t \neq 0$, which is true at $t=0$ and thus for small time $s$ till some time $u>0$.
Thus,  we can solve the ODE :
$$\det \Diff Y^x_u  =\exp(\int_0^u\Tr \Diff_{Y^x_s} Z ds)\,.$$
Because the RHS is uniformly bounded below as $Y^{x}_{s}$ stays uniformly bounded, we conclude that we can take $u=t$.
Finally, putting things together we get:

$$ \frac{f(x)}{f\circ\phi (x) J_{\phi}(x)}= \exp\left(\int_0^t(\langle \nabla_{Y^x_s} W.Z_{Y^x_s}\rangle -\Tr \Diff_{Y^x_s} Z)ds\right)$$
and we can apply the previous Lemma.

\end{proof}

We deduce from this an inequality by flow for matrix models.

\begin{lemma}[{\bf A random matrix inequality}]\label{change-variable-matrix}
	Let $X^N=(X^N_1,\dots,X^N_\ell)$ be random Hermitian matrices with law $\P$  proportional to
	$$\exp(-N\Tr V(X^N))dX^N.$$
	Let $X^N_t$ be the solution of $X^N_0=X^N$ and $\partial_t (X^N_i)_t = - g_i(X^N_t)$. Then
	\begin{equation}\label{cont1}\P(X^N\in A)\leqslant \sup_{X^N_0\in A} \exp(-\int_0^t(N\Tr \Da V(X^N_s) . g(X^N_s){-}\Tr\otimes \Tr (\partial g(X^N_s)))ds)\,.\end{equation}
	Here 
	 $A.B=\sum_{i=1}^\ell A_i B_i$ and 	$g_{i}(X)=P_{i}(f_{1}(X_{1}),\ldots, f_{\ell}(X_{\ell}))$ with 
	 self-adjoint non-commutative polynomials  $P_{1},\ldots,P_{\ell}$ and $C^{1}$ bounded  functions $f_{i}:\mathbb R\rightarrow \mathbb R$. Moreover, we set for $f$ $C^{1}$, $\partial f(X)=\int_{0}^{1} f'(\alpha X\otimes 1+(1-\alpha)1\otimes X) d\alpha$ 
	 and with $A\otimes B\#(C\otimes D)=AC\otimes DB$, 
	 
	 $$\partial g(X)=\sum_{i=1}^{\ell}(\partial_{X_{i}} P_{i})(f(X))\#\partial  f_{i}(X_{i})\,.$$

\end{lemma}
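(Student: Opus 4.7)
The plan is to apply Lemma \ref{change-variable-flow} directly, viewing the space of $\ell$-tuples of $N\times N$ Hermitian matrices as a real Euclidean space of dimension $\ell N^2$ equipped with the inner product $\langle A,B\rangle=\sum_{i=1}^\ell \Tr(A_iB_i)$, with density $\exp(-W)$ where $W(\bX)=N\Tr V(\bX)$, and with vector field $Z_\bX=-g(\bX)=(-g_1(\bX),\ldots,-g_\ell(\bX))$. The first preliminary step is to verify the hypotheses of Lemma \ref{change-variable-flow}: since each $f_i$ is $C^1$ and bounded and each $P_i$ is a polynomial, each $g_i(\bX)$ is a bounded and globally Lipschitz function of $\bX$ on the space of Hermitian matrices; in particular the flow $X^N_t$ exists globally, is $C^1$ in the initial data, and $\bX\mapsto Y^\bX_t$ is a diffeomorphism onto its image for every $t\ge 0$ by the standard Cauchy--Lipschitz theorem. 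Self-adjointness is preserved along the flow because each $g_i$ is self-adjoint.

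Next I would compute the two quantities that appear inside the exponent of Lemma \ref{change-variable-flow}. For the gradient term, take a perturbation $H=(H_1,\ldots,H_\ell)$; then by definition of the cyclic derivative,
\begin{equation*}
\left.\frac{d}{ds}\right|_{s=0} N\Tr V(\bX+sH) \;=\; N\sum_{i=1}^\ell \Tr(\mathcal D_i V(\bX)\,H_i)\;=\;N\Tr(\mathcal D V(\bX).H),
\end{equation*}
so the gradient of $W$ at $\bX$ is the tuple $N\mathcal D V(\bX)$ and thus $\langle\nabla_\bX W.Z_\bX\rangle = -N\Tr(\mathcal D V(\bX).g(\bX))$. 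This matches the first term inside the integral of \eqref{cont1}.

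For the divergence term, I would prove the identity $\Tr \Diff_\bX g = \Tr\otimes\Tr(\partial g)$ by reducing to monomials. It suffices to show that, for any non-commutative monomial $q$ in $\bY$ and any $i$, the trace of the Jacobian of the map $\bX\mapsto q(f(\bX))$ on the Hermitian component $X_i$ equals $\Tr\otimes\Tr(\partial_i q)(f(\bX))\#\partial f_i(X_i)$: one applies the chain rule through each occurrence of $f_i(X_i)$ inside $q$, uses that the directional derivative of $f_i(X_i)$ in direction $H_i$ is $\partial f_i(X_i)\# H_i$ (in the tensor-product notation introduced in the statement), and finally uses the elementary identity
\begin{equation*}
\sum_{a,b}\langle e_a, A e_b\rangle\langle e_b, B e_a\rangle \;=\; \Tr(A)\Tr(B)
\end{equation*}
when one traces over an orthonormal basis of the Hermitian matrix space and the linear map in question is of the form $H\mapsto AH B$. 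Summed over all positions of $X_i$ in all the monomials appearing in $P_i$, this gives exactly $\Tr\otimes\Tr(\partial g)$; in particular $-\Tr\Diff_\bX Z=\Tr\otimes\Tr(\partial g)$, producing the second term in the integrand of \eqref{cont1}.

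The main technical obstacle is the bookkeeping in this last divergence computation: one must carefully distinguish the cyclic derivative (which contracts the two tensor legs and gives the gradient) from the full non-commutative derivative $\partial_i$ (which produces the $\Tr\otimes\Tr$), and verify that the factor $\partial f_i(X_i)$ arising from the chain rule in each slot combines correctly with the action $\#$ introduced in the statement. Once this identity is established, plugging the two computations into the conclusion of Lemma \ref{change-variable-flow} yields exactly \eqref{cont1}.
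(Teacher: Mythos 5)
Your proposal is correct and takes essentially the same route as the paper: both proofs invoke Lemma \ref{change-variable-flow} directly, identify the gradient of $N\Tr V$ with the cyclic derivative, and show that the Euclidean divergence of the vector field $-g$ on $(\mathcal H_N(\CC))^\ell$ equals $-\Tr\otimes\Tr(\partial g)$. The only difference is cosmetic: the paper verifies the divergence identity by an explicit expansion in the real coordinates $(\Re X_p(i,j),\Im X_p(i,j))$ of the matrix entries, whereas you verify it by decomposing the Jacobian into blocks of the form $H\mapsto AHB$ and using that the trace (on the Hermitian matrix space) of such a block is $\Tr(A)\Tr(B)$. These are two presentations of the same bookkeeping. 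One small caveat: the side identity you quote, $\sum_{a,b}\langle e_a, A e_b\rangle\langle e_b, B e_a\rangle = \Tr(A)\Tr(B)$, as literally written over an orthonormal basis of $\CC^N$ would give $\Tr(AB)$; what you actually use (and what is correct) is that the trace of $H\mapsto AHB$ as a real-linear endomorphism of $\mathcal H_N(\CC)$ equals $\Tr(A)\Tr(B)$. You should also note explicitly that only the diagonal blocks $\partial g_i/\partial X_i$ contribute to $\Tr\Diff g$, which is why the formula involves $\partial_{X_i}P_i$ rather than all $\partial_{X_i}P_p$.
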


\begin{proof}
	This is again a direct application of the previous Lemma since $g$ is uniformly Lipschitz so that the solution of
	$$\partial_{t } X^{N}_{t}= Z_{X^{N}_{t}}$$
	with $Z_{X}=g(X)$ is such that $X^{N}_{0}\rightarrow X^{N}_{t}$ 
		 is a diffeomorphism onto its image. One notices that
	$$\partial_s N\Tr V(X^N_s)= -\nabla_{X^N_s} (N\Tr V). Z_{X^N_s}= -N\Tr \Da V (X^N_s) . g(X^N_s),$$
	whereas
	\begin{eqnarray*}
		\Tr\Diff_{X^N_s}Z_{X^N_s}&=&\sum_{p=1}^\ell \sum_{i,j=1}^N2^{-1_{i\neq j}}( \partial_{ \Re(X_p(i,j))} +i \partial_{\Im(X_p(i,j))})g_{p}(X)({i,j})(X^N_s)\\
		&=&
		\Tr\otimes \Tr \partial g(X^N_s)\end{eqnarray*}
	where the term $\partial_{\Im(X_p(i,i))}$ should be put to zero. Observing that $( \partial_{ \Re(X_p(i,j))} +i \partial_{\Im(X_p(i,j))})X_{k\ell}=2^{1_{i\neq j}}1_{ij=\ell k}$ gives the last equality and completes the proof. 
\end{proof}

\begin{lemma}\label{matrix-ODE} For matrices $X^N=(X^N_1,\dots,X^N_\ell)$ we define the following  dynamic in the space of $N\times N$  Hermitian matrices :
	$$
	\left\{\begin{array}{cll}
		X^N_i(0)&=&X^N_i\\
		Z^N(t) &=& (X^N_1(t))^2+\dots+ (X^N_\ell(t))^2\\
		\partial_t X_i^N(t) &=& -((Z^N(t))^kX_i^N(t)+X_i^N(t)(Z^N(t))^k)
	\end{array}
	\right.
	$$
	Then,  for any $t>0$, $X^{N}_{0}\rightarrow X^{N}_{t}$ is a diffeomorphism onto its image so that \eqref{cont1} extends to the polynomial $g_{i}(X)=(\sum X_{j}^{2})^{k} X_{i}+X_{i }(\sum X_{j}^{2})^{k} $. 
	Moreover,  for every $n$, $\frac{1}{N}\Tr (Z^N(t))^n$ is a decreasing function of  $t$. { Furthermore, for any positive real number $B$ such that $\frac{1}{N}\Tr (Z^N(0))^{k+1}>B$}, for $t\in [0,\frac{1}{N^{\frac{k}{k+1}}}]$,
	$$\frac{1}{N}\Tr (Z^N(t))^{k+1}\geqslant B- 4(k+1)t N^{\frac{k}{k+1}} B^{\frac{2k+1}{k+1}}\,.$$

\end{lemma}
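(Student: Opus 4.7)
The plan is to derive everything from one core computation: the time derivative of $\Tr(Z^N(t))^n$. Differentiating $(X_i^N(t))^2$ via the ODE and summing over $i$, one finds $\partial_t Z^N = -2 Z^{k+1} - 2\sum_i X_i Z^k X_i$ (dropping the time argument). Hence, using that $Z$ commutes with its powers, for any $n \geq 1$,
\[
\partial_t \Tr (Z^N)^n = -2n\,\Tr Z^{n+k} - 2n \sum_i \Tr(X_i Z^{n-1} X_i Z^k).
\]
Both terms on the right are non-positive: the first trivially, and for the second, since $Z \geq 0$ its fractional powers are well defined, and cyclicity yields $\Tr(X_i Z^{n-1} X_i Z^k) = \Tr(A_i A_i^*) \geq 0$ with $A_i = Z^{k/2} X_i Z^{(n-1)/2}$. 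This gives $\partial_t \frac{1}{N}\Tr (Z^N)^n \leq 0$ for every $n$ (the case $n=0$ being trivial).

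The diffeomorphism property is then immediate: the bound $\|X_i(t)\|_\infty^2 \leq \Tr Z^N(t) \leq \Tr Z^N(0)$ shows that the trajectory stays in a compact subset of $\mathcal H^N(\mathbb C)^\ell$, on which the polynomial vector field $g_i(X) = Z^k X_i + X_i Z^k$ is $\mathcal C^1$ and uniformly Lipschitz. Standard ODE theory then guarantees that the time-$t$ flow map $X_0^N \mapsto X_t^N$ is a $\mathcal C^1$ diffeomorphism onto its image, and the Jacobian computation in the proof of Lemma \ref{change-variable-flow} applies verbatim to this polynomial $g$, extending \eqref{cont1} to this choice of vector field.

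For the quantitative bound I specialize to $n = k+1$ and further control the extra sum. Cauchy--Schwarz ($|\Tr(AB)| \leq \|A\|_2 \|B\|_2$ with $A = X_i Z^k$) gives $\Tr(X_i Z^k X_i Z^k) \leq \Tr(X_i^2 Z^{2k})$, and summing over $i$ yields $\sum_i \Tr(X_i Z^k X_i Z^k) \leq \Tr Z^{2k+1}$. Hence $\partial_t \Tr Z^{k+1} \geq -4(k+1)\Tr Z^{2k+1}$. Monotonicity of the $\ell^p$-norm in $p \geq 1$ applied to the eigenvalues $\lambda_i \geq 0$ of $Z$ gives $\sum \lambda_i^{2k+1} = \sum (\lambda_i^{k+1})^{(2k+1)/(k+1)} \leq \bigl(\sum \lambda_i^{k+1}\bigr)^{(2k+1)/(k+1)}$, equivalently $\frac{1}{N}\Tr Z^{2k+1} \leq N^{k/(k+1)} \bigl(\frac{1}{N}\Tr Z^{k+1}\bigr)^{(2k+1)/(k+1)}$.

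Setting $u(t) = \frac{1}{N}\Tr (Z^N(t))^{k+1}$, I therefore obtain the differential inequality $u'(t) \geq -4(k+1) N^{k/(k+1)} u(t)^{(2k+1)/(k+1)}$. The conclusion follows by a simple chopping argument: if $u(t) \geq B$ the desired bound holds trivially, and otherwise continuity plus monotonicity (recall $u(0) > B$) produce a unique $T \in (0,t)$ with $u(T) = B$ and $u(s) \leq B$ on $[T,t]$, so that integrating the inequality on $[T,t]$ yields $u(t) \geq B - 4(k+1) N^{k/(k+1)}(t-T) B^{(2k+1)/(k+1)} \geq B - 4(k+1)\,t\,N^{k/(k+1)}\, B^{(2k+1)/(k+1)}$. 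No step is a genuine obstacle; the only real care needed is tracking the Hölder-type exponents and normalization factors of $N$.
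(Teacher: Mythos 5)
Your proof is correct, and the overall structure matches the paper's: you compute $\partial_t \Tr(Z^N)^n$, identify both contributions as non-positive, specialize to $n=k+1$, bound the cross term by $\Tr Z^{2k+1}$, dominate $\Tr Z^{2k+1}$ by a power of $\Tr Z^{k+1}$, and integrate the resulting differential inequality. Where you diverge is in two cosmetic but pleasant ways. First, for $\sum_i\Tr(X_iZ^kX_iZ^k)\leq\Tr Z^{2k+1}$ you invoke Cauchy--Schwarz for the Hilbert--Schmidt inner product, whereas the paper cites its Lemma~\ref{matrix-inequ} (a real-variable convexity identity on eigenvalues); these are equivalent. Second, and more substantially, in the final step the paper explicitly solves the Riccati-type inequality $y'\geq -4(k+1)y^{(2k+1)/(k+1)}$, obtains $y(t)\geq y(0)(1+4kt\,y(0)^{k/(k+1)})^{-(k+1)/k}$, and then applies the tangent-line bound $(1+x)^{-(k+1)/k}\geq 1-\tfrac{k+1}{k}x$; your chopping argument avoids solving the ODE altogether by observing that once $u$ drops below $B$ the right-hand side is at most $4(k+1)N^{k/(k+1)}B^{(2k+1)/(k+1)}$ in magnitude, and then just integrating that constant bound from the last crossing time. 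This is more elementary and, as you note, makes the restriction $t\leq N^{-k/(k+1)}$ in the lemma statement superfluous for the conclusion (the bound becomes vacuous when the subtracted term exceeds $B$). Both routes are sound; yours is arguably cleaner and slightly more general.
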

\begin{proof} Because $g$ is locally Lipschitz, $X^{N}_{t}$ is well defined until a short time from any given initial condition. To show it is globally well-defined it is enough to show that the spectral radius of the solution remains bounded in time. But
	\begin{align*}
		\frac{\partial}{\partial t}\Tr (Z^N(t))^n
		&=-2n\Tr (Z^N(t))^{n+k}\\
		&-2n \sum_{i=1}^\ell \Tr \left(X^N_i(t)(Z^N(t))^kX^N_i(t)(Z^N(t))^{n-1}\right)
	\end{align*}
	which is non-positive since $\Tr X A X B=\Tr (B^{1/2}X A^{1/2})(B^{1/2}X A^{1/2})^* $ is non-negative  if $A$ and $B$ are non-negative and $X$ self-adjoint.
	Therefore $\Tr (Z^N(t))^n$ is decreasing in $t$, and hence the spectral radius of the solution is bounded by the one of the initial condition. This is enough to guarantee that $X^{N}_{0}\rightarrow X^{N}_{t}$ is a diffeomorphism onto its image and therefore extend the inequality \eqref{cont1} to this case.
	Besides, using Lemma \ref{matrix-inequ} for the second term in the above  inequality, we get 
	
	\begin{align*}
		\frac{\partial}{\partial t}\Tr (Z^N(t))^{k+1}&\geqslant -4(k+1)\Tr (Z^N(t))^{2k+1}\\
		&\geqslant -4(k+1)(||Z^N(t)||_\infty^{k+1})^{\frac{k}{k+1}}\Tr (Z^N(t))^{k+1}\\
		&\geqslant -4(k+1)(\Tr(Z^N(t))^{k+1})^{\frac{k}{k+1}}\Tr (Z^N(t))^{k+1}\\
		&= -4(k+1)(\Tr (Z^N(t))^{k+1})^{\frac{2k+1}{k+1}}.
	\end{align*}

	As a consequence,
	$y(t)=\Tr (Z^N(t))^{k+1}$ satisfies  the one dimensional differential inequality
	$$y'\geqslant-4(k+1)y^{\frac{2k+1}{k+1}}$$
	starting from $y(0)\geqslant NB$.
	Therefore, solving this differential inequality gives,
	$$y(t)\geqslant (4kt+y(0)^{-\frac{k}{k+1}})^{-\frac{k+1}{k}}= y(0) (1+ 4kt y(0)^{\frac{k}{k+1}})^{-\frac{k+1}{k}}\,.$$
	As a consequence, if $y(0)\ge BN$ and $t\le 1/N^{\frac{k}{k+1}}$, using that for $x\ge 0$,  $(1+x)^{-\frac{k+1}{k}} \ge 1-\frac{k+1}{k}x$, we get 
	\begin{eqnarray*}
		\Tr (Z^N(t))^{k+1}&=& y(t)\geqslant BN (1+ 4k t (BN)^{\frac{k}{k+1}})^{-\frac{k+1}{k}}\\
		&\geq& BN (1- 4(k+1)t (BN)^{\frac{k}{k+1}})\,.\\
		\end{eqnarray*}
\end{proof}

\subsection{Matrix inequalities}

\begin{lemma}\label{matrix-inequ}
	If $X$, $Y$ are two $N\times N$ hermitian matrices and $u$ and $v$ are integers then if either $u+v$ is even or $X$ is positive.
	$$\Tr(X^{u+v}Y^2)\geqslant \Tr(X^uYX^vY).$$
\end{lemma}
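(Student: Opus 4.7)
The plan is to reduce to a scalar inequality by diagonalizing $X$. Since $X$ is Hermitian, write $X=UDU^{*}$ with $U$ unitary and $D=\Diag(d_{1},\ldots,d_{N})$ real. Set $\tilde Y=U^{*}YU$, which is again Hermitian. Using the cyclicity of the trace,
$$\Tr(X^{u+v}Y^{2})=\Tr(D^{u+v}\tilde Y^{2})=\sum_{i,j}d_{i}^{u+v}|\tilde Y_{ij}|^{2},$$
$$\Tr(X^{u}YX^{v}Y)=\Tr(D^{u}\tilde Y D^{v}\tilde Y)=\sum_{i,j}d_{i}^{u}d_{j}^{v}\,\tilde Y_{ij}\tilde Y_{ji}=\sum_{i,j}d_{i}^{u}d_{j}^{v}|\tilde Y_{ij}|^{2},$$
since $\tilde Y$ is Hermitian. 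Subtracting and symmetrizing over the pair $(i,j)$ yields
$$\Tr(X^{u+v}Y^{2})-\Tr(X^{u}YX^{v}Y)=\tfrac{1}{2}\sum_{i,j}\bigl(d_{i}^{u}-d_{j}^{u}\bigr)\bigl(d_{i}^{v}-d_{j}^{v}\bigr)|\tilde Y_{ij}|^{2}.$$

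It therefore suffices to show that $(a^{u}-b^{u})(a^{v}-b^{v})\geqslant 0$ for every pair $(a,b)$ of eigenvalues of $X$, under either of the two hypotheses. If $X$ is positive then $a,b\geqslant 0$ and both factors share the sign of $a-b$, so their product is non-negative. If instead $u+v$ is even, I would distinguish three cases on the signs of $a,b$: if they have the same sign the product is non-negative as in the previous case (using $u+v$ even to absorb the sign $(-1)^{u+v}$ when both are negative); if they have opposite signs and $u,v$ are both even, then $a^{u}-b^{u}=|a|^{u}-|b|^{u}$ shares the sign of $|a|-|b|$, and similarly for $v$, so the product is non-negative; and if $u,v$ are both odd with $a>0>b$, then each factor is strictly positive. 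Summing the resulting non-negative contributions gives the inequality.

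The main (modest) obstacle is just the bookkeeping of the scalar case analysis; the matrix step is completely standard and reduces everything to a one-variable statement. Note that the symmetrization step already uses Hermitianness of $Y$ essentially (to rewrite $\tilde Y_{ij}\tilde Y_{ji}$ as $|\tilde Y_{ij}|^{2}$), which matches the hypotheses of the lemma.
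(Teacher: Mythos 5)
Your proof is correct and follows essentially the same route as the paper: diagonalize $X$, rewrite both traces in terms of $|\tilde Y_{ij}|^2$, symmetrize to obtain $\frac{1}{2}\sum_{i,j}(d_i^u-d_j^u)(d_i^v-d_j^v)|\tilde Y_{ij}|^2$, and conclude by a scalar sign analysis. The paper simply states the positivity of the scalar factor in one line ("if the $x_a$'s are positive or if $u$ and $v$ have the same parity") where you spell out the cases; the computations and the use of Hermitianness of $Y$ are identical.
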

\begin{proof}
	To prove this, we may assume that $X$ is  a diagonal matrix. If the $x_a$'s are the eigenvalues of $X$ then,
	\begin{align*}
		\Tr(X^{u+v}Y^2)-\Tr(X^uYX^vY)&=\sum_{1\leqslant a,b\leqslant N}|Y_{ab}|^2x_a^{u+v}- \sum_{1\leqslant a,b\leqslant N}|Y_{ab}|^2x_a^{u}x_b^v\\
		&=\frac{1}{2}\sum_{1\leqslant a,b\leqslant N}|Y_{ab}|^2(x_a^u-x_b^u)(x_a^v-x_b^v)
	\end{align*}
	which is non-negative if the $x_a$'s are positive or if $u$ and $v$ have the same parity.
\end{proof}

\begin{lemma}\label{matrix-inequ-holder}
	For all $D\in \NN$, $\ell\in\NN^*$ for any Hermitian matrices, $X^N_1,\dots,X^N_\ell$ if $Z^N=(X^N_1)^2+\dots+(X^N_\ell)^2$,
	$$\Tr(( Z^N)^{k+D})\leqslant \ell^D\Tr ((Z^N)^k\sum_{i=1}^\ell (X^N_i)^{2D} ).$$
\end{lemma}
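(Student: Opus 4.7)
The plan is to diagonalize the positive matrix $Z^N$ and reduce the trace inequality to a scalar pointwise bound on the eigenvalues of $Z^N$, which follows from two applications of Jensen's inequality. Since $Z^N=\sum_i (X_i^N)^2$ is positive, pick a unitary $U$ with $U^*Z^N U=\Diag(z_1,\dots,z_N)$, $z_a\ge 0$, and set $Y_i:=U^*X_i^N U$, which are Hermitian. Conjugating $Z^N=\sum_i (X_i^N)^2$ yields $\Diag(z_1,\dots,z_N)=\sum_i Y_i^2$; in particular $z_a=\sum_i (Y_i^2)_{aa}$. By unitary invariance of the trace, the two sides of the desired inequality become
\begin{equation*}
\Tr\big((Z^N)^{k+D}\big)=\sum_{a=1}^N z_a^{k+D},\qquad \Tr\Big((Z^N)^k\sum_{i=1}^\ell (X_i^N)^{2D}\Big)=\sum_{a=1}^N z_a^k\sum_{i=1}^\ell (Y_i^{2D})_{aa},
\end{equation*}
so it suffices to show, for each index $a$, the scalar inequality $z_a^D\le \ell^D\sum_{i=1}^\ell (Y_i^{2D})_{aa}$.

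This pointwise bound will follow from two successive applications of Jensen's inequality for the convex function $x\mapsto x^D$ on $[0,\infty)$. First, spectrally decomposing the Hermitian matrix $Y_i$, the diagonal entries $(Y_i^{2D})_{aa}=\|Y_i^D e_a\|^2$ and $(Y_i^2)_{aa}=\|Y_i e_a\|^2$ take the form $\sum_\alpha p_{i,\alpha}\lambda_{i,\alpha}^{2D}$ and $\sum_\alpha p_{i,\alpha}\lambda_{i,\alpha}^{2}$ respectively, with $(p_{i,\alpha})_\alpha$ a probability vector; Jensen then gives $(Y_i^{2D})_{aa}\ge ((Y_i^2)_{aa})^D$. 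Second, the power-mean inequality applied to the $\ell$ nonnegative numbers $s_i:=(Y_i^2)_{aa}$ yields $\big(\sum_i s_i\big)^D\le \ell^{D-1}\sum_i s_i^D$. Combining,
\begin{equation*}
z_a^D=\Big(\sum_{i=1}^\ell (Y_i^2)_{aa}\Big)^D\le \ell^{D-1}\sum_{i=1}^\ell \big((Y_i^2)_{aa}\big)^D\le \ell^{D-1}\sum_{i=1}^\ell (Y_i^{2D})_{aa}\le \ell^D\sum_{i=1}^\ell (Y_i^{2D})_{aa}.
\end{equation*}
Summing over $a$ completes the argument, and in fact delivers the sharper constant $\ell^{D-1}$.

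There is no serious obstacle here: the only conceptual point is that although the $X_i^N$ are not simultaneously diagonalizable with $Z^N$, the diagonal entries in the eigenbasis of $Z^N$ still obey the two-step convexity bound above. The case $D=0$ is immediate since then the right-hand side is $\ell\,\Tr((Z^N)^k)\ge \Tr((Z^N)^k)$.
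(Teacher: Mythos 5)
Your proof is correct and takes a genuinely different route. The paper first handles $k=0$ by expanding $(Z^N)^D$ and bounding each of the $\ell^D$ resulting products $\Tr(X_{i_1}^2\cdots X_{i_D}^2)$ via the non-commutative H\"older inequality, then reduces the general $k$ to this case by the substitution $\tilde X_i=((Z^N)^{k/2D}(X_i^N)^2(Z^N)^{k/2D})^{1/2}$ (so that $\tilde Z=(Z^N)^{(k+D)/D}$) and finishes with the Araki--Lieb--Thirring inequality $\Tr((B^{1/2}AB^{1/2})^D)\le\Tr(B^{D/2}A^D B^{D/2})$. You instead diagonalize $Z^N$, exploit that $(Z^N)^k$ is then diagonal so both traces split over the eigenbasis, and reduce everything to the scalar bound $z_a^D\le\ell^{D-1}\sum_i(Y_i^{2D})_{aa}$, proved by the spectral Jensen inequality $(Y_i^{2D})_{aa}\ge((Y_i^2)_{aa})^D$ followed by the power-mean inequality over the $\ell$ summands. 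Your argument is more elementary (it replaces non-commutative H\"older and ALT by two applications of scalar Jensen), it handles $k$ directly without the square-root/substitution trick, and it produces the slightly sharper constant $\ell^{D-1}$ in place of $\ell^D$; the paper's version is still perfectly sufficient for the use made of it in Theorem \ref{conf-to-T}, where only the order of the constant matters.
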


\begin{proof}
	A difficulty of this result is that in general $(X+Y)^{2D}$ is not smaller than $4^D(X^{2D}+Y^{2D})$ as in the commutative case.
	
	{\bf First step : $\mathbf{k=0}$. }We first prove the case $k=0$ by using the non-commutative H\"older's inequality: for any choice of $i_{1},\ldots,i_{2D}\in \{1,\ldots,\ell\}^{2D}$, 
	$$\Tr (X^N_{i_1}\dots X^N_{i_{2D}})\leqslant \max_i\|X^N_i\|_{2D}^{2D}.$$
	Therefore after expanding the product we get : 
	$$\Tr (Z^N)^D\leqslant \ell^D\max_i\Tr (X^N_i)^{2D}.$$
	{\bf Second step : general case }
	We apply the case $k=0$ to the new matrices
	$$\tilde X^N_i = ((Z^N)^{\frac{k}{2D}} (X^N_i)^2 (Z^N)^{\frac{k}{2D}})^{\frac{1}{2}}$$
	$$\tilde Z^N = (Z^N)^{\frac{k}{2D}} ((X^N_1)^2+\dots+(X^N_\ell)^2) (Z^N)^{\frac{k}{2D}}=(Z^N)^{\frac{k+D}{D}}$$
	Therefore,
	$$\Tr(( Z^N)^{k+D})=\Tr((\tilde Z^N)^D)\leqslant \ell^D\sum_{i=1}^\ell\Tr ((Z^N)^{\frac{k}{2D}} (X^N_i)^2 (Z^N)^{\frac{k}{2D}})^D.$$
	Finally we use the Araki-Lieb-Thirring inequality  \cite{kosaki} on the right hand side 
	$$\Tr(( Z^N)^{k+D})\leqslant \ell^D\sum_{i=1}^\ell\Tr ((Z^N)^{\frac{k}{2}} (X^N_i)^{2D} (Z^N)^{\frac{k}{2}}).$$
	
\end{proof}

\begin{lemma}\label{normbandmatrix}
	If $X^N$ is an Hermitian matrix with Gaussian independent centered entries with  variance uniformly bounded by $ K$ and such that
	$X^N_{ij}=0$ if $|i-j|>uN$, then  with probability greater than $1-\frac{2}{u}e^{-uN}$
	$$ \|X^N\|_{\infty}\leqslant CK\sqrt{uN}$$
	for some universal constant $C$.
		
\end{lemma}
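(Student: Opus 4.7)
The plan is to exploit the band structure to decompose $X^N$ into $O(1/u)$ independent Hermitian Gaussian blocks of size $O(uN)$, bound the operator norm of each block by the standard Gaussian concentration of measure, and then conclude by a union bound. More precisely, let $m=\lceil 1/u\rceil$ and partition $\{1,\dots,N\}$ into consecutive intervals $I_1,\dots, I_m$, each of length at most $\lceil uN\rceil$. The bandedness hypothesis $X^N_{ij}=0$ for $|i-j|>uN$ forces every nonzero entry of $X^N$ to lie in a block $I_k\times I_\ell$ with $|k-\ell|\le 1$. Hence we may write $X^N = D+U+U^*$, where $D$ is the block-diagonal piece (indices in the same $I_k$) and $U$ is the strictly upper block-bidiagonal piece (indices in $I_k\times I_{k+1}$). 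Split further $U = U_{\rm odd}+U_{\rm even}$ according to the parity of $k$; then each of $M_1:=D$, $M_2:=U_{\rm odd}+U_{\rm odd}^*$, $M_3:=U_{\rm even}+U_{\rm even}^*$ is, after a suitable permutation of the basis, a block-diagonal Hermitian matrix whose independent blocks $B$ each have size at most $2\lceil uN\rceil$ and independent centered Gaussian entries of variance at most $K$.

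Next I would invoke the standard Gaussian concentration estimate for the operator norm of a single such block $B$. Writing $B=\sqrt K\, \tilde B$ with $\tilde B$ having entries of variance at most $1$, the map $\tilde B\mapsto \|\tilde B\|_\infty$ is $1$-Lipschitz in the Euclidean norm of the entries, and $\mathbb E\|\tilde B\|_\infty\le C_1\sqrt{uN}$ by standard Gaussian matrix bounds (moment method, or Slepian/Gordon). Borell's inequality then yields, for a suitable universal $C_0$,
\[
\mathbb P\bigl(\|B\|_\infty\ge C_0\sqrt{KuN}\bigr)\le e^{-uN}.
\]
A union bound over the at most $2m\le 2/u+2$ blocks in $M_1,M_2,M_3$ shows that, outside an event of probability at most $\tfrac{2}{u}e^{-uN}$ (after absorbing harmless constants), every block has norm at most $C_0\sqrt{KuN}$, and hence
\[
\|X^N\|_\infty\le \|M_1\|_\infty+\|M_2\|_\infty+\|M_3\|_\infty\le 3C_0\sqrt{KuN},
\]
which gives the claim after absorbing the factor $3$ into the universal constant.

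There is no real obstacle: the band decomposition into three families of independent block-diagonal Hermitian pieces is elementary, and the per-block operator-norm estimate is a textbook application of Gaussian concentration. The only mild point of care is to make sure that the constants in the tail bound combine with the number of blocks $\sim 1/u$ to produce exactly the factor $\tfrac{2}{u}e^{-uN}$ stated in the lemma, which is a matter of bookkeeping.
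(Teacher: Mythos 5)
Your proposal follows essentially the same route as the paper: decompose the band matrix into block-diagonal Hermitian pieces of size $O(uN)$ using a parity trick, bound each block's operator norm by Gaussian concentration, and combine. The paper assembles two pieces from overlapping windows $[a^N_k,a^N_{k+2})$ with $a^N_k=\lfloor kuN\rfloor$ (width $\approx 2uN$, spaced $uN$ apart, split by parity of $k$), and cites Vershynin's sub-Gaussian norm bound directly; you use three pieces (the block diagonal plus two families of bidiagonal blocks) and unpack the norm bound into expected norm plus Borell. These are interchangeable choices, and your version is arguably a cleaner conceptual decomposition.

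There is, however, a small but genuine slip in the set-up of the partition. With $m=\lceil 1/u\rceil$ consecutive intervals partitioning $\{1,\dots,N\}$, each interval has length about $N/m\le uN$, and it can be \emph{strictly} less than $uN$ when $1/u$ is not an integer (for instance $u=0.3$, $N=100$: four intervals of length $25<30=uN$). In that case an index pair $(i,j)$ with $|i-j|\le uN$ can fall in $I_k\times I_\ell$ with $|k-\ell|=2$, so the three matrices $M_1,M_2,M_3$ do not cover all the nonzero entries of $X^N$ and the triangle inequality $\|X^N\|_\infty\le\sum\|M_i\|_\infty$ fails. The fix is easy: take $m=\lfloor 1/u\rfloor$ so each interval has length at least $uN$ (then for $|k-\ell|\ge 2$ one has $|i-j|\ge |I_{k+1}|+1>uN$), or use the paper's overlapping windows of width $2uN$, which guarantee that the band of half-width $uN$ always sits inside a single window. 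After this correction the per-block concentration bound, the union bound over $O(1/u)$ blocks, and the final constant absorption all go through as you sketched.
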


\begin{proof}
Define $a^N_k =	\lfloor kuN\rfloor$
and  by induction (with convention $a^N_{-1}=-\infty$) define for $k\geqslant 0$,
$$A^{N,k}_{ij} = X^N_{ij}(1_{a^N_k\leqslant i,j< a^N_{k+2}}-1_{a^N_{k-1}\leqslant i,j< a^N_{k+1}})_+$$
Since $A^{N,k}_{ij}$ has only non zero coefficients in a block of size at most $2uN+1$ we deduce from \cite[Corollary 4.4.8]{vershynin} that
$$\P(\|A^{N,k}\|\geqslant CK(\sqrt{2uN}+t))\leqslant 4e^{-t^2}$$
Now 
$$X^{N}=A^{N}+B^{N}$$
with 
$$A^N=A^{N,0}+A^{N,2}+\dots+A^{N,2\lfloor (2u)^{-1}\rfloor+2}, B^N=A^{N,1}+A^{N,3}+\dots+A^{N,2\lfloor (2u)^{-1}\rfloor+3}$$
Moreover 
$A^{N}$
has independent blocks coming from each $A^{N,2k}$
\begin{eqnarray*}
&&P(\|A^{N}\|_{\infty}\geqslant CK(\sqrt{2uN}+t))
\\
&&= 1-\prod_{k}(1- \P( \|A^{N,2k}\|_{\infty}\geqslant CK(\sqrt{2uN}+t)))\\
&&\quad \le  1-(1-\sup_{k } \P( \|A^{N,2k}\|_{\infty}\geqslant CK(\sqrt{2uN}+t))^{\lfloor (2u)^{-1}\rfloor  }
\le \frac{1}{u} e^{-t^2}\end{eqnarray*}

The same result holds for 
$B_{N}$ and therefore the result follows for 
 $X^N=A^N+B^N$.
Taking $t=\sqrt{uN}$  allows to conclude.

\end{proof}

\subsection{Change of variables in one-matrix models}

We  compute the Jacobian of a change of variables acting only on the eigenvalues of matrices.
\begin{lemma}[{\bf Jacobian of a function of hermitian matrices}]\label{Jacobian1} Let $f$ be a $C^1$ function from $\mathbb R$ into $\mathbb R$.
	Let $\phi$ be a function  from $\HNC$ into $\HNC$ acting on the eigenvalues
	$\lambda =( \lambda_1>\dots>\lambda_N)$ so that if $\Diag(\lambda)$ is the diagonal matrix with entries $\lambda$ and $U$ is any unitary matrix,
	$$\phi(U^*\Diag(\lambda)U)=U^*\Diag (f(\lambda))U\,.$$
	Then if  $\Delta(x)=\prod_{i<j}(x_j-x_i)$ is the VanderMonde determinant, then the Jacobian of $\phi$ is
	$$J_\phi=\left(\frac{\Delta(f(\lambda))}{\Delta(\lambda)}\right)^2e^{\Tr\ln f'(X)}.$$
\end{lemma}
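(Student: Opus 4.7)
The key observation is that the map $\phi$ coincides with the functional calculus $X \mapsto f(X)$, so the differential $D\phi|_X$ is the well-known Daleckii--Krein operator. My plan is to compute its determinant directly by diagonalizing it, and then to recognize the resulting product as the VanderMonde ratio times $\exp \Tr \ln f'(X)$.

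First I would fix $X = U^*\Diag(\lambda) U$ with distinct eigenvalues (the set where eigenvalues coincide is of codimension $\geqslant 3$ and so has Lebesgue measure zero, which is enough for a change of variable formula). Writing a tangent vector $H \in \HNC$ in the eigenbasis as $\widetilde H = U H U^*$, the Daleckii--Krein formula states
$$
\bigl(U\, D\phi|_X(H)\, U^*\bigr)_{ij} \;=\; f^{[1]}(\lambda_i,\lambda_j)\,\widetilde H_{ij},
$$
where $f^{[1]}(\lambda_i,\lambda_j) = (f(\lambda_i)-f(\lambda_j))/(\lambda_i-\lambda_j)$ if $i\neq j$ and $f^{[1]}(\lambda_i,\lambda_i) = f'(\lambda_i)$. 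Since conjugation by $U$ is orthogonal for the real inner product $\langle H,K\rangle = \Tr(HK)$ on $\HNC$, the determinant of $D\phi|_X$ equals the determinant of the diagonal multiplication operator $\widetilde H \mapsto (f^{[1]}(\lambda_i,\lambda_j)\widetilde H_{ij})_{ij}$ on $\HNC$.

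Next I would choose the standard real basis of $\HNC$: the $N$ diagonal elements $E_{ii}$, and for each $i<j$, the two Hermitian matrices $\tfrac{1}{\sqrt 2}(E_{ij}+E_{ji})$ and $\tfrac{i}{\sqrt 2}(E_{ij}-E_{ji})$. In this basis the multiplication operator is diagonal with eigenvalues $f'(\lambda_i)$ on each diagonal direction and $f^{[1]}(\lambda_i,\lambda_j)$ with multiplicity $2$ on each off-diagonal pair. Hence
$$
J_\phi \;=\; \prod_{i=1}^N f'(\lambda_i)\,\prod_{i<j}\left(\frac{f(\lambda_i)-f(\lambda_j)}{\lambda_i-\lambda_j}\right)^{\!2}.
$$

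Finally I would rewrite the first factor as $\prod_i f'(\lambda_i) = \exp\sum_i \ln f'(\lambda_i) = \exp\Tr\ln f'(X)$ (the latter equality being the definition of $\ln f'(X)$ by functional calculus), and the second as $(\Delta(f(\lambda))/\Delta(\lambda))^2$ using that the square kills the sign ambiguity coming from the convention $\Delta(x)=\prod_{i<j}(x_j-x_i)$. This gives the claimed identity. I do not expect any substantive obstacle; the only delicate point is justifying the Daleckii--Krein formula, which one may instead derive on the spot by observing $\phi(X) = \sum_k a_k X^k$ for polynomials approximating $f$ and expanding $(X+\epsilon H)^k$ to first order in $\epsilon$ in the eigenbasis.
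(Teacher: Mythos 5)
Your proof is correct but takes a genuinely different route from the paper. You compute the differential $D\phi|_X$ directly, identify it via the Daleckii--Krein formula as multiplication by the divided-difference kernel $f^{[1]}(\lambda_i,\lambda_j)$ in the eigenbasis, and read off the determinant by listing the real eigendirections of $\HNC$. The paper instead factors the map $\phi$ through the spectral decomposition $X\mapsto(\Lambda,U)\mapsto(f(\Lambda),U)\mapsto U^*f(\Lambda)U$ and multiplies three Jacobians: the Weyl-integration factor $\Delta(\lambda)^{-2}$ for passing to eigenvalue/eigenvector coordinates, the scalar factor $\prod_i f'(\lambda_i)$ for the map on eigenvalues, and $\Delta(f(\lambda))^{2}$ for returning to the matrix. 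The paper's argument is essentially a bookkeeping exercise once one accepts the classical $\Delta^2$ Jacobian for the eigendecomposition, and it dovetails with the integral formula used in the rest of Section 7.3; your approach is more intrinsic and self-contained in that it never invokes the eigendecomposition Jacobian, at the cost of needing (or deriving) the Daleckii--Krein formula. One small point worth flagging: the identity $\prod_i f'(\lambda_i)=e^{\Tr\ln f'(X)}$ as stated presumes $f'>0$ on the spectrum; strictly, the Jacobian should be $\prod_i |f'(\lambda_i)|=|\det D\phi|$, but this matches the paper's own (slightly informal) statement and is harmless since in the subsequent Lemma~\ref{Jacobian2} the map $f$ is built to be increasing.
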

\begin{remark} In fact we only need $f$ to be $C^1$ in a neighborhood of the spectrum.
\end{remark}
\begin{proof}
	We decompose the map $\phi$ in three parts : first the reduction of $X$ into its eigenvectors, eigenvalues. The Jacobian of this change of variable is $\Delta(\lambda)^{-2}$. Then $f$ is applied to each eigenvalue which creates the factor $\prod_i f'(\lambda_i)$. Finally the last factor comes from the map $(U,f(\Lambda))\to U^*f(\Lambda) U$.
	Roughly, if $W_X\subset V_X$ is the subset of hermitian matrices where $f$ is still $\mathcal C^1$ in each eigenvalue,
	\begin{align*}
		\int_{W_X} h(\phi(X)) J_\phi dX&=\int_{\phi(W_X)} h(X) dX
		=\int_{\phi(W_X)} h(U^*\Lambda U)\Delta(\Lambda)^2 d\Lambda dU\\
		&=\int_{W_X} h(U^*f(\Lambda) U)\Delta(f(\Lambda))^2 \prod f'(\lambda_i)d\Lambda dU\\
		&=\int_{W_X} h(f(U^*\Lambda) U))\Delta(f(\Lambda))^2 \prod f'(\lambda_i)d\Lambda dU\\
		&=\int_{W_X} h(f(X))\Delta(f(\Lambda))^2 \prod f'(\lambda_i)/\Delta(\Lambda)^2dX
	\end{align*}
	And we identify $J_\phi$ by comparing the first and last integral.
\end{proof}

The main result of this section is to construct a change of variables mapping the eigenvalues outside some interval $I$  to the neighborhood of some point and bound its Jacobian from below. 

\begin{lemma}\label{Jacobian2} Let $I
$ be an open set.
	For $n\in \{0,\ldots,N\}$ and $(x_1,\dots,x_n)\in I^n$, let 
	$\mathcal{H}^I_L(x_1,\dots,x_n)$ be the set of $X\in\HNC$  with eigenvalues in the interior of $I^c$ except for $n$ of them which are equal to $(x_1,\dots,x_n)$. Let $\alpha$ be a real number and fix $\varepsilon\in (0,4)$.
	Then,  there exists an injective map $\phi$ from  $\mathcal{H}^I_L(x_{1},\ldots,x_{n})$ into $\HNC$ such that if $$X=U^*\Diag(x_1,\dots,x_n,\lambda_{n+1},\dots,\lambda_N)U$$ with $U$ unitary and $\lambda_{n+1},\dots,\lambda_N$ belongs to the interior of $I^c$ then
	$$\phi(X)=U^*\Diag(x_1,\dots,x_n,\phi_{n+1},\dots,\phi_N)U$$
	with the $\phi_i$ in $]\alpha-\vep;\alpha+\vep[$ and 
	
	$$J_\phi \geqslant \left(\frac{\vep}{4e}\right)^{3N(N-n)}e^{-2N\Tr \ln (1+|X|1_{X\notin I})-2(N-n)\Tr \ln (1+|X|1_{X \in I})}$$
	
	As an immediate application with $I=\emptyset$ we see that there is an injective map $\phi:\HNC\to\{X\in\HNC :||X-\alpha||<\vep\}$ such that 
	$$J_\phi \geqslant \left(\frac{\vep}{4e}\right)^{3N^2}e^{-2N\Tr \ln (1+|X|)}$$
\end{lemma}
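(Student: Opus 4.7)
The plan is to build $\phi$ by acting separately on the eigenvalues: diagonalize $X=U^{*}\Diag(x_{1},\ldots,x_{n},\lambda_{n+1},\ldots,\lambda_{N})U$ with the fixed $x_{i}$'s in the first $n$ slots and the remaining eigenvalues $\lambda_{n+1},\ldots,\lambda_{N}\in I^{c}$ in the last $N-n$ slots, then set $\phi(X)=U^{*}\Diag(x_{1},\ldots,x_{n},g(\lambda_{n+1}),\ldots,g(\lambda_{N}))U$ for a $C^{1}$ injective function $g:I^{c}\to(\alpha-\varepsilon,\alpha+\varepsilon)$. A convenient template is $g(y)=\alpha+\sigma+\tfrac{\varepsilon}{c}\cdot\tfrac{y}{1+|y|}$, where the shift $\sigma$ and constant $c$ are chosen so that $g(I^{c})$ is contained in a sub-interval of $(\alpha-\varepsilon,\alpha+\varepsilon)$ strictly separated from the finitely many points $\{x_{1},\ldots,x_{n}\}$, which is what makes $\phi$ injective on $\mathcal{H}^{I}_{L}(x_{1},\ldots,x_{n})$ and what controls the cross ratios below.

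The Jacobian is then computed exactly as in the proof of Lemma \ref{Jacobian1}, except that only the last $N-n$ diagonal entries are transformed: the three steps (eigendecomposition, one-variable transformation of the diagonal, re-conjugation) give
$$J_{\phi}=\prod_{j>n}g'(\lambda_{j})\,\cdot\,\prod_{i<j}\left(\frac{\phi_{j}-\phi_{i}}{\lambda_{j}-\lambda_{i}}\right)^{2},$$
with $\phi_{i}=x_{i}$ for $i\le n$ and $\phi_{j}=g(\lambda_{j})$ for $j>n$. I then bound the three families of factors separately: (a) $g'(\lambda_{j})\gtrsim \varepsilon/(1+|\lambda_{j}|)^{2}$ directly from the formula; (b) for pairs of moved eigenvalues $n<i<j$, an explicit case-split on the signs of $\lambda_{i},\lambda_{j}$ in the identity $y/(1+|y|)-z/(1+|z|)=(y-z+\text{sign term})/((1+|y|)(1+|z|))$ yields $|g(\lambda_{j})-g(\lambda_{i})|/|\lambda_{j}-\lambda_{i}|\gtrsim \varepsilon/((1+|\lambda_{i}|)(1+|\lambda_{j}|))$; (c) for mixed pairs $i\le n<j$, using that $g(I^{c})$ is separated from the $x_{i}$'s and bounded, $|g(\lambda_{j})-x_{i}|/|\lambda_{j}-x_{i}|\gtrsim \varepsilon/((1+|x_{i}|)(1+|\lambda_{j}|))$.

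Counting powers, (a) contributes $\varepsilon^{N-n}\prod_{j}(1+|\lambda_{j}|)^{-2}$, (b) contributes $\varepsilon^{(N-n)(N-n-1)}\prod_{j}(1+|\lambda_{j}|)^{-2(N-n-1)}$, and (c) contributes $\varepsilon^{2n(N-n)}\prod_{j}(1+|\lambda_{j}|)^{-2n}\prod_{i}(1+|x_{i}|)^{-2(N-n)}$. The exponent of $(1+|\lambda_{j}|)$ totals $2+2(N-n-1)+2n=2N$ and that of $(1+|x_{i}|)$ is $2(N-n)$, matching the stated bound exactly. The total $\varepsilon$-exponent is $(N-n)(N+n)$; since $\varepsilon/4<1$ and $(N-n)(N+n)\le 3N(N-n)$, replacing it by the larger exponent $3N(N-n)$ gives a weaker (hence still valid) lower bound, and the numerical combinatorial constants accumulated along the way are absorbed into the factor $e^{-3N(N-n)}$, yielding $(\varepsilon/(4e))^{3N(N-n)}$. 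The immediate application with $I=\emptyset$ and $n=0$ follows since then only steps (a) and (b) are needed.

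The main obstacle I foresee is step (c): the obvious choice $g(y)=\alpha+\tfrac{\varepsilon}{4}y/(1+|y|)$ fails when $g(\lambda_{j})$ accidentally lands close to some $x_{i}$, so that the cross ratio collapses. One must therefore choose $g$ in a way that depends on the configuration $(x_{1},\ldots,x_{n})$, e.g. by placing $g(I^{c})$ inside a gap between consecutive $x_{i}$'s inside $(\alpha-\varepsilon,\alpha+\varepsilon)$ with a controlled margin. Achieving the desired factor $\varepsilon/((1+|x_{i}|)(1+|\lambda_{j}|))$ rather than something weaker (losing powers of $n+1$ from the gap size) is the delicate point of the argument; the slack $(N-n)(N+n)\le 3N(N-n)$ built into the stated exponent is precisely what gives room to absorb these combinatorial losses.
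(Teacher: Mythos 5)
Your Jacobian formula and the splitting of the Vandermonde ratio into the three families (a), (b), (c) matches the paper's computation, and you have correctly identified the delicate point. However, the claim that the slack $3N(N-n)$ versus $(N-n)(N+n)$ in the $\varepsilon$-exponent absorbs the "combinatorial losses" from step (c) is a genuine gap, and it cannot be repaired within the single-$g$ framework you propose.

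Here is why. If the $x_1,\dots,x_n$ happen to fill $(\alpha-\varepsilon,\alpha+\varepsilon)$ roughly uniformly, any gap between consecutive $x_i$'s has width $O(\varepsilon/n)$, so to place the whole image $g(I^c)$ at distance comparable to $\varepsilon$ from all $x_i$'s simultaneously is impossible; the best one can achieve (and what the logarithmic-potential estimate of Lemma~\ref{add-points} delivers) is a point $y$ whose \emph{product} $\prod_i|y-x_i|$ is of order $(\varepsilon/e)^n$, with a safety margin of only $O(\varepsilon/n)$ around $y$. This forces $\mathrm{diam}\,g(I^c)=O(\varepsilon/n)$, hence $c\asymp n$ in your template $g(y)=\alpha+\sigma+(\varepsilon/c)\,y/(1+|y|)$. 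But then $g'\asymp\varepsilon/(n(1+|y|)^2)$, so the factor $1/n$ contaminates not only (c) but also (a) and, crucially, all $\binom{N-n}{2}$ pairs in (b): the cross ratios for moved pairs are proportional to $g'$ when $g$ is a single smooth function. The net loss is of order $n^{-(N-n)^2}$, whereas the slack in the exponent gives you only a factor $(\varepsilon/(4e))^{(N-n)(2N-n)}$. Taking $\varepsilon$ near $4$ (where the slack is weakest) and $n\approx N/2$, you would need $(N-n)^2\ln n\leqslant (N-n)(2N-n)$, i.e.\ $\ln n\lesssim 3$, which fails for $N$ large. The bound must hold uniformly in $\varepsilon\in(0,4)$ and in the configuration, so this loss is fatal.

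The paper's fix is qualitatively different: instead of a single function $g$, it selects \emph{distinct} target points $x_{n+1},\dots,x_N$ inside $(\alpha-\varepsilon,\alpha+\varepsilon)$, chosen \emph{inductively} via Lemma~\ref{add-points} so that each new point satisfies $\sum_{i<n+k}\ln(|x_{n+k}-x_i|-\tfrac{\varepsilon}{4N})\geqslant(n+k-1)\ln\tfrac{\varepsilon}{4e}$, and then sets $\phi_{n+k}=x_{n+k}+\tfrac{\varepsilon}{8N}\tfrac{\lambda_{n+k}}{1+|\lambda_{n+k}|}$. The effect is that for the moved pairs in (b) the numerator $|\phi_{n+j}-\phi_{n+i}|$ is now controlled by the separation $|x_{n+j}-x_{n+i}|$ of the \emph{targets}, which is of order $\varepsilon$ (in a product-averaged sense) rather than of order $\varepsilon/N$ from a compressed $g'$; the only $1/N$ factors that survive are the $N-n$ one-dimensional derivatives in (a), and these are affordable. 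In short: you need Lemma~\ref{add-points} (or an equivalent averaged log-potential estimate), and you need the targets to be spread out rather than compressed into a single sub-interval, so that the well-separation benefits parts (b) and (c) simultaneously.
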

A key step of the proof is the next lemma.
\begin{lemma}\label{add-points}
	Let $I$ be a bounded close  interval and  $x_1,\dots,x_n$ in $\RR$  be fixed. Let $0<\eta<1$. Then,  there exists $y$ in $I$  at distance greater than $\eta\frac{|I|}{2n}$ from $\{x_1,\ldots,x_n\}$
	such that 
	$$\sum_{i=1}^n\ln\left(|x_i-y|-\eta\frac{|I|}{2n}\right)\geqslant n\ln\frac{|I|(1-\eta)}{2e}.$$
	More generally, for any integer number $N\ge n+1$, there exists $(x_{n+1},\dots,x_N)$ at distance greater than $\eta\frac{|I|}{2N}$ from $\{x_1,\ldots,x_n\}$
	such that for every $j\in [n+1,N]$,
	$$\sum_{i=1}^{j-1}\ln\left(|x_i-x_j|-\eta\frac{|I|}{2N}\right)\geqslant (j-1)\ln\frac{|I|(1-\eta)}{2e}.$$
\end{lemma}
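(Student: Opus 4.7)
The plan is to prove the first statement by an averaging argument over a carefully chosen set, and then derive the second statement by iterating the first. Set $\delta = \eta |I|/(2n)$ and let $G = \{y \in I : |y - x_i| > \delta \text{ for all } i\}$ be the complement in $I$ of the open $\delta$-neighborhoods of the $x_i$'s. Because these neighborhoods have total Lebesgue measure at most $2n\delta = \eta|I|$, one has $|G| \geq |I|(1-\eta)$. The function $F(y) = \sum_i \ln(|y - x_i| - \delta)$ is upper semicontinuous on $\overline{G}$ and equals $-\infty$ on $\partial G$, so its supremum is attained at some interior point $y \in G$, which will be the desired witness. Using $\sup_G F \geq |G|^{-1} \int_G F(y)\,dy$, it suffices to show $\int_G \ln(|y-x_i|-\delta)\,dy \geq |G|\ln \frac{|G|}{2e}$ for each $i$.

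For this key estimate I will consider the pushforward $\nu_i$ of Lebesgue measure on $G$ under the map $y \mapsto |y-x_i|-\delta$. The two preimages of any $u > 0$ are $y = x_i \pm (u+\delta)$, so $\nu_i$ has density at most $2$ with respect to Lebesgue on $[0, \infty)$ and in particular $\nu_i([0,t]) \leq 2t$ for every $t \geq 0$. Comparing with the ``extremal'' measure $\nu^*$ of density $2$ on $[0, |G|/2]$, which also has total mass $|G|$, the cumulative bound is equivalent to the stochastic domination $\nu^* \preceq \nu_i$. Since $\ln$ is increasing, one concludes
\[
\int \ln u\, d\nu_i(u) \geq \int \ln u \, d\nu^*(u) = 2\int_0^{|G|/2}\ln u\, du = |G|\ln\tfrac{|G|}{2e},
\]
and combining this with $|G| \geq |I|(1-\eta)$ and the monotonicity of $\ln$ yields $\sup_G F \geq n\ln\frac{|I|(1-\eta)}{2e}$, completing the first statement.

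The second statement will follow by induction on $j$: given $x_1, \dots, x_{j-1}$, apply the first statement with $n$ replaced by $j-1$ and $\eta$ replaced by $\eta' := \eta(j-1)/N \leq \eta$. The resulting separation parameter is $\eta'|I|/(2(j-1)) = \eta|I|/(2N)$ as required, and the target bound $(j-1)\ln\frac{|I|(1-\eta')}{2e} \geq (j-1)\ln\frac{|I|(1-\eta)}{2e}$ holds because $1-\eta' \geq 1-\eta$.

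The only genuinely interesting step is the density bound on $\nu_i$ combined with the stochastic-domination / rearrangement trick, which sharply converts the geometric constraint ``$\nu_i$ cannot be too concentrated near $0$'' into the desired logarithmic integral estimate. Everything else is essentially bookkeeping.
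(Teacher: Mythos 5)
Your proof is correct, and it takes a genuinely different technical route from the paper's. The paper estimates the average of $F(y)=\sum_i\ln(|y-x_i|-\vep)$ over $E=I\setminus\bigcup_i]x_i-\vep,x_i+\vep[$ by the crude bound $\int_E\ln(|x_i-y|-\vep)\,dy\geqslant\int_{-1}^1\ln|y|\,dy=-2$ for each $i$, which alone is far from the stated bound, and then upgrades it by exploiting the scale covariance $V_{\alpha I,\alpha x,\alpha\vep}=V_{I,x,\vep}+n\ln\alpha$ and optimizing over the dilation $\alpha$. Your argument replaces this two-step procedure with a single sharp estimate: the density bound $d\nu_i/du\leqslant 2$ gives $F_{\nu_i}(t)\leqslant \min(2t,|G|)=F_{\nu^*}(t)$, and the resulting stochastic domination yields $\int_G\ln(|y-x_i|-\delta)\,dy\geqslant 2\int_0^{|G|/2}\ln u\,du=|G|\ln\frac{|G|}{2e}$ directly. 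The two are in fact the same inequality in disguise: choosing $\alpha^*=\tfrac12(|I|-2n\vep)$ in the paper's proof rescales $E$ to have length exactly $2$, which is precisely the length at which the crude bound $-2$ coincides with the rearrangement bound $|E'|\ln\frac{|E'|}{2e}$. So the scale-optimization is implicitly finding the normalization at which the crude average becomes the extremal one; your rearrangement argument makes this extremality explicit without rescaling, which is cleaner. For the second claim, your choice $\eta'=\eta(j-1)/N$ in the inductive step is an economical way to make the subtracted gap inside the logarithm come out to exactly $\eta|I|/(2N)$; the paper's corresponding argument (``add the points one by one'' after a preliminary rescaling to make the right-hand side negative) is stated more cryptically but aims at the same iteration. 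Two very minor points worth tightening in your write-up: the maximizer of $F$ on $\overline G$ could in principle sit at an endpoint of $I$ rather than at an interior point of $G$ (but such an endpoint still satisfies $|y-x_i|>\delta$ for all $i$, since $F=-\infty$ exactly where $|y-x_i|\leqslant\delta$, so the conclusion is unaffected); and the passage from the cumulative bound $F_{\nu_i}\leqslant F_{\nu^*}$ to $\int\ln u\,d\nu_i\geqslant\int\ln u\,d\nu^*$ should be justified for the unbounded integrand $\ln$, which is straightforward here because $\nu_i$ has bounded support (as $G\subset I$) and the density bound shows the negative part of $\int\ln u\,d\nu_i$ is finite.
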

\begin{proof}
	Let $\vep = \eta\frac{|I|}{2n}$ and  define
	$$V_{I,x,\vep} = \sup_{y\in I}\sum_{i=1}^n\ln(|x_i-y|-\vep)\,.$$
	Observe that by a simple dilatation of the space by a factor $\alpha>0$ we have
	$V_{\alpha I,\alpha x,\alpha\vep}=V_{I,x,\vep}+n\ln\alpha.$
	
	Now since $\vep<\frac{|I|}{2n}$, we can consider $Y$ to be the  random variable uniformly distributed on 
	$$E=I\backslash\bigcup_{i=1}^n]x_i-\vep;x_i+\vep[.$$
	and 
	$U=\sum_{i=1}^n\ln(|x_i-Y|-\vep)$.
	Since $\ln|x|$ is locally integrable, $U$ is $L^1$ and its expectation is bounded from below
	
	$$\E[U]= \frac{1}{|E|}\int_E\sum_{i=1}^n\ln( |x_i-y|-\vep)dy
	\geqslant \frac{n}{|E|}\int_{-1}^1 \ln( |y|)dy
	\geqslant \frac{-2n}{|I|-2n\vep}
	$$
	As a consequence
	$$V_{I,x,\vep}\ge \E[U]\ge \frac{-2n}{|I|-2n\vep}
	$$
	But due to the scaling argument we see that
	$$V_{I,x,\vep}\geqslant\sup_{\alpha>0}V_{\alpha^{-1} I,\alpha^{-1} x,\alpha^{-1}\vep}+n \ln\alpha\geqslant\sup_{\alpha>0}\{\frac{-2n\alpha}{|I|-2n\vep}+n\ln\alpha\}$$
	Taking the optimal value $\alpha =(1/2)( |I|-2n\vep)$ shows that
	$$V_{I,x,\vep} = \sup_{y\in I}\sum_{i=1}^n\ln(|x_i-y|-\vep)\geqslant n\ln\frac{|I|(1-\eta)}{2e}.$$
	Finally, since $y\mapsto \sum_{i=1}^n\ln(|x_i-y|-\vep)$ is continuous and concave on each of the interval when it is not equal to $-\infty$, it achieves its maximal value on the closed set $I$. We take $y$ to be such a maximizer.

	The generalization is straightforward if we assume that the logarithm in the right hand side is negative (because in that case we can add the points one by one using the above argument). To conclude we again use the scale invariance which allow us to go back to the case where the right hand side is negative.

\end{proof}

\begin{proof}[Proof of Lemma \ref{Jacobian2}.] In the following, $(x_1,\ldots,x_n)$ are fixed and we wish to map $(\lambda_{n+1},\cdots,\lambda_N)$ to some neighborhood of $\alpha$ at sufficient distance from the $x_i$'s to bound from below the associated Jacobian. We first choose a family of  points $\{x_{n+1},\ldots, x_{N}\}$ as in   Lemma \ref{add-points} with $I=]\alpha-\varepsilon;\alpha+\varepsilon[$ and fix them: they are some measurable functions of $\{x_1,\ldots,x_n\}$. We construct $\phi_{n+k}$ close to $x_{n+k}$  for $k\in [1,N-n]$ by putting
	$$\phi_{n+k} = x_{n+k} + \frac{\vep}{8N}\frac{\lambda_{n+k}}{1+|\lambda_{n+k}|}=x_{n+k}+\frac{\vep}{8N}\psi(\lambda_{n+k})$$
	with $\phi(x)=x(1+|x|)^{-1}$. Since $\phi$ is a diffeomorphism from $\RR$ into $[-1,1]$, $\psi: (x_1,\ldots,x_n,\lambda_{n+1},\cdots,\lambda_N)\mapsto (x_1,\ldots x_n,\phi_{n+1},\dots,\phi_N)$ is also a diffeomorphism. 
	Now we use Lemma \ref{Jacobian1} to compute the Jacobian $J_\phi$ of $\phi$ which is equal to 
	
	\begin{align*}
		&\left(\prod_{\substack{1\leqslant i\leqslant n\\ 1\le k\leqslant N-n}}\frac{|\phi_{n+k}-x_i|}{|\lambda_{n+k}-x_i|}\prod_{ 1\le k\le k'\le N-n}
		\frac{|\phi_{n+k}-\phi_{n+k'}|}{|\lambda_{n+k}-\lambda_{n+k'}|}\right)^2e^{\sum\Tr\ln \frac{\vep}{8N(1+|\lambda_{n+k}|)^2}}.
	\end{align*}
	To bound the denominator, we use the inequality $\ln|x-y|\leqslant \ln(1+|x-\gamma|)+\ln(1+|y-\gamma|)$ so that
	$$\left(\prod_{\substack{1\leqslant i\leqslant n \\ 1\le k\leqslant N-n}}|\lambda_{n+k}-x_i|\prod_{1\leqslant k\leqslant k'\leqslant N-n}|\lambda_{n+k}-\lambda_{n+k'}|\right)^2\\
	\leqslant e^{2(N-1)\Tr \ln (1+|X|1_{X\notin I}))}\,.
	$$
	To bound the numerator we use our construction of the $x_{n+k}$, since $|\phi_{n+k}-x_{n+k}|<\frac{\vep}{8N}$ and Lemma \ref{add-points} holds,  to find that for every $k\in [1,N-n]$:
	$$
	\prod_{i\leqslant n}|\phi_{n+k}-x_i|\prod_{k'<k}|\phi_{n+k}-\phi_{n+k'}|\geqslant
	\prod_{i\leqslant n+k-1}(|x_{n+k}-x_i| -\frac{\vep}{4N})\geqslant \left(\frac{\vep}{4e}\right)^{(n+k-1)}
	$$
	Putting everything together we get,
		$$
	J_\phi\geqslant e^{\left(-2N\Tr \ln (1+|X|1_{X\notin I})+\frac{1}{2}(N-n)(N+n+1)
	\ln\frac{\vep}{4e}+(N-n) \ln \frac{N}{4 e}   \right)}
	$$
	Finally the bound follows from $\frac{1}{2}(N+n-1)\ln\frac{\vep}{4e}+\ln \frac{N}{4 e}\leqslant 3N$.

\end{proof}

\subsection{Study of the energy functional related to the commutator model}

\begin{lemma}[{\bf Energy problem without a potential}]\label{Energywithout}
	Let $\beta$ be a positive  real number. Define for a measure $\mu$ on the real line  its energy:
	$$J_\beta^0(\mu)=-\int\int\ln\frac{\beta|x-y|^2}{1+\beta|x-y|^2}d\mu(x)d\mu(y)\,.$$
	\begin{enumerate}
		\item  $J_\beta^0$ is a lower continuous strictly convex function {on the set of probability measures  on $\mathbb R$ equipped with the weak topology.}
		\item{
		Let $I=[a,b]$, $a<b$,  be a closed non empty bounded interval.
		Then, there is a unique probability measure $\mu^I_\beta$  supported on  $I$ such that }
		$$J_\beta^0(\mu^I_\beta)=\inf\{J_\beta^0(\mu):\mu(I)=1\}=:c^{|I|}_\beta.$$
		{ $\mu^I_\beta$ has no atoms.}
		
		\item For every positive real numbers $\alpha, a$, we have $c^a_{\alpha^2\beta}=c^{\alpha a }_{\beta}$ so that $c^{\alpha}_{\beta}=c^{1}_{\alpha^{2}\beta}$. Moreover,  for every real number $b$ and any measurable subset  $B$ of $I$ {
		$$\mu_{\alpha^2\beta}^I(B)=\mu_{\beta}^{\alpha {I+b}}( \alpha(B+b))\,.$$}
		
		\item The application 
		$\beta\to c^1_\beta$ is a strictly positive decreasing continuous function on $(0,\infty)$ and
		$$2\pi-O(\beta^{-\frac{1}{4}})\leqslant \sqrt{\beta}c^1_\beta\leqslant 2\pi$$
		In particular,  for  any  positive real number $a$,{
		$$c^a_\beta\sim_{\beta\to+\infty}\frac{2\pi a}{\sqrt{\beta}}.$$}

	\end{enumerate}
	
\end{lemma}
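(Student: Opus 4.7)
The plan is to establish the four assertions in turn, leveraging an explicit Fourier representation of the kernel $k_\beta(z):=-\ln\frac{\beta z^2}{1+\beta z^2}$ which turns the energy into a weighted Plancherel integral and thereby reduces (4) to a concrete weighted-$L^2$ problem.

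For (1), I would derive lower semi-continuity by writing $k_\beta=\sup_M (k_\beta\wedge M)$, so $J_\beta^0$ is a supremum of weakly continuous functionals; strict convexity was in fact already shown inside the proof of Proposition~\ref{prop1} via Bochner-type representation, and I would just invoke the same argument. For (2), weak compactness of probability measures on the compact interval $I$ together with lower semi-continuity produces a minimizer (finite, since the uniform measure on $I$ is a valid test), strict convexity gives uniqueness, and any atom of mass $m>0$ would contribute $m^{2}k_\beta(x_0,x_0)=+\infty$, contradicting finiteness. For (3), the map $T(x)=\alpha(x+b)$ satisfies $k_{\alpha^2\beta}(x,y)=k_\beta(T(x),T(y))$, so $J_{\alpha^2\beta}^0(\mu)=J_\beta^0(T_\ast\mu)$, and the identities for $c$ and for $\mu_\beta^I$ follow by pushing forward the unique minimizer.

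The heart of the proof is (4). The uniform derivative bound $|\partial_\beta k_\beta(z)|=\frac{1}{\beta(1+\beta z^2)}\leq 1/\beta$ gives at once $|J_{\beta'}^0(\mu)-J_\beta^0(\mu)|\leq |\ln(\beta'/\beta)|$, hence continuity of $\beta\mapsto c^1_\beta$; monotonicity follows from $\partial_\beta k_\beta<0$; and the pointwise bound $k_\beta(x,y)\geq \ln(1+1/\beta)$ on $[0,1]^2$ gives strict positivity. For the precise asymptotic I would first establish the Fourier representation
$$J_\beta^0(\mu)=\int_{\mathbb R}\frac{1-e^{-|\xi|/\sqrt\beta}}{|\xi|}\,|\widehat\mu(\xi)|^2\,d\xi,$$
derived from $\ln(1+(\beta z^2)^{-1})=\int_0^1 (s+\beta z^2)^{-1}\,ds$ combined with the Poisson kernel Fourier pair $\widehat{(s+z^2)^{-1}}(\xi)=\pi e^{-\sqrt{s}\,|\xi|}/\sqrt{s}$. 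The upper bound $\sqrt\beta\,c^1_\beta\leq 2\pi$ then falls out of testing against the uniform measure $\nu$ on $[0,1]$: using $1-e^{-x}\leq x$ the weight is dominated by $1/\sqrt\beta$, and Parseval gives $\int|\widehat\nu|^2\,d\xi=2\pi\int \nu^2\,dx=2\pi$.

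The main obstacle is the matching lower bound with error $O(\beta^{-1/4})$. After rescaling $\tilde\mu=T_\ast\mu$ with $T(x)=\sqrt\beta\,x$, this reduces to showing $L\cdot J_1^0(\tilde\mu)\geq 2\pi-O(L^{-1/2})$ for every probability measure $\tilde\mu$ on $[0,L]$ with $L=\sqrt\beta$. My strategy is to exploit the Frostman condition: by (2) there is a unique minimizer $\tilde\mu^\ast$ on $[0,L]$, the potential $V_{\tilde\mu^\ast}(x)=\int k_1(x-y)\,d\tilde\mu^\ast(y)$ equals $c^1_{L^2}$ quasi-everywhere on $\mathrm{supp}(\tilde\mu^\ast)$, and integrating against $\tilde\mu^\ast$ gives $c^1_{L^2}=\int V_{\tilde\mu^\ast}\,d\tilde\mu^\ast$. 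Comparing $V_{\tilde\mu^\ast}$ with $V_{\nu_L}$ for the reference uniform measure $\nu_L=L^{-1}1_{[0,L]}$, using the identity $\int_{\mathbb R}\ln(1+u^{-2})\,du=2\pi$ to get $V_{\nu_L}(x)=2\pi/L+O(1/L)$ with boundary corrections concentrated in an $O(1)$ layer, then splitting the integration of $V_{\tilde\mu^\ast}$ against $\tilde\mu^\ast$ into bulk and boundary layer of thickness $L^{1/2}$, one optimizes the cutoff to obtain the $O(L^{-1/2})$ error. Combined with the scaling identity (3), this gives the concluding equivalence $c_\beta^a\sim 2\pi a/\sqrt\beta$.
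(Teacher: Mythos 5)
Your handling of (1)--(3) is fine and follows the paper's logic, and your Fourier route to the upper bound in (4) is both correct and genuinely cleaner than the paper's. Indeed, writing $\widehat{k_\beta}(\xi)=\frac{2\pi}{|\xi|}\bigl(1-e^{-|\xi|/\sqrt\beta}\bigr)$, bounding the weight by $\beta^{-1/2}$ and testing with the uniform density gives $\sqrt\beta\,c^1_\beta\le 2\pi$ in three lines, whereas the paper first invokes Fekete's subadditive lemma (via $\psi(\beta)=(c^1_{\beta^2})^{-1}$) to get existence of $\lim\sqrt\beta c^1_\beta$, and then uses the Euler--Lagrange constancy of the potential on the full support to identify the constant. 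Moreover, your representation also yields strict convexity and positive-definiteness directly, so you do not even need to borrow from Proposition~\ref{prop1}.

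The lower bound, however, has a real gap. First, the quantitative error from a ``bulk plus boundary layer of thickness $L^{1/2}$'' split does not come out to $O(L^{-1/2})$. Write the deficit as $\int E\,d\tilde\mu^\ast$ with $E(x)\lesssim\min(1,1/x)$ on $[0,L/2]$: cutting at height $\delta$ costs $O(\tilde\mu^\ast([0,\delta]))$ from the layer and $O(1/\delta)$ from the bulk, and even with the sharp mass bound $\tilde\mu^\ast([0,\delta])\lesssim\sqrt{\delta/L}$ the optimum is $\delta\sim L^{1/3}$ giving $O(L^{-1/3})=O(\beta^{-1/6})$, while your stated choice $\delta=L^{1/2}$ gives only $O(L^{-1/4})=O(\beta^{-1/8})$. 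To reach the claimed $O(\beta^{-1/4})$ one must instead integrate the full decaying profile of $E$ against the equilibrium measure using a pointwise domination $\int f\,d\tilde\mu^\ast\lesssim L^{-1/2}\int f(x)x^{-1/2}dx$ for $f$ decreasing, which is exactly the content of \eqref{holder}--\eqref{sqrtequ} in the paper. Second, and more fundamentally, you never establish this Frostman-type mass bound. The paper derives $\mu^{[0;1]}_\beta(I_\beta)\lesssim\sqrt{|I_\beta|}$ from the energy-on-a-subinterval inequality $c^1_\beta\ge\mu^{[0;1]}_\beta(I_\beta)^2\,c^1_{|I_\beta|^2\beta}$ together with a two-sided a priori control $\sqrt\gamma\, c^1_\gamma\asymp 1$, and that a priori control is precisely what the subadditivity argument you skipped delivers before the constant is identified. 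Your Fourier estimate gives only the one-sided $c^1_\gamma\lesssim\gamma^{-1/2}$, and a pointwise bound like $c^1_\gamma\ge\ln(1+1/\gamma)$ is of the wrong order $\gamma^{-1}$. Your plan can be rescued entirely within the Fourier framework: since $\widehat{\tilde\mu}$ is $L$-Lipschitz with $\widehat{\tilde\mu}(0)=1$, restricting the Plancherel integral to $|\xi|\le (2L)^{-1}$ gives $J^0_1(\tilde\mu)\gtrsim L^{-1}$, i.e. $\sqrt\gamma\,c^1_\gamma\gtrsim 1$, which closes the circularity, yields the Frostman bound, and then the $x^{-1/2}$-weighted integration gives the stated $O(\beta^{-1/4})$. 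But as written the proposal neither supplies the a priori lower bound on $c^1_\gamma$ nor the mass estimate, and the cutoff it does describe misses the rate.
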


\begin{proof}
	The first point was proved in the proof of Proposition \ref{prop1}  and the second point is clear as a simple change of variable $h_{\alpha,b}(x)=\alpha x+b$ shows that
		$J_{\beta}^{0}(h_{\alpha,b}\#\mu)=J^{0}_{\beta\alpha^{2}}(\mu)$ if $h\#\mu$ denotes the push-forward of $\mu$ by $h$. This  insures by uniqueness of the minimizers that
		$h_{\alpha,b}\#\mu_{\alpha^2\beta}^I=\mu_{\beta}^{\alpha {I+b}}  $ and $c^a_{\alpha^2\beta}=c^{\alpha a }_{\beta}$. Hereafter we denote by
		$$\phi_\beta(t):=- \ln \frac{\beta|t|^2}{1+\beta|t|^2}=\ln (1+\frac{1}{\beta|t|^2}) $$ and observe that $\phi_\beta\ge 0$ is continuous except at $0$ where it blows up. {Moreover $\beta\rightarrow \phi_{\beta}(t)$ is decreasing for every real number $t$. }
	As a consequence,
	$$\beta\to c^1_\beta= \inf_{\mu\in \mathcal P([0,1])}J^{0}_{\beta}(\mu)=\inf_{\mu\in \mathcal P([0,1])}\int\int \phi_{\beta}(x-y) d\mu(x)d\mu(y)
	$$
	 is a  decreasing  function from $(0,\infty)$ into itself. 
	  $c^{1}_{.}$ is moreover continuous. Indeed, it is 
	 is lower semi-continuous since it is decreasing. The 
  upper semi-continuity can be checked by noticing that { for every $\varepsilon\in [0,\beta)$
  \begin{eqnarray*}
  J_\beta^0(\mu)&=&J^{0}_{\beta-\varepsilon}(\mu)+\int \ln\left( \frac{(1+\beta(x-y)^{2})(\beta-\varepsilon)}{(1+(\beta-\varepsilon)(x-y)^{2})\beta}\right)d\mu(x)d\mu(y)\\
  &\ge& J_{\beta-\varepsilon}(\mu)+\ln(1-\frac{\varepsilon}{\beta})\end{eqnarray*}
  which implies
	$$c^1_{\beta}=  J_\beta^0(\mu^{[0;1]}_\beta) \ge J^{0}_{\beta-\varepsilon}(\mu^{[0;1]}_\beta)+\ln(1-\frac{\varepsilon}{\beta})
	\geqslant c^{1}_{\beta-\varepsilon}+O(\frac{\varepsilon}{\beta})\,.$$}
	We next show that the function $\psi(\beta):= (c^1_{\beta^{2}})^{-1}$ is subadditive.  To this end,
	we cut the interval $[0,1]$ in two subintervals $[0;u]$ and $[u;1]$. Let us denote $\gamma:=\mu^{[0;1]}_\beta([0;u])$, since $\mu^{[0;1]}_\beta$ has no atoms and we integrate a non-negative function, 
	\begin{align}
		c^1_\beta&= J_\beta^0(\mu^{[0;1]}_\beta)\geqslant J_\beta^0(\mu^{[0;1]}_\beta|_{[0;u]})+J_\beta^0(\mu^{[0;1]}_\beta|_{[u;1]})\nonumber\\
		& \geqslant \gamma^2c^u_\beta+(1-\gamma)^2c^{1-u}_\beta\label{b1}\\
		&\geqslant\inf_{0\le \delta\le 1}\{ \delta^2c^1_{u^{2}\beta}+(1-\delta)^2c^1_{(1-u)^{-}\beta}\}=((c^1_{u^{2}\beta})^{-1}+(c^1_{(1-u)^{2}\beta})^{-1})^{-1}\,.\nonumber
	\end{align}
	Therefore $\psi$ is subbaditive since for every $u\in [0,1]$
	$$\psi(\beta)\le \psi(\beta u)+\psi(\beta(1-u))\,.$$
	 As a consequence there exists a constant $c$ such that 
	 $c^1_{\beta^2}\sim c/\beta$,  as $\beta$ goes to infinity.
	The aim of the rest of the proof is to show that $c=2\pi$, which  requires extra work. As in \eqref{b1}, we find
	$$c^1_\beta \geqslant J_\beta^0(\mu^{[0;1]}_\beta|_I)\geqslant \mu^{[0;1]}_\beta(I)^2c^{|I|}_\beta\geqslant \mu^{[0;1]}_\beta(I)^2c^1_{|I|^2\beta}\,.$$
	{
	Therefore, we deduce that if $|I_\beta|^2\beta\to\infty$ then
	\begin{equation}\label{holder}
	\limsup_\beta \frac{ \mu^{[0;1]}_\beta(I_\beta)}{\sqrt{|I_\beta|}}\leqslant \limsup_{\beta}\sqrt{\frac{c^1_\beta}{{|I_\beta|}c^1_{|I_\beta|^2\beta}}}=1\end{equation}
	As a  consequence, for every measurable  decreasing function $f:[0;1]\to\RR^+$, we have:
	\begin{equation}\label{sqrtequ}\limsup_\beta\int_0^1 fd \mu^{[0;1]}_\beta \le 2
	  \int_0^1 f(x) \frac{1}{\sqrt{x}}dx\end{equation}
	Indeed it is trivially true for $f(x)=1_{x<t}$ and then one can extend the result to all decreasing $f$ by linearity and monotone limit. 
	}
	
	Now let us prove that the support of $\mu^{[0;1]}_\beta$ is  the whole interval $[0;1]$.
	Since $J_\beta^0$ is left constant under translation of the measure and decreases by  dilatation we already know that the convex closure of the support of $\mu^{[0;1]}_\beta$ is ${[0;1]}$.	We therefore only need to make sure that there is no hole in the support.
We know that there exists a constant $C_\beta$ such that $\mu^{[0;1]}_\beta$ p.p. :
	$$U(x)=2\int\ln \frac{1+\beta|x-y|^2}{\beta|x-y|^2}d\mu^{[0;1]}_\beta(y)= 2\int \phi_\beta(x-y) d\mu^{[0;1]}_\beta(y) 
	=C_\beta$$
	and everywhere on ${[0;1]}$, $U(x)\geqslant C_\beta$. Indeed, this can be deduced from the fact that $J_\beta^0(\mu^{[0;1]}_\beta+\delta \nu)\ge J_\beta^0(\mu^{[0;1]}_\beta)$ for 
	any measure $\nu$ on $[0,1]$ such that $\mu^{[0;1]}_\beta+\delta \nu$ is a probability measure.
But $U(x)$ is strictly convex on the complementary of the support of $\mu^{[0;1]}_\beta$ (since $\phi_\beta$ is strictly convex)
and therefore the support  of $\mu^{[0;1]}_\beta$ is connected and therefore equals $[0,1]$. So  $U$ is constant equal to $C_\beta$ on ${[0;1]}$. Therefore its integral with respect to $d\mu^{[0;1]}_\beta$ is the same than its integral with respect to the uniform measure on $I$, from there we get the new formula for the energy:
	$$J_\beta^0(\mu^{[0;1]}_\beta)=\int\int_0^1\ln \frac{1+\beta|x-y|^2}{\beta|x-y|^2}dyd\mu^{[0;1]}_\beta(x)= \int(\int_0^1  \phi_\beta(x-y)dy)d\mu^{[0;1]}_\beta(x)$$
	Now for any $0<x<1$, because $\phi_\beta$ is non negative, {
	\begin{align}\sqrt{\beta}\int_0^1  \phi_\beta(x-y)dy&\leqslant \sqrt{\beta}\int_{\RR}\phi_\beta(z)dz
		=\int_{\RR}\phi_1(z)dz\nonumber\\
		&=[z\ln\frac{z^2}{1+z^2}+2\arctan(z)]_{-\infty}^\infty=2\pi\label{b2}
	\end{align}}
	Therefore $c^1_\beta\leqslant\frac{2\pi}{\sqrt{\beta}}$ and we have the upper bound in the fourth point of the proposition.
	We finally prove the converse inequality and notice that for all real number $x$, 
	
	\begin{align*}0&\leqslant 2\pi+\sqrt{\beta}\int_0^1 \ln \frac{\beta|x-y|^2}{1+\beta|x-y|^2}dy= \sqrt{\beta}\int_{[0,1]^{c}} \phi_{\beta}(x-y)dy\\
	&\le \int_{-\infty}^{-\sqrt{\beta}x}\phi_{1}(y) dy+\int_{-\sqrt{\beta} x+\sqrt{\beta}}^{+\infty}\phi_{1}(y) dy\\
		&
		\leqslant 2\pi (1\wedge (\frac{1}{\sqrt{\beta}x}+ \frac{1}{\sqrt{\beta}(1-x)}))
	\end{align*}
	where we used that the left hand side is bounded by $2\pi$ and $\phi_{1}(x)\le x^{-2}$.
	From this we deduce, using \eqref{sqrtequ} that
	\begin{align*}
	\limsup_\beta \beta^{\frac{1}{4}}(2\pi- \sqrt{\beta}c^1_\beta)&\leqslant \limsup_\beta \beta^{\frac{1}{4}} 4\pi \int_0^{\frac{1}{2}} \left(1\wedge \frac{1}{\sqrt{\beta}x}\right)d\mu^{[0;1]}_\beta\\
	&\leqslant\limsup_\beta \beta^{\frac{1}{4}}  8\pi \int_0^{\frac{1}{2}} \left(1\wedge \frac{1}{\sqrt{\beta}x}\right)\frac{1}{\sqrt{x}}dx = 32\pi\,.
	\end{align*}
	
\end{proof}
We next consider the case where the energy functional to minimize contains a non trivial potential.  \begin{lemma}[{\bf Energy problem with a potential}]\label{energyproblempotential} Let
	$V:\RR\to\RR$ be a continuous functions such that
	$\lim_{|x|\to\infty}V(x)=+\infty$ and $V$ achieves its minimum value  (equal to zero) exactly on the finite set $x_1,\dots,x_r$ with $V(x)=c_i(x-x_i)^{2p_i}(1+O(|x-x_{i}|))$ near $x_i$ for each $i\in\{1,\ldots r\}$ for some positive integers $p_i$. We set  $p=\max p_i$  and for a measure $\mu$, we recall that
	\begin{align*}
		J^V_\beta(\mu)&
		&=\int \left( \frac{1}{2}( V(x)+V(y))-\ln\frac{\beta|x-y|^2}{1+\beta|x-y|^2}\right)d\mu(x)d\mu(y)\,.
	\end{align*}
	
	{We define $\beta_p=\beta^{\frac{p}{2p+1}}$  and  $\gamma_p = \beta^{-\frac{1}{2(2p+1)}}$.}
Then:
	\begin{enumerate}
		\item  $J^V_\beta$ is a lower continuous stricly convex function on the set $\mathcal P(\mathbb R)$ of probability measures on the real line. It achieves its minimum value 
		at a unique probability measure $\mu^V_\beta$ and there exists a constant $0<c_V<\infty$ such that {
		$$c^V_\beta {:=\inf J^{V}_{\beta}}= \beta_{p}^{-1} c_V+o(\beta_{p}^{-1})$$
		where $o(\beta_{p}^{-1})$ goes to zero as $\beta$ goes to infinity.}
		The constant $c_{V}$ satisfies the scaling property $c_{\alpha V} = \alpha^{\frac{1}{2p+1}}c_V$  for every $\alpha>0$. Besides we have an explicit expression for $c_V$:
		$$c_V=\left(1+\frac{1}{2p}\right)^{\frac{4p+1}{2p+1}}\pi^{1-\frac{1}{2p+1}}(\sum_{j:p_j=p} c_j^{-1/(2p)})^{\frac{1}{2p+1}-1}$$
		\item
		There exists a constant $L_V<\infty$ such that for every $\beta>1$,
		$\mu^{V}_\beta$ is compactly supported in
		$$S(L_{V}):={\bigcup}_{i=1}^r \{ x_i+\beta^{\frac{-p}{(2p_i)(2p+1)}}[-L_V;L_V]\}$$
	{	Moreover, for every interval  $I_\beta$ of length $|I_\beta|$ such that $|I_\beta|^2\beta\to\infty$ then
	$$	
\mu^V_\beta(I_\beta)=O(\beta^{\frac{1}{4(2p+1)}})\sqrt{|I_\beta|}.$$ }
		\item
		There exists a constant $0<C<\infty$ such that for every $i$:
		$$\lim_{\beta\rightarrow \infty} \mu^V_\beta(x_i+\beta^{\frac{-p}{(2p_i)(2p+1)}}[-L_V;L_V])=1_{p_i=p}C^{-1}c_i^{-\frac{1}{2p}}$$
		
		\item {Recall that $h_{\alpha}(x)=\alpha x$.
		If $V$ has a single  minimum at  $x_1=0$ and $V(x)\simeq c x^{2p}$ in a neighborhood of the origin,  then
		$$h_{\beta^{-\frac{1}{4p+2}}}\#  \mu^V_\beta|_{\{\beta^{\frac{-1}{2(2p+1)\}}}[-L_V;L_V]\}}\to d\mu(z):= \frac{1}{2\pi}(A-{c}\color{black} z^{2p})_+dz$$
		where $A$ is the only constant such that the RHS is a probability measure. More precisely, the Wasserstein distance $W_1$ between these two measures is 
		 bounded by 
		$\beta_p^{-\frac{1}{11}}$.}
		
	\end{enumerate}
	
\end{lemma}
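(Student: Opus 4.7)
Lower semi-continuity of $J^V_\beta$ follows from that of $J^0_\beta$ (Lemma \ref{Energywithout}) together with continuity of $V$; strict convexity likewise descends from $J^0_\beta$ since $\mu \mapsto \int V\, d\mu$ is linear. Because $V \to +\infty$ at infinity, sub-level sets are tight, giving existence of a minimizer; strict convexity then forces uniqueness of $\mu^V_\beta$. The scaling identity $c_{\alpha V} = \alpha^{1/(2p+1)} c_V$ is obtained by the substitution $V \to \alpha V$ combined with a rescaling of the variable by $\alpha^{-1/(2p(2p+1))}$, so that the two scales in the problem (the potential well and the logarithmic repulsion) rescale consistently.

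\textbf{Asymptotics and support bounds.} To prove $c_\beta^V = c_V \beta_p^{-1}(1 + o(1))$, I match upper and lower bounds. For the upper bound, use the trial measure $\nu_\beta = \sum_{i:\, p_i = p} m_i^\star \nu_i^\beta$ with masses $m_i^\star \propto c_i^{-1/(2p)}$ and $\nu_i^\beta$ supported near $x_i$ at scale $\sigma = \beta^{-1/(2(2p+1))}$ with the candidate profile. Cross-well contributions to the Coulomb energy are $O(1/\beta)$, hence negligible. Within well $i$, the change of variables $x = x_i + \sigma z$ reduces the local energy to $m_i c_i \sigma^{2p} \int z^{2p} d\hat\nu_i + m_i^2 J^0_{\beta\sigma^2}(\hat\nu_i)$, and the asymptotic $c^1_B \sim 2\pi/\sqrt B$ from Lemma \ref{Energywithout} gives the upper bound. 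For the matching lower bound, restrict $\mu^V_\beta$ to small neighborhoods of each well: since $V \geq 0$, the energy is bounded below well-by-well by the same local optimization. For the support claim in part (2), the Euler--Lagrange equation $V(x) + 2\int \phi_\beta(x-y)\, d\mu^V_\beta(y) = C_\beta$ on the support, combined with $\phi_\beta \geq 0$ and $C_\beta \leq 2 c_\beta^V = O(\beta_p^{-1})$ (integrate against $\mu^V_\beta$), forces $V(x) \leq C_\beta$ on the support; this gives the ball bound $|x - x_i| \leq L_V \beta^{-p/(2p_i(2p+1))}$ near each well, and no mass elsewhere (since $V$ is bounded below outside any neighborhood of $\{x_1,\dots,x_r\}$). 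The density bound follows from restricting $\mu^V_\beta$ to $I_\beta$: using $V \geq 0$, one has $c_\beta^V \geq J^V_\beta(\mu^V_\beta) \geq \mu^V_\beta(I_\beta)^2 c^{|I_\beta|}_\beta$, and combining with $c^{|I_\beta|}_\beta \sim 2\pi/(|I_\beta| \sqrt\beta)$ yields $\mu^V_\beta(I_\beta) = O(\beta^{1/(4(2p+1))} \sqrt{|I_\beta|})$.

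\textbf{Filling fractions and limit density.} For part (3), the well-by-well decomposition shows each well $i$ contributes energy $\sim \beta^{-p_i/(2p_i+1)} m_i^{(4p_i+1)/(2p_i+1)} c_i^{1/(2p_i+1)}$. Since $p_i/(2p_i+1)$ is strictly increasing in $p_i$, only wells with $p_i = p$ can carry asymptotic mass; among those, Lagrange multipliers on the reduced problem $\min\{\sum_{i:\,p_i=p} c_i^{1/(2p+1)} m_i^{(4p+1)/(2p+1)} : \sum m_i = 1\}$ yield $m_i^\star = c_i^{-1/(2p)}/\sum_j c_j^{-1/(2p)}$, matching the claim. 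For part (4), in the single-well case the rescaled $\hat\mu_\beta := h_{\sigma^{-1}}\#\mu^V_\beta$ (with $\sigma = \beta^{-1/(2(2p+1))}$) is the unique minimizer of $\hat J(\hat\mu) = c \int z^{2p} d\hat\mu + \sqrt B\, J^0_B(\hat\mu)$ with $B = \beta^{2p/(2p+1)} \to \infty$, and a Gamma-convergence analysis (using that $\sqrt B \phi_B$ is an approximate identity at $0$) identifies the limiting functional whose Euler--Lagrange equation, together with normalization, gives the stated density $(A - c z^{2p})_+/(2\pi)$. The quantitative $W_1$ rate $\beta_p^{-1/11}$ stems from a quantitative version of the approximate-identity convergence together with the modulus of convexity of the limit functional near its minimizer.

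\textbf{Main obstacle.} The most delicate step is the quantitative part of (4): the limit profile $(A - c z^{2p})_+$ vanishes only linearly at its boundary, so the smoothed-delta behavior of $\sqrt B \phi_B$ must be controlled uniformly (including there), which is where the specific exponent $1/11$ enters. A related subtlety in the lower bound for $c_\beta^V$ is pinning down the constant $c_V$ exactly: this requires a uniform version of the local well estimate plus strict convexity of the reduced finite-dimensional problem on $(m_i)$, so as to rule out spurious near-minimizing mass configurations whose leading-order energy coincides with $c_V \beta_p^{-1}$ only up to lower-order terms.
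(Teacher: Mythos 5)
Your treatment of parts (1)--(3) is essentially sound, and you use a genuinely different route to the support bound: where the paper constructs an explicit perturbation $\mu_t = \mu + t(\lambda_{\gamma_p} - \mu_z)$ moving mass from a region where $V>2L\beta_p^{-1}$ to a small interval near the minimizer and shows the directional derivative is positive, you invoke the Euler--Lagrange characterization $V(x) + 2\int\phi_\beta(x-\cdot)\,d\mu^V_\beta = C_\beta$ on $\mathrm{supp}\,\mu^V_\beta$ and read off $V\le C_\beta \le 2c^V_\beta = O(\beta_p^{-1})$ directly from $\phi_\beta\ge 0$. This is cleaner and self-contained provided you first justify that the equilibrium potential is finite on the support (so that the Euler--Lagrange condition holds); the paper's variational-perturbation argument sidesteps this regularity issue entirely. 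Your reduction of part (3) via the scaling $\alpha^2 J^{(c_i/\alpha)x^{2p_i}}_\beta \sim \beta^{-p_i/(2p_i+1)}\alpha^{(4p_i+1)/(2p_i+1)}c_i^{1/(2p_i+1)}$ and the resulting Lagrange-multiplier identification $m_i^\star\propto c_i^{-1/(2p)}$ also matches the paper's computation, though you do not carry it through to the explicit constant $c_V$, which requires integrating $(A-cz^{2p})_+$ and normalizing.

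Part (4) has a genuine gap: the $\Gamma$-convergence sketch plausibly identifies the limit density $\frac{1}{2\pi}(A - cz^{2p})_+$, but the quantitative Wasserstein rate $\beta_p^{-1/11}$ is merely asserted. Saying it ``stems from a quantitative version of the approximate-identity convergence together with the modulus of convexity'' is not a proof---no quantitative $\Gamma$-convergence estimate is exhibited, and nothing in your argument produces the exponent $1/11$. Moreover your ``main obstacle'' section misattributes the source of this exponent: in the paper it does not come from controlling $\sqrt{B}\phi_B$ near the boundary of $\{(A-cz^{2p})_+>0\}$. It comes from a careful discretization of $[-L_V,L_V]$ into $m$ intervals of width $1/m$, a smoothing parameter $\eta$ controlling the cross-interval interaction, and the balance of three error contributions of size $O(1/m)$, $O(\sqrt{m/\beta_p})$, and $O(m^{3/2}\eta^{1/2}\beta_p^{1/4} + \beta_p^{-1}\eta^{-2})$; the choices $m\sim\beta_p^{1/11}$, $\eta\sim\beta_p^{-1/10}$ equate these, and the modulus-of-strict-convexity bound $\pi m\|\alpha^m(\beta)-\bar\alpha^m\|_2^2 \le O(\beta_p^{-1/11})$ is then transferred to $W_1$ through a finite $\ell^2$--$\ell^1$ comparison. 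To close the gap in your approach you would have to replicate this error balancing, or produce an independent quantitative estimate for the rescaled minimization with the same exponent.
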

In other words,   the energy is of order  $\beta_{p}^{-1}$  and   $\gamma_p$ is the typical length of the support of the minimizer around the minima.\color{black}

\begin{remark}
	
	As a special case of interest we get that if all global minima of $V$ are quadratic ($p=p_i=1$) then the minimizer has its support with endpoints at distance of order $\beta^{-\frac{1}{6}}$ of the minima of $V$ and $ \inf J^{V}_\beta $ is of order $\beta^{-\frac{1}{3}}$.
\end{remark}
\begin{proof}
	
	The strict convexity of $J_\beta^V$ is a direct consequence of the strict convexity of $J_\beta^{0}$ as they differ only by a linear term. 
	We next estimate $c^V_\beta$.
	We will assume without loss of generality that $0$ minimizes $V$ and that near $0$, $V(x)=cx^{2p}(1+o(1))$ with $c>0$. Recall that $\mu_\beta^I$ denotes the minimizer of $J_\beta$ on $\mathcal P(I)$.  We have:
	$$c^V_\beta\leqslant \inf_{\gamma>0}J^V_\beta(\mu^{[-\gamma;\gamma]}_\beta)\leqslant \inf_\gamma \{c\gamma^{2p}(1+o(\gamma))+c^{2\gamma}_\beta\}= \inf_\gamma\{ c\gamma^{2p}(1+o(\gamma))+c^{1}_{4\gamma^2\beta}\}$$
	where we used Lemma \ref{Energywithout}. Recalling that $c^{1}_{4\gamma^2\beta}\simeq \pi/ \gamma\sqrt{\beta}$, 
 and taking $\gamma = \gamma_p$, we get
	\begin{equation}\label{cub}c^V_\beta\le O(\beta_p^{-1})\,.\end{equation}
	where $\beta_p,\gamma_p$ are defined in the statement of the Theorem.
	
	Let us now prove that the support of a minimizer of $J^V_\beta$ is concentrated in a neighborhood of  the minima of $V$. Let $\mu$ be a measure such that there exists  $z$ in the support of $\mu$ such that $V(z)>2 L \beta_p^{-1}$ for some constant $L$ to be chosen later.
	Choose $\varepsilon$ such that $V>L \beta_p^{-1}$ on $I_z^\varepsilon:=[z- \varepsilon, z+\varepsilon]$ and define $\mu_z =\mu(I_z^\varepsilon)^{-1} \mu|_{I_z^\varepsilon}$.  Now we can define:
	$$\mu_t = \mu+t(\lambda_{\gamma_p}-\mu_z)$$
	with $\lambda_{\gamma_p}$ the uniform measure on $[- \gamma_p,\gamma_p]$.
	$\mu_{t}$ is a probability measure for $t\in [0,\mu(I_z^\varepsilon)]$
	 and $\mu_0=\mu$.
	Besides
	
	\begin{align*}
		\partial_t J^V_\beta(\mu_t) |_{t=0}&=\int V d(\lambda_{\gamma_p}-\mu_z) +2 \int\phi_\beta(x-y)
		d\mu(x)d(\lambda_{\gamma_p}-\mu_z) (y)\\
		&\leqslant \sup_{[-\gamma_p, \gamma_p]}V - L \beta_p^{-1}+2 \sup_u  \int \phi_\beta(u-y)d\lambda_{\gamma_p}(y)\\
	\end{align*}
	where we again used that $\phi_\beta$ is non-negative.
	It is easy to see that the supremum in the last term is achieved at $u=0$ and then by similar computations as in \eqref{b2} to 
	 see that this term is  at most of order 
	$O( \beta_p^{-1})$. Since the first term as the same order of magnitude, we conclude that there exists  a finite  constant $L_{V}$ such that if $L\ge L_{V}$ the above right hand side is non-negative.Then,  $\mu$ can not be the minimizer of $J_\beta^V$.
	Hence, the support of  $\mu^V_\beta$ is included in 
	$S(L_{V}):={\bigcup}_{i=1}^r \{ x_i+\beta^{\frac{-p}{(2p_i)(2p+1)}}[-L_V;L_V]\}$.
	For $\beta$ sufficiently large, the restriction of $V$ to  each of the interval  $x_i+\beta^{\frac{-p}{(2p_i)(2p+1)}}[-L_V;L_V]$ is convex (and strictly convex except at the point $\{x_{i}\}$)  and an argument identical to the one for $\mu^{[0;1]}_\beta$ in the proof of the previous Lemma \ref{Energywithout}  shows  that the restriction of the support of $\mu^V_\beta$ in $x_i+\beta^{\frac{-p}{(2p_i)(2p+1)}}[-L_V;L_V]$  is an interval. Let us call $\gamma^i_\beta$ the length of this interval and  define $\alpha^i_\beta= \mu^V_\beta(x_i+\beta^{\frac{-p}{(2p_i)(2p+1)}}[-L_V;L_V])$.
	Then, we get the complementary lower bound to \eqref{cub} by using again that $\phi_{\beta}$ and $V$ are non-negative:
		\begin{eqnarray}\label{eqc}
	c^V_\beta&\geqslant& \sum_i  \inf_{\mu\in\mathcal P(x_i+\beta^{\frac{-p}{(2p_i)(2p+1)}}[-L_V;L_V]))} J^{0}_{\beta}(\alpha_{\beta}^{i}\mu)
	\nonumber\\
	&=&(1+o(1))\sum_i(\alpha_{\beta}^{i})^{2} \frac{2\pi}{L_{V}\sqrt{\beta}\beta^{-\frac{p}{2p_{i}(2p+1)}}}
	\end{eqnarray}
	where we finally used Lemma \ref{energyproblempotential}.  We notice that for $i$ such that $p_{i}=p$
	{$$\sqrt{\beta}\beta^{-\frac{p}{2p_{i}(2p+1)}}=\beta^{\frac{p}{2p+1}}=\beta_{p}$$ whereas for $i$ such  that $p_{i}<p$,
	{$\sqrt{\beta}\beta^{-\frac{p}{2p_{i}(2p+1)}}\ll \beta_{p}$.}  Since we have seen  in \eqref{cub} that $c^{V}_{\beta}$ is at most of order $\beta_{p}^{-1}$ we conclude that  for $i$ such that $p_{i}<p$, $\alpha_{\beta}^{i}$ goes to zero with $\beta$ (is in fact bounded by $\beta^{(\frac{p}{p_{i}}-1)\frac{p}{4(2p+1)}}$)
	and also that $0<\liminf \beta_{p}c^{V}_{\beta} \leqslant \limsup \beta_{p}c^{V}_{\beta}<\infty$, i.e.
	\begin{equation}\label{cublb}
	c^{V}_{\beta} \asymp \beta_{p}^{-1}.\end{equation}
	{
	This allows us to complete the proof of  Lemma  \ref{energyproblempotential} (2) and show that
$\mu^{V}_{\beta}$  can not concentrate too much on small intervals. Indeed, let $I_\beta$ be a sequence of intervals of Lebesgue measure $|I_\beta|$ such that $|I_\beta|^2\beta\to\infty$, then, since $V$ and $\phi_{\beta}$ are non-negative 
$$c^V_\beta\geqslant J^0_\beta(\mu^V_\beta|_{I_\beta})=
\mu^V_\beta(I_\beta)^2c_\beta^{|I_\beta|}$$
and therefore by Lemma
\ref{Energywithout}(4) we get 
$$\mu^V_\beta(I_\beta)\leqslant \sqrt{\frac{c^V_\beta}{c_{|I_\beta|^2\beta}^{1}}}=\sqrt{O(\beta^{-\frac{p}{2p+1}}\sqrt{\beta}|I_\beta|)}=O(\beta^{\frac{1}{4(2p+1)}})\sqrt{|I_\beta|}.$$

	We next  complete the proof of the first point of Lemma  \ref{energyproblempotential}, as well as its third and fourth points. First notice that we can remove the interaction between wells since they are at distance of order one, 
	up to an error of order $1/\beta$:
	
	$$J^V_\beta(\mu_\beta)=\sum_i J_\beta^V(\alpha^\beta_i\mu_i^\beta)+O(1/\beta)$$
where 
	$\mu_i^\beta$ is   the restriction of $\mu^{V}_{\beta}$ to  the neighborhood $x_i+\beta^{-\frac{p}{2p_{i}(2p+1)}}[-L_V;L_V]$ of the minimizer $\{x_{i}\}$. 	We estimate now the energy  in each of these wells. 
		}

	{\bf The energy problem  with one well}
		
	Assume that $V(x)=c x^{2p}$.
	With the previous discussion in mind we look at the rescaled energy functional for a probability measure $\mu$  on $[-L_V;L_V]$:
	
	\begin{eqnarray*}
	\tilde J_\beta^{V}(\mu)&:=&\beta_pJ^{V}_\beta(h_{ \beta^{-\frac{1}{2(2p+1)}}}\#\mu)\\
	&=&\int c x^{2p}d\mu(x)+\beta_p\int\int\ln\phi_{\beta_{p}^2}(x-y)d\mu(x)d\mu(y).\end{eqnarray*}
		We are going to show that the infimum of $\tilde J_\beta^{V}$ over the set of probability measures on $[-L_V;L_V]$ is converging to a  constant depending only on $p$ and $c$. More precisely,  with $a_p=\left(1+\frac{1}{2p}\right)^{\frac{4p+1}{2p+1}}\pi^{1-\frac{1}{2p+1}}$, we show:

\begin{equation}\label{toprove} \inf \tilde J^{cx^{2p}}_\beta =a_pc^{\frac{1}{2p+1}}+O(\beta_p^{-\frac{1}{11}}). \end{equation}

		To this end, let $m$  be a large integer  (to be chosen later) and cover $[-L_V;L_V]$ by the union of segments $[z_i;z_{i+1}]$ of width $m^{-1}$ with $z_i=-L_V+ (i+1)/m$, $-1\leqslant i\leqslant \lfloor 2L_V\rfloor $.

	We first lower bound the infimum of $ \tilde J_\beta^{V}$. Again  this infimum is achieved at a unique probability measure that we denote  $\nu^V_\beta$. 
	We let  $\alpha^m_i(\beta)=\nu^V_\beta([z_i;z_{i+1}])$ and notice that, because $\phi_{\beta_{p}^2}$ is non-negative,
	\begin{align}
		\tilde J^V_\beta(\nu^V_\beta)&\geqslant \sum_{i=-1}^m \tilde J^V_\beta(\nu^V_\beta|_{[z_i;z_{i+1}]})\nonumber\\
		&
		\geqslant \sum_{i=-1}^{ \lfloor 2L_V\rfloor}\left\lbrace  \alpha^m_i(\beta) c\inf_{[z_i;z_{i+1}]} z^{2p} +(\alpha^m_i(\beta))^2 \tilde J^0_\beta((\alpha^m_i(\beta))^{-1}\mu|_{[z_i;z_{i+1}]})\right\rbrace\nonumber\\
		& \geqslant (1-O(\frac{1}{m}))  \sum_{i=-1}^{ \lfloor 2L_V\rfloor} \left\lbrace \alpha^m_i(\beta)c z_i^{2p}+(\alpha^m_i(\beta))^2\beta_p c^{1/m}_{\beta_p^2}\right\rbrace\nonumber \\
		&=(1-O(\frac{1}{m}))\sum_{i=-1}^{ \lfloor 2L_V\rfloor} \left\lbrace \alpha^m_i(\beta)c z_i^{2p}+(\alpha^m_i(\beta))^2 m2\pi (1-O((\frac{\beta_p}{m})^{-1/2}))\right\rbrace\nonumber \\
		&=(1-O(\frac{1}{m})-O((\frac{\beta_p}{m})^{-1/2}))
		 \phi_m(\alpha^m(\beta))\label{lb1}
	\end{align}
	where 
	$$\phi_m(\alpha)=\sum_{i=-1}^{ \lfloor 2L_V\rfloor} \{\alpha_i cz_i^{2p}+2\pi m(\alpha_i)^2 \}$$
	and we assumed that $\beta_{p}/m$ is large to use the fourth point of Lemma \ref{Energywithout}.	Now let $\bar\alpha^m=(\bar\alpha^m_i)_{i=1,..r}$ be the value which achieves the minimum of the strictly convex functional $\phi_m$  on the convex set $\{\alpha_{i}\in [0,1]: \sum\alpha_{i}=1\}$ and $\mu^{[z_i;z_{i+1}]}_{ \beta^2_p}$ the probability on $[z_{i};z_{i+1}]$ minimizing  { $\tilde J_{\beta}^{0}$. }From the above we deduce that
	\begin{eqnarray}
	 \inf \tilde J^{cx^{2p}}_\beta &\ge &(1-O(\frac{1}{m})-O((\frac{\beta_p}{m})^{-1/2}))
		\inf  \phi_m(\alpha^m(\beta))\nonumber\\
		&=& (1-O(\frac{1}{m})-O((\frac{\beta_p}{m})^{-1/2})) \tilde J^{cx^{2p}}_\beta(\mu^{m})\label{lbv}
		\end{eqnarray}}
		where we 
define $\mu^{m}$ to be  the probability measure
	$$\mu^m = :\sum_i\bar \alpha^m_i\mu^{[z_i;z_{i+1}]}_{ \beta^2_p}.$$
	We next prove the complimentary upper bound. In fact:
	\begin{align*}
		 \tilde J^V_\beta(\nu^V_\beta)
		&\leqslant \tilde J^V_\beta(\mu^m)\\
		&\leqslant \sum_{i=1}^m \bar\alpha^m_i c\sup_{[z_i;z_{i+1}]} z^{2p}+(\bar\alpha^m_i)^2 \tilde J^0_\beta(\mu^{[z_i;z_{i+1}]}_{ \beta^2_p})\\
		&-\sum_{i\neq j}\bar\alpha^m_i\bar\alpha^m_j\beta_p\int\int\ln\frac{\beta_p^2|x-y|^2}{1+\beta_p^2|x-y|^2}d\mu^{[z_i;z_{i+1}]}_{ \beta^2_p}(x)d\mu^{[z_j;z_{j+1}]}_{ \beta^2_p}(y)\\
	\end{align*}
	
	Our goal is to get rid of the last term. Define the function : $$g^i_\beta(y)=-\beta_p\int\ln\frac{\beta_p^2|x-y|^2}{1+\beta_p^2|x-y|^2}d\mu^{[z_i;z_{i+1}]}_{ \beta^2_p}(x).$$ 
	The function $g^i_\beta$ is positive and decreases when $y$ goes away from $[z_i;z_{i+1}]$. Moreover,  for  $y\in [z_i;z_{i+1}]$, $g^i_\beta(y)$ is constant and its value is 
	{
	$$ g^{i}_{\beta}(y)=a_m^p(\beta):=\beta_pJ^{0}_{\beta_p^2}(\mu^{[z_i;z_{i+1}]}_{ \beta^2_p})=\beta_{p}c^{\frac{1}{m}}_{\beta_{p}^{2}}=
2m\pi (1+O(\frac{\beta_{p}}{m})^{-1/2})\,,$$}
where we again assumed that $\beta_{p}/m$ is large.
	This is sufficient to get a bound on our sum. Indeed, for $j> i$, we get for every $\eta\in [0,\frac{2}{m}]$,
	
	{
	\begin{align*}
		&\int g^i_\beta(y) d\mu^{[z_j;z_{j+1}]}_{ \beta^2_p}(y)\leqslant \int g^i_\beta(y) d\mu^{[z_{i+1};z_{i+2}]}_{ \beta^2_p}(y)\\
		&\leqslant \int_{z_{i+1}}^{z_{i+1}+\eta} g^i_\beta(z_{i+1})  d\mu^{[z_{i+1};z_{i+2}]}_{ \beta^2_p}(y)+g^i_\beta(z_{i+1}+\eta)\\
		&\leqslant \mu^{[0;1/m]}_{ \beta^2_p}([0;\eta])g^0_\beta(z_{1}) -\beta_p\ln\frac{\beta_p^2\eta^2}{1+\beta_p^2\eta^2}.
	\end{align*}	
	Now according to \eqref{holder}, $ \mu^{[0;1/m]}_{ \beta^2_p}([0;\eta]) =  \mu^{[0;1]}_{ \beta^2_p/m^2}([0;m\eta])=O((m\eta)^{1/2})$ as long as $\eta\beta_p\to\infty$.}
	 Therefore, we deduce that if $\eta\beta_p\to\infty$,
	$$\sum_{i\neq j}\bar\alpha^m_i\bar\alpha^m_j
	\int g^{i}_{\beta}(y)
	d\mu^{[z_j;z_{j+1}]}_{ \beta^2_p}(y)= O( m^{3/2}\eta^{1/2}(1+(\frac{\beta_{p}}{m})^{1/4}))+\beta_{p}^{-1}\eta^{-2})$$
	
	We finally choose $m,\eta$ so that $\eta = 	 \beta_p^{-\frac{1}{10}}, m=\lfloor\beta_p^{\frac{1}{11}}\rfloor$.
With this choice, together with \eqref{lb1}, we have proved 
\begin{equation}\label{bor1}\phi_m( \alpha^m(\beta))+O(\beta_p^{-\frac{1}{11}})\leqslant \tilde J^V_\beta(\nu^V_\beta)\leqslant \phi_m( \bar\alpha^m)+O(\beta_p^{-\frac{1}{11}})\end{equation}
Because $\phi_m( \bar\alpha^m)\le \phi_m( \alpha^m(\beta))$ we conclude that 
	 \begin{equation}\label{conv1}\lim_{\beta\rightarrow\infty}\inf \tilde J^V_\beta=\lim_{m\rightarrow\infty}\inf \phi_{m}\end{equation}
	 if these limits exists.
	 We finally show that  the above limits are positive and finite, and moreover that $\alpha^{m}_{\beta}-
	 \bar\alpha^m$ goes to zero.
	 Looking at the critical points  $\bar\alpha^{m}$ of  $\phi_m$ on $\{\alpha_{i}\in [0,1]:\sum_{i}\alpha_{i}=1\}$ we see that 
	 for the indices in 
	  $S_m:=\{i : \bar\alpha^m_i>0\}$ we must have
	$$cz_i^{2p}+2\pi m\bar \alpha^m_i=A_m$$
	where $A_m$ is a constant.  Hence, the minimizer of $\phi_{m}$ is given by
	$$\bar\alpha^m_i = \frac{1}{2\pi m}(A_m-cz_i^{2p})_+$$
	where $A_m$ is the only positive solution to $$\sum_i \frac{1}{2\pi m}(A_m-cz_i^{2p})_+=1\,.$$

	The continuum limit of this minimizer
	is given by  the probability measure 
	\begin{equation}\label{eqmeas} \nu_{c}(dx)= \frac{1}{2\pi }(A-cx^{2p})_+ dx\end{equation}
	where $A$ is the only positive solution to $\int \frac{1}{2\pi }(A-cz^{2p})_+dz=1$.
	This can be solved directly and gives :
	$$A=c^{\frac{1}{2p+1}}\left(\frac{(2p+1)\pi}{2p}\right)^{\frac{2p}{2p+1}}\,.$$
	Plugging this formula
	in $\phi_{m}$ we see that
	\begin{eqnarray}
	\label{infphi}
	\inf \phi_{m}&=&\sum_{i=1}^m \{\frac{1}{2\pi m}(A_m-cz_i^{2p})_+ cz_i^{2p}+\frac{1}{4 \pi m}(A_m-cz_i^{2p})_+^{2}\}\nonumber\\
	&\rightarrow& \int c x^{2p}\nu_{c}(dx)+ \pi \int \frac{d\nu_{c}(x)}{dx} d\nu_{c}(x)=:F(c x^{2p})\end{eqnarray}
	A direct computation proves that
	$$F(c x^{2p}) = \left(1+\frac{1}{2p}\right)^{\frac{4p+1}{2p+1}}\pi^{1-\frac{1}{2p+1}}c^{\frac{1}{2p+1}}.$$
	This completes the proof of \eqref{toprove} . 
{ We can also see that  $A_m=A+O(1/m)$ and  $W_1(\nu,\sum_i \bar\alpha^m_i\delta_{z_i})=O(1/m)$.

	We next can quantify this convergence as follows.
	Indeed, for any sequence $\alpha^m_i$ on the simplex:
	\begin{align*}
		&\phi_m(\alpha^m)-\phi_m(\bar\alpha^m)\\&=\sum_i(\alpha^m_i -\bar\alpha_i^m)(cz_i^{2p}+2\pi m \bar\alpha^m_i)+\sum_i\pi m (\alpha^m_i-\bar\alpha_i^m)^2\\
		&=\sum_{i}(\alpha^m_i -\bar\alpha_i^m)A_m+\sum_{i\notin S_m}\alpha^m_i(cz_i^{2p}-A_m)+\sum_i\pi m (\alpha^m_i-\alpha_i^m)^2\\
		&\geqslant \sum_i\pi m (\alpha^m_i-\alpha_i^m)^2=\pi m \|\alpha^m-\bar\alpha^m\|^2_2
	\end{align*}
	Therefore \eqref{bor1} implies that
	
	$$\pi m \|\alpha^m(\beta)-\bar\alpha^m\|^2_2\leqslant O(\beta_p^{-\frac{1}{11}})\,.$$
	
	This allows us to bound the Wasserstein distance between the  corresponding measures:
	
		\begin{align*}W_1(\sum_i \bar\alpha^m_i\delta_{z_i},\sum_i\alpha^m_i(\beta) \delta_{z_i})&\leqslant
	2d_{TV}(\sum_i \bar\alpha^m_i\delta_{z_i},\sum_i\alpha^m_i(\beta) \delta_{z_i})\\
	&=
	2\|\alpha^m(\beta)-\bar\alpha^m\|_1\leqslant 2\|\alpha^m(\beta)-\bar\alpha^m\|_2\\
	&\leqslant 2\sqrt{O(m^{-1}\beta_p^{-\frac{1}{11}})} = O(\beta_{p}^{-\frac{1}{11}}).
	\end{align*}
		Finally : 
\begin{align*}	W_1(\nu_{c},\nu^V_\beta)
		&\leqslant W_1(\nu_c,\sum_i \bar\alpha^m_i\delta_{z_i}) +W_1(\sum_i \bar\alpha^m_i\delta_{z_i},\sum_i\alpha^m_i(\beta) \delta_{z_i})\\ &+W_1(\sum_i\alpha^m_i(\beta) \delta_{z_i},\nu^V_\beta) 	
\end{align*}	
and 
the first and last term are bounded by  $ O(m^{-1})=O(\beta_p^{-\frac{1}{11}})$
So we conclude that $W_1(\nu_c,\nu^V_\beta)=O(\beta_p^{-\frac{1}{11}})$.
}

\noindent
{\bf{End of the proof  of the Lemma.}}
We have already proved that:
$$\inf J^V_\beta = \inf \{J_\beta(\mu) : \mu(	{\bigcup}_{i=1}^r \{ x_i+\beta^{\frac{-p}{(2p_i)(2p+1)}}[-L_V;L_V]\}) = 1\}.$$

But on $\{ \mu : \mu(	{\bigcup}_{i=1}^r \{ x_i+\beta^{\frac{-p}{(2p_i)(2p+1)}}[-L_V;L_V]\}) = 1\}$,
\begin{align*}
&\sup_\mu \left|J^V_\beta(\mu)-\sum_i J^{c_i(x-x_i)^{2p_i}}_\beta(\mu|_{x_i+\beta^{\frac{-p}{(2p_i)(2p+1)}}[-L_V;L_V]})\right|\\
&\leqslant  O(\frac{1}{\beta}) + O(\max_i \beta^{\frac{-p(2p_i+1)}{(2p_i)(2p+1)}}) = O(\beta^{-1/2})
\end{align*}
Indeed neglecting the interaction term gives an error of order $\beta^{-1}$ and replacing in the $i$ th well $V$ by $c_i(x-x_i)^{2p_i}$ gives an error of order $\beta^{\frac{-p(2p_i+1)}{(2p_i)(2p+1)}}$ whose maximal value is obtained for $p_i=p$. Therefore,
$$\inf J^V_\beta = \inf\{\sum_i J^{c_i(x-x_i)^{2p_i}}_\beta(\mu_i) : \sum_i \alpha_i = 1\}+O(\beta^{-1/2})$$
where in the above sum we take the infimum on $\alpha$ in the simplex and $\mu_i$ is a measure of mass $\alpha_i$ (The minimisation problem forces the optimal $\mu_i$ to have a support in  
$x_i+\beta^{\frac{-p}{(2p_i)(2p+1)}}[-L_V;L_V]$ if $L_V$ was taken sufficiently large). We next notice that we can remove the terms with small $\alpha_{i}'$s. Indeed, if for $\epsilon>0$ we set  $T_{\epsilon}=\{i: \alpha_{i}\ge\epsilon\}, $ we have 
$$ \inf\{\sum_{i\in T_{\epsilon}} J^{c_i(x-x_i)^{2p_i}}_\beta(\mu_i) : \sum_i \alpha_i = 1\} \le  \inf\{\sum  J^{c_i(x-x_i)^{2p_i}}_\beta(\mu_i) : \sum  \alpha_i = 1\}\quad $$
$$\qquad\qquad \qquad\qquad \qquad  \le
 \inf\{\sum_{i\in T_{\epsilon}} J^{c_i(x-x_i)^{2p_i}}_\beta(\mu_i) : \sum_{i\in T_{\epsilon}} \alpha_i = 1\}$$
 where in the left hand side we just used that $J^{c_i(x-x_i)^{2p_i}}_\beta$ is non-negative and in the right hand side that we restrict the set of measures where we take the infimum.   We set $\eta(\epsilon)=\sum_{i\in T^{c}_{\epsilon}}\alpha_{i}\le r\epsilon$.
Rescaling in the left hand side $\{\alpha_{i}, i\in T_{\epsilon}\}$ by $\{\alpha_{i}(1-\eta(\epsilon))^{-1}, i\in T_{\epsilon}\}$ while using  that $J^{c_i(x-x_i)^{2p_i}}_\beta(\alpha \mu_i)\ge \alpha^{2} J^{c_i(x-x_i)^{2p_i}}_\beta(\mu_i)$  for every $\alpha\in [0,1]$ since the potential is non negative, we conclude
$$ (1-r\epsilon)^{-2}\inf\{\sum_{i\in T_{\epsilon}} J^{c_i(x-x_i)^{2p_i}}_\beta(\mu_i) : \sum_{i\in T_{\epsilon}} \alpha_i = 1\} \qquad$$
$$\qquad\qquad \le  \inf\{\sum  J^{c_i(x-x_i)^{2p_i}}_\beta(\mu_i) : \sum  \alpha_i = 1\}$$
We have seen that the masses $\alpha_{i}$ for $i$ such that $p_{i}<p$ go to zero with $\beta$ like $\beta^{(\frac{p}{p_{i}}-1)\frac{p}{4(2p+1)}}$ and so we may assume that they are in $T_{\epsilon}$. Then, factorizing the masses $\alpha_{i}$ gives for $\beta$ large,
\begin{align*}
\inf J^V_\beta &=(1+o(\epsilon))\left(\inf\{ \sum_{i\in T_{\epsilon}} \alpha_i^2J^{\frac{c_i}{\alpha_i}(x-x_i)^{2p_i}}_\beta(\mu_i): \sum_{i\in T}\alpha_i = 1\}+O(\beta^{-1/2})\right)\\
&= (1+o(\epsilon))\beta_p^{-1}\inf\{ \sum_{i \in T_{\epsilon}} \alpha_i^{2-\frac{1}{2p_i+1}} (a_{p_i}c_i^{\frac{1}{2p_i+1}}+O(\beta_{p_i}^{-\frac{1}{11}})): \sum_i \alpha_i = 1\}\\
\end{align*}
where we finally used { \eqref{toprove} assuming that $c_{i}/\alpha_{i}$ does not blow up, namely $\alpha_{i}$ does not go to zero. }
\color{black}
Finally, we see that the above infimum is taken at $\alpha_{i}^{*}=c_{i}^{-\frac{1}{2p}}/\sum_{i}c_{i}^{-\frac{1}{2p}}$ yielding

\begin{align*}
\inf J^V_\beta
&=(1+o(1))\left( \beta_p^{-1}a_p(\sum_{j:p_j=p} c_j^{-1/(2p)})^{\frac{1}{2p+1}-1}+O(\beta^{-1/2})\right) \end{align*}

The expression for $c_V$ and its scaling property $c_{\alpha V}=\alpha^{\frac{1}{2p+1}}c_V$ is a trivial consequence from this formula. Moreover, our proof shows that $\{\alpha^{*}_{i}\}_{1\le i\le r}$  is the limit of the mass of the neighborhood of the minimizers  $\{x_{i}\}_{1\le i\le r}$ since it is the unique minimizer of the above approximation of the energy. This proves the third point of the Lemma.

%
%
%
%
%
%
%
%
%
%

\end{proof}

\bibliographystyle{acm}
\bibliography{lowT}

\end{document}